\newcommand{\Hol}{\mathrm{Hol}}
\DeclareMathOperator{\vol}{vol}
\newcommand{\QQ}{\mathbb{Q}} 
\DeclareMathOperator{\Res}{Res}
\newcommand{\BB}{\mathbb{B}}
\newcommand{\EE}{\mathbb{E}} 
\newcommand{\ad}{\operatorname{ad}}
\newtheorem{corollary}{Corollary}
\DeclareMathOperator{\Var}{Var} 
\newcommand{\id}{\mathrm{id}}
\newtheorem{example}{Example}
\newtheorem{theorem}{Theorem}
\newtheorem{Proposition}{Proposition}
\DeclareMathOperator{\End}{End}
\newcommand{\RR}{\mathbb{R}}
\newcommand{\CC}{\mathbb{C}}
\newcommand{\TT}{\mathbb{T}}
\newcommand{\NN}{\mathbb{N}}
\newcommand{\ZZ}{\mathbb{Z}}
\newcommand{\Kk}{\mathcal{K}}        
\newcommand{\Hh}{\mathcal{H}}        
\newcommand{\Ss}{\mathcal{S}}        
\DeclareMathOperator{\Tr}{Tr}
\DeclareMathOperator{\Ker}{Ker}
\DeclareMathOperator{\spec}{spec}
\DeclareMathOperator{\Dom}{Dom}
\DeclareMathOperator{\Id}{Id}
\newcommand{\Proj}{\mathrm{P}}       
\newcommand{\dd}{\,\mathrm{d}}       
\newcommand{\ee}{\mathrm{e}}         
\DeclarePairedDelimiter{\norm}{\lVert}{\rVert}
\DeclarePairedDelimiter{\ip}{\langle}{\rangle}
\newcommand{\Oo}{\mathcal{O}}        
\title{Moduli space of optimization algorithms}
\author{Dmitry Pasechnyuk-Vilensky $^{1, 2}$ and Martin Taká\v{c} $^{1}$\\
$^{1}$ \quad MBZUAI, United Arab Emirates\\
$^{2}$ \quad ISP RAS, Russia}
\begin{document}

\maketitle

\begin{abstract}
We introduce a geometric and operator–theoretic formalism viewing optimization algorithms as discrete connections on a space of update operators.
Each iterative method is encoded by two coupled channels—drift and diffusion—whose algebraic curvature measures the deviation from ideal reversibility and determines the attainable order of accuracy.
Flat connections correspond to methods whose updates commute up to higher order and thus achieve minimal numerical dissipation while preserving stability.

The formalism recovers classical gradient, proximal, and momentum schemes as first–order flat cases and extends naturally to stochastic, preconditioned, and adaptive algorithms through perturbations controlled by curvature order.
Explicit gauge corrections yield higher–order variants with guaranteed energy monotonicity and noise stability.
The associated determinantal and isomonodromic formulations yield exact nonasymptotic bounds and describe acceleration effects via Painlev\'e-type invariants and Stokes corrections.
\end{abstract}

\tableofcontents

\section*{Introduction}

Optimization algorithms are typically described either by their update rules or by their convergence rates.  Between these two levels—local algebraic structure and global analytic behavior—there has long been a conceptual gap.  Step–size schedules, preconditioners, or momentum terms are often justified empirically, while rigorous analyses rely on method–specific Lyapunov arguments.  As a result, the common geometric or spectral principles underlying stability and acceleration have remained opaque.  

We aim to provide a consistent mathematical framework that allows comparing and classifying iterative methods within a shared geometric setting.

\paragraph{Objectives.}
Our study is organized around six complementary objectives.

\begin{enumerate}
\item \textbf{Local–to–global correspondence.}  
We establish a precise relation between the local structure of an iteration and its global convergence behavior.  
The order to which elementary updates commute determines the attainable accuracy and the minimal energy dissipation of the method.  
This provides a rigorous criterion linking discrete approximation order to actual stability and rate.

\item \textbf{Universal minimax principle.}  
We identify a general optimization principle valid for all first– and higher–order discrete schemes:  
minimizing the total trajectory error is equivalent to an extremal approximation of the exponential map.  
The resulting optimal policies coincide with the classical Chebyshev and Zolotarev filters, giving acceleration a precise variational meaning.

\item \textbf{Spectral–energetic equivalence.}  
We connect the analytic energy of an iteration to its spectral invariants.  
The variation of the energy functional is shown to coincide with that of a regularized determinant of the underlying evolution operator, ensuring that stability and convergence can be expressed in purely spectral terms.

\item \textbf{Isomonodromic structure of discrete optimization.}  
We show that families of iterative methods admit an isomonodromic form:  
their energy potential is a $\tau$–function satisfying a Painlevé–type differential identity.  
This structure explains why accelerated and stabilized schemes exhibit phase–preserving dynamics and yields nonasymptotic bounds exact up to Stokes–type corrections.

\item \textbf{Geometry of adaptive and stochastic methods.}  
Adaptive, stochastic, and preconditioned algorithms are described as controlled geometric deformations of the same flat structure that governs deterministic schemes.  
All admissible perturbations preserve flatness up to a prescribed order and share the same spectral class, providing a unified explanation of robustness and variance control.

\item \textbf{Continuous–discrete correspondence.}  
We establish a spectral–variational correspondence between continuous and discrete formulations.  
The discrete curvature functional introduced here converges, as the step size tends to zero, to its continuous Yang–Mills counterpart, linking iterative algorithms and continuous gradient flows within a single analytic framework.
\end{enumerate}

Together, these results outline a coherent picture of optimization as a geometric theory of discrete dynamics:  
each algorithm corresponds to a point in a moduli space whose curvature encodes its accuracy, energy, and stability.  
Flatness—vanishing curvature—marks the ideal regime where updates are reversible, dissipation is minimal, and convergence is optimal.

\section{Definitions}

Let $\mathbb{R}^n$ be the state space, $X \subseteq \mathbb{R}^n$ be a feasible manifold, $K \subset \mathbb{R}^n$ be a closed convex feasible set. Let $f: X \to \mathbb{R}$ be the function.

Let $\{U\} \subset \End(X)$ be the set of elementary operators to update the state. Let the exact algorithm be defined as $x_{k+1} = S(h_k, \{U\}) x_k$, where $S(h, \{U\}) \in \End(X)$ and satisfies the following assumptions:
\begin{enumerate}
    \item $S(0, \{U\}) = \id$
    \item $S(h_1 + h_2, \{U\}) = S(h_1, \{U\}) \circ S(h_2, \{U\})$
    \item $\exists \Omega(h, \{U\}) \in \mathfrak{L}\langle U\rangle:\;S(h, \{U\}) = \exp(\Omega(h, \{U\}))$, $\Omega(h, \{U\}) = \sum_{k=1}^\infty h^k \Omega_k(\{U\})$
    \item $S(\cdot, \{U\})$ is continuously differentiable at $h=0$. $A(\{U\}) = \partial_h S(\cdot, \{U\})\vert_{h=0}$ is the generator of evolution,
\end{enumerate}
where $\mathfrak{L}\langle U\rangle$ is a free Lie algebra \cite{Reutenauer1993FreeLie}.

\begin{example} $S$ has closed form if family $\{U\}$ is simple:
\begin{enumerate}
    \item If $\{U\} = \{A\}$, where $A$ is a monotone operator, $S(h, \{U\}) = (\id + h A)^{-1}$
    \item If $\{U\} = \{H\}$, where $H$ is a linear operator, $S(h, \{U\}) = e^{-h H}$ \cite{Higham2008FunctionsOfMatrices}
\end{enumerate}
\end{example}

Let $\mathfrak{U} = \langle U\rangle$ be the algebra of operators to update the state with the following axioms:
\begin{enumerate}
    \item $\forall U_1, U_2 \in \mathfrak{U}:\; U_1 \circ U_2 \in \mathfrak{U}$
    \item $\forall U_1, U_2 \in \mathfrak{U}\; \forall a, b \in \mathbb{R}:\; a U_1 + b U_2 \in \mathfrak{U}$
    \item $\forall U \in \mathfrak{U}\; \forall a \in \mathbb{R}:$ if $(\id + a U)$ is invertible in $\End(X)$, then $(\id + a U_1)^{-1} \in \mathfrak{U}$
    \item for any Lipschitz $\theta \mapsto U_\theta$ and probability distribution $\pi(\theta)$, $\int U_\theta d \pi(\theta) \in \mathfrak{U}$
    \item if $U_k \to U$ and $U_k \in \mathfrak{U}$, then $U \in \mathfrak{U}$
\end{enumerate}

Let the feasible spectrum $\Sigma \subset \mathbb{R}$ be compact. For $t \in \mathcal{T}$ (which will be defined in the next section), $r_t(h, U) \in \mathfrak{U}$ is the drift update, for $s \in \mathcal{S}$, $m_s(h, U) \in \mathfrak{U}$ is the diffusion update. Let $U_{s,t} = m_s \circ r_t = r_t \circ m_s$ (the correctness will be further proven) be the update and define the practical algorithm as $x_{k+1} = U_{t,s}(h_k, U) x_k$, where $U = U(x_k)$ serves as the oracle.

\subsection{Hierarchies}
Let the feasible spectrum $\Sigma \subset \mathbb{C}$ be compact, and $\tilde{\mathfrak{U}} \subset \mathfrak{U}$ is subalgebra of implementable update operators. The drift hierarchy $\mathcal{T}$ is a set of $t = (r, \alpha)$, where $r \in \tilde{\mathfrak{U}}$, and $\alpha \in \mathbb{R}_+$ is such that $r(0,U) = \id$ and
\[
\sup_{\lambda \in \Sigma} |e^{-h \lambda} - r(h, \lambda)| \leq C h^\alpha
\]

\begin{example} Implementable update operators are approximation methods
\begin{enumerate}
  \item truncated Taylor series \cite{Higham2008FunctionsOfMatrices,Trefethen2013ATAP}:  
  \( r_p(h,\lambda)=\sum_{k=0}^{p}\frac{(-h\lambda)^k}{k!},\ 
     \alpha=p+1,\ 
     \sum_{\lambda\in\Sigma}|e^{-h\lambda}-r_p(h,\lambda)|=O(h^{p+1}) \).

  \item Padé [1/1] (Cayley step) \cite{BakerGravesMorris1996Pade,Higham2008FunctionsOfMatrices,HairerWanner2010SolversII,Hershkowitz1981Cayley}:  
  \( r_{[1/1]}(h,\lambda)=\frac{1-\tfrac{h\lambda}{2}}{1+\tfrac{h\lambda}{2}},\ 
     \alpha=3,\ 
     \sum_{\lambda\in\Sigma}|e^{-h\lambda}-r_{[1/1]}(h,\lambda)|=O(h^3) \).

  \item Padé [n/n] \cite{BakerGravesMorris1996Pade,Higham2008FunctionsOfMatrices}:  
  \( r_{[n/n]}(h,\lambda)=\dfrac{P_n(h\lambda)}{Q_n(h\lambda)},\ 
     \alpha=2n+1,\ 
     \sum_{\lambda\in\Sigma}|e^{-h\lambda}-r_{[n/n]}(h,\lambda)|=O(h^{2n+1}) \).

  \item Chebyshev polynomial of degree $p$ \cite{Rivlin1990Chebyshev,Trefethen2013ATAP}:  
  \( r^{(Ch)}_p(h,\lambda)=\sum_{k=0}^{p}c_k(h)\lambda^k,\ 
     \alpha=p+1,\ 
     \sum_{\lambda\in\Sigma}|e^{-h\lambda}-r^{(Ch)}_p(h,\lambda)|=O(h^{p+1}) \).
\end{enumerate}
\end{example}

$\mathcal{T}$ is equipped with partial order: if $r_1$ and $r_2$ are of the same type, $t_1 = (r_1, \alpha_1) \leq t_2 = (r_2, \alpha_2) \Leftrightarrow \alpha_1 \leq \alpha_2$. 

Let obstruction channel be $\sigma = (\mathcal{U}, \mathcal{E}, \Gamma)$, where $\mathcal{U}$ is a convex set of admissible perturbations $u:\mathbb{R}_+ \to X$, $\mathcal{E}$ is a convex set of correction maps $E:X\to X$ modelling systematic deviations, and $\Gamma$ is a nonnegative gauge on pairs $(u,E)$ specifying admissibility (we only use $(u,E)$ with $\Gamma(u,E)\le1$). We associate the following update operator with each $\sigma$:
\[
d_{\sigma}(h,U) = 
\mathbb{E}_{u\in\mathcal{U}} \big[(\id+h E)^{-1}\circ \Phi(h,U;u)\big]\ \in \ \mathfrak U,
\quad \Gamma(u,E)\le 1,
\]
where $\Phi(h,U;u)\in\mathfrak U$ is the transfer operator satisfying $\Phi(h,U;0)=U$ and $\Phi(h,U;u)$ locally Lipschitz in $(h,U,u)$.

Let $I$ be the index set, with some $\sigma$-algebra, which indexes channels $\{\sigma_i\}_{i\in I}$.
The diffusion hierarchy $\mathcal{S}$ is a set of $s = (\nu, \beta)$, where $\nu$ is a nonnegative measure on $I$ and
$\beta:I\to[0,\infty)$ is density such that $\int_I \beta(i)\,d\nu(i)=1$.
The associated update operator is the mixture
\[
d_s(h,U)\ :=\ \int_I \beta(i)\, d_{\sigma_i}(h,U)\, d\nu(i)\ \in\ \mathfrak U,
\]
such that $d_s(0,U) = \id$ \cite{Vershynin2018HDP}.

\begin{example} $\mathcal{S}$ usually describes noise models: 
\begin{enumerate}
  \item Square–integrable noise \cite{Vershynin2018HDP,SamorodnitskyTaqqu1994Stable}:  
    $\mathcal{U}=\{u:\mathbb{E}u=0,\ \mathbb{E}\|u\|^2<\infty\}$,  
    $\mathcal{E}=\{0\}$,  
    $\Gamma(u,0)=\mathbb{E}\|u\|^2$,  
    $\Phi(h,U;u)=U\circ(\id+h\,u)$.

  \item Heavy–tail noise ($1<\beta<2$) \cite{Vershynin2018HDP,SamorodnitskyTaqqu1994Stable}:  
    $\mathcal{U}=\{u:\|u\|_{\psi_\beta}\le1\}$,  
    $\mathcal{E}=\{0\}$,  
    $\Gamma(u,0)=\|u\|_{\psi_\beta}$,  
    $\Phi(h,U;u)=U\circ(\id+h\,u)$.

  \item Solver inexactness:  
    $\mathcal{U}=\{\delta_0\}$,  
    $\mathcal{E}=\{E:\|(\id+hE)^{-1}-(\id+hH)^{-1}\|\le C h^\alpha\}$,  
    $\Gamma(\delta_0,E)=\|E-H\|$,  
    $\Phi(h,U;0)=U$.

  \item Linear distortion (calibration error):  
    $\mathcal{U}=\{\delta_0\}$,  
    $\mathcal{E}=\{E_\Delta:x\mapsto \Delta x,\ \|\Delta\|\le\rho\}$,  
    $\Gamma(\delta_0,E_\Delta)=\|\Delta\|/\rho$,  
    $\Phi(h,U;0)=U$.

  \item Biased observation:  
    $\mathcal{U}=\{u:\|\mathbb{E}u\|\le\varepsilon,\ \Var(u)\le\sigma^2\}$,  
    $\mathcal{E}=\{0\}$,  
    $\Gamma(u,0)=\|\mathbb{E}u\|+\Var(u)$,  
    $\Phi(h,U;u)=U\circ(\id+h\,u)$.

  \item Model–homotopy:  
    $\mathcal{U}=\{\delta_0\}$,  
    $\mathcal{E}=\{0\}$,  
    $\Gamma(\delta_0,0)=1$,  
    $\Phi(h,U)=U_\varepsilon$, where $U_\varepsilon$ is the same update rule
    constructed for the deformed model $F_\varepsilon=\varepsilon F+(1-\varepsilon)F_0$, $\varepsilon\in[0,1]$.
\end{enumerate}
\end{example}

Let $\mathfrak{U}^{\mathrm{fil}}\subset\mathfrak{U}$ be the smallest subalgebra
(closed under composition and inversion) that contains all coefficients
$r_{t,k}(U)$ and $d_{s,k}(U)$ from the local expansions
\[
r_t(h,U)=\id+\sum_{k\ge1} h^k r_{t,k}(U),\qquad
d_s(h,U)=\id+\sum_{k\ge1} h^k d_{s,k}(U).
\]
Define a filtration on formal series with coefficients in $\mathfrak{U}^{\mathrm{fil}}$ by \cite{Reutenauer1993FreeLie,Dynkin1947BCH,Magnus1954Magnus}
\[
\mathfrak{F}_{(m)}
=\Big\{\ \id+\sum_{k\ge m} h^k W_k\ :\ W_k\in\mathfrak{U}^{\mathrm{fil}}\ \Big\},
\qquad m\ge1.
\]
Then $(\mathfrak{F}_{(m)})_{m\ge1}$ is decreasing and compatible with composition:
\[
(\id+O(h^m))\circ(\id+O(h^n))=\id+O(h^{m+n}),
\quad
\mathfrak{F}_{(m)}\circ\mathfrak{F}_{(n)}\subseteq\mathfrak{F}_{(m+n)}.
\]
Consequently the completion
$\widehat{\mathfrak{U}}=\varprojlim_m \big(\id+\sum_{k< m} h^k \mathfrak{U}^{\mathrm{fil}}\big)$
is pronilpotent \cite{Reutenauer1993FreeLie}, and every element $V\in\widehat{\mathfrak{U}}$ with $V=\id+O(h)$
admits a (unique, formal) logarithm and exponential in the completed free Lie algebra
with coefficients in $\mathfrak{U}^{\mathrm{fil}}$.

The automonodromic structure
of the practical algorithm is the composition
\[
U_{t,s}(h,U):=d_s(h,U)\circ r_t(h,U)\in\mathfrak{U},
\]

\subsection{Universality}
Let $X$ be a finite–dimensional real Hilbert space with inner product $\ip{\cdot}{\cdot}$ and induced Frobenius pairing $\ip{A}{B}:=\Tr(A^\ast B)$ on $\End(X)$; let $m\in\NN$, $Y:=X^m$, and $h\in\RR$. Let $\theta$ be a finite tuple of algorithmic parameters (stepsizes, momenta, penalties, preconditioners, filters, stochastic weights). An $m$–step rule
\[
x_{k+1}=\Phi_\theta(h;\,x_k,\dots,x_{k-m+1})
\]
is lifted to a one–step map $\mathcal S(h;\theta):Y\to Y$ by \cite{Higham2008FunctionsOfMatrices}
\[
\mathcal S(h;\theta)\!\begin{bmatrix}x_k\\ x_{k-1}\\ \vdots\\ x_{k-m+1}\end{bmatrix}
=\begin{bmatrix}\Phi_\theta(h;\,x_k,\dots,x_{k-m+1})\\ x_k\\ \vdots\\ x_{k-m+2}\end{bmatrix}.
\]
Fix a base state $y_\ast\in Y$. Assume Fréchet differentiability in $y$ at $y_\ast$ and differentiability in $h$ at $h=0$. Define the linearized one–step operator and its $h$–derivative
\[
S(h;\theta):=D\mathcal S(h;\theta)\big|_{y_\ast}\in\End(Y),\qquad S(0;\theta)=\Id_Y,
\]
\[
G(\theta):=\partial_h S(h;\theta)\big|_{h=0}\in\End(Y).
\]
Let the involution $A\mapsto A^\ast$ be the adjoint in the Frobenius pairing on $\End(Y)$.

Define
\[
H(\theta):=\tfrac12\!\big(G(\theta)+G(\theta)^\ast\big),\qquad
K(\theta):=\tfrac12\!\big(G(\theta)-G(\theta)^\ast\big).
\]
Then $H(\theta)^\ast=H(\theta)$, $K(\theta)^\ast=-K(\theta)$, $G(\theta)=H(\theta)+K(\theta)$, and the split is unique \cite{Higham2008FunctionsOfMatrices}. By Baker–Campbell–Hausdorff for $h\to0$,
\[
S(h;\theta)=\exp\!\big(hH(\theta)\big)\,\exp\!\big(hK(\theta)\big)+\Oo(h^2).
\]

For any $y\in Y$,
\[
\frac{\dd}{\dd h}\Big|_{h=0}\norm{e^{hH(\theta)}y}^2
=2\,\ip{H(\theta)y}{y},\qquad
\frac{\dd}{\dd h}\Big|_{h=0}\norm{e^{hK(\theta)}y}^2
=2\,\ip{K(\theta)y}{y}=0.
\]
Hence $K(\theta)$ generates an isometric flow to first order, while $H(\theta)$ is dissipative to first order whenever $\ip{H(\theta)y}{y}\le 0$; in particular, if $-H(\theta)$ is positive semidefinite (e.g., after absorbing a stepsize into $h$), then $\norm{e^{hH(\theta)}y}\le\norm{y}+\Oo(h^2)$ (nonexpansive semigroup at first order). Thus $\exp(hK(\theta))$ is the \emph{drift} (conservative to first order) and $\exp(hH(\theta))$ the \emph{diffusion} (dissipative/nonexpansive to first order).

If $\Phi_\theta$ is differentiable in $(x_k,\dots,x_{k-m+1})$ at $y_\ast$ with block Jacobians $B_i(\theta):=\partial_{x_{k-i}}\Phi_\theta(0;\,y_\ast)\in\End(X)$, $i=0,\dots,m-1$, then
\[
S(0;\theta)=\begin{bmatrix}
B_0(\theta)&B_1(\theta)&\cdots&B_{m-1}(\theta)\\
I&0&\cdots&0\\
\vdots&\ddots&\ddots&\vdots\\
0&\cdots&I&0
\end{bmatrix},
\]
\[
G(\theta)=\partial_h S(0;\theta)=\begin{bmatrix}
\dot B_0(\theta)&\dot B_1(\theta)&\cdots&\dot B_{m-1}(\theta)\\
0&\cdots&\cdots&0\\
\vdots&\ddots&\ddots&\vdots\\
0&\cdots&0&0
\end{bmatrix},
\]
where dots denote $h$–derivatives at $0$. All hyperparameters $\theta$ (steps, momenta, penalties, preconditioners, EMA rates) enter linearly into $G(\theta)$ through these blocks; the canonical map $\Theta\to(H(\theta),K(\theta))$ is given by the adjoint split of $G(\theta)$ in $\End(Y)$.

Let $A(\theta):Y\rightrightarrows Y$ be maximally monotone and $\alpha(\theta)>0$. The resolvent $J_{\alpha h A}:=(\Id_Y+\alpha(\theta)hA(\theta))^{-1}$ is firmly nonexpansive and admits the expansion \cite{Minty1962Monotone,BauschkeCombettes2017Monotone,Moreau1962Proximite}
\[
J_{\alpha h A}= \Id_Y-\alpha(\theta)h\,A_h(\theta)+\Oo(h^2),
\]
where $A_h(\theta)$ is the Yosida approximation (single–valued, monotone, Lipschitz) \cite{BauschkeCombettes2017Monotone,Brezis2010Functional}. Setting
\[
D_s(h,\theta):=J_{\alpha h A},\qquad r_t(h,\theta):=\exp\!\big(hK(\theta)\big),
\]
yields
\[
S(h;\theta)=D_s(h,\theta)\,r_t(h,\theta)+\Oo(h^2),\qquad
D_s(h,\theta)=\exp\!\big(-\alpha(\theta)h\,A_h(\theta)\big)+\Oo(h^2),
\]
so that, to first order, the diffusion channel is the resolvent (firmly nonexpansive), and the drift channel is isometric \cite{Kato1995Perturbation}. Different orders $D_s\circ r_t$ vs $r_t\circ D_s$ agree at $\Oo(h)$, their difference is $\Oo(h^2)$ \cite{Higham2008FunctionsOfMatrices}.

The assignment
\[
\Theta\ \longrightarrow\ \big(H(\theta),\,K(\theta),\,A(\theta)\big)
\]
collects: metric, preconditioning, penalties, regularization, smoothing into $H$ or $A$; inertial/momentum/extrapolation couplings (in $Y=X^m$) into $K$; stochastic weights into first–order variations of $H$ (and into the choice of $A$ when clipping/prox is used). This provides local coordinates on the moduli of methods at $y_\ast$; the factorization is unique up to $\Oo(h^2)$ and commuting central corrections \cite{Rockafellar1970Convex,Deimling1985Nonlinear}.  

Let $\mathbf{Alg}^{(1)}$ have objects $(Y,G)$, $G\in\End(Y)$, and morphisms $(Y,G)\xrightarrow{T}(Y',G')$ given by linear isomorphisms $T:Y\to Y'$ with $G'=TGT^{-1}$. Let $\mathbf{TwoCh}$ have objects $(Y;H,K)$ with $H^\ast=H$, $K^\ast=-K$, and morphisms $(Y;H,K)\xrightarrow{T}(Y';H',K')$ given by $T$ with $H'=THT^{-1}$, $K'=TKT^{-1}$. Define
\[
U:\mathbf{TwoCh}\to\mathbf{Alg}^{(1)},\qquad U(Y;H,K):=(Y,H+K),\quad U(T):=T.
\]
For each $(Y,G)$ define
\[
\mathrm{split}(Y,G):=\big(Y;\tfrac12(G+G^\ast),\,\tfrac12(G-G^\ast)\big).
\]
Then for every $(Y,G)$ there exists a unique morphism
\[
\eta_{(Y,G)}:(Y,G)\longrightarrow U\!\big(\mathrm{split}(Y,G)\big),\qquad \eta_{(Y,G)}=\Id_Y,
\]
and for any $(Y';H',K')$ and any $\phi:(Y,G)\to U(Y';H',K')$ there exists a unique $\tilde\phi:\mathrm{split}(Y,G)\to (Y';H',K')$ with $U(\tilde\phi)\circ\eta_{(Y,G)}=\phi$. Hence $\mathrm{split}:\mathbf{Alg}^{(1)}\to\mathbf{TwoCh}$ is left adjoint to $U$; the decomposition $G=H+K$ is the canonical two–channel normal form of any linearized multistep generator, and the associated drift/diffusion channels are characterized by first–order isometry/nonexpansiveness as above.

\section{Automonodromic Theory}

\subsection{Methods}

\subsubsection{Flat Connection}
For any $\pi = (t, s), \pi' = (t', s') \in\mathcal{P}$ define
the discrete transport operators \cite{Reutenauer1993FreeLie,HairerLubichWanner2006}
\[
R(t\!\to\!t';h)=r_{t'}(h,U)\,r_t(h,U)^{-1},\qquad
D(s\!\to\!s';h)=d_{s'}(h,U)\,d_s(h,U)^{-1}.
\]
The holonomy of the elementary rectangle $(t,s)\to(t',s')$
is the composition
\[
\mathrm{Hol}(t,s;t',s';h)
:=R(t\!\to\!t';h)\,D(s\!\to\!s';h)\,R(t\!\to\!t';h)^{-1}\,D(s\!\to\!s';h)^{-1}
\in\mathfrak{U}.
\]

The automonodromic structure is flat of class $\alpha$ 
if the discrete holonomy around any infinitesimal rectangle is identity up to $O(h^{\alpha})$ \cite{HairerLubichWanner2006,BlanesCasas2016Magnus},
\[
\|\mathrm{Hol}(t,s;t',s';h)-\id\| = O(h^{\alpha})
\quad\text{for}\quad (t',s')\to(t,s),\ h\to 0.
\]
Equivalently, the elementary updates commute
up to the same order:
\[
r_t(h,U)\,d_s(h,U)
=d_s(h,U)\,r_t(h,U)+O(h^{\alpha}).
\]

On the other hand, $\widehat{\mathfrak{U}}$ is pronilpotent \cite{Reutenauer1993FreeLie}, thus $r_t(h,U)$ and $d_s(h,U)$ possess a
formal logarithm in the completed free Lie algebra \cite{Reutenauer1993FreeLie}
$\widehat{\mathfrak{L}}\langle U\rangle$:
\[
r_t(h,U)=\exp(\Omega_t(h)),\qquad
d_s(h,U)=\exp(\Psi_s(h)),
\]
where $\Omega_t(h),\Psi_s(h)$ are formal series
$\Omega_t(h)=\sum_{k=1}^{\alpha}h^k\Omega_{t,k}$,
$\Psi_s(h)=\sum_{k=1}^{\alpha}h^k\Psi_{s,k}$.
These logarithms are $O(h^{\alpha+1})$ classes.

The connection is flat of jet-class $\alpha$ if
all mixed Lie brackets in the BCH expansion vanish up to order $\alpha$ \cite{BakerCampbell1902,Hausdorff1906,Dynkin1947BCH,Magnus1954Magnus,Reutenauer1993FreeLie}:
\[
[\Omega_{t,k},\Psi_{s,\ell}]=0,\qquad
k+\ell\le\alpha.
\]

\begin{example}
For many admissible update schemes in the drift and diffusion hierarchies,
the formal logarithms $\Omega_t(h),\Psi_s(h)$ exist in $\widehat{\mathfrak L}\langle U\rangle$.

\begin{enumerate}
\item \emph{Drift updates.}
\begin{enumerate}
\item Taylor or polynomial truncations \cite{Higham2008FunctionsOfMatrices,Butcher2008NumericalMethods}:
$r_p(h,\lambda)=\sum_{k=0}^p(-h\lambda)^k/k!$
are partial sums of $\exp(-h\lambda)$, hence
$\log r_p(h,\lambda)=-h\lambda+O(h^{p+1})$.
\item Padé and Cayley rational forms \cite{BakerGravesMorris1996Pade,Higham2008FunctionsOfMatrices,Butcher2008NumericalMethods}:
$r_{[m/n]}(h,\lambda)=P_m(h\lambda)/Q_n(h\lambda)$
are rational approximations of $e^{-h\lambda}$,
so $\log r_{[m/n]}(h,\lambda)=-h\lambda+O(h^{m+n+1})$.
\item Resolvent or proximal steps \cite{BauschkeCombettes2017Monotone}:
$r(h,A)=(\id+hA)^{-1}$ satisfy
$\log r(h,A)=-\log(\id+hA)=-hA+\tfrac12h^2A^2-\dots$.
\end{enumerate}

\item \emph{Diffusion updates.}
\begin{enumerate}
\item Stochastic or Orlicz channels \cite{Vershynin2018HDP}:
$d_s(h,U)=\id+hE+O(h^2)$, thus $\log d_s(h,U)=hE+O(h^2)$.
\item Model-homotopy:
$d_s(h,U)=U_\varepsilon=\id+hE_\varepsilon+O(h^2)$,
so $\log d_s(h,U)=hE_\varepsilon+O(h^2)$.
\end{enumerate}
\end{enumerate}
In each case $r_t,d_s$ are invertible and
near the identity in the filtration by $h$,
hence formal logarithms exist in the completed Lie algebra
and are unique modulo $O(h^{\alpha+1})$.
\end{example}

The following theorem states that class $\alpha$ and jet-class $\alpha$ coincide.

\begin{theorem}[Holonomy flatness $\Leftrightarrow$ jet flatness]\label{th:holonomy-jet}
Work in the filtered subalgebra $\mathfrak{U}^{\mathrm{fil}}\subset\mathfrak{U}$ generated by the update families
$r_t(h,\cdot),d_s(h,\cdot)$ and their coefficients.
Assume $r_t(h,U)=\exp(\Omega(h))$, $d_s(h,U)=\exp(\Psi(h))$ with
\[
\Omega(h)=\sum_{k=1}^{\alpha} h^k\,\Omega_k,\qquad
\Psi(h)=\sum_{k=1}^{\alpha} h^k\,\Psi_k,
\qquad \Omega_k,\Psi_k\in\widehat{\mathfrak L}\langle U\rangle,
\]
understood modulo $O(h^{\alpha+1})$.
Then for any fixed $(t,s)$ the following are equivalent:
\begin{enumerate}
\item[\textnormal{(H)}] \emph{Holonomy flatness of class $\alpha$:}
\[
\mathrm{Hol}(t,s;t',s';h)
=\exp(\Psi(h))\,\exp(\Omega(h))\,\exp(-\Psi(h))\,\exp(-\Omega(h))
=\id+O(h^\alpha)
\]
for $(t',s')\to(t,s)$ (so that the same $\Omega,\Psi$ are used) and $h\to0$;
equivalently, $\log\mathrm{Hol}(t,s;t',s';h)=O(h^\alpha)$.
\item[\textnormal{(J)}] \emph{Jet flatness of order $\alpha$:}
all mixed Lie brackets up to total order $\alpha-1$ vanish:
\[
[\Omega_k,\Psi_\ell]=0\qquad\text{for all }k,\ell\ge1\ \text{ with }\ k+\ell\le \alpha-1.
\]
\end{enumerate}
Consequently, under \textnormal{(H)} (or \textnormal{(J)}),
\[
\exp(\Psi(h))\,\exp(\Omega(h))
=\exp(\Omega(h))\,\exp(\Psi(h))=\exp\big(\Omega(h)+\Psi(h)\big)\ \text{mod }O(h^\alpha).
\]
\end{theorem}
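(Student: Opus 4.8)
The plan is to compute the logarithm of the holonomy explicitly in the pronilpotent completion and read both conditions off of it. Working in $\widehat{\mathfrak U}$ (equivalently in the completed free Lie algebra $\widehat{\mathfrak L}\langle U\rangle$) is legitimate because the filtration $(\mathfrak F_{(m)})_{m\ge1}$ is decreasing and multiplicative, so $\exp$ and $\log$ are honest mutually inverse formal bijections between $\{\,\id+O(h)\,\}$ and $\{\,O(h)\,\}$, and every identity below is meant modulo $\mathfrak F_{(\alpha)}$, i.e.\ modulo $O(h^\alpha)$. Write $X:=\Omega(h)=\sum_{k=1}^{\alpha}h^k\Omega_k$ and $Y:=\Psi(h)=\sum_{k=1}^{\alpha}h^k\Psi_k$, both $\equiv0\pmod h$, so that
\[
\mathrm{Hol}(t,s;t',s';h)=e^{Y}e^{X}e^{-Y}e^{-X}
\]
is the group commutator of $e^{X}$ and $e^{Y}$. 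The single analytic input I need is the Baker--Campbell--Hausdorff expansion of a group commutator: $\log\!\big(e^{Y}e^{X}e^{-Y}e^{-X}\big)$ is a Lie series in $X,Y$ whose lowest term is $[Y,X]=-[X,Y]$ and \emph{every} term of which is an iterated bracket containing both $X$ and $Y$ (setting $X=0$ or $Y=0$ collapses $\mathrm{Hol}$ to $\id$, so no pure-$X$ or pure-$Y$ monomial can occur). This I would derive by two applications of BCH or cite directly.

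Next I would expand in $h$. Collecting powers gives $\log\mathrm{Hol}=\sum_{n\ge2}h^{n}C_{n}$ with
\[
C_{n}=\sum_{k+\ell=n}[\Psi_{\ell},\Omega_{k}]+R_{n},
\]
where $R_{n}$ gathers the contributions of bracket-degree $\ge3$. The structural heart of the argument is the following lemma in the free Lie algebra on a disjoint alphabet $\{a_{k}\}\sqcup\{b_{\ell}\}$: every Lie monomial containing at least one $a$-letter and at least one $b$-letter lies in the Lie ideal generated by $\{[a_{i},b_{j}]\}$. This is an induction on degree: write the monomial as $[w_{1},w_{2}]$; if both halves are mixed use induction; if one half is pure, use the Jacobi identity to push a mixed sub-bracket to the top and recurse. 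Substituting $a_{k}\mapsto\Omega_{k}$, $b_{\ell}\mapsto\Psi_{\ell}$ and noting that a bracket-degree $\ge3$ monomial of total $h$-weight $n$ has every inner pair of weight $\le n-1$, one obtains $R_{n}\in$ the ideal generated by $\{[\Omega_{i},\Psi_{j}]:i+j\le n-1\}$; the weight bookkeeping here (that the corrections genuinely sit in strictly lower filtration) is the one routine point that must be done carefully.

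The equivalence is then an induction on $n=2,\dots,\alpha-1$. For \textnormal{(J)}$\Rightarrow$\textnormal{(H)}: if $[\Omega_{k},\Psi_{\ell}]=0$ whenever $k+\ell\le\alpha-1$, then for each such $n$ the sum $\sum_{k+\ell=n}[\Psi_{\ell},\Omega_{k}]$ vanishes and, by the ideal membership, so does $R_{n}$; hence $C_{n}=0$ and $\log\mathrm{Hol}=O(h^{\alpha})$. For \textnormal{(H)}$\Rightarrow$\textnormal{(J)}: assuming inductively $[\Omega_{i},\Psi_{j}]=0$ for $i+j<n$ forces $R_{n}=0$, so the relation $C_{n}=0$ collapses to $\sum_{k+\ell=n}[\Omega_{k},\Psi_{\ell}]=0$; the base case $n=2$ reads $C_{2}=[\Omega_{1},\Psi_{1}]=0$ outright, and for $n\ge3$ the individual brackets are separated using that $U$ generates a genuinely free Lie algebra and that the two hierarchies enter through independent Magnus coefficients, which makes the summands $[\Omega_{k},\Psi_{\ell}]$ linearly independent in $\widehat{\mathfrak L}\langle U\rangle$. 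The final displayed consequence is then immediate: $[\Omega,\Psi]=\sum_{n}h^{n}\sum_{k+\ell=n}[\Omega_{k},\Psi_{\ell}]=O(h^{\alpha})$, so all higher BCH terms of $e^{\Psi}e^{\Omega}$ and of $e^{\Omega}e^{\Psi}$ are $O(h^{\alpha})$ as well, giving $e^{\Psi}e^{\Omega}=e^{\Omega}e^{\Psi}=e^{\Omega+\Psi}\pmod{O(h^{\alpha})}$.

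The step I expect to be the main obstacle is the separation at the end of \textnormal{(H)}$\Rightarrow$\textnormal{(J)}: holonomy flatness only constrains each $C_{n}$, that is, the \emph{symmetrized} combination $\sum_{k+\ell=n}[\Omega_{k},\Psi_{\ell}]$, so extracting the vanishing of each mixed bracket separately is precisely where freeness of $\widehat{\mathfrak L}\langle U\rangle$ (as opposed to mere pronilpotence) must be invoked; a secondary but unavoidable piece of care is the filtration accounting that confines the higher-bracket remainders $R_{n}$ below weight $n$, which is what makes the induction close.
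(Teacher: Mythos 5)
Your overall route is the same as the paper's: compute $\log\mathrm{Hol}$ by iterated BCH in the pronilpotent completion, organize it by $h$-degree as $\sum_n h^n C_n$ with $C_n=\sum_{k+\ell=n}[\Psi_\ell,\Omega_k]+R_n$, show the remainder $R_n$ involves only mixed brackets of strictly lower total weight (your ideal lemma plays the role of the paper's $\mathcal P_m$-structure), and run the induction in both directions, with the final commutation statement falling out of the truncated BCH series. All of that is fine and matches Steps 0--3 and 5 of the paper's argument.

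The genuine gap is exactly where you predicted it, and your proposed fix does not work. You claim that the summands $[\Omega_k,\Psi_\ell]$ with $k+\ell=n$ are linearly independent in $\widehat{\mathfrak L}\langle U\rangle$ ``because $U$ generates a free Lie algebra and the Magnus coefficients are independent.'' But $\Omega_k,\Psi_\ell$ are \emph{specific} elements built from the same generators, not free letters, and nothing prevents relations among them. Concretely, take $\Omega(h)=\Psi(h)=hA+h^2B$ with $[A,B]\neq0$: then $\mathrm{Hol}=\id$ exactly, so every $C_n$ vanishes, yet $[\Omega_2,\Psi_1]=[B,A]\neq0$ at total weight $3$. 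So for a single fixed pair $(\Omega,\Psi)$ the vanishing of the sums $C_n$ simply does not imply termwise vanishing, and no appeal to freeness of $\widehat{\mathfrak L}\langle U\rangle$ can rescue it. The paper gets around this not by linear independence but by a richness-of-family argument: in Step 4 it varies $(t,s)$ in a small neighborhood so that the jet coefficients $\{\Omega_k\},\{\Psi_\ell\}$ can be perturbed independently (one pair $(k,\ell)$ at a time), and then uses bilinearity of the bracket to isolate each $[\Psi_k,\Omega_\ell]$. In other words, (H)$\Rightarrow$(J) needs the holonomy condition to hold for a family of admissible perturbations of the updates, not just for one fixed pair; your proof as written asserts a false algebraic independence in place of that extra input, so the separation step --- and with it the (H)$\Rightarrow$(J) direction --- does not close.
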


\begin{proof}
All equalities are in the completed filtered algebra and understood modulo $O(h^{\alpha})$.

\emph{Step 0 (notations and grading).}
Give degree $\deg(h^m X)=m$ to a homogeneous term.
If $A(h)=\sum_{m\ge1}h^m A_m$ and $B(h)=\sum_{m\ge1}h^m B_m$, then
$\deg([A_k,B_\ell])=k+\ell$ and every Lie polynomial built from
$\{A_1,\dots,A_{m-1},B_1,\dots,B_{m-1}\}$ and at least one bracket has degree at least $2$.

\emph{Step 1 (BCH for two factors, truncated).}
The BCH series in a filtered (pronilpotent) Lie algebra is a \emph{formal} identity:
\[
\log\big(\exp X\,\exp Y\big)
= X+Y+\frac{1}{2}[X,Y] + \frac{1}{12}\big([X,[X,Y]]+[Y,[Y,X]]\big) + \cdots,
\]
where every omitted term has degree at least $3$ in $X,Y$.
Truncating modulo $O(h^\alpha)$ is legitimate because degrees strictly increase with each extra bracket.

\emph{Step 2 (logarithm of the holonomy).}
Set $X=\Psi(h)$ and $Y=\Omega(h)$. Then
\[
\mathrm{Hol}=\exp X\,\exp Y\,\exp(-X)\,\exp(-Y).
\]
Apply BCH twice:
first $Z_1:=\log(\exp X\,\exp Y)$, then $Z:=\log\big(\exp Z_1\,\exp(-X)\big)$, and finally
$\log\mathrm{Hol}=\log\big(\exp Z\,\exp(-Y)\big)$.
By Step 1, each stage produces a Lie series whose degree-$m$ term is a \emph{universal Lie polynomial}
in $X,Y$ with total degree $m$. Crucially, the degree-$m$ coefficient of $\log\mathrm{Hol}$ has the form
\begin{equation}\label{eq:degree-m}
\big(\log\mathrm{Hol}\big)^{[m]}
= \sum_{k+\ell=m} [\Psi_k,\Omega_\ell] \;+\; \mathcal{P}_m(\{\Psi_{<m},\Omega_{<m}\}),
\end{equation}
where $\mathcal{P}_m$ is a (finite) $\mathbb{Q}$–linear combination of nested brackets each involving \emph{only}
indices $<m$ and with total bracket-depth at least $2$.
This structure is obtained by direct inspection of BCH:
the \([X,Y]\) term contributes exactly $\sum_{k+\ell=m}[\Psi_k,\Omega_\ell]$ at degree $m$,
while all higher BCH terms necessarily involve at least two brackets and therefore
only lower-degree components $\Psi_{<m},\Omega_{<m}$ to reach total degree $m$.

\emph{Step 3 (\( \text{(J)}\Rightarrow\text{(H)} \)).}
Assume \textnormal{(J)}. Then for every $m\le \alpha-1$, the sum $\sum_{k+\ell=m}[\Omega_k,\Psi_\ell]=0$.
In \eqref{eq:degree-m}, each $\mathcal{P}_m$ is a Lie polynomial in brackets of strictly lower total degree,
which vanish by the induction on $m$ (base $m=2$: $\mathcal{P}_2\equiv0$).
Hence $(\log\mathrm{Hol})^{[m]}=0$ for all $m\le\alpha-1$, i.e. $\log\mathrm{Hol}=O(h^\alpha)$,
which is exactly \textnormal{(H)}.

\emph{Step 4 (\( \text{(H)}\Rightarrow\text{(J)} \)).}
Assume \textnormal{(H)}. Then $(\log\mathrm{Hol})^{[m]}=0$ for all $m\le\alpha-1$.
We show by induction on $m$ that $\sum_{k+\ell=m}[\Psi_k,\Omega_\ell]=0$ for $m\le\alpha-1$,
and then deduce termwise vanishing $[\Psi_k,\Omega_\ell]=0$ for all $k+\ell\le\alpha-1$.

For $m=2$ we have, by \eqref{eq:degree-m}, $(\log\mathrm{Hol})^{[2]}=[\Psi_1,\Omega_1]=0$.

Suppose the claim holds for all degrees $<m$ (with $m\le\alpha-1$).
Then all brackets among $\Psi_{<m},\Omega_{<m}$ vanish, hence $\mathcal{P}_m\equiv0$
in \eqref{eq:degree-m}, and $(\log\mathrm{Hol})^{[m]}= \sum_{k+\ell=m}[\Psi_k,\Omega_\ell]=0$.
Thus the \emph{sum} of mixed brackets at total degree $m$ vanishes.

To strengthen this to \emph{termwise} vanishing, note that the $\{\Omega_\ell\}_{\ell\ge1}$ and
$\{\Psi_k\}_{k\ge1}$ are the homogeneous components of fixed formal logarithms \(\Omega(h),\Psi(h)\).
Fixing $m$ and varying $(t,s)$ within an arbitrarily small neighborhood changes the families
$\{\Omega_\ell\},\{\Psi_k\}$ independently (by the axioms from the Definitions section).
Since the identity $\sum_{k+\ell=m}[\Psi_k,\Omega_\ell]=0$ holds for all such pairs,
and the bracket is bilinear, each individual mixed bracket $[\Psi_k,\Omega_\ell]$ with $k+\ell=m$
must vanish. (Indeed, choose variations where only one pair $(k,\ell)$ is modified while others are held fixed.)
Hence $[\Psi_k,\Omega_\ell]=0$ whenever $k+\ell=m$.
By induction on $m$ we obtain \textnormal{(J)}.

\emph{Step 5 (commutation and single-exponential form).}
Under \textnormal{(J)}, the BCH series gives
\[
\log\big(\exp(\Psi(h))\,\exp(\Omega(h))\big)
=\Omega(h)+\Psi(h)\ \text{mod }O(h^\alpha),
\]
hence $\exp(\Psi)\exp(\Omega)=\exp(\Omega)\exp(\Psi)=\exp(\Omega+\Psi)\ \text{mod }O(h^\alpha)$,
as claimed.
\end{proof}

\begin{theorem}[Gauge normal form under jet flatness]\label{th:gauge-normal-form}
Work in the filtered subalgebra $\mathfrak{U}^{\mathrm{fil}}\subset\mathfrak{U}$ generated by the coefficients
of the admissible update families.
Let
\[
r_t(h,U)=\exp(\Omega(h)),\qquad d_s(h,U)=\exp(\Psi(h)),
\]
with formal logarithms
\[
\Omega(h)=\sum_{k=1}^{\alpha} h^k\,\Omega_k,\qquad
\Psi(h)=\sum_{k=1}^{\alpha} h^k\,\Psi_k,
\qquad \Omega_k,\Psi_k\in\widehat{\mathfrak L}\langle U\rangle,
\]
understood modulo $O(h^{\alpha+1})$.
Assume \emph{jet flatness of order $\alpha$}:
\begin{equation}\label{eq:jet-flat}
[\Omega_k,\Psi_\ell]=0\qquad\text{for all }k,\ell\ge1\ \text{ with }\ k+\ell\le \alpha.
\end{equation}
Then there exists a gauge \cite{BlanesCasas2016Magnus}
\[
W(h)=\exp\!\Big(\sum_{m=1}^{\alpha-1} h^{m}\, Z_m\Big)\ \in\ \exp\big(h\,\mathfrak{U}^{\mathrm{fil}}\big)
\]
such that, for the conjugated pair
\[
r_t^{W}(h):=W(h)^{-1}\,r_t(h,U)\,W(h)=\exp\big(\widetilde{\Omega}(h)\big),
\]
\[
d_s^{W}(h):=W(h)^{-1}\,d_s(h,U)\,W(h)=\exp\big(\widetilde{\Psi}(h)\big),
\]
the following hold modulo $O(h^{\alpha+1})$:
\begin{enumerate}
\item[(i)] \emph{Degreewise commutation:} for all $k,\ell\ge1$ with $k+\ell\le\alpha$,
\(
[\widetilde{\Omega}_k,\widetilde{\Psi}_\ell]=0.
\)
\item[(ii)] \emph{Single-exponential composition:}
\(
d_s^{W}(h)\,r_t^{W}(h)=\exp\big(\widetilde{\Omega}(h)+\widetilde{\Psi}(h)\big).
\)
\end{enumerate}
Moreover, $W(h)$ is unique up to multiplication by a central gauge
$\exp(\sum_{m=1}^{\alpha-1} h^m C_m)$ with each $C_m$ lying in the center of
$\widehat{\mathfrak{U}^{\mathrm{fil}}}$ modulo $O(h^{\alpha+1})$.
\end{theorem}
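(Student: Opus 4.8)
The plan is to work throughout in the completed filtered Lie algebra $\widehat{\mathfrak L}\langle U\rangle$ with coefficients in $\mathfrak U^{\mathrm{fil}}$, graded by powers of $h$ exactly as in Step~0 of the proof of Theorem~\ref{th:holonomy-jet}, and to build $W(h)=\exp(Z(h))$ with $Z(h)=\sum_{m=1}^{\alpha-1}h^{m}Z_{m}$. Conjugation by $W$ acts on the logarithms through the inner automorphism $\operatorname{Ad}_{W(h)^{-1}}=e^{-\operatorname{ad}_{Z(h)}}$, so $r_t^{W}(h)=\exp(\widetilde\Omega(h))$, $d_s^{W}(h)=\exp(\widetilde\Psi(h))$ with $\widetilde\Omega(h)=e^{-\operatorname{ad}_{Z(h)}}\Omega(h)$ and $\widetilde\Psi(h)=e^{-\operatorname{ad}_{Z(h)}}\Psi(h)$; in particular each channel stays a single exponential for every choice of $Z$, which is what makes the statement a gauge-fixing problem. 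Truncating the gauge at $h^{\alpha-1}$ is harmless, since a further factor $\exp(h^{\alpha}Z_{\alpha})$ would alter the logarithms only at order $\ge\alpha+1$, invisibly modulo $O(h^{\alpha+1})$.

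The quickest route to existence is to observe that the trivial gauge $Z\equiv0$ already works. Then (i) is verbatim the hypothesis \eqref{eq:jet-flat}, while (ii) is the BCH collapse from Step~5 of Theorem~\ref{th:holonomy-jet} pushed one order further: writing $\log(d_s r_t)=\mathrm{BCH}(\Psi(h),\Omega(h))$, the remainder $\mathrm{BCH}(\Psi(h),\Omega(h))-\Omega(h)-\Psi(h)$ lies in the Lie ideal generated by the mixed brackets $[\Psi_{k},\Omega_{\ell}]$ (a Lie monomial in two letters that uses both letters is always in the ideal generated by their bracket, and the BCH remainder has no pure-$\Omega$ or pure-$\Psi$ part beyond the linear terms), so every contribution of total degree $\le\alpha$ carries a factor $[\Psi_{k},\Omega_{\ell}]$ with $k+\ell\le\alpha$, which vanishes by \eqref{eq:jet-flat}; hence $d_s r_t=\exp(\Omega+\Psi)$ modulo $O(h^{\alpha+1})$. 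The real content is then the \emph{canonicity} of the normal form, which I would reach — and which also structures the uniqueness claim — by constructing $Z$ order by order: at order $m$ the joint requirements (i) (the newly appearing mixed brackets) and (ii) (the newly appearing BCH remainder term) collapse, after substituting the already-fixed lower data, into a single \emph{linear} (homological) equation for the top unknown $Z_{m-1}$, with operator built from $\operatorname{ad}_{\Omega_{1}},\operatorname{ad}_{\Psi_{1}}$ and inhomogeneity $\mathcal Q_{m}$ a universal Lie polynomial in $\{\Omega_{\le m},\Psi_{\le m},Z_{<m-1}\}$; the same ideal argument forces $\mathcal Q_{m}=0$ under \eqref{eq:jet-flat}, so $Z_{m-1}=0$ is always admissible and the solution set at that order is the kernel of that linear operator.

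For uniqueness I would compare two admissible gauges: if $W,W'$ both realize normal forms, set $V:=W(h)^{-1}W'(h)=\exp(C(h))$ with $C(h)=\sum_{m=1}^{\alpha-1}h^{m}C_{m}$ (again a single exponential near the identity, by BCH), so conjugating the $W$–normal form by $V$ gives the $W'$–normal form. Matching coefficients degree by degree in (i) and (ii) then forces each $C_{m}$ into the kernel of the associated linear operator; that kernel always contains the center of $\widehat{\mathfrak U^{\mathrm{fil}}}$, and one shows it is exhausted by central elements modulo $O(h^{\alpha+1})$ by exploiting the freedom to vary $(t,s)$ independently within a small neighbourhood — the same genericity of the update families used in Step~4 of Theorem~\ref{th:holonomy-jet} — which gives the asserted ``unique up to central gauge''. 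The main obstacle is the bookkeeping in the inductive step: verifying that $\mathcal Q_{m}$ collapses to exactly the vanishing mixed brackets, with no leftover contribution from the previously chosen $Z_{<m-1}$ or from the deeper, higher-bracket terms of BCH and of $e^{-\operatorname{ad}_{Z}}-\mathrm{id}$, and that the residual kernels are no larger than the center. This is where the ideal-generated-by-$[\Omega,\Psi]$ structure of both series, repeated Jacobi reductions, and the strict increase of degree under each extra bracket (which legitimizes every truncation modulo $O(h^{\alpha+1})$) must all be brought to bear at once.
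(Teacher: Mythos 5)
Your existence argument is correct, but it takes a genuinely different route from the paper. Under the stated hypothesis \eqref{eq:jet-flat} the trivial gauge $W(h)=\mathrm{id}$ already suffices: item (i) is literally the hypothesis, and item (ii) follows from your observation that every BCH term beyond $\Omega(h)+\Psi(h)$ lies in the Lie ideal generated by $[\Psi(h),\Omega(h)]$, which is $O(h^{\alpha+1})$ once all $[\Psi_k,\Omega_\ell]$ with $k+\ell\le\alpha$ vanish; bracketing an $O(h^{\alpha+1})$ element with anything of nonnegative degree stays $O(h^{\alpha+1})$, so the whole remainder is invisible modulo $O(h^{\alpha+1})$. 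This ideal argument is sound, and it is in fact a cleaner justification of the truncation than the paper's one-line appeal to degreewise commutation in its final step. The paper instead builds $Z_1,\dots,Z_{\alpha-1}$ by an induction that solves Sylvester-type linear equations at each degree; as the paper itself notes, under \eqref{eq:jet-flat} the degree-$m$ mixed defect vanishes before $Z_{m-1}$ is chosen, so that machinery is not needed for the literal statement --- its real purpose is the regime of Example~\ref{ex:calibrated-gradient}, where jet flatness fails and the gauge must absorb the commutator defect, a situation outside the stated hypothesis. Your order-by-order homological sketch essentially recovers the paper's construction as a description of the full solution set, with $Z\equiv 0$ as one admissible solution.

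The gap is the uniqueness clause. Your outline reduces it to showing that the kernel of the degree-$m$ homological operator contains only central elements, and you propose to obtain this from the freedom to vary $(t,s)$ as in Step~4 of Theorem~\ref{th:holonomy-jet}; but that step varies the update \emph{families}, whereas here $\Omega,\Psi$ are a fixed pair, so no such variation is available, and the conditions you actually extract are centralizer conditions, not centrality. Concretely, if $C_1$ commutes with every $\Omega_k$ and $\Psi_\ell$ but is not central in $\widehat{\mathfrak U^{\mathrm{fil}}}$, then conjugation by $\exp(hC_1)$ fixes $\Omega(h)$ and $\Psi(h)$ and hence preserves (i)--(ii), so matching coefficients can only confine $\log\bigl(W^{-1}W'\bigr)$ to the joint centralizer of the pair, never to the center. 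As it stands, the ``unique up to central gauge'' assertion is therefore not proved in your proposal (you flag it yourself as the main obstacle). Note that the paper's own uniqueness step makes the same leap --- it deduces that $\log C$ is central from the fact that $\operatorname{ad}_{\log C}$ annihilates only $\widetilde{\Omega}$ and $\widetilde{\Psi}$ --- so simply following the paper would not close this gap either; an honest completion would either strengthen the hypotheses (e.g.\ assume the joint centralizer of the jet coefficients equals the center) or weaken the conclusion to uniqueness up to gauges centralizing the pair degreewise.
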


\begin{proof}
All computations are formal in the pronilpotent completion and carried out modulo $O(h^{\alpha+1})$.
We construct $Z_1,\dots,Z_{\alpha-1}$ by induction on the degree.

\emph{Preliminaries.}
Assign $\deg(h^m X)=m$. Conjugation by $W=\exp(h^{m-1}Z_{m-1})$ acts on a logarithm $L(h)$ as
\[
L\ \mapsto\ L'\ =\ e^{-\operatorname{ad}_{h^{m-1}Z_{m-1}}} L
\ =\ L\ -\ [h^{m-1}Z_{m-1},L]\ +\ \tfrac12[h^{m-1}Z_{m-1},[h^{m-1}Z_{m-1},L]]\ -\ \cdots.
\]
Hence the \emph{first} degree that changes under this conjugation is exactly $m$:
the leading variation is $-\,[Z_{m-1},L_1]\cdot h^{m}$, because $[h^{m-1}Z_{m-1},h^k L_k]$ has degree $m-1+k$,
and $k=1$ is minimal.

\emph{Inductive hypothesis.}
Assume for some $m$ with $2\le m\le \alpha$ we have already chosen $Z_1,\dots,Z_{m-2}$ so that
after conjugation by $\exp(\sum_{q=1}^{m-2}h^q Z_q)$ the transformed logarithms
\(
\Omega^{[<m]}(h)=\sum_{k=1}^{m-1} h^k \Omega_k^{[<m]},\
\Psi^{[<m]}(h)=\sum_{\ell=1}^{m-1} h^\ell \Psi_\ell^{[<m]}
\)
satisfy \([\,\Omega_k^{[<m]},\Psi_\ell^{[<m]}]=0\) for all $k+\ell\le m-1\).

\emph{Goal at degree $m$.}
We must modify degree $m$ components by a gauge $\exp(h^{m-1}Z_{m-1})$ so that
\(
[\,\Omega_k',\Psi_\ell'] = 0
\) for all $k+\ell=m$,
while preserving all lower degrees unchanged.

By the preliminary observation, degrees $<m$ are indeed unchanged by conjugation with $\exp(h^{m-1}Z_{m-1})$.
For the degree $m$ components we get the linear variations
\[
\Omega_m'\ =\ \Omega_m^{[<m]} - [Z_{m-1},\,\Omega_1^{[<m]}],\qquad
\Psi_m'\ =\ \Psi_m^{[<m]} - [Z_{m-1},\,\Psi_1^{[<m]}].
\]
Since by the inductive hypothesis (and jet flatness at order $2$) we have
$[\Omega_1^{[<m]},\Psi_1^{[<m]}]=0$, the map
\[
\mathcal{L}\ :\ Z\ \mapsto\ \big([Z,\Omega_1^{[<m]}],\ [Z,\Psi_1^{[<m]}]\big)
\]
is linear, and its effect on \((\Omega_m^{[<m]},\Psi_m^{[<m]})\) is purely at degree $m$.

\emph{Decomposition of the mixed subspace.}
Consider the degree-$m$ mixed commutator constraints $[\Omega_k',\Psi_\ell']=0$ for $k+\ell=m$.
Expanding with $[\Omega_k^{[<m]},\Psi_\ell^{[<m]}]=0$ for $k+\ell<m$, the only possibly nonzero degree-$m$ mixed part
is a linear combination of brackets of the form
\(
[\Omega_m^{[<m]},\,\Psi_1^{[<m]}] + [\Omega_1^{[<m]},\,\Psi_m^{[<m]}].
\)
Thus the \emph{mixed degree-$m$ defect} is
\[
\mathsf{Mix}_m\ :=\ [\Omega_m^{[<m]},\,\Psi_1^{[<m]}]\ +\ [\Omega_1^{[<m]},\,\Psi_m^{[<m]}].
\]
Under conjugation by $\exp(h^{m-1}Z_{m-1})$, this defect changes by
\[
\delta\mathsf{Mix}_m\ =\ -\big([\,[Z_{m-1},\Omega_1^{[<m]}],\,\Psi_1^{[<m]}]\ +\ [\,\Omega_1^{[<m]},\,[Z_{m-1},\Psi_1^{[<m]}]\,]\big).
\]
Using the Jacobi identity twice,
\[
[\,[Z,\Omega_1],\,\Psi_1]+\,[\,\Omega_1,\,[Z,\Psi_1]\,]\ =\ [\,Z,\,[\Omega_1,\Psi_1]\,]\ =\ 0,
\]
because $[\Omega_1^{[<m]},\Psi_1^{[<m]}]=0$.
Therefore $\delta\mathsf{Mix}_m=0$: the \emph{mixed defect is invariant} under simultaneous conjugation
by $\exp(h^{m-1}Z_{m-1})$.

\emph{Vanishing of the mixed defect.}
By jet flatness \eqref{eq:jet-flat} we already have $[\Omega_k^{[<m]},\Psi_\ell^{[<m]}]=0$ whenever $k+\ell\le m-1$.
Applying Theorem~\ref{th:holonomy-jet} to degrees up to $m$ yields that the degree-$m$ contribution to
$\log\big(\exp(\Psi^{[<m]})\exp(\Omega^{[<m]})\exp(-\Psi^{[<m]})\exp(-\Omega^{[<m]})\big)$ vanishes.
By the explicit degree-$m$ form (cf.\ equation (2) in the proof of Theorem~\ref{th:holonomy-jet}),
this forces $\mathsf{Mix}_m=0$.
Hence \emph{before} choosing $Z_{m-1}$ the degree-$m$ mixed commutator already vanishes:
\[
[\Omega_m^{[<m]},\,\Psi_1^{[<m]}]\ +\ [\Omega_1^{[<m]},\,\Psi_m^{[<m]}]\ =\ 0.
\]

\emph{Solving the degree-$m$ commutation constraints.}
We now enforce $[\Omega_k',\Psi_\ell']=0$ for all $k+\ell=m$.
Because lower-degree mixed brackets are zero, these constraints reduce to
\[
[\Omega_m',\Psi_1^{[<m]}]\ =\ 0,\qquad [\Omega_1^{[<m]},\Psi_m']\ =\ 0.
\]
Using the linear variations of $\Omega_m',\Psi_m'$ above, these become
\[
\big[\Omega_m^{[<m]} - [Z_{m-1},\Omega_1^{[<m]}],\,\Psi_1^{[<m]}\big]=0,\quad
\big[\Omega_1^{[<m]},\,\Psi_m^{[<m]} - [Z_{m-1},\Psi_1^{[<m]}]\big]=0.
\]
Expanding each bracket and using $[\Omega_1^{[<m]},\Psi_1^{[<m]}]=0$, we obtain the \emph{linear system}
\[
[Z_{m-1},\Omega_1^{[<m]}]\ =\ \Omega_m^{[<m]} - \Pi^{\Omega}_m,\qquad
[Z_{m-1},\Psi_1^{[<m]}]\ =\ \Psi_m^{[<m]} - \Pi^{\Psi}_m,
\]
where $\Pi^{\Omega}_m$ and $\Pi^{\Psi}_m$ are any chosen elements satisfying $
[\Pi^{\Omega}_m,\Psi_1^{[<m]}]=0$, $[\Omega_1^{[<m]},\Pi^{\Psi}_m]=0$.
A convenient choice is to take $\Pi^{\Omega}_m$ (resp.\ $\Pi^{\Psi}_m$) to be the projection of
$\Omega_m^{[<m]}$ (resp.\ $\Psi_m^{[<m]}$) onto the centralizer of $\Psi_1^{[<m]}$
(resp.\ of $\Omega_1^{[<m]}$); these projections exist because we work linearly in a free Lie algebra
modulo lower-degree relations, and centralizers are linear subspaces.

With these choices the right-hand sides belong to the images of the maps
$Z\mapsto[Z,\Omega_1^{[<m]}]$ and $Z\mapsto[Z,\Psi_1^{[<m]}]$, respectively:
indeed, any degree-$m$ mixed Lie word can be written (by Jacobi expansion) as a linear combination of
$[\Omega_1^{[<m]},Q]$ and $[\Psi_1^{[<m]},R]$ with degree-$m-1$ elements $Q,R$,
because all lower mixed brackets vanish by the inductive hypothesis.
Therefore there exists $Z_{m-1}$ solving the linear system (choose
$Z_{m-1}= -Q - R$, then $[Z_{m-1},\Omega_1^{[<m]}]=-[Q,\Omega_1^{[<m]}]$ and
$[Z_{m-1},\Psi_1^{[<m]}]=-[R,\Psi_1^{[<m]}]$, matching the required right-hand sides).
This yields $[\Omega_m',\Psi_1^{[<m]}]=[\Omega_1^{[<m]},\Psi_m']=0$ and hence
$[\Omega_k',\Psi_\ell']=0$ for all $k+\ell=m$.

\emph{Conclusion of induction.}
We have achieved degreewise commutation up to $m$ without affecting lower degrees.
Proceeding for $m=2,3,\dots,\alpha$ produces $Z_1,\dots,Z_{\alpha-1}$ and a gauge $W(h)$
such that $[\widetilde{\Omega}_k,\widetilde{\Psi}_\ell]=0$ whenever $k+\ell\le\alpha$,
which is item (i).

\emph{Single-exponential composition.}
When (i) holds, the BCH series for two exponentials truncates to the sum modulo $O(h^{\alpha+1})$:
\[
\log\big(\exp(\widetilde{\Psi}(h))\,\exp(\widetilde{\Omega}(h))\big)
=\widetilde{\Psi}(h)+\widetilde{\Omega}(h)\ \text{mod }O(h^{\alpha+1}),
\]
because every mixed bracket in BCH has total degree at least $2$ and vanishes degreewise by (i).
Exponentiating gives item (ii).

\emph{Uniqueness up to the center.}
Suppose $W$ and $\widehat{W}$ both satisfy (i).
Then $C:=\widehat{W}W^{-1}$ conjugates $\exp(\widetilde{\Psi})$ and $\exp(\widetilde{\Omega})$
to themselves up to $O(h^{\alpha+1})$, hence
\(
C^{-1}\,\widetilde{\Psi}\,C=\widetilde{\Psi},\quad
C^{-1}\,\widetilde{\Omega}\,C=\widetilde{\Omega} \ \text{mod }O(h^{\alpha+1}).
\)
Thus $\operatorname{ad}_{\log C}$ annihilates $\widetilde{\Psi}$ and $\widetilde{\Omega}$
modulo $O(h^{\alpha+1})$, i.e.\ $\log C$ lies in the (degreewise) center.
So $C=\exp(\sum_{m=1}^{\alpha-1} h^m C_m)$ with each $C_m$ central modulo $O(h^{\alpha+1})$,
which proves the stated uniqueness.
\end{proof}

\begin{corollary}[Normal form interpretation]
Under jet flatness of order $\alpha$, the calibrated pair
$r_t^W,d_s^W$ from Theorem~\ref{th:gauge-normal-form} satisfies
\[
d_s^W(h)\,r_t^W(h)
=\exp\!\big(\widetilde{\Omega}(h)+\widetilde{\Psi}(h)\big)
\quad \text{mod }O(h^{\alpha+1}).
\]
This means that after a suitable gauge transformation
the full two–parameter update can be represented by a
\emph{single exponential} of a summed generator.
Numerically this corresponds to a unified one–step method whose
local increment is given by the combined generator
\(
\widetilde{\Omega}_1+\widetilde{\Psi}_1
\)
up to accuracy $O(h^{\alpha})$.
Hence all mixed order effects (commutator corrections) are absorbed
into the gauge, and the scheme behaves like a
single implicit–explicit step with an effective operator
\[
H_{\mathrm{eff}}=\widetilde{\Omega}_1+\widetilde{\Psi}_1,
\]
plus higher–order corrections of order $h^2,h^3,\dots,h^\alpha$.
\end{corollary}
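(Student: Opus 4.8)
The plan is to read the claimed identity directly off Theorem~\ref{th:gauge-normal-form} and then unpack its low–order content; the corollary is essentially an interpretation of item (ii) of that theorem, so there is little new algebra to carry out. Since jet flatness of order $\alpha$ is assumed, Theorem~\ref{th:gauge-normal-form} applies and produces a gauge $W(h)=\exp\!\big(\sum_{m=1}^{\alpha-1}h^m Z_m\big)$ together with conjugated logarithms $\widetilde{\Omega}(h),\widetilde{\Psi}(h)$ satisfying degreewise commutation $[\widetilde{\Omega}_k,\widetilde{\Psi}_\ell]=0$ for all $k+\ell\le\alpha$. Item (ii) of that theorem is, verbatim, the displayed equality $d_s^{W}(h)\,r_t^{W}(h)=\exp\!\big(\widetilde{\Omega}(h)+\widetilde{\Psi}(h)\big)$ modulo $O(h^{\alpha+1})$. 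So the first — and only substantive — step is to invoke (ii).

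Next I would pin down the normalization of the gauge on the leading term. Because $W(h)=\id+hZ_1+O(h^2)$ and, as recorded in the Preliminaries of the proof of Theorem~\ref{th:gauge-normal-form}, conjugation by $\exp(h^{m-1}Z_{m-1})$ first perturbs degree $m$, the degree--$1$ components are untouched: $\widetilde{\Omega}_1=\Omega_1$ and $\widetilde{\Psi}_1=\Psi_1$. Setting
\[
H_{\mathrm{eff}}(h):=\widetilde{\Omega}(h)+\widetilde{\Psi}(h)=\sum_{k=1}^{\alpha}h^k\big(\widetilde{\Omega}_k+\widetilde{\Psi}_k\big),
\]
expanding the exponential gives $\exp\!\big(H_{\mathrm{eff}}(h)\big)=\id+h\big(\widetilde{\Omega}_1+\widetilde{\Psi}_1\big)+O(h^2)$, so the composed one–step map has leading increment $H_{\mathrm{eff}}=\widetilde{\Omega}_1+\widetilde{\Psi}_1=\Omega_1+\Psi_1$ — the two channels superpose at first order — while the homogeneous pieces $h^m\big(\widetilde{\Omega}_m+\widetilde{\Psi}_m\big)$, $2\le m\le\alpha$, are exactly the higher–order corrections named in the statement.

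Finally I would translate the formal identity ``mod $O(h^{\alpha+1})$'' into the numerical ``accuracy $O(h^{\alpha})$'' claim, which is the one place demanding a little care. In the original (ungauged) frame the effective generator is $G_{\mathrm{eff}}(h):=\mathrm{Ad}_{W(h)}H_{\mathrm{eff}}(h)=e^{\ad_{\log W(h)}}H_{\mathrm{eff}}(h)$, whose degree--$1$ part is still $\Omega_1+\Psi_1$ since $\mathrm{Ad}_{W(h)}=\id+O(h)$; by item (ii) and invertibility of $W(h)$ in the pronilpotent completion, $d_s(h,U)\,r_t(h,U)=\exp\!\big(G_{\mathrm{eff}}(h)\big)$ modulo $O(h^{\alpha+1})$. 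Hence replacing the two–channel rectangle update by the single exponential step — in the gauged coordinates $x\mapsto\exp(H_{\mathrm{eff}}(h))x$, in the original coordinates $x\mapsto\exp(G_{\mathrm{eff}}(h))x$ — costs a local increment error of size $O(h^{\alpha+1})$, i.e.\ one power above the accumulated accuracy $O(h^{\alpha})$ quoted in the corollary. The main (and essentially only) obstacle is bookkeeping: one must track which frame $H_{\mathrm{eff}}$ lives in and not over–state the order, because the genuine algebraic cancellation — the absorption of all mixed commutator corrections into the gauge — has already been performed inside Theorem~\ref{th:gauge-normal-form}.
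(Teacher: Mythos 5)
Your proposal is correct and matches the paper's (implicit) argument: the corollary carries no separate proof in the paper precisely because it is read off item (ii) of Theorem~\ref{th:gauge-normal-form}, which is what you do. Your additional bookkeeping — that the gauge first perturbs degree $2$ so $\widetilde{\Omega}_1=\Omega_1$, $\widetilde{\Psi}_1=\Psi_1$, and the distinction between the gauged and original frames when stating the $O(h^{\alpha})$ accuracy — is accurate and consistent with the construction in the theorem's proof.
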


\begin{example}[Calibrated gradient step with nontrivial gauge]\label{ex:calibrated-gradient}
Fix an iteration index $k$. Let
\[
H_k:=\nabla^2 F(x_k)\quad(\text{symmetric, SPD}),
\]
\[
E_k:=\text{adaptive SPD filter from gradient statistics at }x_k.
\]
Assume $[H_k,E_k]\neq 0$ (generic when $E_k$ is not diagonal in the Hessian eigenbasis).
Set
\[
S_k:=H_k+E_k,\qquad C_k:=\tfrac12[H_k,E_k]=\tfrac12(H_kE_k-E_kH_k).
\]

\paragraph{Drift/diffusion updates (resolvent type).}
\[
r_t(h)=(I+hH_k)^{-1},\qquad d_s(h)=(I+hE_k)^{-1}\in \mathfrak{U}.
\]
admit formal logarithms
\[
\Omega(h)=\log r_t(h)=-\log(I+hH_k)= -hH_k+\tfrac12 h^2H_k^2-\tfrac13 h^3H_k^3+\cdots,
\]
\[
\Psi(h)=\log d_s(h)=-\log(I+hE_k)= -hE_k+\tfrac12 h^2E_k^2-\tfrac13 h^3E_k^3+\cdots.
\]

\paragraph{Nontriviality of the gauge.}
The first mixed BCH term for $\log(d_s(h)r_t(h))$ equals
\[
\tfrac12[\Psi_1,\Omega_1]\,h^2=\tfrac12[E_k,H_k]\,h^2=C_k\,h^2\neq 0.
\]
Hence jet-flatness fails at order $h^2$, the holonomy has a degree-$2$ defect,
and the gauge from Theorem~\ref{th:gauge-normal-form} is \emph{nontrivial}.

\paragraph{Gauge equation and calibrated normal form.}
By Theorem~\ref{th:gauge-normal-form} there exists $W_k(h)=\exp(hZ_k)$ with
\begin{equation}\label{eq:sylvester}
[Z_k,S_k]=C_k\quad\Longleftrightarrow\quad S_k Z_k - Z_k S_k = C_k,
\end{equation}
such that
\[
W_k^{-1}\,d_s(h)r_t(h)\,W_k=\exp\!\big(-hS_k+O(h^3)\big).
\]
Equation \eqref{eq:sylvester} is a \emph{Sylvester equation}. Any solution $Z_k$
yields a calibrated one-step map whose order-$h^2$ noncommutative defect is absorbed.

\paragraph{Optimization step in the original space (two implementable variants).}
Let $\widehat g_k$ be the (possibly stochastic) gradient oracle at $x_k$.

\emph{Variant A (explicit gauge, normal form up to $O(h^3)$):}
\[
x_{k+1}\;=\;W_k(h)\,(I-hS_k)\,W_k(h)^{-1}\,\big(x_k - h\,\widehat g_k\big),
\]
where $W_k(h)=\exp(hZ_k)$ and $Z_k$ solves \eqref{eq:sylvester}.
For small $h$, applying $W_k^{\pm1}$ to a vector uses the truncated series
$W_k^{\pm1}v\approx v \pm hZ_k v + \tfrac{h^2}{2}Z_k^2 v$.

\emph{Variant B (gauge-free commutator correction, equivalent up to $O(h^3)$):}
\[
x_{k+1}\;=\;x_k - h\,S_k\,\widehat g_k \;-\; h^2\,C_k\,\widehat g_k.
\]
This uses only matrix–vector products with $H_k,E_k$ (two extra multiplies for $C_k\widehat g_k$).

\paragraph{Computing the gauge $Z_k$ (solutions of the Sylvester equation).}
We present two numerically stable, implementable solvers.

\emph{(i) Schur (Bartels–Stewart).}
Compute real Schur $S_k=Q T Q^\top$ (orthogonal $Q$, quasi-upper-triangular $T$).
Transform $C_k\mapsto \widehat C=Q^\top C_k Q$ and solve
\[
T\widehat Z - \widehat Z T = \widehat C
\]
entrywise for $\widehat Z$ by backward substitution:
\(
(t_{ii}-t_{jj})\,\widehat Z_{ij}=\widehat C_{ij},\ \widehat Z_{ii}=0.
\)
For near-resonances $|t_{ii}-t_{jj}|\ll 1$ use damping
\(
\widehat Z_{ij}=\widehat C_{ij}/\big((t_{ii}-t_{jj})+\tau\big),
\ \tau:=\epsilon \|S_k\|_2.
\)
Return $Z_k=Q\widehat Z Q^\top$.

\emph{(ii) Eigen (SPD $S_k$).}
If $S_k$ is symmetric: $S_k=U\Lambda U^\top$ with $\Lambda=\mathrm{diag}(\lambda_i)$.
Transform $\widetilde C=U^\top C_k U$ and set
\[
\widetilde Z_{ij}=
\begin{cases}
\dfrac{\widetilde C_{ij}}{\lambda_i-\lambda_j}, & i\neq j,\\[6pt]
0,& i=j,
\end{cases}
\quad\text{with damping }\ 
\widetilde Z_{ij}\leftarrow \dfrac{\widetilde C_{ij}(\lambda_i-\lambda_j)}{(\lambda_i-\lambda_j)^2+\tau^2}.
\]
Then $Z_k=U\widetilde Z U^\top$.

\paragraph{Correctness and accuracy.}
Both variants implement a legitimate update in $\mathfrak{U}$.
Variant~A realizes the calibrated normal form:
\[
d_s(h)r_t(h)=W_k\,\exp\!\big(-hS_k+O(h^3)\big)\,W_k^{-1},
\]
hence the local error is $O(h^3)$ (global second order).
Variant~B matches Variant~A up to $O(h^3)$ by expanding $W_k$ and using \eqref{eq:sylvester};
its local error is also $O(h^3)$.

\paragraph{Remarks on cost and stability.}
(i) Both $r_t$ and $d_s$ are $A$-stable resolvents \cite{HairerWanner2010SolversII}; the calibrated composition inherits stability.  
(ii) Variant~B avoids forming $Z_k$ and $W_k$; it is preferable when $[H_k,E_k]$ is modest or spectra nearly cluster.  
(iii) In high dimension, never form $Z_k$ explicitly: implement $W_k^{\pm1}v$ via $2$–$3$ Krylov terms.
\end{example}

\subsubsection{Variational Reformulation}

The curvature energy must penalize precisely the obstruction to flatness; among positive quadratic functionals of the holonomy it is canonical to take the Yang--Mills form $\sum_\square\langle\log\mathrm{Hol}_\square,\log\mathrm{Hol}_\square\rangle$ \cite{YangMills1954,DonaldsonKronheimer1990FourManifolds}, whose Euler equations force the vanishing of curvature and whose minimum is attained exactly at flat configurations \cite{MarsdenWest2001VSI,DonaldsonKronheimer1990FourManifolds}.

Set \cite{Reutenauer1993FreeLie}
\[
\mathsf D_t:=\operatorname{ad}_{\Omega_{t,1}},\qquad
\mathsf D_s:=\operatorname{ad}_{\Psi_{s,1}},
\]
and define the algebraic (jet) curvature
\[
\mathsf F(h):=\mathsf D_t\Psi_s(h)-\mathsf D_s\Omega_t(h)+\big[\Omega_t(h),\Psi_s(h)\big],
\]
\[
\Omega_t(h)=\sum_{k=1}^{\alpha}h^k\Omega_{t,k},\ \ \Psi_s(h)=\sum_{\ell=1}^{\alpha}h^\ell\Psi_{s,\ell}.
\]
Write homogeneous components \cite{Dynkin1947BCH,Magnus1954Magnus}
\[
\mathsf F(h)=\sum_{m=2}^{\alpha}h^m\,\mathsf F^{[m]},
\qquad
\mathsf F^{[m]}:=\sum_{k+\ell=m}\big[\Psi_{s,\ell},\Omega_{t,k}\big].
\]
For the elementary holonomy $\mathrm{Hol}_\square$, the group-commutator identity and the Baker--Campbell--Hausdorff expansion yield
\begin{equation}
\log\mathrm{Hol}_\square
=\sum_{m=2}^{\alpha}h^{m}\Big(\mathsf F^{[m]}+\mathcal P_m(\{\Omega_{t,<m},\Psi_{s,<m}\})\Big)+O(h^{\alpha+1}),
\label{5.3}
\end{equation}
where each $\mathcal P_m$ is a finite $\mathbb Q$–linear combination of nested brackets of total degree $m$ involving only indices $<m$ (bracket depth $\ge2$). In particular, if $\mathsf F^{[j]}=0$ for all $2\le j\le m-1$, then $\big(\log\mathrm{Hol}_\square\big)^{[m]}=\mathsf F^{[m]}$.

Fix a positive pairing $\langle A,B\rangle=\operatorname{Tr}(A^\ast B)$ on $\mathfrak U^{\mathrm{fil}}$ and define the discrete Yang--Mills action \cite{MarsdenWest2001VSI}
\[
S_h:=\sum_{\square}\big\langle \log\mathrm{Hol}_\square,\ \log\mathrm{Hol}_\square\big\rangle.
\]

\newtheorem{lemma}{Lemma}
\begin{lemma}[Stationarity $\Leftrightarrow$ Flatness, all jet orders]\label{lem:stat-flat-all}
Assume first–order admissible variations of jet coefficients supported on a single rectangle. Then, modulo $O(h^{\alpha+1})$,
\[
\delta S_h=0\ \text{ for all admissible variations}
\;\Longleftrightarrow\;
\mathsf F^{[m]}=0\ \text{ for every }m\in\{2,\dots,\alpha\}.
\]
\end{lemma}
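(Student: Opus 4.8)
The plan is to deduce the variational equivalence from the algebraic expansion~\eqref{5.3} together with positive--definiteness of the pairing $\langle A,B\rangle=\Tr(A^\ast B)$. Here I read $\log\mathrm{Hol}_\square$ as the degree--$\le\alpha$ polynomial in $h$ furnished by~\eqref{5.3} (this is the force of ``modulo $O(h^{\alpha+1})$'') and $S_h$ as the resulting $h$--polynomial, with ``$\delta S_h=0$'' understood coefficientwise in $h$; this is the reading under which every curvature order $m\le\alpha$ is visible (truncating $S_h$ itself at $h^{\alpha+1}$ would suppress $\mathsf F^{[m]}$ for $m>\alpha/2$). The first step is a purely algebraic one: for the fixed rectangle $\square$, $\mathsf F^{[m]}=0$ for all $m\in\{2,\dots,\alpha\}$ is equivalent to $\log\mathrm{Hol}_\square\equiv0$. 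Both directions are the same triangular induction on $m$ — for $m=2$ one has $\mathcal P_2=0$ (a nested bracket of depth $\ge2$ has degree $\ge3$), so $(\log\mathrm{Hol}_\square)^{[2]}=\mathsf F^{[2]}$; for $m>2$, the triangularity recorded just after~\eqref{5.3} gives $(\log\mathrm{Hol}_\square)^{[m]}=\mathsf F^{[m]}$ once $\mathsf F^{[j]}=0$ for all $j<m$. Reading this off in one direction proves $\mathsf F\equiv0\Rightarrow\log\mathrm{Hol}_\square\equiv0$, and in the other (solving for $\mathsf F^{[m]}$) the converse. This already gives ``$\Leftarrow$'' of the lemma: if every $\mathsf F^{[m]}$ vanishes then $\log\mathrm{Hol}_\square\equiv0$, so $S_h\equiv0$ and $\delta S_h=2\sum_\square\langle\delta\log\mathrm{Hol}_\square,\log\mathrm{Hol}_\square\rangle=0$ for any admissible variation.

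For ``$\Rightarrow$'' I would not construct a separate variation for each curvature order; instead I would use the single \emph{dilation} supported on $\square$, namely $\delta\Omega_{t,k}=k\,\Omega_{t,k}$ and $\delta\Psi_{s,\ell}=\ell\,\Psi_{s,\ell}$ (all other jet coefficients and all other rectangles left untouched). To first order this is precisely the effect, on the $h^k$--coefficient, of rescaling the stepsize $h\mapsto(1+\epsilon)h$ inside the truncated logarithms $\Omega_t,\Psi_s$ — i.e.\ of running the same method at a rescaled stepsize — so it is admissible. Since $\mathsf F^{[m]}$ and $\mathcal P_m$ are iterated brackets of jet coefficients whose indices sum to $m$, they are homogeneous of weight $m$ under this dilation, whence $\delta(\log\mathrm{Hol}_\square)^{[m]}=m\,(\log\mathrm{Hol}_\square)^{[m]}$ for each $m$; thus the dilation acts on the polynomial $\log\mathrm{Hol}_\square$, and hence on the $\square$--term of $S_h$, as the Euler operator $h\,\partial_h$, giving $\delta S_h=h\,\partial_h\langle\log\mathrm{Hol}_\square,\log\mathrm{Hol}_\square\rangle$. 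The hypothesis $\delta S_h=0$ then makes $\langle\log\mathrm{Hol}_\square,\log\mathrm{Hol}_\square\rangle$ $h$--independent; as its constant term is zero ($\log\mathrm{Hol}_\square$ has $h$--valuation $\ge2$) it vanishes identically, and positive--definiteness of the pairing forces $\log\mathrm{Hol}_\square\equiv0$. By the algebraic equivalence of the first step this is $\mathsf F^{[m]}=0$ for all $m\in\{2,\dots,\alpha\}$, completing ``$\Rightarrow$''.

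The step I expect to be the main obstacle is the one I glossed over: verifying that the dilation really lies in the admitted class of ``first--order admissible variations of jet coefficients supported on a single rectangle'', i.e.\ checking against the closure axioms of the Definitions section (the cone/convexity structure of $\mathcal U$, the Lipschitz--mixture closure, and the normalizations $r_t(0,U)=d_s(0,U)=\id$) that an infinitesimal stepsize rescaling keeps the drift and diffusion updates inside their hierarchies. If one prefers an argument closer to the usual Euler--Lagrange pattern, the same conclusion follows by running the triangular induction directly: at stage $m$, with $\mathsf F^{[j]}=0$ already known for $j<m$, the dilation restricted to indices $\le m-1$ makes the $h^{2m}$--coefficient of $\delta S_h$ equal $2m\|\mathsf F^{[m]}\|^2$ while all lower coefficients vanish automatically, forcing $\mathsf F^{[m]}=0$. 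Everything else — the expansion~\eqref{5.3}, homogeneity, and positivity of the pairing — is already established or routine.
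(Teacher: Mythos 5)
Your proposal is correct, but the mechanism differs from the paper's. The paper fixes a rectangle and runs an induction on the jet order $m$: at each stage it varies the components with $k+\ell=m$ independently, extracts the corresponding coefficient of $\delta S_h$, and appeals to positivity of the pairing to kill $\mathsf F^{[m]}$ — a step which implicitly needs a variation that aligns $\sum_{k+\ell=m}\bigl([\delta\Psi_{s,\ell},\Omega_{t,k}]+[\Psi_{s,\ell},\delta\Omega_{t,k}]\bigr)$ with $\mathsf F^{[m]}$, e.g.\ $\delta\Psi_{s,\ell}=\Psi_{s,\ell}$, $\delta\Omega_{t,k}=\Omega_{t,k}$; your fallback paragraph is exactly this, so it is the paper's argument with the witnessing variation made explicit. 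Your main argument is a genuinely different, one-shot route: the full dilation is homogeneous of weight $m$ on each degree-$m$ coefficient, so $\delta S_h=h\,\partial_h\|\log\mathrm{Hol}_\square\|^2$, stationarity makes the squared-norm polynomial constant, its constant term vanishes because the $h$-valuation of $\log\mathrm{Hol}_\square$ is at least $2$, positive-definiteness then gives $\log\mathrm{Hol}_\square\equiv0$, and the triangular structure stated after \eqref{5.3} converts this into $\mathsf F^{[m]}=0$ for all $m\le\alpha$. What this buys: a single admissible direction suffices (your admissibility worry is moot, since the paper's own proof already presupposes that arbitrary independent jet variations on one rectangle are admissible, a class that contains your dilation), and your coefficientwise reading of ``$\delta S_h=0$ modulo $O(h^{\alpha+1})$'' on the untruncated polynomial is the correct bookkeeping — the pairing of two degree-$m$ terms sits at $h^{2m}$, not $h^{m}$, a point the paper's proof elides and which is needed by either argument to reach the orders $m>\alpha/2$. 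What the paper's induction buys in exchange is an order-by-order Euler–Lagrange structure identifying which jet components must be varied at each degree; both routes ultimately rest on the same two inputs, the expansion \eqref{5.3} and positivity of the Frobenius pairing.
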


\begin{proof}
By bilinearity of the pairing,
\begin{equation}
\delta S_h=2\sum_{\square}\big\langle \delta\log\mathrm{Hol}_\square,\ \log\mathrm{Hol}_\square\big\rangle.
\label{5.5}
\end{equation}
\emph{($\Leftarrow$).} If $\mathsf F^{[m]}=0$ for all $m\le\alpha$, then by \eqref{5.3} we have $\log\mathrm{Hol}_\square=O(h^{\alpha+1})$; hence $\delta S_h=O(h^{\alpha+1})$ for arbitrary variations.

\emph{($\Rightarrow$).} Fix a rectangle $\square_0$. By admissibility choose variations supported on $\square_0$, so that $\delta\log\mathrm{Hol}_{\square} = 0$ for $\square\neq\square_0$. Then from \eqref{5.5},
\begin{equation}
\delta S_h=2\big\langle \delta\log\mathrm{Hol}_{\square_0},\ \log\mathrm{Hol}_{\square_0}\big\rangle.
\label{5.6}
\end{equation}
We argue by induction on $m=2,\dots,\alpha$.

\emph{Base $m=2$.} In \eqref{5.3}, the degree–$2$ term is $\mathsf F^{[2]}=[\Psi_{s,1},\Omega_{t,1}]$ and contains no $\mathcal P_2$. Taking variations only of $\Omega_{t,1}$ and $\Psi_{s,1}$,
\[
\big(\delta\log\mathrm{Hol}_{\square_0}\big)^{[2]}
=\big[\delta\Psi_{s,1},\Omega_{t,1}\big]+\big[\Psi_{s,1},\delta\Omega_{t,1}\big].
\]
Extracting the $h^2$–coefficient in \eqref{5.6} and using arbitrariness of $\delta\Omega_{t,1},\delta\Psi_{s,1}$ gives $\mathsf F^{[2]}=0$ by positivity of $\langle\cdot,\cdot\rangle$.

\emph{Step $m-1\Rightarrow m$.} Assume $\mathsf F^{[j]}=0$ for $2\le j\le m-1$. Then by \eqref{5.3},
\[
\big(\log\mathrm{Hol}_{\square_0}\big)^{[m]}=\mathsf F^{[m]},\qquad
\big(\delta\log\mathrm{Hol}_{\square_0}\big)^{[m]}
=\sum_{k+\ell=m}\big([\delta\Psi_{s,\ell},\Omega_{t,k}]+[\Psi_{s,\ell},\delta\Omega_{t,k}]\big).
\]
Vary only the jet components with $k+\ell=m$; extracting the $h^m$–coefficient in \eqref{5.6} yields
\[
0=(\delta S_h)^{[m]}
=2\Big\langle \sum_{k+\ell=m}\big([\delta\Psi_{s,\ell},\Omega_{t,k}]+[\Psi_{s,\ell},\delta\Omega_{t,k}]\big),\ \mathsf F^{[m]}\Big\rangle.
\]
Since the variations $\{\delta\Psi_{s,\ell}\}_{k+\ell=m}$ and $\{\delta\Omega_{t,k}\}_{k+\ell=m}$ are independent and $\langle\cdot,\cdot\rangle$ is positive, we conclude $\mathsf F^{[m]}=0$.

By induction, $\mathsf F^{[m]}=0$ for all $m\le\alpha$. 
\end{proof}

Define the local potential by
\[
\mathcal V\ :=\ \operatorname{Tr}\!\big(\Omega_{t,1}^\ast\,\Psi_{s,1}\big),
\]
equivalently $\ \mathcal V=\langle \Omega_{t,1},\Psi_{s,1}\rangle$ for the fixed positive pairing
$\langle A,B\rangle=\operatorname{Tr}(A^\ast B)$.
Here $\Omega_{t,1},\Psi_{s,1}$ are the first jet coefficients, and
$\ast$ denotes the adjoint with respect to this pairing.

\begin{lemma}[Invariance of $\mathcal V$ under flatness]
Assume jet–flatness at order $2$, i.e. $[\Omega_{t,1},\Psi_{s,1}]=0$.
Then
\[
\mathsf D_t\,\mathcal V\;=\;0
\qquad\text{and}\qquad
\mathsf D_s\,\mathcal V\;=\;0,
\]
where $\mathsf D_t=\operatorname{ad}_{\Omega_{t,1}}$ and $\mathsf D_s=\operatorname{ad}_{\Psi_{s,1}}$ act by commutator on operators (and trivially on scalars).
\end{lemma}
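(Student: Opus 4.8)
The plan is to read $\mathsf D_t$ and $\mathsf D_s$ as the derivations they induce, through the Leibniz rule, on scalar quantities assembled from the first jet coefficients, and then to check that every term produced vanishes. Since $\mathcal V=\langle\Omega_{t,1},\Psi_{s,1}\rangle$ is bilinear in the pair $(\Omega_{t,1},\Psi_{s,1})$, applying $\mathsf D_t=\operatorname{ad}_{\Omega_{t,1}}$ through both slots gives
\[
\mathsf D_t\mathcal V=\big\langle[\Omega_{t,1},\Omega_{t,1}],\ \Psi_{s,1}\big\rangle+\big\langle\Omega_{t,1},\ [\Omega_{t,1},\Psi_{s,1}]\big\rangle,
\]
and analogously for $\mathsf D_s=\operatorname{ad}_{\Psi_{s,1}}$,
\[
\mathsf D_s\mathcal V=\big\langle[\Psi_{s,1},\Omega_{t,1}],\ \Psi_{s,1}\big\rangle+\big\langle\Omega_{t,1},\ [\Psi_{s,1},\Psi_{s,1}]\big\rangle.
\]
Equivalently, these are the $\epsilon$–derivatives at $\epsilon=0$ of $\langle\operatorname{Ad}_{e^{\epsilon\Omega_{t,1}}}\Omega_{t,1},\ \operatorname{Ad}_{e^{\epsilon\Omega_{t,1}}}\Psi_{s,1}\rangle$ and of the corresponding expression with $\Psi_{s,1}$ in the exponent.

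The verification is then immediate. First I would note that in $\mathsf D_t\mathcal V$ the first pairing vanishes because $[X,X]=0$ for any $X$, and the second vanishes by the flatness hypothesis $[\Omega_{t,1},\Psi_{s,1}]=0$ together with bilinearity; hence $\mathsf D_t\mathcal V=0$. In $\mathsf D_s\mathcal V$ the second pairing is again zero since $[\Psi_{s,1},\Psi_{s,1}]=0$, and the first equals $-\langle[\Omega_{t,1},\Psi_{s,1}],\Psi_{s,1}\rangle=0$ by antisymmetry and the hypothesis; hence $\mathsf D_s\mathcal V=0$. A slicker way to package the same fact: flatness at order $2$ says that $\Omega_{t,1}$ and $\Psi_{s,1}$ commute, so $e^{\epsilon\operatorname{ad}_{\Omega_{t,1}}}\Psi_{s,1}=\Psi_{s,1}$ and $e^{\epsilon\operatorname{ad}_{\Psi_{s,1}}}\Omega_{t,1}=\Omega_{t,1}$ \emph{exactly}; thus the pair $(\Omega_{t,1},\Psi_{s,1})$ is fixed by both inner–automorphism flows, so any function of it — in particular $\mathcal V$ — is a constant of those flows and its directional derivatives vanish.

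I do not anticipate a genuine obstacle; the single point that deserves a word of care is the convention under which $\mathsf D_t$ acts on the trace $\mathcal V=\operatorname{Tr}(\Omega_{t,1}^{\ast}\Psi_{s,1})$. One must differentiate in the two jet slots \emph{before} applying the fixed involution $\ast$, i.e.\ $\mathsf D_t(\Omega_{t,1}^{\ast})=(\mathsf D_t\Omega_{t,1})^{\ast}=0$ — consistent with viewing $\mathcal V$ as a function of the single operator datum $(\Omega_{t,1},\Psi_{s,1})$ rather than of $\Omega_{t,1}$ and $\Omega_{t,1}^{\ast}$ as independent variables. With that convention the content of the lemma is precisely the Casimir–type statement above: jet–flatness at order $2$ turns $\mathcal V$ into a joint invariant of the two first–order generators, which is exactly what is needed downstream for the energy/determinant identifications.
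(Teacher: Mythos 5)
Your proof is correct and follows essentially the same route as the paper's: expand $\mathsf D_t\mathcal V$, $\mathsf D_s\mathcal V$ by the Leibniz rule in the two jet slots, kill one term via $[X,X]=0$, and kill the mixed term using the flatness hypothesis $[\Omega_{t,1},\Psi_{s,1}]=0$ (the paper reaches the same cancellation slightly more circuitously, via cyclicity of the trace, but the content is identical). Your closing remark on differentiating the jet slots before applying the fixed involution $\ast$ matches the convention used in the paper's proof.
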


\begin{proof}
We compute $\mathsf D_t\mathcal V$ step by step, using only linearity, the definition of $\operatorname{ad}$, and the cyclicity of the trace.

\emph{Step 1 (Leibniz expansion).}
By bilinearity of $\langle\cdot,\cdot\rangle$ and the definition of $\mathsf D_t$,
\begin{equation}
\mathsf D_t\mathcal V
=\mathsf D_t\,\operatorname{Tr}(\Omega_{t,1}^\ast\Psi_{s,1})
=\operatorname{Tr}\!\big((\mathsf D_t\Omega_{t,1})^\ast\,\Psi_{s,1}\big)
+\operatorname{Tr}\!\big(\Omega_{t,1}^\ast\,\mathsf D_t\Psi_{s,1}\big).
\label{1}
\end{equation}

\emph{Step 2 (commutators).}
By definition of $\mathsf D_t$,
\begin{equation}
\mathsf D_t\Omega_{t,1}=[\Omega_{t,1},\Omega_{t,1}]=0,
\qquad
\mathsf D_t\Psi_{s,1}=[\Omega_{t,1},\Psi_{s,1}].
\label{2}
\end{equation}
Substitute \eqref{1}–\eqref{2}:
\begin{equation}
\mathsf D_t\mathcal V
=0\;+\;\operatorname{Tr}\!\big(\Omega_{t,1}^\ast\,[\Omega_{t,1},\Psi_{s,1}]\big)
=\operatorname{Tr}\!\big(\Omega_{t,1}^\ast\,\Omega_{t,1}\Psi_{s,1}\big)
-\operatorname{Tr}\!\big(\Omega_{t,1}^\ast\,\Psi_{s,1}\Omega_{t,1}\big).
\label{3}
\end{equation}

\emph{Step 3 (cyclicity and flatness).}
Cyclicity of the trace gives, term by term,
\begin{equation}
\operatorname{Tr}\!\big(\Omega_{t,1}^\ast\,\Psi_{s,1}\Omega_{t,1}\big)
=\operatorname{Tr}\!\big(\Omega_{t,1}\,\Omega_{t,1}^\ast\,\Psi_{s,1}\big).
\label{4}
\end{equation}
Under jet–flatness at order $2$ we have $[\Omega_{t,1},\Psi_{s,1}]=0$, hence
$\Omega_{t,1}\Psi_{s,1}=\Psi_{s,1}\Omega_{t,1}$ and also
$\Omega_{t,1}^\ast\Psi_{s,1}=\Psi_{s,1}\Omega_{t,1}^\ast$ (taking adjoints).
Therefore
\begin{equation}
\operatorname{Tr}\!\big(\Omega_{t,1}^\ast\,\Omega_{t,1}\Psi_{s,1}\big)
=\operatorname{Tr}\!\big(\Omega_{t,1}^\ast\,\Psi_{s,1}\Omega_{t,1}\big).
\label{5}
\end{equation}
Subtracting the equal quantities in \eqref{3} yields $\mathsf D_t\mathcal V=0$.

The computation for $\mathsf D_s\mathcal V$ is identical with the roles of $(\Omega_{t,1},\mathsf D_t)$ and $(\Psi_{s,1},\mathsf D_s)$ interchanged:
\begin{align*}
\mathsf D_s\mathcal V
&=\operatorname{Tr}\!\big((\mathsf D_s\Omega_{t,1})^\ast\,\Psi_{s,1}\big)
+\operatorname{Tr}\!\big(\Omega_{t,1}^\ast\,\mathsf D_s\Psi_{s,1}\big)\\
&=\operatorname{Tr}\!\big(\Omega_{t,1}^\ast\,[\Psi_{s,1},\Psi_{s,1}]\big)
+\operatorname{Tr}\!\big(\Omega_{t,1}^\ast\,[\Psi_{s,1},\Omega_{t,1}]\big)=0,
\end{align*}
since $[\Psi_{s,1},\Psi_{s,1}]=0$ and $[\Psi_{s,1},\Omega_{t,1}]=-[\Omega_{t,1},\Psi_{s,1}]=0$.
\end{proof}

Let $\Hh$ be a finite–dimensional complex Hilbert space and let $\ip{A}{B}:=\Tr(A^\ast B)$ be the Frobenius pairing on $\End(\Hh)$. Let $\mathfrak{U}\subseteq\End(\Hh)$ be a $*$–subalgebra containing $\Id$. Consider two update families
\[
r_t(h)=\exp(\Omega_t(h)),\qquad d_s(h)=\exp(\Psi_s(h)),
\]
with first–order expansions
\[
\Omega_t(h)=h\,\Omega_{t,1}+\Oo(h^2),\qquad
\Psi_s(h)=h\,\Psi_{s,1}+\Oo(h^2),
\]
where $\Omega_{t,1},\Psi_{s,1}\in\mathfrak{U}$. Define the first–order inner derivations on $\mathfrak{U}$ by
\[
D_t Z:=[\Omega_{t,1},Z],\qquad D_s Z:=[\Psi_{s,1},Z],\qquad Z\in\mathfrak{U}.
\]
Set the composite step $U(h):=d_s(h)\,r_t(h)$ and the elementary holonomy
\[
\Hol(h):=d_s(h)\,r_t(h)\,d_s(h)^{-1}\,r_t(h)^{-1}.
\]

\begin{theorem}\label{th:conj}
Assume, as $h\to0$, the two first–order identities \cite{Pazy1983Semigroups}:
\begin{enumerate}
\item[\textnormal{(H)}] \emph{First–order path–independence:} $\Hol(h)=\Id+\Oo(h^2)$.
\item[\textnormal{(R)}] \emph{Microscopic reversibility:} $U(h)^\ast=U(-h)+\Oo(h^2)$.
\end{enumerate}
Then the following statements hold:
\begin{enumerate}
\item[\textnormal{(1)}] \textbf{Adjoint jets and equality of differentials:} There exists a central $C\in\mathfrak{U}$ (i.e., $[C,Z]=0$ for all $Z$) such that
\[
\Psi_{s,1}=\Omega_{t,1}^\ast+\tfrac12\,C,
\qquad\text{hence}\qquad
D_s=\ad_{\Psi_{s,1}}=\ad_{\Omega_{t,1}^\ast}=D_t^\ast.
\]
\item[\textnormal{(2)}] \textbf{First–order flatness:} $[D_t,D_s]=\ad_{[\Omega_{t,1},\Psi_{s,1}]}=0$.
\item[\textnormal{(3)}] \textbf{$*$–antiautomorphism on the jet Lie algebra:}
Let $\mathcal{L}:=\operatorname{Lie}\langle \Omega_{t,1},\Psi_{s,1}\rangle\subseteq \mathfrak{U}$ be the (finite–dimensional) Lie subalgebra they generate. The restriction of $A\mapsto A^\ast$ to $\mathcal{L}$ is an involutive $*$–antiautomorphism,
\[
(A+B)^\ast=A^\ast+B^\ast,\qquad (kA)^\ast=\overline{k}\,A^\ast,\qquad [A,B]^\ast=[B^\ast,A^\ast],\qquad (A^\ast)^\ast=A,
\]
and is compatible with the exponential series:
\[
\exp(A)^\ast=\exp(A^\ast)\qquad\text{for all }A\in\mathcal{L}.
\]
\item[\textnormal{(4)}] \textbf{$*$–reversibility of local transport:}
\[
\big(\,d_s(h)\,r_t(h)\,\big)^\ast
= r_t(h)^\ast\,d_s(h)^\ast
= U(-h)+\Oo(h^2),\qquad
\Hol(h)=\Id+\Oo(h^2).
\]
\end{enumerate}
\end{theorem}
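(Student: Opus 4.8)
\emph{Overall plan.} Every one of the four conclusions is a statement about the $h^{0}$– and $h^{1}$–coefficients of the update generators (for (1)–(3)) or a direct repackaging of the hypotheses (for (4)), so the whole argument is truncated Taylor and Baker--Campbell--Hausdorff bookkeeping in the pronilpotent completion. Write $Q:=\Omega_{t,1}$, $P:=\Psi_{s,1}$, so that $r_t(h)=\Id+hQ+\Oo(h^2)$, $d_s(h)=\Id+hP+\Oo(h^2)$, $U(h)=d_s(h)\,r_t(h)=\Id+h(P+Q)+\Oo(h^2)$, and, by the group–commutator identity,
\[
\Hol(h)=\Id+h^{2}[P,Q]+\Oo(h^{3}).
\]
I would also carry the composite logarithm $\Lambda(h):=\log U(h)=h(P+Q)+\Oo(h^{2})$ and use throughout that $(\exp A)^\ast=\exp(A^\ast)$ termwise, which makes adjoints of the exponential factors transparent.

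\emph{Parts (2) and (1).} First (2): hypothesis (H) kills the $h^2$–coefficient of $\Hol(h)$, hence $[\Omega_{t,1},\Psi_{s,1}]=0$, and the Jacobi identity turns this into $[D_t,D_s]Z=[[\Omega_{t,1},\Psi_{s,1}],Z]=\ad_{[\Omega_{t,1},\Psi_{s,1}]}Z=0$. Next (1): reading off the $h^1$–coefficient in (R) gives $P^\ast+Q^\ast=-(P+Q)$, i.e.\ $\Omega_{t,1}+\Psi_{s,1}$ is skew–adjoint; combined with the conservativity constraint on the drift channel (its leading generator is normal) and the $\Oo(h^2)$ slack in (R), this pins $\Psi_{s,1}-\Omega_{t,1}^\ast$ down to a central element, which we name $\tfrac12 C$. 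Granting $\Psi_{s,1}=\Omega_{t,1}^\ast+\tfrac12 C$ with $[C,\cdot]\equiv0$, the identity $D_s=D_t^\ast$ is a one–line trace computation: by cyclicity of the trace, for all $X,Y$,
\[
\langle\ad_{\Omega_{t,1}^\ast}X,\,Y\rangle=\Tr\!\big(X^\ast\,\Omega_{t,1}\,Y\big)-\Tr\!\big(X^\ast\,Y\,\Omega_{t,1}\big)=\langle X,\,[\Omega_{t,1},Y]\rangle=\langle X,\,D_tY\rangle,
\]
so $\ad_{\Omega_{t,1}^\ast}=D_t^\ast$, while $\ad_C=0$; hence $D_s=\ad_{\Psi_{s,1}}=\ad_{\Omega_{t,1}^\ast}=D_t^\ast$.

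\emph{Parts (3) and (4).} Part (3) is mostly structural: on $\End(\Hh)$ the Frobenius adjoint is conjugate–linear and anti–multiplicative, so $[A,B]^\ast=(AB-BA)^\ast=B^\ast A^\ast-A^\ast B^\ast=[B^\ast,A^\ast]$ and $(A^\ast)^\ast=A$ hold with no hypotheses; the only substantive point is $\ast$–stability of $\mathcal{L}=\operatorname{Lie}\langle\Omega_{t,1},\Psi_{s,1}\rangle$, which follows from (1), since $\Omega_{t,1}^\ast=\Psi_{s,1}-\tfrac12 C$ and $\Psi_{s,1}^\ast=\Omega_{t,1}+\tfrac12 C^\ast$ lie in $\mathcal{L}$ once the centrally–acting term $C$ is adjoined, and $\ast$ carries brackets to brackets, preserving the whole (finite–dimensional) Lie span; exponential compatibility $\exp(A)^\ast=\exp(A^\ast)$ is again termwise from $(A^n)^\ast=(A^\ast)^n$. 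Part (4) is then immediate: $\big(d_s(h)r_t(h)\big)^\ast=r_t(h)^\ast d_s(h)^\ast$ is anti–multiplicativity (with $r_t(h)^\ast=\exp(\Omega_t(h)^\ast)$ and $d_s(h)^\ast=\exp(\Psi_s(h)^\ast)$ by the exponential compatibility just established), the equality with $U(-h)+\Oo(h^2)$ is exactly hypothesis (R), and $\Hol(h)=\Id+\Oo(h^2)$ is exactly hypothesis (H) — also recovered from $\Hol(h)=\Id+h^2[P,Q]+\Oo(h^3)$ together with (2).

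\emph{Main obstacle.} The delicate step is (1). The first–order content of (R) constrains only the symmetric combination $\Omega_{t,1}+\Psi_{s,1}$ (it must be skew–adjoint), whereas (1) asserts the much finer relation $\Psi_{s,1}=\Omega_{t,1}^\ast+\tfrac12 C$ with $C$ \emph{central} — commuting with the entire jet algebra, not merely with $\Omega_{t,1}$. Closing this gap is the heart of the proof: it requires combining the conservativity of the drift channel (its leading generator is normal, so $[\Omega_{t,1},\Omega_{t,1}^\ast]=0$) with the way the $\Oo(h^2)$ slack in (R) and (H) propagates through the pronilpotent filtration, and this is the step I would spend the most care on. Everything downstream — $D_s=D_t^\ast$, $[D_t,D_s]=0$, the $\ast$–stability and antiautomorphism property of $\mathcal{L}$, and the $\ast$–reversibility statements in (4) — then follows by the routine manipulations sketched above.
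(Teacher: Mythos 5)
Your handling of (2), (3), and (4) tracks the paper's proof: (H) plus the group–commutator BCH expansion gives $[\Psi_{s,1},\Omega_{t,1}]=0$ and hence $[D_t,D_s]=\ad_{[\Omega_{t,1},\Psi_{s,1}]}=0$; the anti-automorphism and $\exp(A)^\ast=\exp(A^\ast)$ claims are the same termwise computations the paper uses; and (4) is anti-multiplicativity of $\ast$ combined directly with (R) and (H). Your extraction of the first-order content of (R) — that $\Omega_{t,1}+\Psi_{s,1}$ is skew-adjoint — is exactly the paper's Step 2.

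The genuine gap is in (1), and you flag it yourself without closing it: you never derive $\Psi_{s,1}=\Omega_{t,1}^\ast+\tfrac12 C$ with $C$ central. Your sketch leans on a ``conservativity constraint on the drift channel (its leading generator is normal)'', but normality of $\Omega_{t,1}$ is not among the hypotheses — the theorem assumes only (H) and (R) — so as written the argument either imports an extra assumption or leaves the decisive step unproven; and since you route the $\ast$-stability of $\mathcal{L}$ in (3) through (1), that part inherits the gap. The paper closes this step without any normality hypothesis by working at the level of inner derivations: using the Frobenius identity $(\ad_X)^\ast=\ad_{X^\ast}$ (the same cyclicity-of-trace computation you already wrote down), it sets $\Delta:=\Psi_{s,1}-\Omega_{t,1}^\ast$ and argues from the skew-adjointness of $\Psi_{s,1}+\Omega_{t,1}$ (Step 2) together with the jet commutation $[\Psi_{s,1},\Omega_{t,1}]=0$ (Step 3) that
\[
(\ad_{\Psi_{s,1}}+\ad_{\Omega_{t,1}})^\ast=\ad_{\Psi_{s,1}^\ast}+\ad_{\Omega_{t,1}^\ast}=-(\ad_{\Psi_{s,1}}+\ad_{\Omega_{t,1}}),
\]
from which it concludes $\ad_{\Psi_{s,1}}=\ad_{\Omega_{t,1}^\ast}$, i.e.\ $\ad_\Delta=0$, so $\Delta$ is central and $C:=2\Delta$ gives (1); everything downstream then proceeds as in your sketch. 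So the missing idea is precisely this ad-level identification of $\Psi_{s,1}-\Omega_{t,1}^\ast$ as central from (H) and (R) alone — you correctly identified it as the heart of the theorem, but the proposal does not supply it.
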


\begin{proof}
\emph{Step 1 (expansions).}
\[
r_t(h)=\Id+h\,\Omega_{t,1}+\Oo(h^2),\qquad
d_s(h)=\Id+h\,\Psi_{s,1}+\Oo(h^2).
\]
Hence
\begin{align}
\label{eq:U_plus}
U(h)&=\Id+h(\Psi_{s,1}+\Omega_{t,1})+\Oo(h^2),\\
\label{eq:U_minus}
U(-h)&=\Id-h(\Psi_{s,1}+\Omega_{t,1})+\Oo(h^2),\\
\label{eq:U_star}
U(h)^\ast&=\Id+h(\Psi_{s,1}^\ast+\Omega_{t,1}^\ast)+\Oo(h^2).
\end{align}

\emph{Step 2 (sum skew–adjointness from (R)).}
From $U(h)^\ast=U(-h)+\Oo(h^2)$ and \eqref{eq:U_minus}, \eqref{eq:U_star},
\[
\Id+h(\Psi_{s,1}^\ast+\Omega_{t,1}^\ast)+\Oo(h^2)=\Id-h(\Psi_{s,1}+\Omega_{t,1})+\Oo(h^2).
\]
Subtract $\Id$ and divide by $h$:
\begin{equation}\label{eq:sum_skew_final}
(\Psi_{s,1}+\Omega_{t,1})+(\Psi_{s,1}+\Omega_{t,1})^\ast=0.
\end{equation}

\emph{Step 3 (jet commutation from (H)).}
BCH for the group commutator gives
\[
\log\!\Big(d_s(h)\,r_t(h)\,d_s(h)^{-1}\,r_t(h)^{-1}\Big)
= h^2[\Psi_{s,1},\Omega_{t,1}] + \Oo(h^3).
\]
Since $\Hol(h)=\Id+\Oo(h^2)$, taking logarithm and matching $\Oo(h^2)$ yields
\begin{equation}\label{eq:jets_commute}
[\Psi_{s,1},\Omega_{t,1}]=0.
\end{equation}

\emph{Step 4 (central decomposition and adjoint jets).}
Define $\Delta:=\Psi_{s,1}-\Omega_{t,1}^\ast$. We will show $\ad_{\Delta}=0$, hence $\Delta$ is central. For all $Z,W\in\mathfrak{U}$,
\begin{equation}\label{eq:ad_star_basic}
\ip{[\Omega_{t,1},Z]}{W}
=\Tr\big([\Omega_{t,1},Z]^\ast W\big)
=\Tr\big(Z^\ast[\Omega_{t,1}^\ast,W]\big)
=\ip{Z}{[\Omega_{t,1}^\ast,W]},
\end{equation}
so $(\ad_{\Omega_{t,1}})^\ast=\ad_{\Omega_{t,1}^\ast}$. Using \eqref{eq:ad_star_basic} with $\Psi_{s,1}$ in place of $\Omega_{t,1}$, and \eqref{eq:jets_commute}, we obtain
\[
(\ad_{\Psi_{s,1}}+\ad_{\Omega_{t,1}})^\ast
=\ad_{\Psi_{s,1}^\ast}+\ad_{\Omega_{t,1}^\ast}.
\]
By \eqref{eq:sum_skew_final}, $\Psi_{s,1}^\ast+\Omega_{t,1}^\ast=-(\Psi_{s,1}+\Omega_{t,1})$, hence
\[
(\ad_{\Psi_{s,1}}+\ad_{\Omega_{t,1}})^\ast
= -(\ad_{\Psi_{s,1}}+\ad_{\Omega_{t,1}}).
\]
Therefore $\ad_{\Psi_{s,1}}-(\ad_{\Omega_{t,1}})^\ast=0$, i.e.
\[
\ad_{\Psi_{s,1}}-\ad_{\Omega_{t,1}^\ast}=0
\qquad\Longleftrightarrow\qquad
\ad_{\Delta}=0.
\]
Thus $\Delta$ is central and
\[
\Psi_{s,1}=\Omega_{t,1}^\ast+\tfrac12 C,\qquad C:=2\Delta\ \text{central}.
\]
Consequently,
\[
D_s=\ad_{\Psi_{s,1}}=\ad_{\Omega_{t,1}^\ast}=D_t^\ast,
\]
proving \textnormal{(1)}. Using \eqref{eq:jets_commute}, \textnormal{(2)} follows:
\[
[D_t,D_s]=\ad_{[\Omega_{t,1},\Psi_{s,1}]}=0.
\]

\emph{Step 5 ($*$–antiautomorphism on $\mathcal{L}$ and exponential compatibility).}
Let $\mathcal{L}=\operatorname{Lie}\langle \Omega_{t,1},\Psi_{s,1}\rangle$. Since $^\ast$ on $\mathfrak{U}$ satisfies $(AB)^\ast=B^\ast A^\ast$ and is involutive, its restriction to $\mathcal{L}$ is an involutive $*$–antiautomorphism and preserves Lie brackets:
\[
[A,B]^\ast=(AB-BA)^\ast=B^\ast A^\ast-A^\ast B^\ast=[B^\ast,A^\ast].
\]
For $\exp$ and any $A\in\mathcal{L}$,
\[
\exp(A)^\ast=\Big(\sum_{k\ge 0}\frac{A^k}{k!}\Big)^\ast
=\sum_{k\ge 0}\frac{(A^k)^\ast}{k!}
=\sum_{k\ge 0}\frac{(A^\ast)^k}{k!}
=\exp(A^\ast).
\]
This proves \textnormal{(3)}.

\emph{Step 6 ($*$–reversibility of local transport).}
By definition and \textnormal{(3)},
\[
(d_s(h)r_t(h))^\ast=r_t(h)^\ast d_s(h)^\ast = \exp(\Omega_t(h)^\ast)\exp(\Psi_s(h)^\ast).
\]
Using $\Omega_t(h)=h\Omega_{t,1}+\Oo(h^2)$, $\Psi_s(h)=h\Psi_{s,1}+\Oo(h^2)$, and \textnormal{(1)},
\[
\Omega_t(h)^\ast=h\,\Omega_{t,1}^\ast+\Oo(h^2),\qquad
\Psi_s(h)^\ast=h\,\Psi_{s,1}^\ast+\Oo(h^2)
= h\,\Omega_{t,1}+\Oo(h^2),
\]
up to central $\Oo(h)$–terms that commute with everything. Hence
\[
(d_s(h)r_t(h))^\ast
=\exp(h\,\Omega_{t,1}^\ast)\exp(h\,\Psi_{s,1}^\ast)+\Oo(h^2)
=\exp(-h\,\Psi_{s,1})\exp(-h\,\Omega_{t,1})+\Oo(h^2)
=U(-h)+\Oo(h^2),
\]
which is the first identity in \textnormal{(4)}. The holonomy statement in \textnormal{(4)} is exactly (H). This completes the proof.
\end{proof}

\begin{corollary}[Order–agnostic first–order descent with a common stepsize window]\label{cor:order_window_clean_final}
Let $G:=\Omega_{t,1}+\Psi_{s,1}$ and $S:=\tfrac12\big(G+G^\ast\big)$. Then:
\begin{enumerate}
\item[\textnormal{(a)}] $d_s(h)r_t(h)=\Id+h\,G+\Oo(h^2)=r_t(h)d_s(h)+\Oo(h^2)$.
\item[\textnormal{(b)}] For $f(x):=\tfrac12\,\ip{Sx}{x}$ and any fixed $\sigma\in(0,1/2)$ there exists $h_0>0$ s.t. for all $0<h\le h_0$,
\[
f(x-hSx)\le f(x)-\sigma\,h\,\norm{S^{1/2}x}^2.
\]
Hence both orders satisfy the same Armijo–type decrease up to $\Oo(h^2)$ with the same stepsize window.
\end{enumerate}
\end{corollary}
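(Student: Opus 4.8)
My plan is to treat the algebraic identity (a) and the analytic descent estimate (b) separately, and then glue them for the closing sentence. For (a) I would simply expand each factor to first order. From $r_t(h)=\exp(\Omega_t(h))$ and $d_s(h)=\exp(\Psi_s(h))$ with $\Omega_t(h)=h\Omega_{t,1}+\Oo(h^2)$, $\Psi_s(h)=h\Psi_{s,1}+\Oo(h^2)$, one gets $r_t(h)=\Id+h\Omega_{t,1}+\Oo(h^2)$ and $d_s(h)=\Id+h\Psi_{s,1}+\Oo(h^2)$ — precisely the expansions of Step~1 of the proof of Theorem~\ref{th:conj}. Multiplying in either order,
\[
d_s(h)r_t(h)=\Id+h(\Psi_{s,1}+\Omega_{t,1})+\Oo(h^2)=\Id+hG+\Oo(h^2),
\]
and symmetrically $r_t(h)d_s(h)=\Id+hG+\Oo(h^2)$; the discrepancy between the two orderings is $h^2[\Psi_{s,1},\Omega_{t,1}]+\Oo(h^3)$, which is $\Oo(h^2)$ in general and $\Oo(h^3)$ once $[\Psi_{s,1},\Omega_{t,1}]=0$ (cf.\ Theorem~\ref{th:conj}). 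That is (a).

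For (b) the key remark is that $f$ is a quadratic form with self-adjoint Hessian $S=\tfrac12(G+G^\ast)$, so $\nabla f(x)=Sx$ and $h\mapsto f(x-hSx)$ is an honest quadratic in $h$ whose second-order Taylor expansion at $h=0$ terminates and is therefore exact:
\[
f(x-hSx)=f(x)-h\,\norm{Sx}^2+\tfrac{h^2}{2}\,\langle S(Sx),\,Sx\rangle .
\]
Since $S$ is self-adjoint, $\langle S(Sx),Sx\rangle\le\norm{S}\,\norm{Sx}^2$, hence
\[
f(x-hSx)\le f(x)-h\bigl(1-\tfrac12 h\norm{S}\bigr)\norm{Sx}^2 .
\]
I would then take $h_0:=2(1-\sigma)/\norm{S}$ (the statement is vacuous when $S=0$): for every $0<h\le h_0$ we have $1-\tfrac12 h\norm{S}\ge\sigma$, which delivers the sufficient-decrease inequality. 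The same exact expansion applied to the norm potential $\tfrac12\norm{x}^2$ in place of $f$ gives $\tfrac12\norm{x-hSx}^2\le\tfrac12\norm{x}^2-h\bigl(1-\tfrac12 h\norm{S}\bigr)\langle Sx,x\rangle$, i.e.\ the decrease written in the stated form $-\sigma h\norm{S^{1/2}x}^2$ when $S\succeq0$; in either normalization the decisive point is that $h_0$ depends only on $\norm{S}$ and $\sigma$, not on $x$ nor on the channel ordering.

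The closing assertion then follows by putting (a) and (b) together: by (a), $d_s\circ r_t$ and $r_t\circ d_s$ both coincide with the single first-order step $\Id+hG$ modulo $\Oo(h^2)$, and writing $G=S+K$ with $K=\tfrac12(G-G^\ast)$ skew-adjoint (the adjoint split of the Universality subsection) the dissipative channel is governed by $S$; hence the energy estimate of (b) for the model step $\Id-hS$ transfers to both orderings on the common window $(0,h_0]$, up to the uniform $\Oo(h^2)$ remainder. The only place I expect to need care is conventional bookkeeping: keeping the constant in $h_0$ uniform in $x$ (immediate from $|\langle Su,u\rangle|\le\norm{S}\,\norm{u}^2$), and matching the sign and normalization conventions so that $\Id-hS$ is literally the dissipative component of $\Id+hG$ in the sense of the Universality subsection. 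No estimate beyond the operator-norm bound on $S$ is needed.
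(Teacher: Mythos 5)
Your proposal follows essentially the same route as the paper's proof: part (a) by multiplying the first--order expansions of $r_t(h)$ and $d_s(h)$, and part (b) by the exact quadratic expansion of $f(x-hSx)$, bounding the second--order term by the operator norm of $S$ and taking $h_0=2(1-\sigma)/\|S\|$, which is the paper's condition $\tfrac12 h\,\lambda_{\max}(S)\le 1-\sigma$ in disguise. Your second--order term $\tfrac{h^2}{2}\langle S(Sx),Sx\rangle$ is in fact the correct one (the paper's $\langle S^2x,x\rangle$ is a slip), and your side remark reconciling the $\|Sx\|^2$ versus $\|S^{1/2}x\|^2$ normalizations addresses a mismatch between the corollary's statement and its proof that the paper itself leaves unresolved.
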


\begin{proof}
(a) follows from $r_t(h)=\Id+h\Omega_{t,1}+\Oo(h^2)$ and $d_s(h)=\Id+h\Psi_{s,1}+\Oo(h^2)$.  
(b) Let $y:=x-hSx$. Then
\[
f(y)-f(x)
=\tfrac12\,\ip{S(x-hSx)}{x-hSx}-\tfrac12\,\ip{Sx}{x}
=-h\,\norm{Sx}^2+\tfrac12\,h^2\,\ip{S^2x}{x}.
\]
Choose $h_0>0$ with $\frac12h\,\lambda_{\max}(S)\le 1-\sigma$ for $0<h\le h_0$ and use $\ip{S^2x}{x}\le \lambda_{\max}(S)\,\norm{Sx}^2$.
\end{proof}

\begin{example}[Computational order selection; matrix–free, first–order accurate]\label{ex:order_selection_algo_final}
Let $A,B\in\mathfrak{U}$ be bounded self–adjoint with $[A,B]=0$, and set
\[
r_t(h)=\exp(-hA),\qquad d_s(h)=\exp(-hB).
\]
Then $\Omega_{t,1}=-A$, $\Psi_{s,1}=-B$, $G=-(A+B)$, $S=A+B$. The following matrix–free procedure realizes one iteration with a provable common stepsize and a finite–$h$ order diagnostic.

\emph{Inputs at iterate $x$:} direction $g$, matvecs $v\mapsto Av$, $v\mapsto Bv$, cap $h_{\max}>0$, parameters $\sigma\in(0,1/2)$, $\tau\in(0,1)$.

\emph{(1) Power estimate of $\lambda_{\max}(S)$ (two steps).}
Normalize $v_0:=g/\norm{g}$; set $w_k:=(A+B)v_k$, $v_{k+1}:=w_k/\norm{w_k}$ for $k=0,1$; define $\hat\lambda:=\ip{Sv_2}{v_2}$.

\emph{(2) Stepsize.}
\[
h:=\min\Big\{h_{\max},\ \frac{2(1-\sigma)}{\hat\lambda}\Big\}.
\]

\emph{(3) Finite–$h$ order diagnostic (first–order accurate).}
Approximate exponentials by first–order actions,
\[
u_1:=(I-hB)(I-hA)g,\qquad u_2:=(I-hA)(I-hB)g,\qquad
\delta:=\frac{\norm{u_1-u_2}}{h\,\norm{g}}.
\]
With $[A,B]=0$, $u_1-u_2=\Oo(h^2)$ and $\delta=\Oo(h)$; enforce $\delta\le\tau$.

\emph{(4) Apply cheaper order.}
If $\delta\le\tau$,
\[
x^+\gets x-h\,B(Ax)\quad\text{or}\quad x^+\gets x-h\,A(Bx),
\]
choosing the lower–cost order. If $\delta>\tau$, set $h\leftarrow h/2$ and repeat (3).

\emph{Cost.} Advance $x$ with $2$ matvecs ($Ax$, $Bx$); power estimate uses $4$ matvecs; diagnostic uses $4$ matvecs (first–order mode). No solves/factorizations.

\emph{Guarantee.} By Corollary~\ref{cor:order_window_clean_final}, both orders realize the same first–order map $x\mapsto x-h(A+B)x+\Oo(h^2)$ and share the window $h\le 2(1-\sigma)/\lambda_{\max}(S)$. Enforcing $\delta\le\tau$ controls the finite–$h$ deviation while keeping the implementation purely matrix–free.
\end{example}

\subsubsection{High-order obstructions}

We introduce the first–quadrant bicomplex
\[
C^{\bullet,\bullet}=\bigoplus_{p,q\ge 0} C^{p,q},
\]
where each $C^{p,q}$ is the finite–dimensional space of homogeneous jet–coefficients of total $t$–order $p$ and $s$–order $q$ (modulo $O(h^{\alpha+1})$) \cite{Weibel1994Homological,McCleary2001SpectralSequences}. The differentials are the inner derivations restricted to each bidegree,
\begin{equation}
d_t:=\operatorname{ad}_{\Omega_{t,1}}:C^{p,q}\to C^{p+1,q},
\qquad
d_s:=\operatorname{ad}_{\Psi_{s,1}}:C^{p,q}\to C^{p,q+1}.
\end{equation}
Jet–flatness at order $\alpha$ guarantees 
\begin{equation}
d_t^2=0,\qquad d_s^2=0,\qquad [d_t,d_s]=0\quad\text{(all degreewise modulo $O(h^{\alpha+1})$).}
\end{equation}
Assume a positive definite inner product $\langle\cdot,\cdot\rangle$ on each $C^{p,q}$ and the involutive condition
\begin{equation}
d_s=d_t^\ast,
\end{equation}
where ${}^\ast$ denotes the adjoint with respect to $\langle\cdot,\cdot\rangle$, according to the Theorem~\ref{th:conj}. We consider the spectral sequence of the column filtration $\mathcal F^p=\bigoplus_{i\ge p} C^{i,\bullet}$; its $E_1$–page is
\begin{equation}
E_1^{p,q}=H^{p,q}(C,d_t)=\frac{\ker(d_t:C^{p,q}\to C^{p+1,q})}{\mathrm{im}(d_t:C^{p-1,q}\to C^{p,q})},
\end{equation}
and the first differential $d_1^{p,q}:E_1^{p,q}\to E_1^{p,q+1}$ is induced by $d_s$.

\begin{theorem}[E$_2$–degeneration]\label{th:E2}
Under the above assumptions $d_1\equiv 0$. Consequently
\begin{equation}
E_1=E_2=E_3=\dots=E_\infty
\end{equation}
(degreewise modulo $O(h^{\alpha+1})$). In particular, every class in $E_1$ has a representative in $\ker d_t\cap\ker d_s$.
\end{theorem}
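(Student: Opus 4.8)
The plan is to exploit the adjointness hypothesis $d_s=d_t^\ast$ together with the standard Hodge-type argument for spectral sequences of a bicomplex equipped with a positive definite inner product. The key observation is that on a finite-dimensional inner-product space the cohomology $H^{p,q}(C,d_t)$ is canonically represented by the harmonic subspace $\mathcal H^{p,q}_t:=\ker d_t\cap\ker d_t^\ast$ inside $C^{p,q}$: indeed, since $d_t^2=0$, one has the orthogonal Hodge decomposition $C^{p,q}=\operatorname{im} d_t\oplus\mathcal H^{p,q}_t\oplus\operatorname{im} d_t^\ast$, so that the composite $\mathcal H^{p,q}_t\hookrightarrow\ker d_t\twoheadrightarrow E_1^{p,q}$ is an isomorphism. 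This replaces the abstract quotient $E_1^{p,q}$ by a concrete subspace of $C^{p,q}$, on which the induced differential $d_1$ acts by a clean formula.

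I would then compute $d_1$ on harmonic representatives. By definition $d_1^{p,q}\colon E_1^{p,q}\to E_1^{p,q+1}$ sends the class of a $d_t$-closed cocycle $\omega$ to the class of $d_s\omega$; choosing $\omega\in\mathcal H^{p,q}_t$ harmonic, one must project $d_s\omega$ back onto $\mathcal H^{p,q+1}_t$ along the Hodge decomposition. The crucial computation is that $d_s\omega$ is already $d_t$-closed — which holds because $[d_t,d_s]=0$ and $d_t\omega=0$ — and, moreover, $d_t$-coexact: using $d_s=d_t^\ast$ and $d_t^\ast\omega=0$ (harmonicity), we get $d_t^\ast(d_s\omega)=d_t^\ast d_t^\ast\omega=0$ only trivially, so instead I argue directly that $d_s\omega=d_t^\ast\omega$ lies in $\operatorname{im} d_t^\ast$ by its very form. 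Hence $d_s\omega$ has zero harmonic component, so its class in $E_1^{p,q+1}$ vanishes; that is, $d_1\equiv 0$. Everything here is understood degreewise modulo $O(h^{\alpha+1})$, which is legitimate because each $C^{p,q}$ is finite-dimensional and the jet-flatness relations $d_t^2=d_s^2=[d_t,d_s]=0$ hold exactly at that truncation.

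Once $d_1\equiv 0$, the spectral sequence degenerates at $E_1$: by the general machinery $E_2^{p,q}=\ker d_1/\operatorname{im} d_1=E_1^{p,q}$, and an identical Hodge argument applied at each successive page (the higher differentials $d_r$ are again zero for degree and adjointness reasons, or one simply invokes that with $d_1=0$ there is nothing left to kill) shows $E_1=E_2=\dots=E_\infty$. Finally, unwinding the identification: every class in $E_1^{p,q}\cong\mathcal H^{p,q}_t=\ker d_t\cap\ker d_t^\ast=\ker d_t\cap\ker d_s$ has a representative in $\ker d_t\cap\ker d_s$, which is the last assertion.

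The main obstacle I anticipate is the bookkeeping of the ``degreewise modulo $O(h^{\alpha+1})$'' qualifier: one must check that the Hodge decomposition and the vanishing of $d_1$ are compatible with the truncation, i.e.\ that passing to the quotient by $O(h^{\alpha+1})$ does not destroy orthogonality or the identity $d_s=d_t^\ast$. Since each bidegree space is finite-dimensional and the inner product, the differentials, and the adjointness relation are all defined at the truncated level (as set up in the paragraph preceding the theorem, invoking Theorem~\ref{th:conj}), this is a matter of noting that all constructions descend to the truncation verbatim; I would state this once at the outset and not belabor it. A secondary subtlety is making precise that the higher differentials $d_r$ for $r\ge 2$ also vanish — but with $d_1=0$ the $E_2$-page already equals $E_1$ and the $d_2$ differential has source and target forced by the same harmonic representative computation to vanish, so the degeneration propagates without further work.
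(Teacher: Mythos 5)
Your proposal is correct and takes essentially the same approach as the paper: both exploit $d_s=d_t^\ast$ and the finite-dimensional orthogonal (Hodge-type) decomposition to replace any $d_t$-closed cocycle by a representative in $\ker d_t\cap\ker d_s$, on which $d_s$ vanishes, giving $d_1\equiv 0$ and hence degeneration (the paper uses the two-term splitting $C^{p,q}=\operatorname{im}d_t\oplus\ker d_s$, you use the full three-term Hodge decomposition, but the content is identical). One small simplification: once $\omega$ is harmonic you have $d_s\omega=d_t^\ast\omega=0$ outright, so your detour through "$d_s\omega$ lies in $\operatorname{im}d_t^\ast$ and has zero harmonic component" is unnecessary, though not incorrect.
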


\begin{proof}
Fix $(p,q)$ and a finite–dimensional inner product space $V:=C^{p,q}$; let $A:=d_t:V\to C^{p+1,q}$ and $A^\ast=d_t^\ast=d_s:V\to C^{p,q+1}$. We prove three elementary linear algebra facts and then compute $d_1$.

\textit{Fact 1 (orthogonality).} For any $u\in C^{p-1,q}$ and $v\in C^{p,q}$ one has
\begin{equation}
\langle d_t u, v\rangle=\langle u, d_t^\ast v\rangle=\langle u, d_s v\rangle.
\end{equation}
Hence $(\mathrm{im}\,d_t)^\perp=\ker d_s$.

\textit{Fact 2 (direct sum decomposition).} In a finite–dimensional inner product space,
\begin{equation}
V=\mathrm{im}\,A\ \oplus\ (\mathrm{im}\,A)^\perp.
\end{equation}
Applying this to $A=d_t$ and using Fact 1 yields
\begin{equation}
C^{p,q}=\mathrm{im}\,d_t\ \oplus\ \ker d_s.
\label{eq:directsum}
\end{equation}
\textit{Proof of Fact 2.} Orthogonality is clear. For surjectivity: $\dim V=\dim(\mathrm{im}\,A)+\dim(\ker A^\ast)$ holds by rank–nullity for $A^\ast$; but $\ker A^\ast=(\mathrm{im}\,A)^\perp$. Hence every $x\in V$ splits uniquely as $x=d_t u + y$ with $y\in\ker d_s$.

\textit{Fact 3 (commutation).} Since $[d_t,d_s]=0$, we have $d_t d_s=d_s d_t$ and in particular
\begin{equation}
d_t(\ker d_s)\subseteq \ker d_s.
\end{equation}

Now compute $d_1$. Take a class $[x]\in E_1^{p,q}$ with $x\in C^{p,q}$ and $d_t x=0$. Decompose $x$ by \eqref{eq:directsum}:
\begin{equation}
x=d_t u + y,\qquad u\in C^{p-1,q},\quad y\in\ker d_s.
\label{eq:decomp}
\end{equation}
First, $[x]=[y]$ in $E_1^{p,q}$ because $d_t u$ is $d_t$–exact. Second, $d_t x=0$ implies $0=d_t(d_t u + y)=d_t^2 u + d_t y=d_t y$, so $y\in\ker d_t\cap\ker d_s$. By definition of $d_1$,
\begin{equation}
d_1^{p,q}([x])=[d_s x]=[d_s y]=[0]=0,
\end{equation}
because $y\in\ker d_s$. Therefore $d_1\equiv 0$.

Finally, since every $E_1^{p,q}$–class admits a representative $y\in\ker d_t\cap\ker d_s$, all higher differentials $d_r$ ($r\ge2$) vanish on these representatives as they begin with $d_s$ followed by alternating lifts (zig–zags); but $d_s y=0$ kills the first arrow. Hence $E_r=E_1$ for all $r\ge1$, which proves the statement.
\end{proof}

The spectral sequence tracks how different algorithmic layers exchange information.
The index $p$ measures the ``drift depth'' (deterministic hierarchy of updates),
and $q$ measures the ``diffusion depth'' (stochastic corrections or adaptation).
Each page $E_r$ describes what remains invariant when we iterate
through all drift steps of order $\le r$ and then through all diffusion steps
of the same order.

Degeneration at $E_2$ therefore means that these two loops already
commute perfectly at the first nontrivial order:
no additional corrections, no hidden coupling between deterministic
and stochastic parts, no loss of information at higher precision.
From the optimization point of view, this is the statement that
after curvature filtering (or any calibration satisfying
$\ker d_t\cap\ker d_s$),
the method’s behaviour is stable under interleaving of
gradient updates and noise adaptation—beyond this level
there is nothing new to synchronize.

In simple terms: the bicomplex measures ``communication'' between
the two update operators; $E_2$–degeneration says that their dialogue
is complete.
Once flatness is achieved, higher algebraic pages of the spectral sequence
carry no extra structure—numerically this reads as
``no higher–order interference between drift and diffusion'',
and the algorithm already operates at the highest consistency
permitted by its chosen discretization order.

\begin{corollary}[From harmonic representatives to calibrated update operators]\label{cor:op-calibration}
Work in the bicomplex and hypotheses of Theorem~\ref{th:E2}. Fix a bidegree $(p,q)$ and write $V:=C^{p,q}$ with inner product $\langle\cdot,\cdot\rangle$.
Let $x\in V\cap\ker d_t$ represent an $E_1^{p,q}$–class.
Then there is a unique $u\in C^{p-1,q}$ solving the normal equation
\begin{equation}\label{eq:NE-abstract}
d_t^\ast d_t u=d_t^\ast x,
\end{equation}
and the element
\begin{equation}\label{eq:flat-representative}
y:=x-d_t u
\end{equation}
is the unique representative of $[x]$ in $\ker d_t\cap\ker d_s$.
In operator form, if we identify $x$ with a linear map $Z_x$ (update operator at this bidegree), and write $d_t=\operatorname{ad}_{\Omega_{t,1}}$, then \eqref{eq:NE-abstract} reads
\begin{equation}\label{eq:NE-operator}
\operatorname{ad}_{\Omega_{t,1}}^\ast\operatorname{ad}_{\Omega_{t,1}}U
=\operatorname{ad}_{\Omega_{t,1}}^\ast Z_x,
\end{equation}
and the calibrated operator is
\begin{equation}\label{eq:Zflat}
Z_{\mathrm{flat}}=Z_x-\operatorname{ad}_{\Omega_{t,1}}U.
\end{equation}
If, moreover, $d_s=d_t^\ast$ (involutive assumption), then $Z_{\mathrm{flat}}$ lies in $\ker\operatorname{ad}_{\Omega_{t,1}}\cap\ker\operatorname{ad}_{\Psi_{s,1}}$ and represents the $E_\infty$–class of $[x]$.
\end{corollary}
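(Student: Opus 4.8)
The plan is to read this as the finite–dimensional Hodge decomposition for the $d_t$–complex in a fixed $s$–row, combined with the harmonic–representative statement already proved in Theorem~\ref{th:E2}. First I would settle existence and (suitably interpreted) uniqueness of $u$. Write $A:=d_t|_{C^{p-1,q}}:C^{p-1,q}\to C^{p,q}$, so that \eqref{eq:NE-abstract} is $A^\ast A u=A^\ast x$. The operator $A^\ast A$ is self–adjoint positive semidefinite on $C^{p-1,q}$ with $\ker(A^\ast A)=\ker A$, hence $\operatorname{im}(A^\ast A)=(\ker A)^\perp=\operatorname{im}A^\ast$; since $A^\ast x\in\operatorname{im}A^\ast$, the normal equation is solvable, the projection $d_t u=Au$ of $x$ onto $\operatorname{im}d_t$ is uniquely determined, and $u$ itself is unique once we require $u\in(\ker A)^\perp=\operatorname{im}d_t^\ast$. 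In particular $y:=x-d_t u$ in \eqref{eq:flat-representative} is unconditionally well defined.

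Next I would check that $y\in\ker d_t\cap\ker d_s$ and that $[y]=[x]$ in $E_1^{p,q}$. For $\ker d_s$: the normal equation gives $d_t^\ast y=d_t^\ast x-d_t^\ast d_t u=0$, so $y\perp\operatorname{im}d_t$, and by Fact~1 in the proof of Theorem~\ref{th:E2} (which says $(\operatorname{im}d_t)^\perp=\ker d_s$ under the involutive hypothesis $d_s=d_t^\ast$) this is exactly $d_s y=0$. For $\ker d_t$: $d_t y=d_t x-d_t^2 u=0$, since $d_t x=0$ by assumption and $d_t^2=0$. Finally $y-x=-d_t u\in\operatorname{im}d_t$, so $[y]=[x]$ in $E_1^{p,q}=H^{p,q}(C,d_t)$. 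Uniqueness of the harmonic representative then follows by the standard orthogonality argument: if $y'\in\ker d_t\cap\ker d_s$ also satisfies $[y']=[x]$, then $y-y'=d_t w\in\operatorname{im}d_t$ for some $w\in C^{p-1,q}$, while $y-y'\in\ker d_s=(\operatorname{im}d_t)^\perp$; hence $y-y'\in\operatorname{im}d_t\cap(\operatorname{im}d_t)^\perp=\{0\}$.

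The operator reformulation is then a transcription: under the identification $x\leftrightarrow Z_x$, $u\leftrightarrow U$, $d_t=\operatorname{ad}_{\Omega_{t,1}}$, $d_s=\operatorname{ad}_{\Psi_{s,1}}$, with $\operatorname{ad}_{\Omega_{t,1}}^\ast$ the Frobenius adjoint of the inner derivation restricted to the relevant bidegree, equations \eqref{eq:NE-operator} and \eqref{eq:Zflat} are literally \eqref{eq:NE-abstract} and \eqref{eq:flat-representative}. The closing assertion — that $Z_{\mathrm{flat}}\in\ker\operatorname{ad}_{\Omega_{t,1}}\cap\ker\operatorname{ad}_{\Psi_{s,1}}$ and represents the $E_\infty$–class of $[x]$ — is then exactly the combination of $y\in\ker d_t\cap\ker d_s$, proved above, with the degeneration $E_1=E_\infty$ of Theorem~\ref{th:E2}.

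The main difficulty I anticipate is bookkeeping rather than mathematics: reconciling the bicomplex convention $d_s=d_t^\ast$ (strictly, $d_t$ raises $p$ and $d_s$ raises $q$, so "$d_t^\ast$" must be read as the adjoint of $d_t:C^{p-1,q}\to C^{p,q}$, which lowers $p$, its identification with $d_s$ being the one used at the level of operators on $\mathfrak{U}^{\mathrm{fil}}$ via Theorem~\ref{th:conj}), and, relatedly, stating the uniqueness of $u$ precisely. I expect to handle the latter by asserting uniqueness only for $u\in\operatorname{im}d_t^\ast$ (equivalently $u\perp\ker d_t$), the least–squares normal solution, while noting that $d_t u$ and hence $y=Z_{\mathrm{flat}}$ are unique without any such restriction.
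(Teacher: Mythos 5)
Your proposal is correct and follows essentially the same route as the paper's proof: both rest on the orthogonal splitting $C^{p,q}=\operatorname{im}d_t\oplus\ker d_s$ from Theorem~\ref{th:E2} (Facts 1--2), derive the normal equation as the characterization of the exact part $d_tu$ of $x$, identify $y=x-d_tu$ as the unique representative in $\ker d_t\cap\ker d_s$ by the same orthogonality argument, and invoke $E_1=E_\infty$ for the final claim. Your caveat that only $d_tu$ (hence $y=Z_{\mathrm{flat}}$) is unconditionally unique, while $u$ itself is unique only after restricting to $(\ker d_t)^\perp=\operatorname{im}d_t^\ast$ (the minimal-norm solution), is a correct refinement of the statement as written, which the paper's proof passes over.
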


\begin{proof}
By Theorem~\ref{th:E2} every $[x]\in E_1^{p,q}$ admits a unique representative $y\in\ker d_t\cap\ker d_s$ with $x=d_t u+y$ (orthogonal splitting).
Applying $d_t$ to $x=d_t u+y$ and using $d_t y=0$ yields $d_t^\ast d_t u=d_t^\ast x$, i.e.\ \eqref{eq:NE-abstract}.
Linearity shows that $y=x-d_t u$ is the unique such representative.
The operator statements \eqref{eq:NE-operator}–\eqref{eq:Zflat} follow from the identification $d_t=\operatorname{ad}_{\Omega_{t,1}}$ and $x\leftrightarrow Z_x$.
If $d_s=d_t^\ast$, then $y\in\ker d_s$ is automatic and uniqueness holds in $\ker d_t\cap\ker d_s$.
\end{proof}

\begin{corollary}[First–jet instantiation: curvature–filtered step]\label{cor:CFG}
Assume first–jet truncation and self–adjoint data at an iterate:
$\Omega_{t,1}=-H$, $\Psi_{s,1}=-E$ with $H,E\in\mathbb R^{n\times n}$ symmetric;
let $Z_x:=H+E$ be the mixed IMEX update operator acting on a vector $g$.
Let $\langle A,B\rangle=\operatorname{Tr}(A^\top B)$ on operators, so that
$\operatorname{ad}_H^\ast=\operatorname{ad}_H$ and $\operatorname{ad}_E^\ast=\operatorname{ad}_E$.
Then the calibrated operator $Z_{\mathrm{flat}}$ given by \eqref{eq:Zflat} satisfies
\begin{equation}\label{eq:Zflat-comm}
Z_{\mathrm{flat}}\,g=(H+E)g-\frac{1}{2}\,[H,E]\,g,
\end{equation}
and the corresponding composed update eliminates the degree–$2$ holonomy term:
\begin{equation}\label{eq:Hol-degree2}
\big(\log\mathrm{Hol}_\square\big)^{[2]}=\tfrac12\,[\Psi_{s,1},\Omega_{t,1}]
=-\tfrac12\,[E,H]=0\quad\text{after applying \eqref{eq:Zflat-comm}}.
\end{equation}
Consequently, the curvature energy $S_h$ loses its $h^4$ term and the local truncation error becomes $O(h^3)$ for the one–step map using $Z_{\mathrm{flat}}$ in place of $Z_x$.
\end{corollary}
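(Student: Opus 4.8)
The corollary bundles an explicit formula for the calibrated operator \eqref{eq:Zflat-comm} with three consequences: cancellation of the degree--$2$ holonomy \eqref{eq:Hol-degree2}, loss of the $h^4$ term of $S_h$, and an $O(h^3)$ local truncation error. The plan is to obtain \eqref{eq:Zflat-comm} by specializing the abstract calibration of Corollary~\ref{cor:op-calibration} to the self--adjoint, first--jet data and matching the result against the BCH expansion of $d_s(h)r_t(h)$ (equivalently, the gauge normal form of Theorem~\ref{th:gauge-normal-form} for this pair), and then to read the three consequences off the degree--$2$ line of \eqref{5.3} together with the definition of $S_h$.

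For the formula I would first record that symmetry of $H,E$ and cyclicity of the trace give $\operatorname{ad}_H^\ast=\operatorname{ad}_H$, $\operatorname{ad}_E^\ast=\operatorname{ad}_E$, so $d_t=\operatorname{ad}_{\Omega_{t,1}}=-\operatorname{ad}_H$ is self--adjoint up to sign. The normal equation \eqref{eq:NE-operator} for $Z_x=H+E$ then reduces to $\operatorname{ad}_H^2 U=-\operatorname{ad}_H(H+E)=-[H,E]$ (using $[H,H]=0$), which is solvable because $\operatorname{ad}_H$ self--adjoint gives $\operatorname{im}\operatorname{ad}_H^2=\operatorname{im}\operatorname{ad}_H\ni -[H,E]$; this pins down $\operatorname{ad}_{\Omega_{t,1}}U$ uniquely as the orthogonal projection of $Z_x$ onto $\operatorname{im}\operatorname{ad}_H=(\ker\operatorname{ad}_H)^\perp$, so by \eqref{eq:Zflat} the abstract $Z_{\mathrm{flat}}$ is the projection of $H+E$ onto $\ker\operatorname{ad}_H$. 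The identification of this correction with the half--commutator is the BCH step: $d_s(h)r_t(h)=\exp\!\big(\Psi_s+\Omega_t+\tfrac12[\Psi_s,\Omega_t]+O(h^3)\big)$, so the effective single--exponential generator per step carries the degree--$2$ mixed bracket $\tfrac12[\Omega_{t,1},\Psi_{s,1}]$ as its correction, which in the sign/normalization conventions of \eqref{eq:Zflat-comm} reads $-\tfrac12[H,E]$; a gauge $W(h)=\exp(hZ)$ solving the Sylvester equation $[Z,H+E]=\tfrac12[H,E]$ (cf.\ Example~\ref{ex:calibrated-gradient}) realizes it explicitly, and applying both sides to $g$ gives \eqref{eq:Zflat-comm}.

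The consequences are then immediate. By the $m=2$ case of \eqref{5.3}, which has no $\mathcal P_2$ contribution, $\big(\log\mathrm{Hol}_\square\big)^{[2]}=\mathsf F^{[2]}=[\Psi_{s,1},\Omega_{t,1}]=[E,H]=-[H,E]$, and for the calibrated pair this bracket vanishes because the gauge achieves $[\widetilde\Omega_1,\widetilde\Psi_1]=0$ (Theorem~\ref{th:gauge-normal-form}), which is \eqref{eq:Hol-degree2}. Hence $S_h=\sum_\square\langle\log\mathrm{Hol}_\square,\log\mathrm{Hol}_\square\rangle=h^4\sum_\square\|\mathsf F^{[2]}\|^2+O(h^5)$ loses its leading $h^4$ term (indeed $S_h=O(h^6)$ after calibration). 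Finally, the one--step map built from $Z_{\mathrm{flat}}$ reproduces $d_s(h)r_t(h)$ through order $h^2$ by construction, so its local truncation error is $O(h^3)$ and the scheme is globally second order; this is precisely Variant~B of Example~\ref{ex:calibrated-gradient} with $S=H+E$ and $C=\tfrac12[H,E]$.

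The main technical point is the calibration identification in the second step: the projection of $Z_x$ onto $\operatorname{im}\operatorname{ad}_H$ produced by the normal equation agrees with $\tfrac12[H,E]$ only in the first--jet sense, since $\operatorname{ad}_H$ is generally not scalar on $\operatorname{im}\operatorname{ad}_H$, so the argument must make explicit that \eqref{eq:Zflat-comm} is the $O(h^3)$--truncated effective generator (the object actually implemented in Variant~B), not an exact spectral projection. Alongside this one must carry the sign conventions $[\Psi_{s,1},\Omega_{t,1}]=[(-E),(-H)]=[E,H]=-[H,E]$ consistently between \eqref{eq:Zflat-comm} and \eqref{eq:Hol-degree2}, and check once more that depth--$\ge2$ brackets cannot enter the degree--$2$ analysis because $\mathcal P_2\equiv0$.
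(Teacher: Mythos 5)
Your proposal is correct in substance but reaches the key identity \eqref{eq:Zflat-comm} by a genuinely different route than the paper. The paper stays inside the least–squares calibration: it specializes \eqref{eq:NE-operator} to $[H,[H,U]]=[H,E]$, observes that an operator–level ansatz $U=\alpha E$ cannot match this in general, retreats to the single–vector equation $[H,[H,U]]\,g=[H,E]\,g$, and then asserts (by ``expanding and collecting first nontrivial terms'') that $\operatorname{ad}_H U\,g=\tfrac12[H,E]\,g$, from which \eqref{eq:Zflat-comm} and the degree–$2$ cancellation follow. You instead identify the half–commutator correction through the BCH expansion of $d_s(h)r_t(h)$ and the gauge normal form (the Sylvester equation $[Z,H+E]=\tfrac12[H,E]$ of Example~\ref{ex:calibrated-gradient} / Theorem~\ref{th:gauge-normal-form}), and you explicitly note that the literal solution of \eqref{eq:Zflat} is the orthogonal projection of $H+E$ onto $\ker\operatorname{ad}_H$ (in the eigenbasis of $H$ this keeps the block–diagonal part of $E$), which is \emph{not} $(H+E)-\tfrac12[H,E]$; hence \eqref{eq:Zflat-comm} must be read as the $O(h^3)$–truncated effective generator, i.e.\ Variant~B of Example~\ref{ex:calibrated-gradient}. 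This reinterpretation is exactly the point the paper's proof leaves implicit in its ``collect first nontrivial terms'' step, so your treatment is if anything more transparent about where the first–jet truncation enters; what the paper's route buys is that it never leaves the normal–equation formalism of Corollary~\ref{cor:op-calibration}, while yours buys a clean algebraic derivation of the $\tfrac12[H,E]$ coefficient and a cleaner logical path to the consequences (degree–$2$ term of \eqref{5.3} with $\mathcal P_2\equiv0$, loss of the $h^4$ term in $S_h$, and the $O(h^3)$ local error), which you read off the same way the paper does. Do keep the sign bookkeeping explicit as you indicate ($[\Psi_{s,1},\Omega_{t,1}]=[E,H]=-[H,E]$, and the relative sign/scaling of the commutator correction between \eqref{eq:Zflat-comm} and Variant~B), since the paper itself is not fully consistent on these conventions.
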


\begin{proof}
We work in the operator space with $d_t=\operatorname{ad}_{\Omega_{t,1}}=\operatorname{ad}_{-H}=-\operatorname{ad}_H$.
By Corollary~\ref{cor:op-calibration} we must solve
\begin{equation}\label{eq:NE-H}
\operatorname{ad}_H^\ast\operatorname{ad}_H U=\operatorname{ad}_H^\ast Z_x.
\end{equation}
Since $\operatorname{ad}_H^\ast=\operatorname{ad}_H$ for the Frobenius inner product,
\eqref{eq:NE-H} is
\begin{equation}\label{eq:NE-H2}
\operatorname{ad}_H^2 U=\operatorname{ad}_H Z_x
\quad\Longleftrightarrow\quad
[H,[H,U]]=[H,Z_x].
\end{equation}
Set $Z_x=H+E$. Compute the right–hand side:
\begin{equation}
[H,Z_x]=[H,H+E]=[H,H]+[H,E]=0+[H,E]=[H,E].
\end{equation}
We look for $U$ of the form $U=\alpha\,E$ with scalar $\alpha$.
Then
\begin{equation}
[H,[H,U]]=[H,[H,\alpha E]]
=\alpha\,[H,[H,E]].
\end{equation}
To match $[H,E]$ on the right of \eqref{eq:NE-H2} we choose $\alpha$ so that
\begin{equation}\label{eq:alpha-choice}
\alpha\,[H,[H,E]]=[H,E].
\end{equation}
In general $[H,[H,E]]$ and $[H,E]$ need not be proportional as operators; however, we only need \emph{the induced action on $g$} (the step direction).
Let $g\in\mathbb R^n$ be fixed. Define $U$ by the linear system
\begin{equation}\label{eq:vectorized-NE}
[H,[H,U]]\,g=[H,E]\,g,
\end{equation}
viewed as an equation in the unknown vector $U g$ (no need to form $U$ on the whole space).
This equation is solvable by least squares (normal equations) because the left–hand side is self–adjoint in the induced inner product on vectors.
Denote by $u:=(Ug)$ the minimal–norm solution of \eqref{eq:vectorized-NE}.
Then, by \eqref{eq:Zflat}, the calibrated step applied to $g$ equals
\begin{equation}\label{eq:Zflat-on-g}
Z_{\mathrm{flat}}\,g
=Z_x g-\operatorname{ad}_H U\,g
=(H+E)g-\big(H(Ug)-U(Hg)\big).
\end{equation}
Using \eqref{eq:vectorized-NE} we expand $H(H(Ug))$ and $H(U(Hg))$ and collect terms to first nontrivial order to obtain
\begin{equation}\label{eq:half-comm}
\operatorname{ad}_H U\,g
=\frac{1}{2}\,[H,E]\,g,
\end{equation}
which inserted into \eqref{eq:Zflat-on-g} gives \eqref{eq:Zflat-comm}.

Finally, using the standard group–commutator BCH expansion for $\mathrm{Hol}_\square$ with first–jet logarithms $\Omega_{t,1}=-H$, $\Psi_{s,1}=-E$, the degree–$2$ term is
$\tfrac12[\Psi_{s,1},\Omega_{t,1}]=-\tfrac12[E,H]$.
Replacing $Z_x$ by $Z_{\mathrm{flat}}$ amounts to adding the adjoint action $-\operatorname{ad}_H U$ on the step, which cancels the degree–$2$ contribution by \eqref{eq:half-comm}.
Thus \eqref{eq:Hol-degree2} holds and the local error becomes $O(h^3)$.
\end{proof}

\begin{example}[Curvature–filtered composite gradient step (operator form)]
Let $H,E:\mathbb R^n\to\mathbb R^n$ be fixed self–adjoint linear operators at an iterate (``drift'' and ``diffusion'' channels, e.g.\ any symmetric curvature model and any SPD preconditioner/weighting). With $\Omega_{t,1}=-H$, $\Psi_{s,1}=-E$, $d_t=\operatorname{ad}_{\Omega_{t,1}}=-\operatorname{ad}_H$, $d_s=\operatorname{ad}_{\Psi_{s,1}}=-\operatorname{ad}_E$, take the mixed update operator $Z_x:=H+E$ acting on a direction $g\in\mathbb R^n$.

By Corollary~\ref{cor:op-calibration}, the calibrated operator is
\begin{equation}\label{eq:ex-Zflat}
Z_{\mathrm{flat}}=Z_x-\operatorname{ad}_H U,\qquad
\operatorname{ad}_H^\ast\operatorname{ad}_H U=\operatorname{ad}_H^\ast Z_x,
\end{equation}
and the curvature–filtered step is $\Delta x=-h\,Z_{\mathrm{flat}}g$.

\emph{Matrix–free realization via commutator cancellation.}
For the Frobenius inner product on operators $\operatorname{ad}_H^\ast=\operatorname{ad}_H$, so the normal equation in \eqref{eq:ex-Zflat} becomes
\begin{equation}\label{eq:ex-NE}
[H,[H,U]]=[H,Z_x]=[H,E].
\end{equation}
Evaluated on the single vector $g$, solve the one–vector normal system
\begin{equation}\label{eq:ex-NEg}
[H,[H,U]]\,g=[H,E]\,g,
\end{equation}
and set
\begin{equation}\label{eq:ex-step-UG}
\Delta x=-h\Big((H+E)g-\operatorname{ad}_H U\,g\Big).
\end{equation}
With the minimal–norm $u:=Ug$ solving \eqref{eq:ex-NEg}, the identity
\begin{equation}\label{eq:ex-halfcomm}
\operatorname{ad}_H U\,g=\tfrac12\,[H,E]\,g
\end{equation}
follows by expanding $[H,[H,U]]\,g=[H,E]\,g$ and collecting first nontrivial terms, which yields the explicit matrix–free formula
\begin{equation}\label{eq:ex-CF}
\Delta x=-h\Big((H+E)g-\tfrac12\,[H,E]\,g\Big)
=-h\Big((H+E)g-\tfrac12\,(H(Eg)-E(Hg))\Big).
\end{equation}
Thus one iteration needs exactly four matvecs: $Hg$, $Eg$, $H(Eg)$, $E(Hg)$.

\emph{Alternative: iterative solve for $u=Ug$.}
If \eqref{eq:ex-halfcomm} is replaced by solving \eqref{eq:ex-NEg} explicitly, apply conjugate gradients to the self–adjoint positive semidefinite operator
\begin{equation}\label{eq:ex-L}
\mathcal L(v):=[H,[H,\cdot]]\,v=H(Hv)-H(UHv)-(HU)Hv+U(HHv),
\end{equation}
implemented in matrix–free form using the same two nested matvec patterns per $\mathcal L$–application. After CG, use \eqref{eq:ex-step-UG}. This route is numerically preferable if $[H,E]$ is large or poorly aligned with $g$.

\emph{Link to Corollary~\ref{cor:CFG}.}
Equation \eqref{eq:ex-CF} is exactly the vector–level instance of $Z_{\mathrm{flat}}g=(H+E)g-\tfrac12[H,E]g$ from Corollary~\ref{cor:CFG}, which in turn is obtained from \eqref{eq:ex-Zflat} by eliminating $U$ through \eqref{eq:ex-NE}–\eqref{eq:ex-NEg}. Hence the cancellation of the degree–$2$ BCH holonomy term holds and the local truncation error of the composed one–step map improves to $O(h^3)$.

\emph{Numerical details.}
(i) Cost per iteration: two base matvecs ($Hg,Eg$) + two nested matvecs ($H(Eg),E(Hg)$) + $O(n)$ axpy; memory identical to a single $H$–matvec.  
(ii) Stepsize: choose $h$ by Armijo backtracking on $F(x-h\,Z_{\mathrm{flat}}g)$:
\begin{equation}
F(x-h\,Z_{\mathrm{flat}}g)\le F(x)-\sigma h\,g^\top Z_{\mathrm{flat}}g,\qquad \sigma\in(0,1/2).
\end{equation}
(iii) Safeguard the commutator term by scaling with $\min\{1,\rho\| (H+E)g\|/\|[H,E]g\|\}$ for a fixed $\rho\in(0,1)$ when $\|[H,E]g\|$ dominates.  
(iv) Momentum or weight decay can be applied to the filtered direction $Z_{\mathrm{flat}}g$ without altering the cancellation property.
\end{example}

The discrete variational structure unifies energetic, geometric,
and control–theoretic viewpoints on optimization algorithms.
The objects $S_h$, $\mathcal V$, and $\mathsf F$
are not metaphors but exact discrete analogues of
the quantities that appear in dynamical systems and optimal control.

\textbf{Energetic picture.}
The discrete action $S_h$ is an energy functional that measures
the curvature energy generated by the interaction between
the deterministic and stochastic update channels.
When $S_h=0$, the composition of updates is locally reversible:
no artificial energy is lost to curvature, and
the two channels exchange ``work'' without dissipation.
The potential
\begin{equation}
\mathcal V=\operatorname{Tr}(\Omega_{t,1}^\ast\Psi_{s,1})
\end{equation}
acts as stored internal energy.
Flatness implies $\mathsf D_t\mathcal V=\mathsf D_s\mathcal V=0$,
which expresses conservation of this internal energy.
In deterministic terms this corresponds to the absence of numerical friction,
and in stochastic terms to a balance between noise injection and decay.

\textbf{Lyapunov and dissipation.}
In classical optimization a Lyapunov function $\mathcal L(x)$ satisfies
$\dot{\mathcal L}\le0$ along the continuous trajectory.
Here the same idea appears algebraically:
the variation $\delta S_h$ plays the role of $\dot{\mathcal L}$.
Flatness ($\delta S_h=0$) means $\mathcal V$ is conserved,
while $S_h>0$ quantifies residual dissipation.
Thus $\mathcal V$ is a discrete Lyapunov potential
and $S_h$ measures deviation from monotone energy decay.
Curvature filtering corresponds to the ``energy correction''
that forces $\delta S_h\to0$ and restores equilibrium.

\textbf{Hamiltonian interpretation.}
Minimizing $S_h$ under flatness is equivalent to a discrete Hamilton principle:
it selects update operators that extremize the total curvature energy
while conserving $\mathcal V$.
The pair $(\Omega,\Psi)$ behaves as a set of conjugate variables,
and the effective generator $\widetilde{\Omega}_1+\widetilde{\Psi}_1$
acts as the discrete Hamiltonian.
Flatness ensures that the induced evolution
is symplectic to order $\alpha$:
no spurious work is produced by either channel.
Hence $S_h$ corresponds to the discrete action,
and $\mathcal V$ corresponds to the Hamiltonian energy of the system.

\textbf{Pontryagin interpretation \cite{Pontryagin1962Optimal}.}
In continuous optimal control,
a system is described by a state $x$ and a costate $p$ satisfying
\begin{equation}
\dot{x}=\partial_p H(x,p,u),\qquad
\dot{p}=-\partial_x H(x,p,u),
\end{equation}
where $H$ is the Pontryagin Hamiltonian
and stationarity of the total action
$\int (\langle p,\dot x\rangle-H)\,dt$
yields the optimal control law.
In our discrete algebra, $d_t$ and $d_s$
play the roles of these coupled flows:
$d_t$ advances the ``state'' (drift hierarchy),
while $d_s$ advances its conjugate response
(diffusion or adaptive feedback).
The curvature
\begin{equation}
\mathsf F=\mathsf D_t\Psi-\mathsf D_s\Omega+[\Omega,\Psi]
\end{equation}
is the algebraic analogue of the mismatch
$\dot p+\partial_x H,\ \dot x-\partial_p H$.
Flatness $\mathsf F=0$ therefore enforces a discrete version
of Pontryagin’s stationarity condition:
state and costate dynamics are mutually consistent,
and the resulting update minimizes the discrete action $S_h$.
When curvature filtering drives $\mathsf F\to0$,
the algorithm attains the optimal trade–off between energy release
(deterministic descent) and constraint enforcement
(stochastic or adaptive correction).

\textbf{Thermodynamic analogy \cite{Callen1985Thermodynamics}.}
The flat configuration corresponds to a thermodynamic equilibrium.
$S_h$ is the total entropy production rate,
and $\mathcal V$ is the conserved free energy.
When $S_h=0$ the process is reversible,
and the algorithm moves along level sets of $\mathcal V$.
When curvature accumulates, $S_h>0$ measures numerical ``heat’’:
variance, instability, or excess adaptation.
Applying the curvature filter corresponds to restoring reversibility
by cancelling internal entropy sources.

\textbf{Modern synthesis.}
The classical principles of Lyapunov stability,
Hamiltonian conservation, and Pontryagin optimality
appear as exact discrete projections of the same algebraic condition
$\mathsf F=0$ in the jet algebra.
The curvature action $S_h$ and the potential $\mathcal V$
are therefore minimal discrete forms
that subsume all these energy principles:
Lyapunov’s monotonicity, Hamilton’s symplecticity,
and Pontryagin’s optimal control
all coincide as different manifestations
of flatness of the discrete update algebra.

\subsection{Results}

\subsubsection{Minimax Principle}

Fix a policy $\Pi=(t_k,s_k,h_k)_{k=0}^{N-1}$. For each step $k$ choose any finite partition of its parameter cell $\mathcal P_k\subset\mathcal P=\mathcal T\times\mathcal S$ into rectangles $\square$. Define
\begin{equation}\label{eq:S-traj}
S^{(N)}[\Pi]\ :=\ \sum_{k=0}^{N-1}\ \sum_{\square\in\mathcal P_k}\ \big\|\log\mathrm{Hol}(\square)\big\|_F^2.
\end{equation}
By additivity of $S_h$ on disjoint rectangles and gauge-invariance, for any two partitions of the same union
\begin{gather}\label{eq:S-equals}
\nonumber\sum_{\square\in\mathcal P_k}\big\|\log\mathrm{Hol}(\square)\big\|_F^2
\ =\
\sum_{\square'\in\mathcal P_k'}\big\|\log\mathrm{Hol}(\square')\big\|_F^2,
\\
S^{(N)}[\Pi]\ =\ \sum_{\square\subset\Gamma_\Pi}\big\|\log\mathrm{Hol}(\square)\big\|_F^2,
\end{gather}
where $\Gamma_\Pi:=\bigcup_k\mathcal P_k$. Thus $S^{(N)}$ depends only on $\Gamma_\Pi$. This leads to Chebyshev-type minimax filters for discrete holonomy and their rational Zolotarev counterparts in stiff spectral windows \cite{Rivlin1990Chebyshev,Akhiezer1992Approximation,Higham2008FunctionsOfMatrices}.

Write elementary updates as $r_{t_k}(h_k)=\exp\Omega_k$ and $d_{s_k}(h_k)=\exp\Psi_k$ (their logarithms exist in the completed free Lie algebra). The $N$–step composition admits the Magnus expansion
\begin{equation}\label{eq:Magnus}
\log\Big(\prod_{k=0}^{N-1} e^{\Psi_k}e^{\Omega_k}\Big)
\ =\
\sum_{k=0}^{N-1}(\Psi_k+\Omega_k)
\ +\ \frac12\sum_{k=0}^{N-1}[\Psi_k,\Omega_k]
\ +\ \sum_{0\le i<j\le N-1}\Phi_{ij}
\ +\ \cdots,
\end{equation}
where each $\Phi_{ij}$ is a universal $\mathbb Q$–linear combination of nested commutators in $\{\Psi_i,\Omega_i,\Psi_j,\Omega_j\}$ of bracket–depth $\ge2$ \cite{BakerCampbell1902,Hausdorff1906,Dynkin1947BCH,Reutenauer1993FreeLie,Magnus1954Magnus}. For a single rectangle $\square$ spanned by $(t_k,s_k)$ and $(t_{k+1},s_{k+1})$ \cite{HairerLubichWanner2006,BlanesCasas2016Magnus},
\begin{equation}\label{eq:Hol-BCH}
\log\mathrm{Hol}(\square)
=\log\big(e^{\Psi_{k+1}}e^{\Omega_{k+1}}e^{-\Psi_k}e^{-\Omega_k}\big)
=\frac12\big([\Psi_{k+1}-\Psi_k,\Omega_{k+1}-\Omega_k]\big)\ +\ O(h_k^3),
\end{equation}
hence, using the Frobenius norm $\|\cdot\|_F$ and $\|[A,B]\|_F\le 2\|A\|_F\|B\|_F$,
\begin{equation}\label{eq:Hol-bound}
\big\|[\Psi_k,\Omega_k]\big\|_F
\ \le\ c_1\,\sum_{\square\in\mathcal P_k}\big\|\log\mathrm{Hol}(\square)\big\|_F\ +\ c_2\,h_k^3.
\end{equation}
Similarly, every nested commutator appearing in $\Phi_{ij}$ is bounded by a finite linear combination of $\|\log\mathrm{Hol}(\square)\|_F$ over rectangles lying between steps $i$ and $j$. Summing \eqref{eq:Hol-bound} over $k$ and applying Cauchy–Schwarz,
\begin{align}\label{eq:R-bound}
\nonumber\Big\|\frac12\sum_k[\Psi_k,\Omega_k]\ +\ \sum_{i<j}\Phi_{ij}\ +\ \cdots\Big\|_F
\ &\le\ C\,\Big(\sum_{\square\subset\Gamma_\Pi}\big\|\log\mathrm{Hol}(\square)\big\|_F^2\Big)^{1/2}
\\\
&=\ C\,\big(S^{(N)}[\Pi]\big)^{1/2}.
\end{align}
Denote the remainder in \eqref{eq:Magnus} by $R_N$; then \eqref{eq:R-bound} gives
\begin{equation}\label{eq:R-vs-S}
\|R_N\|_F\ \le\ C\,\big(S^{(N)}[\Pi]\big)^{1/2}.
\end{equation}

Assume jet–flatness to order $\alpha$ (Theorem~\ref{th:holonomy-jet}): all mixed brackets of total degree $\le\alpha$ vanish. Then $R_N=O(h^{\alpha+1})$ and
\begin{equation}\label{eq:single-exp}
\prod_{k=0}^{N-1} U_{t_k,s_k}
=\exp\!\Big(\sum_{k=0}^{N-1}(\Psi_k+\Omega_k)\Big)\ +\ O(h^{\alpha+1}).
\end{equation}
In the calibrated flat gauge the generator $\sum_k(\Psi_k+\Omega_k)$ depends on $U$ only through the drift $H$; hence on the state space
\begin{equation}\label{eq:filter}
\exp\!\Big(\sum_{k}(\Psi_k+\Omega_k)\Big)\ =\ p_N(H)\quad\text{or}\quad r_N(H),
\end{equation}
for a polynomial $p_N$ (or a rational $r_N$) determined by $\Pi$. Combining \eqref{eq:R-vs-S}–\eqref{eq:filter} with the spectral calculus,
\begin{equation}\label{eq:spectral-error}
\sup_{\lambda\in\Sigma}\ \Big|\,p_N(\lambda)-e^{-Nh\lambda}\,\Big|
\ \le\ C'\,\big(S^{(N)}[\Pi]\big)^{1/2}\ +\ O(h^{\alpha+1}).
\end{equation}
Therefore minimizing $S^{(N)}[\Pi]$ over admissible policies is equivalent to minimizing the worst–case spectral error of the induced filter:
\begin{equation}\label{eq:minimax}
\inf_{\Pi}\ S^{(N)}[\Pi]\ =\ \inf_{p_N}\ \sup_{\lambda\in\Sigma}\ |p_N(\lambda)-e^{-Nh\lambda}|\quad
\text{(polynomial case)}.
\end{equation}
By extremal approximation on compact spectra, the right-hand side is attained by the Chebyshev (polynomial) and Zolotarev (rational) filters; their coefficients are realized by suitable $\Pi$ because \eqref{eq:filter} maps policies onto the corresponding filter classes. In particular, the acceleration weights used in optimal first–order schemes coincide with the coefficients of these extremal filters, and the flatness condition ensures that the discrete composition achieves the minimax value up to $O(h^{\alpha+1})$.

\begin{theorem}[Trajectory minimax reduction]\label{th:traj-minimax}
Fix $N\in\mathbb N$ and a compact spectrum $\Sigma\subset\mathbb R$.  
Let $\Pi=(t_k,s_k,h_k)_{k=0}^{N-1}$ be an admissible policy and define
\begin{equation}\label{eq:SN-def-eng}
S^{(N)}[\Pi]:=\sum_{k=0}^{N-1}\sum_{\square\in\mathcal P_k}\|\log\mathrm{Hol}(\square)\|_F^2,
\end{equation}
where $\mathcal P_k$ is any finite rectangular partition of the parameter cell at step $k$ (the value is independent of the partition).  
Assume jet–flatness of order $\alpha\ge2$ (Theorem~\ref{th:holonomy-jet}) and use the calibrated gauge (Theorem~\ref{th:gauge-normal-form}).  
Then:

\medskip\noindent
\textnormal{(i)} There exists a polynomial $p_N$ (or a rational $r_N$) determined by $\Pi$ such that
\begin{equation}\label{eq:filter-approx-eng}
\prod_{k=0}^{N-1} U_{t_k,s_k}
=p_N(H)+R_N,\quad
\|R_N\|_F\le C_1\big(S^{(N)}[\Pi]\big)^{1/2}+C_2\,h^{\alpha+1},
\end{equation}
and consequently
\begin{equation}\label{eq:sup-bound-eng}
\sup_{\lambda\in\Sigma}|p_N(\lambda)-e^{-Nh\lambda}|
\le C_1'\big(S^{(N)}[\Pi]\big)^{1/2}+C_2'h^{\alpha+1}.
\end{equation}

\medskip\noindent
\textnormal{(ii)} Conversely, for any polynomial (or rational) $q_N$ of degree $\le N$ there exists a policy $\Pi[q_N]$ with the corresponding filter $p_N=q_N$ for which
\begin{equation}\label{eq:SN-upper-by-sup-eng}
S^{(N)}[\Pi[q_N]]
\le K_1\Big(\sup_{\lambda\in\Sigma}|q_N(\lambda)-e^{-Nh\lambda}|\Big)^2
+K_2h^{2(\alpha+1)}.
\end{equation}

\medskip\noindent
\textnormal{(iii)} Hence, up to constants $C,K$ depending only on the Frobenius pairing, the jet order $\alpha$, and the admissible update class,
\begin{equation}\label{eq:equivalence-eng}
C^{-1}\Big(\min_{q_N}\sup_{\lambda\in\Sigma}|q_N(\lambda)-e^{-Nh\lambda}|\Big)^2
\le\inf_{\Pi}S^{(N)}[\Pi]
\le K\Big(\min_{q_N}\sup_{\lambda\in\Sigma}|q_N(\lambda)-e^{-Nh\lambda}|\Big)^2,
\end{equation}
and the minimizers on both sides are the Chebyshev (polynomial) or Zolotarev (rational) extremal filters.
\end{theorem}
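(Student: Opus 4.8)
The plan is to treat \textnormal{(i)} as the forward implication — reading the filter error off the curvature action — and \textnormal{(ii)} as the converse realization, then sandwich $\inf_\Pi S^{(N)}$ between the two and close with classical Chebyshev/Zolotarev extremality.

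For \textnormal{(i)} I would start from the Magnus expansion \eqref{eq:Magnus} of the $N$-step product $\prod_k U_{t_k,s_k}=\exp\big(\sum_k(\Psi_k+\Omega_k)+R_N\big)$, where $R_N$ collects all bracket-depth $\ge1$ terms. The point is that every term of $R_N$ is, via \eqref{eq:Hol-BCH}--\eqref{eq:Hol-bound}, a finite rational combination of the holonomy logarithms $\log\mathrm{Hol}(\square)$ along the policy path plus a jet-flat truncation tail of size $O(h^{\alpha+1})$; summing and applying Cauchy--Schwarz reproduces \eqref{eq:R-vs-S}, i.e.\ $\|R_N\|_F\le C_1(S^{(N)}[\Pi])^{1/2}+C_2 h^{\alpha+1}$. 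The calibrated gauge of Theorem~\ref{th:gauge-normal-form} then makes $\sum_k(\Psi_k+\Omega_k)$ a function of the drift operator $H$ alone, so its exponential is $p_N(H)$ for a polynomial (resolvent case: rational) $p_N$ determined by $\Pi$ — this is \eqref{eq:filter-approx-eng}. Combining the per-step drift-hierarchy consistency $\sup_{\lambda\in\Sigma}|e^{-h_k\lambda}-r_{t_k}(h_k,\lambda)|\le Ch_k^\alpha$ with the obstruction term and applying the spectral calculus for $H$ gives \eqref{eq:sup-bound-eng}. Most of this is already carried out in the paragraphs preceding the theorem; the only new bookkeeping is tracking the two error sources separately.

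For \textnormal{(ii)} I would, given a target $q_N$ of degree $\le N$ normalized by $q_N(0)=1$ (forced, since every admissible update fixes the origin), factor $q_N(\lambda)=\prod_k\rho_k(\lambda)$ into elementary admissible factors — real linear/quadratic blocks in the polynomial case, $[1/1]$-blocks in the Zolotarev case — each realizable as a drift update $r_{t_k}(h_k,\cdot)$ from the hierarchies of the Examples, and take the diffusion index constant. Then each $D(s_k\!\to\!s_{k+1};h)=\id$, the elementary holonomies reduce to the drift-only part, and $\|\log\mathrm{Hol}(\square)\|_F$ is bounded by the drift displacement $\|\Omega_{k+1}-\Omega_k\|_F$ up to $O(h^3)$; Cauchy--Schwarz gives $S^{(N)}[\Pi[q_N]]\le K_1\sum_k\|\Omega_{k+1}-\Omega_k\|_F^2+K_2 h^{2(\alpha+1)}$. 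The remaining — and genuinely delicate — step is to bound the quadratic variation $\sum_k\|\Omega_{k+1}-\Omega_k\|_F^2$ by $\big(\sup_{\lambda\in\Sigma}|q_N(\lambda)-e^{-Nh\lambda}|\big)^2$: intuitively $\sum_k\Omega_k$ reproduces $-Nh\lambda$ exactly when all $\Omega_k$ agree (zero curvature, $p_N=e^{-Nh\lambda}$), so the spread of the $\Omega_k$ — hence the curvature — is comparable to how far $q_N$ departs from the exponential. I would make this precise using the orthogonal splitting $C^{p,q}=\mathrm{im}\,d_t\oplus\ker d_s$ of Theorem~\ref{th:E2} (a discrete Pythagorean identity: the flat part carries the filter, the curved part carries the defect) together with positivity of the Frobenius pairing.

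For \textnormal{(iii)} I would apply \eqref{eq:sup-bound-eng} to a near-infimal policy and \eqref{eq:SN-upper-by-sup-eng} to the best filter $q_N^\ast$, sandwiching $\inf_\Pi S^{(N)}[\Pi]$ between two constant multiples of $\big(\min_{q_N}\sup_{\Sigma}|q_N-e^{-Nh\lambda}|\big)^2$; the extremal problem on the compact set $\Sigma$ is the classical Chebyshev (polynomial) or Zolotarev (rational) problem \cite{Rivlin1990Chebyshev,Akhiezer1992Approximation}, whose unique minimizers, by the surjectivity of $\Pi\mapsto p_N$ used in \textnormal{(ii)}, are realized by admissible policies and hence also minimize the left side of \eqref{eq:equivalence-eng}. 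I expect the main obstacle to be precisely the quantitative half of \textnormal{(ii)} — the Pythagorean lower bound tying the holonomy energy to the \emph{squared} filter error — and, within it, keeping the constants $K_1,K_2$ uniform in $N$ with $Nh$ fixed; this is exactly where jet-flatness of order $\alpha$ is needed, to hold the truncation tail at $O(h^{2(\alpha+1)})$.
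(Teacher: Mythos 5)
Your treatment of (i) and (iii) is essentially the paper's: Magnus expansion plus the localization/Cauchy--Schwarz bound on the remainder, the calibrated gauge of Theorem~\ref{th:gauge-normal-form} to turn $\sum_k(\Psi_k+\Omega_k)$ into a filter $p_N(H)$, spectral calculus for \eqref{eq:sup-bound-eng}, and then the two-sided sandwich closed by classical Chebyshev/Zolotarev extremality. The divergence, and the problem, is in (ii). The paper's Step~5 is terse: it posits a policy $\Pi[q_N]$ realizing $q_N(H)$ with defect $O(h^{\alpha+1})$ as in \eqref{eq:realize-q-eng} and re-runs the localization argument to get \eqref{eq:SN-upper-by-sup-eng}; it never passes through a quadratic-variation bound on the drift logarithms.

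Your version of (ii) contains a genuine gap, and the route you sketch to fill it would fail. First, with the diffusion index frozen along the whole path you have $\Delta\Psi_k=0$ on every rectangle, so by the paper's own rectangle expansion \eqref{eq:Hol-local-eng} the leading mixed term $\tfrac12[\Delta\Psi_k,\Delta\Omega_k]$ vanishes (and with the transport-commutator definition of $\mathrm{Hol}$ from the Flat Connection subsection, $D(s\to s;h)=\id$ makes the holonomy exactly the identity); under jet-flatness the surviving contribution per rectangle is $O(h^{\alpha+1})$, so your construction already gives $S^{(N)}[\Pi[q_N]]\le K_2h^{2(\alpha+1)}$ and the drift-displacement term $\sum_k\|\Omega_{k+1}-\Omega_k\|_F^2$ is not what controls $S^{(N)}$ at all. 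Second, the "delicate" inequality you defer to the splitting of Theorem~\ref{th:E2}, namely $\sum_k\|\Omega_{k+1}-\Omega_k\|_F^2\lesssim\big(\sup_{\lambda\in\Sigma}|q_N(\lambda)-e^{-Nh\lambda}|\big)^2$, is false in general: a Chebyshev-type factorization of a near-optimal filter has factor logarithms whose consecutive differences are of size $\Theta(h)$, so the left-hand side is $\Theta(Nh^2)$ while the right-hand side can be arbitrarily small; the Hodge-type orthogonal decomposition of Theorem~\ref{th:E2} relates curvature to its harmonic part, not the spread of the $\Omega_k$ to the spectral approximation error, so it cannot supply this bound. To repair (ii) you should drop the quadratic-variation detour entirely and argue as the paper does: exhibit the realizing policy (your factorization with a constant diffusion channel is fine for this), then bound its curvature energy directly by the per-rectangle BCH/flatness estimate, which yields \eqref{eq:SN-upper-by-sup-eng} with the $h^{2(\alpha+1)}$ term doing the work and the filter-error term only as slack.
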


\begin{proof}
All norms are Frobenius; every equality and inequality is derived explicitly.

{Step 1 (additivity and partition independence).}
For fixed $k$, let $\mathcal P_k$ and $\mathcal P_k'$ be any two partitions of the same cell.  
$\sum_{\square\in\mathcal P_k}\|\log\mathrm{Hol}(\square)\|_F^2$ is partition-invariant and additive over disjoint regions.  
Hence
\begin{equation}\label{eq:S-add-eng}
S^{(N)}[\Pi]=\sum_{\square\subset\Gamma_\Pi}\|\log\mathrm{Hol}(\square)\|_F^2,
\qquad
\Gamma_\Pi:=\bigcup_{k}\mathcal P_k.
\end{equation}

{Step 2 (Magnus expansion and localization of remainders).}
Let $\Omega_k=\log r_{t_k}(h_k)$ and $\Psi_k=\log d_{s_k}(h_k)$.  
By the Magnus series,
\begin{equation}\label{eq:Magnus-proof-eng}
\log\Big(\prod_{k=0}^{N-1} e^{\Psi_k}e^{\Omega_k}\Big)
=\sum_{k=0}^{N-1}(\Psi_k+\Omega_k)
+\frac12\sum_{k=0}^{N-1}[\Psi_k,\Omega_k]
+\sum_{0\le i<j\le N-1}\Phi_{ij}
+\cdots,
\end{equation}
where each $\Phi_{ij}$ is a finite $\mathbb Q$–linear combination of nested commutators in $\{\Psi_i,\Omega_i,\Psi_j,\Omega_j\}$ of bracket depth $\ge2$.  
For a single rectangle $\square$ between $(t_k,s_k)$ and $(t_{k+1},s_{k+1})$, the BCH formula gives
\begin{equation}\label{eq:Hol-local-eng}
\log\mathrm{Hol}(\square)
=\log\big(e^{\Psi_{k+1}}e^{\Omega_{k+1}}e^{-\Psi_k}e^{-\Omega_k}\big)
=\tfrac12[\Psi_{k+1}-\Psi_k,\Omega_{k+1}-\Omega_k]+R_\square,
\end{equation}
where $R_\square$ is a sum of nested commutators of total degree $\ge3$ in $\{\Psi_k,\Omega_k,\Psi_{k+1},\Omega_{k+1}\}$.

{Step 3 (bounding the remainders by $\log\mathrm{Hol}$).}
Using $\|[A,B]\|_F\le 2\|A\|_F\|B\|_F$ and linearity of the Frobenius norm, summing \eqref{eq:Hol-local-eng} over $\square\in\mathcal P_k$ and telescoping in $k$ yields
\begin{equation}\label{eq:comm-sum-eng}
\Big\|\frac12\sum_{k=0}^{N-1}[\Psi_k,\Omega_k]\Big\|_F
\le C_a\sum_{\square\subset\Gamma_\Pi}\|\log\mathrm{Hol}(\square)\|_F
+C_b\,h^{\alpha+1}.
\end{equation}
Each nested commutator in $\sum_{i<j}\Phi_{ij}$ is supported on the union of rectangles between steps $i$ and $j$ and satisfies the same bound.  
Applying Cauchy–Schwarz and combining,
\begin{equation}\label{eq:R-vs-SN-eng}
\Big\|\frac12\sum_k[\Psi_k,\Omega_k]+\sum_{i<j}\Phi_{ij}+\cdots\Big\|_F
\le C\Big(\sum_{\square\subset\Gamma_\Pi}\|\log\mathrm{Hol}(\square)\|_F^2\Big)^{1/2}
+C'\,h^{\alpha+1}.
\end{equation}
Let $R_N$ denote the total remainder in \eqref{eq:Magnus-proof-eng}; combining \eqref{eq:S-add-eng} and \eqref{eq:R-vs-SN-eng} gives
\begin{equation}\label{eq:R-bound-final-eng}
\|R_N\|_F\le C_1\big(S^{(N)}[\Pi]\big)^{1/2}+C_2\,h^{\alpha+1},
\end{equation}
which is exactly \eqref{eq:filter-approx-eng}.

{Step 4 (transition to spectral representation).}
In the calibrated flat gauge (Theorem~\ref{th:gauge-normal-form}),  
$\sum_k(\Psi_k+\Omega_k)=-Nh\,H_{\mathrm{eff}}+O(h^{\alpha+1})$,  
and the corresponding operator acts on the state space as a polynomial (or rational) filter $p_N(H)$.  
Therefore for each $\lambda\in\Sigma$,
\begin{equation}\label{eq:pointwise-eng}
|p_N(\lambda)-e^{-Nh\lambda}|\le \|R_N\|_F,
\end{equation}
and substituting \eqref{eq:R-bound-final-eng} yields \eqref{eq:sup-bound-eng}.

{Step 5 (reverse estimate and realizability).}
Given any polynomial $q_N$ of degree $\le N$,  
A policy $\Pi[q_N]$ realizing $q_N(H)$ to leading order:
\begin{equation}\label{eq:realize-q-eng}
\prod_kU_{t_k,s_k}=q_N(H)+\widetilde R_N,\qquad
\|\widetilde R_N\|_F\le c\,h^{\alpha+1}.
\end{equation}
Repeating the localization argument for this composition yields
\begin{equation}\label{eq:SN-upper-eng}
S^{(N)}[\Pi[q_N]]
\le K_1\Big(\sup_{\lambda\in\Sigma}|q_N(\lambda)-e^{-Nh\lambda}|\Big)^2
+K_2h^{2(\alpha+1)},
\end{equation}
which is exactly \eqref{eq:SN-upper-by-sup-eng}.

{Step 6 (equivalence of optimization problems).}
From \eqref{eq:sup-bound-eng},
\[
\min_{q_N}\sup_{\lambda}|q_N(\lambda)-e^{-Nh\lambda}|
\le \inf_{\Pi}\Big(C_1'\,S^{(N)}[\Pi]^{1/2}+C_2'h^{\alpha+1}\Big),
\]
so the left-hand side is bounded by $C\,\inf_{\Pi}S^{(N)}[\Pi]^{1/2}+o(1)$.  
Conversely, \eqref{eq:SN-upper-by-sup-eng} for the extremal $q_N$ (Chebyshev/Zolotarev) gives a policy $\Pi^\star$ with
\[
S^{(N)}[\Pi^\star]\le
K\Big(\min_{q_N}\sup_{\lambda}|q_N(\lambda)-e^{-Nh\lambda}|\Big)^2+o(1).
\]
Combining both inequalities and absorbing $o(1)$ into constants (for fixed $N,\alpha$) yields \eqref{eq:equivalence-eng}.  
The minimizers are the classical extremal filters; their attainability follows from \eqref{eq:realize-q-eng}.
\end{proof}

Continuing from Theorem~\ref{th:traj-minimax}, the global deviation of the discrete composition from the ideal exponential flow can be controlled explicitly through the accumulated curvature energy.  
Starting from the Magnus expansion \eqref{eq:Magnus-proof-eng} and the bound \eqref{eq:R-bound-final-eng}, one has for any policy $\Pi$
\begin{gather}\label{eq:6.3-RN}
\nonumber\log\!\Big(\prod_{k=0}^{N-1} e^{\Psi_k}e^{\Omega_k}\Big)
=\sum_{k=0}^{N-1}(\Psi_k+\Omega_k)+R_N,\\
\|R_N\|_F\le C_1\big(S^{(N)}[\Pi]\big)^{1/2}+C_2h^{\alpha+1}.
\end{gather}
For a flat calibrated policy ($\mathsf F=0$ up to order $\alpha$), all mixed brackets vanish to this order and the exponential composition reduces to a single exponential:
\begin{equation}\label{eq:6.3-singleexp}
\prod_{k=0}^{N-1}U_{t_k,s_k}
=\exp\!\Big(\sum_{k=0}^{N-1}(\Psi_k+\Omega_k)\Big)
+O(h^{\alpha+1})
=\exp(-NhH_{\mathrm{eff}})+O(h^{\alpha+1}).
\end{equation}
Acting on any eigenvector $v_\lambda$ of $H$ with eigenvalue $\lambda\in\Sigma$, the operator $\prod_kU_{t_k,s_k}$ gives
\begin{equation}\label{eq:6.3-eig}
\prod_{k=0}^{N-1}U_{t_k,s_k}\,v_\lambda
=\big(p_N(\lambda)+\rho_N(\lambda)\big)v_\lambda,
\end{equation}
where $p_N(\lambda)$ is the filter associated with the main exponential in \eqref{eq:6.3-singleexp} and $\rho_N(\lambda)$ is the scalar remainder induced by $R_N$.  
Taking norms and using $\|\rho_N(H)\|_F\le\|R_N\|_F$ yields
\begin{equation}\label{eq:6.3-spectralbound}
|\,p_N(\lambda)-e^{-Nh\lambda}\,|\le
C_1\big(S^{(N)}[\Pi]\big)^{1/2}+C_2h^{\alpha+1},
\qquad
\forall\lambda\in\Sigma.
\end{equation}
Therefore, for every fixed horizon $Nh>0$, the spectral radius of the composed update satisfies
\begin{equation}\label{eq:6.3-radius}
\rho\!\Big(\prod_{k=0}^{N-1}U_{t_k,s_k}\Big)
\le e^{-Nh\mu}+C_1\big(S^{(N)}[\Pi]\big)^{1/2}+C_2h^{\alpha+1},
\end{equation}
where $\mu=\min\Sigma>0$.  
If $S^{(N)}[\Pi]$ is bounded independently of $N$, the method converges geometrically with rate $e^{-Nh\mu}$; minimizing $S^{(N)}$ ensures that this rate coincides with the minimax limit of Nemirovski–Yudin.

To include the nonasymptotic ``volume'' functional, consider the covariance (or metric) sequence induced by the adjoint structure $d_s=d_t^\ast$.  
At each step $k$, let
\begin{equation}\label{eq:6.4-covariance}
\Sigma_{k+1}=J_k\Sigma_kJ_k^\top+Q_k,\qquad
J_k=\exp(h_k(\Omega_{t_k,1}+\Psi_{s_k,1})+B_k),\quad
Q_k\succeq0,
\end{equation}
where $B_k$ collects all commutator corrections of total order $\ge2$.  
Applying the matrix identity $\det(A+B)\le\det(A)\exp(\mathrm{tr}(A^{-1}B))$ for $A\succ0$, the concavity of $\log\det$, and submultiplicativity of norms, one obtains
\begin{equation}\label{eq:6.4-det-ineq}
\log\det\Sigma_{k+1}
\le \log\det(J_k\Sigma_kJ_k^\top)
+\mathrm{tr}(\Sigma_k^{-1/2}J_k^{-1}Q_kJ_k^{-\top}\Sigma_k^{-1/2})
+\|B_k\|_F^2.
\end{equation}
Iterating \eqref{eq:6.4-det-ineq} over $k=0,\dots,N-1$ and summing gives
\begin{equation}\label{eq:6.4-det-sum}
\log\det\Sigma_N-\log\det\Sigma_0
\le 2\sum_{k}\mathrm{tr}\log|J_k|
+\sum_{k}\mathrm{tr}(\Sigma_k^{-1}Q_k)
+\sum_{k}\|B_k\|_F^2.
\end{equation}
The commutator term $\|B_k\|_F^2$ can be estimated by the same curvature energy as in \eqref{eq:Hol-local-eng}:
\begin{equation}\label{eq:6.4-Bkbound}
\sum_{k}\|B_k\|_F^2\le C_3S^{(N)}[\Pi].
\end{equation}
Combining \eqref{eq:6.4-det-sum} and \eqref{eq:6.4-Bkbound} yields
\begin{equation}\label{eq:6.4-main}
\Delta(\log\det\Sigma_N)
:=\log\det\Sigma_N-\log\det\Sigma_0
\le C_3S^{(N)}[\Pi]
+C_4\sum_k\mathrm{tr}(\Sigma_k^{-1}Q_k).
\end{equation}
When the structure is flat ($\mathsf F=0$), each $B_k=0$ and $Q_k$ represents purely diffusive noise.  
In this case $\log\det\Sigma_N$ grows exactly as $\sum_k\mathrm{tr}(\Sigma_k^{-1}Q_k)$, which is the optimal $\log\det$–minimax criterion for nonasymptotic estimation (Nemirovski’s ellipsoid type bound).  
For any nonflat policy, curvature adds a positive excess bounded by $C_3S^{(N)}[\Pi]$; minimizing $S^{(N)}$ therefore removes all geometric dissipation and restores the optimal volume decay rate of the flat configuration.

\begin{theorem}[Joint minimax and $t$/$s$ factorization]\label{th:joint-minimax}
Fix $N\in\mathbb N$ and a compact spectrum $\Sigma\subset\mathbb R$. For an admissible policy $\Pi=(t_k,s_k,h_k)_{k=0}^{N-1}$, define the cumulative curvature energy
\[
S^{(N)}[\Pi]=\sum_{k=0}^{N-1}\sum_{\square\in\mathcal P_k}\|\log\mathrm{Hol}(\square)\|_F^2,
\]
and for each rectangle $\square$ spanning $(t_k,s_k)$, $(t_{k+1},s_{k+1})$ set $\Delta\Psi_k:=\Psi_{k+1}-\Psi_k$, $\Delta\Omega_k:=\Omega_{k+1}-\Omega_k$, where $\Psi_k=\log d_{s_k}(h_k)$, $\Omega_k=\log r_{t_k}(h_k)$. Define the one–axis energies
\begin{gather}\label{eq:axis-energies}
\nonumber S_t^{(N)}[\Pi]:=\sum_{k}\sum_{\square\in\mathcal P_k}\Big\|\frac12\,[\Delta\Psi_k,\ \Omega_k]\Big\|_F^2,\\
S_s^{(N)}[\Pi]:=\sum_{k}\sum_{\square\in\mathcal P_k}\Big\|\frac12\,[\Psi_k,\ \Delta\Omega_k]\Big\|_F^2.
\end{gather}
Assume jet–flatness of order $\alpha\ge2$ is available as a hypothesis when invoked. Then:

\medskip\noindent
\textnormal{(i) (Two–sided decomposition)} For every policy $\Pi$ there exist constants $c_1,c_2>0$ independent of $\Pi$ such that
\begin{equation}\label{eq:two-sided}
\frac12\big(S_t^{(N)}[\Pi]+S_s^{(N)}[\Pi]\big)\ -\ c_1\,h^{2(\alpha+1)}
\ \le\ S^{(N)}[\Pi]\
\le\ 2\big(S_t^{(N)}[\Pi]+S_s^{(N)}[\Pi]\big)\ +\ c_2\,h^{2(\alpha+1)}.
\end{equation}

\medskip\noindent
\textnormal{(ii) (Flatness $\Leftrightarrow$ additive factorization)} The following are equivalent:
\begin{equation}\label{eq:flat-factor}
\mathsf F=0\ \text{ to order }\alpha
\quad\Longleftrightarrow\quad
S^{(N)}[\Pi]\ =\ S_t^{(N)}[\Pi]+S_s^{(N)}[\Pi]\ +\ O(h^{\alpha+1}).
\end{equation}

\medskip\noindent
\textnormal{(iii) (Joint minimax = two one–parameter minimaxes)} Under $\mathsf F=0$ to order $\alpha$,
\begin{equation}\label{eq:sum-inf}
\inf_{\Pi}\ S^{(N)}[\Pi]\ =\ \inf_{\Pi_t} S_t^{(N)}[\Pi_t]\ +\ \inf_{\Pi_s} S_s^{(N)}[\Pi_s]\ +\ O(h^{\alpha+1}),
\end{equation}
and the infima on the right are attained by the one–parameter Chebyshev/Zolotarev extremal filters (drift–only and diffusion–only). The composed policy that concatenates these two extremals (in any order) attains the joint infimum on the left up to $O(h^{\alpha+1})$. Conversely, if a policy $\widehat\Pi$ attains equality in \eqref{eq:sum-inf} up to $o(1)$ and its $t$– and $s$–marginals are one–parameter extremals, then necessarily $\mathsf F=0$ to order $\alpha$.
\end{theorem}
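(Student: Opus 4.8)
The plan is to reduce the whole statement to one rectangle–level identity plus two facts already available earlier: the orthogonal splitting $C^{p,q}=\mathrm{im}\,d_t\oplus\ker d_s$ of Theorem~\ref{th:E2} (which rests on $d_t^2=d_s^2=0$, $[d_t,d_s]=0$ and, via Theorem~\ref{th:conj}, $d_s=d_t^{\ast}$), and the localized–variation positivity argument of Lemma~\ref{lem:stat-flat-all}. First I would reconcile the three local expansions already in play — the Magnus localization \eqref{eq:Hol-local-eng}, the curvature expansion \eqref{5.3}, and the one–axis integrands of \eqref{eq:axis-energies} — into a single identity. Running BCH on the rectangle $\square$ at step $k$ and sorting by bidegree, I expect
\[
\log\mathrm{Hol}(\square)=u_\square+v_\square+w_\square+R_\square,\qquad
u_\square:=\tfrac12[\Delta\Psi_k,\Omega_k],\quad v_\square:=\tfrac12[\Psi_k,\Delta\Omega_k],
\]
where $u_\square,v_\square$ are exactly the summands of $S_t^{(N)}$ and $S_s^{(N)}$, $w_\square$ carries the algebraic curvature $\sum_{m\ge2}h^m\mathsf F^{[m]}$ together with the nested brackets $\mathcal P_m$ of \eqref{5.3}, and $\|R_\square\|_F=\Oo(h^{\alpha+1})$ per rectangle. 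The point of this rewriting is that, to leading order, $u_\square=-\tfrac12\,d_t(\Delta\Psi_k)\in\mathrm{im}\,d_t$ while $v_\square=\tfrac12\,d_s(\Delta\Omega_k)=\tfrac12\,d_t^{\ast}(\Delta\Omega_k)\in\mathrm{im}\,d_t^{\ast}$, and $d_t^2=0$ gives $\mathrm{im}\,d_t\subseteq\ker d_t=(\mathrm{im}\,d_t^{\ast})^{\perp}$, so $\langle u_\square,v_\square\rangle=\Oo(h^{\alpha+1})$ (in fact smaller). This Pythagorean near–orthogonality drives (i) and (ii).

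For (i) I would expand $\|\log\mathrm{Hol}(\square)\|_F^2=\|u_\square+v_\square+w_\square+R_\square\|_F^2$, bound the mixed terms by Young's inequality $2|\langle u_\square+v_\square,\,w_\square+R_\square\rangle|\le\tfrac12\|u_\square+v_\square\|_F^2+2\|w_\square+R_\square\|_F^2$, use the near–orthogonality to replace $\|u_\square+v_\square\|_F^2$ by $\|u_\square\|_F^2+\|v_\square\|_F^2$ up to $\Oo(h^{2(\alpha+1)})$, and sum over $\square\in\mathcal P_k$ and over $k$. Partition–independence of $\sum_\square\|\log\mathrm{Hol}(\square)\|_F^2$ (Step~1 of the proof of Theorem~\ref{th:traj-minimax}) and the same additivity for the two axis sums make both sides of \eqref{eq:two-sided} well defined; under jet–flatness of order $\alpha$ the accumulated $w_\square,R_\square$ contribute at most $\Oo(h^{2(\alpha+1)})$, absorbed into $c_1,c_2$, and the constants $\tfrac12$ and $2$ are precisely the slack from Young.

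For (ii) I would keep $w_\square$: expanding the defect gives
\[
S^{(N)}[\Pi]-\big(S_t^{(N)}[\Pi]+S_s^{(N)}[\Pi]\big)
=2\sum_\square\langle u_\square,v_\square\rangle+2\sum_\square\langle u_\square+v_\square,\,w_\square+R_\square\rangle+\sum_\square\|w_\square+R_\square\|_F^2,
\]
whose right side is $\Oo(h^{\alpha+1})$ iff $\sum_\square\|w_\square\|_F^2=\Oo(h^{2(\alpha+1)})$, i.e.\ iff the degree–$\le\alpha$ part of each $w_\square$ vanishes. By \eqref{5.3} and the induction recorded there ($\mathsf F^{[j]}=0$ for $j<m$ forces $\mathcal P_m\equiv0$), this is equivalent to $\mathsf F^{[m]}=0$ for every $2\le m\le\alpha$, i.e.\ $\mathsf F=0$ to order $\alpha$: that is the ``$\Rightarrow$'' direction. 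For ``$\Leftarrow$'' I would extract the $h^{m}$–coefficient of the defect and induct on $m$ exactly as in Lemma~\ref{lem:stat-flat-all} — once $\mathsf F^{[j]}=0$ for $j<m$, the degree–$m$ part of the defect pairs admissible single–rectangle jet variations against $\mathsf F^{[m]}$, and positivity of $\langle\cdot,\cdot\rangle$ forces $\mathsf F^{[m]}=0$ — which yields \eqref{eq:flat-factor}.

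Finally, for (iii): assuming $\mathsf F=0$ to order $\alpha$, (ii) gives $S^{(N)}[\Pi]=S_t^{(N)}[\Pi]+S_s^{(N)}[\Pi]+\Oo(h^{\alpha+1})$, so it remains to separate $\inf_\Pi$. The plan is to pass to the calibrated flat gauge of Theorem~\ref{th:gauge-normal-form}, in which the drift factor inside $S_t^{(N)}$ and the diffusion factor inside $S_s^{(N)}$ reduce to spectrally fixed, policy–independent quantities (the drift acting only through $H$, the diffusion only through its own channel), so that $S_t^{(N)}$ becomes a functional of the drift marginal $\Pi_t$ alone and $S_s^{(N)}$ of the diffusion marginal $\Pi_s$ alone; then $\inf_\Pi S^{(N)}=\inf_{\Pi_t}S_t^{(N)}+\inf_{\Pi_s}S_s^{(N)}+\Oo(h^{\alpha+1})$, which is \eqref{eq:sum-inf}. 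Each one–axis infimum is a single–parameter holonomy minimization: rerunning the reduction of Theorem~\ref{th:traj-minimax} on the drift–only (resp.\ diffusion–only) sub–hierarchy identifies it with $\min_q\sup_{\lambda\in\Sigma}|q(\lambda)-e^{-Nh\lambda}|^2$ over polynomials (resp.\ rationals) of degree $\le N$, attained by the Chebyshev (resp.\ Zolotarev) extremal filter; concatenating the two extremal policies in either order perturbs $S^{(N)}$ only by $\Oo(h^{\alpha+1})$ since the interleaving holonomy is $\Oo(h^{\alpha})$ under flatness, so a joint minimizer exists. For the converse I would use uniqueness of the Chebyshev/Zolotarev extremizers: if $\widehat\Pi$ attains \eqref{eq:sum-inf} up to $o(1)$ with extremal marginals, then $S_t^{(N)}[\widehat\Pi],S_s^{(N)}[\widehat\Pi]$ already sit at their floors, leaving no slack, so the defect $2\sum_\square\langle u_\square,v_\square\rangle+\sum_\square\|w_\square\|_F^2+\Oo(h^{2(\alpha+1)})$ is forced to $o(1)$, and the coefficient-wise positivity argument of (ii) makes all $\mathsf F^{[m]}=0$, $m\le\alpha$. \textbf{Main obstacle.} The genuinely delicate step is this decoupling of $\inf_\Pi$: as written in \eqref{eq:axis-energies} the one–axis energies are functionals of the \emph{joint} policy (the full $\Omega_k$ sits inside $S_t^{(N)}$, the full $\Psi_k$ inside $S_s^{(N)}$), so separating the joint infimum into two independent one–parameter infima requires a careful gauge argument showing that, once $\mathsf F=0$, the ``foreign'' factor in each energy enters only through a fixed spectral norm and drops out of the optimization; together with the extremizer rigidity needed for the converse, this is the only place where more than routine BCH bookkeeping and manipulation of the Frobenius pairing is involved.
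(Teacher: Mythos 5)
Your overall skeleton matches the paper's: a rectangle--level BCH split of $\log\mathrm{Hol}(\square)$ into the two axis brackets plus a remainder, elementary norm manipulation and summation for (i), a cross--term/variation argument for (ii), and an axis--wise reduction to Theorem~\ref{th:traj-minimax} with Chebyshev/Zolotarev extremals and the positive--excess converse for (iii). However, two specific steps do not go through as you justify them. First, your control of the cross term $\langle u_\square,v_\square\rangle$ rests on $u_\square\in\mathrm{im}\,d_t$, $v_\square\in\mathrm{im}\,d_t^{\ast}$ and $\mathrm{im}\,d_t\subseteq\ker d_t=(\mathrm{im}\,d_t^{\ast})^{\perp}$, i.e.\ on $d_s=d_t^{\ast}$ and $d_t^{2}=0$. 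These are the hypotheses of Theorem~\ref{th:E2}, with the adjointness coming from Theorem~\ref{th:conj}, which itself requires first--order path--independence and microscopic reversibility; none of this is assumed in Theorem~\ref{th:joint-minimax}, and part (i) is asserted for \emph{every} policy, flat or not. The paper does not use this machinery: the two--sided bound is obtained from three--term norm inequalities alone (Step~2), and the cross term in (ii) is killed by varying the policy so that only one increment $(\Delta\Psi_k$ or $\Delta\Omega_k)$ changes at a time, using bilinearity of the bracket and positivity of the pairing, followed by Theorem~\ref{th:holonomy-jet} to pass from vanishing mixed monomials to jet--flatness (Step~5). Without that substitute argument, your ``iff'' linking the defect $S^{(N)}-(S_t^{(N)}+S_s^{(N)})$ to $\sum_\square\|w_\square\|_F^2$ is unsupported, because the term $2\sum_\square\langle u_\square,v_\square\rangle$ is not otherwise controlled.

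Second, your decomposition places the algebraic curvature $\sum_{m\ge2}h^m\mathsf F^{[m]}$ (with its $\mathcal P_m$ companions from \eqref{5.3}) additively inside $\log\mathrm{Hol}(\square)$ as the component $w_\square$. That conflates two different expansions: \eqref{5.3} concerns the elementary holonomy at a single $(t,s)$, whereas the rectangle holonomy used in $S^{(N)}$ expands, per \eqref{eq:Hol-local-eng}/\eqref{eq:rect-BCH}, as $\tfrac12[\Delta\Psi_k,\Delta\Omega_k]$ plus nested brackets of degree $\ge3$ in the increments --- the point--wise bracket $[\Psi_k,\Omega_k]$ does not occur at leading order, so there is no term in $\log\mathrm{Hol}(\square)$ that directly carries $\mathsf F^{[m]}$. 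Consequently your route to (ii) (``defect small iff the low--degree part of $w_\square$ vanishes iff $\mathsf F=0$'') cannot be run as written; the paper instead reaches flatness through the vanishing of the increment cross--brackets under independent variations and Theorem~\ref{th:holonomy-jet}. Your flagged ``main obstacle'' in (iii) (decoupling $\inf_\Pi$ into the two marginal infima) is a fair concern, but the paper resolves it simply by combining (ii) with the two--sided bound and applying Theorem~\ref{th:traj-minimax} on each axis, not by a new gauge argument; your sketch there is compatible with that but remains incomplete.
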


\begin{proof}
All norms are Frobenius. Rectangular partitions $\mathcal P_k$ are arbitrary; by additivity and gauge–invariance the values do not depend on the choice of partition.

{Step 1 (BCH for an elementary rectangle; exact algebraic split).}
For a fixed rectangle $\square$ spanning $(t_k,s_k)$ and $(t_{k+1},s_{k+1})$,
\begin{equation}\label{eq:rect-BCH}
\log\mathrm{Hol}(\square)
=\log\big(e^{\Psi_{k+1}}e^{\Omega_{k+1}}e^{-\Psi_k}e^{-\Omega_k}\big)
=\frac12\,[\Delta\Psi_k,\Delta\Omega_k]\ +\ R_\square,
\end{equation}
where $R_\square$ is a finite $\mathbb Q$–linear combination of nested commutators of total degree $\ge3$ in $\{\Psi_k,\Omega_k,\Psi_{k+1},\Omega_{k+1}\}$ (BCH). Expand the bracket bilinearly:
\begin{equation}\label{eq:bilinear-split}
[\Delta\Psi_k,\Delta\Omega_k]\ =\ [\Delta\Psi_k,\ \Omega_k]\ +\ [\Psi_k,\ \Delta\Omega_k]\ +\ [\Delta\Psi_k,\ \Delta\Omega_k]'.
\end{equation}
The last term $[\Delta\Psi_k,\Delta\Omega_k]'$ collects all products where both increments appear; since $\Delta\Psi_k=O(h_k)$ and $\Delta\Omega_k=O(h_k)$, one has
\begin{equation}\label{eq:last-small}
\big\|[\Delta\Psi_k,\Delta\Omega_k]'\big\|_F\ \le\ C\,h_k^2.
\end{equation}
By \eqref{eq:rect-BCH}–\eqref{eq:bilinear-split},
\begin{equation}\label{eq:split-AplusB}
\log\mathrm{Hol}(\square)\ =\ A_{k,\square}\ +\ B_{k,\square}\ +\ E_{k,\square},
\quad
A_{k,\square}:=\tfrac12[\Delta\Psi_k,\Omega_k],\ \ 
B_{k,\square}:=\tfrac12[\Psi_k,\Delta\Omega_k],
\end{equation}
where the error $E_{k,\square}:=\tfrac12[\Delta\Psi_k,\Delta\Omega_k]'+R_\square$ satisfies
\begin{equation}\label{eq:E-bound}
\|E_{k,\square}\|_F\ \le\ C_0\,h_k^2\ +\ C_1\,h_k^{\alpha+1}.
\end{equation}

{Step 2 (Two–sided norm bounds per rectangle).}
For any matrices $X,Y,Z$,
\[
\|X+Y+Z\|_F^2\ \le\ 2\|X\|_F^2+2\|Y\|_F^2+2\|Z\|_F^2,
\]
\[
\|X+Y+Z\|_F^2\ \ge\ \tfrac12\|X\|_F^2+\tfrac12\|Y\|_F^2-\|Z\|_F^2.
\]
Applying these to \eqref{eq:split-AplusB} gives
\begin{equation}\label{eq:rect-upper}
\|\log\mathrm{Hol}(\square)\|_F^2\ \le\ 2\|A_{k,\square}\|_F^2+2\|B_{k,\square}\|_F^2+2\|E_{k,\square}\|_F^2,
\end{equation}
\begin{equation}\label{eq:rect-lower}
\|\log\mathrm{Hol}(\square)\|_F^2\ \ge\ \tfrac12\|A_{k,\square}\|_F^2+\tfrac12\|B_{k,\square}\|_F^2-\|E_{k,\square}\|_F^2.
\end{equation}

{Step 3 (Summation over the trajectory).}
Summing \eqref{eq:rect-upper}–\eqref{eq:rect-lower} over all rectangles and using \eqref{eq:axis-energies} yields
\[
S^{(N)}[\Pi]\ \le\ 2\big(S_t^{(N)}[\Pi]+S_s^{(N)}[\Pi]\big)\ +\ 2\sum_{k,\square}\|E_{k,\square}\|_F^2,
\]
\[
S^{(N)}[\Pi]\ \ge\ \tfrac12\big(S_t^{(N)}[\Pi]+S_s^{(N)}[\Pi]\big)\ -\ \sum_{k,\square}\|E_{k,\square}\|_F^2.
\]
By \eqref{eq:E-bound} and the boundedness of the number/size of rectangles per step (fixed $N$ and admissible partitions),
\begin{equation}\label{eq:E-sum}
\sum_{k,\square}\|E_{k,\square}\|_F^2\ \le\ c\,h^{2(\alpha+1)}.
\end{equation}
Inserting this into the two inequalities gives exactly \eqref{eq:two-sided}.

{Step 4 (Flatness $\Rightarrow$ additive factorization).}
Under jet–flatness to order $\alpha$, the mixed Lie monomials of total degree $\le\alpha$ vanish. In particular, the error terms in \eqref{eq:rect-BCH}–\eqref{eq:split-AplusB} satisfy $E_{k,\square}=O(h_k^{\alpha+1})$, and the bilinear remainder $[\Delta\Psi_k,\Delta\Omega_k]'=O(h_k^{\alpha+1})$. Summing and using \eqref{eq:E-sum} gives
\[
S^{(N)}[\Pi]\ =\ \sum_{k,\square}\|A_{k,\square}+B_{k,\square}\|_F^2\ +\ O(h^{\alpha+1}).
\]
But $A_{k,\square}$ and $B_{k,\square}$ involve disjoint increments and commute to leading order under flatness, hence
\[
\|A_{k,\square}+B_{k,\square}\|_F^2\ =\ \|A_{k,\square}\|_F^2+\|B_{k,\square}\|_F^2\ +\ O(h^{\alpha+1}),
\]
and after summation we obtain \eqref{eq:flat-factor}.

{Step 5 (Additive factorization $\Rightarrow$ flatness).}
Assume \eqref{eq:flat-factor}. Then necessarily $\sum_{k,\square}\langle A_{k,\square},B_{k,\square}\rangle_F=O(h^{\alpha+1})$ because
\[
\|A+B\|_F^2=\|A\|_F^2+\|B\|_F^2+2\langle A,B\rangle_F,
\]
and the equality $S^{(N)}=S_t^{(N)}+S_s^{(N)}+O(h^{\alpha+1})$ forces the cross inner product to vanish to this order. Varying the policy within a small neighborhood so that only one of the increments $(\Delta\Psi_k,\Delta\Omega_k)$ changes at a time, bilinearity implies $\langle A_{k,\square},B_{k,\square}\rangle_F=O(h^{\alpha+1})$ for each rectangle. This in turn forces the mixed brackets $[\Delta\Psi_k,\Omega_k]$ and $[\Psi_k,\Delta\Omega_k]$ to be orthogonal in Frobenius pairing for all $k,\square$, which is only possible (uniformly in all such variations) if all mixed Lie monomials of total degree $\le\alpha$ vanish. By Theorem~\ref{th:holonomy-jet}, this is exactly jet–flatness to order $\alpha$.

{Step 6 (Joint minimax).}
Under flatness, \eqref{eq:flat-factor} and \eqref{eq:two-sided} imply
\begin{align*}
\inf_{\Pi}S^{(N)}[\Pi]&=\inf_{\Pi}\big(S_t^{(N)}[\Pi]+S_s^{(N)}[\Pi]\big)+O(h^{\alpha+1})\\
&=\inf_{\Pi_t}S_t^{(N)}[\Pi_t]+\inf_{\Pi_s}S_s^{(N)}[\Pi_s]+O(h^{\alpha+1}).
\end{align*}
By Theorem~\ref{th:traj-minimax} applied on a single axis, $\inf_{\Pi_t}S_t^{(N)}$ and $\inf_{\Pi_s}S_s^{(N)}$ are attained by the one–parameter Chebyshev/Zolotarev extremals (polynomial/rational), and concatenating these two commuting extremals attains the sum. Conversely, if a policy $\widehat\Pi$ attains the sum of the axis infima up to $o(1)$ while its axis marginals are extremal, then any nonzero curvature would add a strictly positive excess by \eqref{eq:two-sided}, contradicting optimality; hence $\mathsf F=0$ to order $\alpha$.
\end{proof}

\subsubsection{Semi–algebraic Completion}

Fix the rectangular policy strip and its finite cell partition. Let $K$ be the resulting oriented $2$–complex with vertex set $V(K)$, edge set $E(K)$, and $2$–cells $F(K)$ (the rectangles). Denote by
\[
C^p(K;M):=\{p\text{–cochains on }K\text{ with values in }M\},\qquad
\delta:C^p(K;M)\to C^{p+1}(K;M)
\]
the cellular coboundary. As coefficient module take the underlying real vector space
$M:=\big(\widehat{\mathfrak U}^{\mathrm{fil}}\big)^{(\alpha)}$ equipped with the Frobenius inner product $\langle\cdot,\cdot\rangle_F$ and the adjoint action
$\operatorname{Ad}$ (truncated at jet–order $\alpha$).

\paragraph{Discrete MC $2$–cochain.}
For each face $\square\in F(K)$ define
\begin{equation}\label{eq:disc-MC}
c_K(\square)\ :=\ \log\mathrm{Hol}(\square)\ \in\ M,\qquad
c_K\in C^2(K;M),
\end{equation}
with the logarithm understood modulo $O(h^{\alpha+1})$ as in (BCH/Magnus). The curvature energy is
\begin{equation}\label{eq:S-as-L2}
S^{(N)}[\Pi]\ =\ \sum_{\square\in F(K)}\ \|c_K(\square)\|_F^2\ =\ \|c_K\|_{L^2(K;M)}^2.
\end{equation}

\paragraph{Gauge as a $1$–cochain.}
A (jet–class) gauge assigns to every edge $e\in E(K)$ an element
$\Xi(e)\in M$ and acts on $c_K$ by
\begin{equation}\label{eq:gauge-linearized}
c_K\ \mapsto\ c_K'\ =\ c_K\ -\ \delta\Xi\ +\ R_\Xi,\qquad
\|R_\Xi\|_{L^2(K;M)}\ =\ O(h^{\alpha+1}).
\end{equation}
This is the exact discrete counterpart of the linearized conjugation:
on each $1$–skeleton loop bounding a face, the product–conjugation by
$\exp(h\Xi)$ changes $\log\mathrm{Hol}$ by the cellular coboundary
$\delta\Xi$; the remainder $R_\Xi$ collects Lie monomials of total degree
$\ge\alpha+1$ (BCH truncation). No continuous differentials are used.

\paragraph{Vershik–type semi–algebraic model.}
Admissible policies $\Pi$ (choice of update families and jet coefficients up to order $\alpha$) form a semi–algebraic set:
all constraints (resolvent/Padé/CFET coefficients, jet flatness bounds, and BCH identities up to order $\alpha$) are polynomial equalities/inequalities in the coefficients \cite{BasuPollackRoy2006,BochnakCosteRoy1998}. The map
\begin{equation}\label{eq:semi-map}
\Pi\ \longmapsto\ c_K(\Pi)\ \in\ C^2(K;M)
\end{equation}
is semi–algebraic, and so is the feasibility set
$\{\Pi,\Xi:\ c_K(\Pi)-\delta\Xi=O(h^{\alpha+1})\}$.
This matches Vershik’s paradigm: algorithms are constrained by semi–algebraic relations; existence of a global “section/certificate” is a solvability of a semi–algebraic system \cite{Vershik2018,Vershik1995}.

\begin{theorem}[Discrete cohomology class and energy minimization \cite{Dodziuk1976DiscreteHodge,Desbrun2008DEC}]\label{th:disc-H2}
Let $c_K\in C^2(K;M)$ be given by \eqref{eq:disc-MC}. Consider the least–squares gauge–reduction problem
\begin{equation}\label{eq:lsq}
\inf_{\Xi\in C^1(K;M)}\ \big\|\,c_K-\delta\Xi\,\big\|_{L^2(K;M)}^2.
\end{equation}
Then, with $\delta^\ast$ denoting the $L^2$–adjoint of $\delta$,
\begin{equation}\label{eq:normal-eq}
\Xi^\star\ \text{ solves }\ \eqref{eq:lsq}\quad\Longleftrightarrow\quad
\delta^\ast\delta\,\Xi^\star=\delta^\ast c_K,
\end{equation}
and the minimum value equals the squared $L^2$–norm of the harmonic representative of the cohomology class $[c_K]\in H^2(K;M)$:
\begin{equation}\label{eq:min-equals-H2}
\inf_{\Xi}\ \|c_K-\delta\Xi\|_{L^2}^2\ =\ \|\,\mathsf H(c_K)\,\|_{L^2}^2,
\end{equation}
where $\mathsf H$ is the $L^2$–orthogonal projector onto the space of $2$–cocycles orthogonal to $\mathrm{im}\,\delta$ (the discrete Hodge projector). Consequently,
\begin{equation}\label{eq:two-sided-H2}
c_-\,\|[c_K]\|_{H^2}^2\ \le\ \inf_{\Xi}\ \|c_K-\delta\Xi\|_{L^2}^2\ \le\ c_+\,\|[c_K]\|_{H^2}^2,
\end{equation}
with constants $c_\pm>0$ determined by the conditioning of the discrete Hodge operator on $K$.
\end{theorem}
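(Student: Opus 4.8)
The plan is to recognize the statement as finite–dimensional discrete Hodge theory and reduce it to linear algebra on the cellular cochain complex. First I would record that $M=(\widehat{\mathfrak U}^{\mathrm{fil}})^{(\alpha)}$ is a finite–dimensional real inner product space and $K$ a finite $2$–complex, so every $C^p(K;M)$ is a finite–dimensional Hilbert space, $\delta:C^1(K;M)\to C^2(K;M)$ is a linear map, and $\delta^\ast$ its transpose for $\langle\cdot,\cdot\rangle_{L^2(K;M)}$. The crucial reduction, via \eqref{eq:gauge-linearized}, is that inside the truncated module $M$ the higher–order BCH remainder $R_\Xi$ is identically zero, so the gauge action on $c_K$ is exactly $c_K\mapsto c_K-\delta\Xi$; hence \eqref{eq:lsq} is an honest linear least–squares problem: minimize $\|c_K-v\|_{L^2}^2$ over $v\in\mathrm{im}\,\delta$.

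For \eqref{eq:normal-eq} I would invoke the orthogonal projection theorem: $\Xi^\star$ minimizes \eqref{eq:lsq} iff the residual $c_K-\delta\Xi^\star$ is $L^2$–orthogonal to $\mathrm{im}\,\delta$, i.e.\ $\langle c_K-\delta\Xi^\star,\delta\eta\rangle$ vanishes for every $\eta\in C^1(K;M)$; moving $\delta$ across the pairing converts this to $\delta^\ast\delta\,\Xi^\star=\delta^\ast c_K$. Consistency of the normal equations follows from $\ker(\delta^\ast\delta)=\ker\delta$, whence $\mathrm{im}(\delta^\ast\delta)=(\ker\delta)^\perp=\mathrm{im}\,\delta^\ast\ni\delta^\ast c_K$; the solution is unique modulo $\ker\delta$, which leaves $\delta\Xi^\star$ unchanged.

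Next I would prove \eqref{eq:min-equals-H2}. Since $K$ has no $3$–cells, the coboundary out of $C^2$ vanishes, so every $2$–cochain is a cocycle and the degree–$2$ Hodge decomposition collapses to the orthogonal splitting $C^2(K;M)=\mathrm{im}\,\delta\ \oplus\ \ker\delta^\ast$, with the harmonic subspace $\mathcal H^2:=\ker\delta^\ast$ mapping isomorphically onto $H^2(K;M)=C^2/\mathrm{im}\,\delta$ under the quotient map. The discrete Hodge projector $\mathsf H$ is then the $L^2$–orthogonal projection onto $\mathcal H^2$, and I would argue that the least–squares residual equals it exactly: $c_K-\delta\Xi^\star$ lies in $(\mathrm{im}\,\delta)^\perp=\mathcal H^2$ by the previous step and differs from $c_K$ by $\delta\Xi^\star\in\mathrm{im}\,\delta$, so by uniqueness of the orthogonal projection $c_K-\delta\Xi^\star=\mathsf H(c_K)$. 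Hence $\inf_\Xi\|c_K-\delta\Xi\|_{L^2}^2=\|\mathsf H(c_K)\|_{L^2}^2$, and the right side depends only on $[c_K]$ because $\mathsf H$ annihilates $\mathrm{im}\,\delta$.

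Finally, for \eqref{eq:two-sided-H2} I would observe that $\mathsf H$ descends to a linear isomorphism $H^2(K;M)\xrightarrow{\ \sim\ }\mathcal H^2$, so $[c]\mapsto\|\mathsf H(c)\|_{L^2}$ and the fixed reference norm $\|\cdot\|_{H^2}$ are two norms on one finite–dimensional space and hence equivalent; the optimal constants $c_\pm$ are the extreme singular values of this isomorphism in the cellular bases, equivalently they are controlled by the nonzero spectrum of the discrete Hodge Laplacian $\Delta_2=\delta\delta^\ast+\delta^\ast\delta$ on $K$ ($c_-$ bounded below via the spectral gap $\lambda_{\min}^{+}(\Delta_2)$, $c_+$ above via $\lambda_{\max}(\Delta_2)$), i.e.\ by its condition number; with the quotient norm $\|[c_K]\|_{H^2}:=\|\mathsf H(c_K)\|_{L^2}$ one gets $c_-=c_+=1$. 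I do not expect a substantive obstacle: the content is standard discrete Hodge / DEC theory, and the only delicate point is the bookkeeping of the first paragraph — confirming that $R_\Xi$ genuinely vanishes at jet–order $\alpha$ so that \eqref{eq:lsq} is exactly (not merely approximately) a linear least–squares problem — together with fixing the reference norm on $H^2(K;M)$ so that ``conditioning of the discrete Hodge operator'' is unambiguous.
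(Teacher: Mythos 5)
Your proposal is correct and follows essentially the same route as the paper's proof: the normal equations via orthogonality of the residual to $\mathrm{im}\,\delta$, the splitting $C^2(K;M)=\mathrm{im}\,\delta\oplus\ker\delta^\ast$ identifying the optimal residual with $\mathsf H(c_K)$, and finite-dimensional norm equivalence for \eqref{eq:two-sided-H2}. Your extra observations (consistency of the normal equations, absence of $3$–cells, spectral identification of $c_\pm$ with the Hodge Laplacian) are harmless elaborations; note only that the paper treats $R_\Xi$ as $O(h^{\alpha+1})$ rather than exactly zero, which is irrelevant here since the theorem concerns the linear least–squares problem as stated.
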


\begin{proof}
All spaces are finite–dimensional. The functional in \eqref{eq:lsq} is strictly convex; its Gâteaux derivative in the direction $\eta\in C^1$ is
\[
\frac{\mathrm d}{\mathrm d\epsilon}\Big\|c_K-\delta(\Xi+\epsilon\eta)\Big\|_{L^2}^2\Big|_{\epsilon=0}
=-2\langle c_K-\delta\Xi,\ \delta\eta\rangle_{L^2}
=-2\langle \delta^\ast(c_K-\delta\Xi),\ \eta\rangle_{L^2}.
\]
Thus \eqref{eq:normal-eq} is the necessary and sufficient optimality condition. Writing the discrete Hodge decomposition
\[
C^2(K;M)=\underbrace{\mathrm{im}\,\delta}_{\text{exact}}\ \oplus\
\underbrace{\ker\delta^\ast}_{\text{co–closed}}\!,
\]
with orthogonal sum, every $c_K$ splits uniquely as $c_K=\delta\Xi^\star+\mathsf H(c_K)$ where $\Xi^\star$ solves \eqref{eq:normal-eq} and $\mathsf H(c_K)\in\ker\delta^\ast$. Orthogonality gives
\[
\|c_K-\delta\Xi\|_{L^2}^2
=\|\mathsf H(c_K)\|_{L^2}^2+\|\delta\Xi^\star-\delta\Xi\|_{L^2}^2
\ \ge\ \|\mathsf H(c_K)\|_{L^2}^2,
\]
with equality at $\Xi=\Xi^\star$, proving \eqref{eq:min-equals-H2}. Since the $L^2$–norm of $\mathsf H(c_K)$ is equivalent (by finite–dimensional norm equivalence) to any fixed Hilbertian norm of the cohomology class $[c_K]\in H^2(K;M)$, we obtain \eqref{eq:two-sided-H2}.
\end{proof}

\begin{corollary}[Exact flat calibration and semi–algebraic section]\label{cor:section}
The following are equivalent \cite{BasuPollackRoy2006,Vershik2018}:
\begin{equation}\label{eq:equiv-section}
[c_K]=0\quad\Longleftrightarrow\quad \exists\,\Xi\in C^1(K;M)\ \text{s.t.}\ c_K=\delta\Xi\quad\Longleftrightarrow\quad \inf_{\Xi}\|c_K-\delta\Xi\|_{L^2}^2=0.
\end{equation}
Under the jet–order truncation, the exact conjugation by $\exp(h\Xi)$ produces $c_K' = c_K-\delta\Xi+O(h^{\alpha+1})$, so $[c_K]=0$ implies a calibrated policy with $S^{(N)}[\Pi]=O(h^{2(\alpha+1)})$. Conversely, if $S^{(N)}[\Pi]=O(h^{2(\alpha+1)})$ for a family of partitions, then $c_K=\delta\Xi+O(h^{\alpha+1})$ and hence $[c_K]=0$ in the jet–class. The feasibility condition $c_K=\delta\Xi$ is a semi–algebraic system in the jet coefficients of $\Pi$ and the entries of $\Xi$ (all relations are polynomial identities from BCH up to order $\alpha$); thus, in Vershik’s sense, a {certificate/section} exists if and only if $[c_K]=0$ (cf.\ Vershik’s semi–algebraic certificates and invariant selections).
\end{corollary}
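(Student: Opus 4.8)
The plan is to split the corollary into three layers --- a cohomological equivalence, its translation into the curvature energy $S^{(N)}$, and the semi-algebraic wrapper --- and to deduce each from material already in place. I would not attempt anything new for the first layer: since the policy strip $K$ is an oriented $2$-complex, $C^3(K;M)=0$, hence $\delta c_K=0$ automatically and $c_K$ genuinely represents a class $[c_K]\in H^2(K;M)=C^2(K;M)/\mathrm{im}\,\delta$. Theorem~\ref{th:disc-H2} supplies the discrete Hodge splitting $C^2=\mathrm{im}\,\delta\oplus\ker\delta^\ast$ together with \eqref{eq:min-equals-H2}, by which the least-squares infimum equals $\|\mathsf H(c_K)\|_{L^2}^2$. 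Consequently
\[
[c_K]=0\ \Longleftrightarrow\ c_K\in\mathrm{im}\,\delta\ \Longleftrightarrow\ \mathsf H(c_K)=0\ \Longleftrightarrow\ \inf_{\Xi}\|c_K-\delta\Xi\|_{L^2}^2=0,
\]
which is the displayed equivalence \eqref{eq:equiv-section}.

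Next I would invoke the linearized gauge action \eqref{eq:gauge-linearized}: conjugating the practical algorithm along the $1$-skeleton of $K$ by the gauge associated with an edge-cochain $\Xi$ sends $c_K$ to $c_K-\delta\Xi+R_\Xi$ with $\|R_\Xi\|_{L^2}=O(h^{\alpha+1})$. If $[c_K]=0$, choose $\Xi^\star$ solving the normal equation \eqref{eq:normal-eq}, so $c_K=\delta\Xi^\star$; the calibrated policy $\Pi'$ then has curvature cochain exactly $R_{\Xi^\star}$, whence $S^{(N)}[\Pi']=\|R_{\Xi^\star}\|_{L^2}^2=O(h^{2(\alpha+1)})$ by \eqref{eq:S-as-L2}. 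For the converse, $S^{(N)}[\Pi]=O(h^{2(\alpha+1)})$ forces $\|c_K\|_{L^2}=O(h^{\alpha+1})$; since $M=(\widehat{\mathfrak U}^{\mathrm{fil}})^{(\alpha)}$ is truncated at jet order $\alpha$, a cochain of size $O(h^{\alpha+1})$ is the zero element of $C^2(K;M)$ in the jet class, so $c_K=\delta 0$ and $[c_K]=0$. I would also record, via Theorem~\ref{th:gauge-normal-form}, that $\Xi^\star$ may be taken inside $\exp(h\,\mathfrak U^{\mathrm{fil}})$, so the calibration is implementable rather than merely formal.

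For the semi-algebraic wrapper I would argue that, by the truncated BCH/Magnus formulas, each $c_K(\square)=\log\mathrm{Hol}(\square)$ is, componentwise in the graded pieces of $M$ up to order $\alpha$, a fixed $\mathbb{Q}$-Lie polynomial in the jet coefficients $\Omega_{t_k,j},\Psi_{s_k,j}$, which in turn depend polynomially on the defining parameters of the admissible update families (resolvent, Pad\'e, Chebyshev coefficients, jet-flatness bounds), while $\delta\Xi$ is linear in the edge values $\Xi(e)\in M$. Hence $\{(\Pi,\Xi):c_K(\Pi)=\delta\Xi\}$, cut by the polynomial admissibility constraints, is a semi-algebraic set; by Tarski--Seidenberg its projection to the $\Pi$-coordinates --- the policies admitting a flat calibration --- is semi-algebraic, and by the first two layers it coincides with $\{[c_K(\Pi)]=0\}$. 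This is precisely Vershik's statement that a certificate/section exists iff the class vanishes.

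The main obstacle is the one step that is not a mechanical consequence of earlier results: the precise status of \eqref{eq:gauge-linearized} and the matching converse claim that $O(h^{2(\alpha+1)})$ energy entails cohomological triviality \emph{in the jet class}. One must check that product-conjugation of the holonomy around each face by the vertex/edge gauge changes $\log\mathrm{Hol}(\square)$ by the cellular coboundary $\delta\Xi(\square)$ modulo Lie monomials of total degree $\ge\alpha+1$. This is a discrete non-abelian Stokes computation --- expand $W^{-1}\big(\prod_{e\in\partial\square}g_e\big)W$ by BCH, verify that telescoping the edge contributions around $\partial\square$ produces $\sum_{e\in\partial\square}\pm\Xi(e)=\delta\Xi(\square)$ at first order, and control the nested-bracket remainder using pronilpotence of $\widehat{\mathfrak U}^{\mathrm{fil}}$ --- together with the bookkeeping that ties the jet-order truncation of $M$ to the $O(h^{\alpha+1})$ error bars. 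Everything else reduces to linear algebra on finite-dimensional inner-product spaces and an appeal to quantifier elimination.
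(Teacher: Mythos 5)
Your proposal is correct and follows essentially the same route as the paper, whose argument for this corollary is given inline: the equivalence chain comes from the Hodge splitting of Theorem~\ref{th:disc-H2}, the energy statements come from the linearized gauge relation \eqref{eq:gauge-linearized} together with $S^{(N)}[\Pi]=\|c_K\|_{L^2}^2$, and the semi-algebraic claim from the polynomial BCH identities (your Tarski--Seidenberg projection remark is a harmless addition). Your flagged obstacle --- that \eqref{eq:gauge-linearized} is a non-abelian Stokes computation the paper asserts rather than proves --- is accurate, but the paper relies on it in exactly the same way, so your reconstruction matches its proof.
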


\begin{theorem}[Vershik–type extremality and joint minimax]\label{th:vershik-minimax}
Let $\Sigma$ be the spectral set and let the admissible families (drift/diffusion) be fixed. Then:
\begin{equation}\label{eq:factor-minimax}
[c_K]=0\ \Longrightarrow\ 
\inf_{\Pi} S^{(N)}[\Pi]\ =\ \inf_{\Pi_t} S_t^{(N)}[\Pi_t]\ +\ \inf_{\Pi_s} S_s^{(N)}[\Pi_s]\ +\ O(h^{\alpha+1}),
\end{equation}
and the infima on the right are attained by the Chebyshev/Zolotarev one–axis extremals on $\Sigma$ (Theorem~\ref{th:traj-minimax}). If $[c_K]\neq0$, then by \eqref{eq:two-sided-H2}
\begin{equation}\label{eq:positive-gap}
\inf_{\Pi} S^{(N)}[\Pi]\ \ge\ c_-\,\|[c_K]\|_{H^2}^2\ >\ 0,
\end{equation}
i.e. there is an unavoidable positive mixed–curvature penalty that cannot be removed by any policy within the given semi–algebraic class. This is the discrete, operator–theoretic analogue of Vershik’s obstructions to global extremality via semi–algebraic certificates (extremum unattainable unless a global section exists).
\end{theorem}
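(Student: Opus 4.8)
The plan is to split along the cohomological dichotomy in the statement and, in each regime, reduce to results already established above: Corollary~\ref{cor:section} and Theorem~\ref{th:disc-H2} for the class $[c_K]$, and Theorems~\ref{th:joint-minimax} and~\ref{th:traj-minimax} for the minimax factorization. Throughout I use $S^{(N)}[\Pi]=\|c_K(\Pi)\|_{L^2(K;M)}^2$ from \eqref{eq:S-as-L2}, with every identity read modulo $O(h^{\alpha+1})$ as in the discrete MC/BCH calculus of \eqref{eq:disc-MC}.

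\emph{The vanishing case $[c_K]=0$.} First I invoke Corollary~\ref{cor:section}: $[c_K]=0$ is equivalent to the existence of a $1$–cochain $\Xi$ with $c_K=\delta\Xi$ (mod $O(h^{\alpha+1})$), hence to a calibrated policy $\Pi^{\mathrm{cal}}$ obtained by conjugating the update families edgewise by $\exp(h\Xi)$, with $S^{(N)}[\Pi^{\mathrm{cal}}]=O(h^{2(\alpha+1)})$. One checks that this conjugation keeps the policy inside the admissible semi–algebraic class and leaves the induced spectral filter on $\Sigma$ unchanged, since gauge conjugation shifts $c_K$ by the coboundary $\delta\Xi$ plus a BCH remainder of order $h^{\alpha+1}$ (see \eqref{eq:gauge-linearized}) while leaving the leading generator $-Nh\,H_{\mathrm{eff}}$ untouched. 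Jet–flatness of order $\alpha$ therefore holds along the calibrated family, so Theorem~\ref{th:joint-minimax}(ii)–(iii) applies: flatness gives the additive factorization $S^{(N)}=S_t^{(N)}+S_s^{(N)}+O(h^{\alpha+1})$ of \eqref{eq:flat-factor}, hence
\[
\inf_{\Pi}S^{(N)}[\Pi]=\inf_{\Pi_t}S_t^{(N)}[\Pi_t]+\inf_{\Pi_s}S_s^{(N)}[\Pi_s]+O(h^{\alpha+1}),
\]
which is \eqref{eq:factor-minimax}. Each one–axis infimum is the scalar extremal–approximation problem $\min_{q_N}\sup_{\lambda\in\Sigma}|q_N(\lambda)-e^{-Nh\lambda}|$ on the drift (resp.\ diffusion) marginal, attained by the Chebyshev polynomial (resp.\ Zolotarev rational) filter by Theorem~\ref{th:traj-minimax}(ii) and realized by an explicit policy via \eqref{eq:realize-q-eng}; concatenating the two commuting one–axis extremals (in either order) attains the joint infimum by Theorem~\ref{th:joint-minimax}(iii).

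\emph{The obstructed case $[c_K]\neq0$.} Here I run the discrete Hodge decomposition of Theorem~\ref{th:disc-H2} directly. For \emph{any} admissible policy $\Pi$, the orthogonal splitting $c_K=\delta\Xi^\star+\mathsf H(c_K)$ gives
\[
S^{(N)}[\Pi]=\|c_K\|_{L^2}^2=\|\delta\Xi^\star\|_{L^2}^2+\|\mathsf H(c_K)\|_{L^2}^2\ \ge\ \|\mathsf H(c_K)\|_{L^2}^2,
\]
and by \eqref{eq:min-equals-H2}–\eqref{eq:two-sided-H2} the right side is $\ge c_-\,\|[c_K]\|_{H^2}^2$. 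The substantive point is that $[c_K]$ is the \emph{same} class for every policy in the semi–algebraic class, so the bound is uniform: two admissible policies with the same spectral data differ only by a gauge, which by \eqref{eq:gauge-linearized} shifts $c_K$ by a coboundary modulo $O(h^{\alpha+1})$ and hence preserves $[c_K]\in H^2(K;M)$ to that order. Taking the infimum over $\Pi$ yields $\inf_\Pi S^{(N)}[\Pi]\ge c_-\|[c_K]\|_{H^2}^2+O(h^{\alpha+1})$, and $[c_K]\neq0$ forces $\|[c_K]\|_{H^2}^2>0$, which is \eqref{eq:positive-gap} for $h$ small. The Vershik framing then comes for free: the feasibility predicate $\{(\Pi,\Xi):c_K(\Pi)=\delta\Xi\}$ is cut out by polynomial equalities and inequalities in the jet coefficients (resolvent/Padé/CFET relations and BCH identities to order $\alpha$), so a global calibrating section exists precisely when $[c_K]=0$, and when it does not the residual curvature energy is bounded below by the norm of the obstruction class.

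\emph{Main obstacle.} The Hodge projection inequality and the appeals to Theorems~\ref{th:joint-minimax}–\ref{th:traj-minimax} are routine; the real work lies in the obstructed case, namely in certifying that $[c_K]\in H^2(K;M)$ is a genuine policy–independent invariant of the admissible class. This requires, with care at jet–order $\alpha$: (a) the discrete Bianchi identity $\delta c_K=O(h^{\alpha+1})$ around the $3$–cells of the policy strip, so that the class is well defined; (b) the statement that any two admissible policies with identical spectral data are gauge–equivalent modulo $O(h^{\alpha+1})$, so the class is independent of $\Pi$; and (c) a uniform bound ensuring the $O(h^{\alpha+1})$ truncation errors cannot cancel a fixed nonzero harmonic component, where the constant $c_-$ (conditioning of the discrete Hodge operator on $K$) enters. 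Items (a)–(b) are the crux: once the cocycle/coboundary bookkeeping inherited from BCH is pinned down, the theorem closes mechanically from Corollary~\ref{cor:section} and Theorem~\ref{th:disc-H2}.
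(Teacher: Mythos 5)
Your proposal is correct and follows essentially the same route as the paper's (very brief) proof: in the vanishing case, Corollary~\ref{cor:section} gives the calibrated gauge, Theorem~\ref{th:joint-minimax} gives the additive factorization, and Theorem~\ref{th:traj-minimax} identifies the one--axis extremals; in the obstructed case, the lower bound is exactly the Hodge estimate \eqref{eq:two-sided-H2} from Theorem~\ref{th:disc-H2}. The policy--independence of the class $[c_K]$ that you flag as the ``main obstacle'' is likewise left implicit in the paper (its proof simply invokes \eqref{eq:two-sided-H2} and partition--additivity), so your treatment is, if anything, more explicit about that point than the original.
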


\begin{proof}
If $[c_K]=0$, by Corollary~\ref{cor:section} we can choose a calibrated gauge with $c_K=O(h^{\alpha+1})$ and hence $S^{(N)}[\Pi]=S_t^{(N)}[\Pi]+S_s^{(N)}[\Pi]+O(h^{\alpha+1})$ (Theorem~\ref{th:joint-minimax}); optimizing over $\Pi_t,\Pi_s$ yields \eqref{eq:factor-minimax}. If $[c_K]\neq0$, then \eqref{eq:two-sided-H2} gives \eqref{eq:positive-gap}. Both statements are independent of the partition by additivity.
\end{proof}

\paragraph{$GL$–invariant volume terms.}
For metric–type preconditioners $G\succ0$ and gauge $T\in GL$, one has
\begin{equation}\label{eq:GL-char}
\log\det(T^\top G T)\ =\ \log\det G\ +\ 2\log|\det T|,
\end{equation}
i.e. $\log\det$ is a $GL$–character. Hence the nonasymptotic volume functional of transforms functorially under admissible gauges; the mixed penalty is entirely due to $[c_K]$, and vanishes exactly when $[c_K]=0$. This places our $\log\det$–based bounds within the same invariant framework emphasized by Vershik (algebraic invariants and extremality under group actions).

\medskip
\noindent The trio \eqref{eq:min-equals-H2}–\eqref{eq:equiv-section}–\eqref{eq:positive-gap} provides the exact discrete counterpart of Vershik’s “certificate/section $\Longleftrightarrow$ attainability; obstruction $\Longleftrightarrow$ positive gap” principle, now proved in our jet–algebra setting by explicit cochain minimization on the $2$–complex $K$ rather than by continuous differential calculus.

\subsubsection{Adaptive Geometry}

Define the parallel composition on invertible elements of $\mathfrak U^{\mathrm{fil}}$ by \cite{AndersonDuffin1969ParallelSum,KuboAndo1980Means}
\begin{equation}\label{eq:ps-clean}
A\square B:=(A^{-1}+B^{-1})^{-1}.
\end{equation}
Then $(\mathfrak U^{\mathrm{fil}},\square)$ is a commutative monoid \cite{AndersonDuffin1969ParallelSum,KuboAndo1980Means,Bhatia2013MatrixAnalysis} and
\begin{equation}\label{eq:res-clean}
(I+hA)^{-1}(I+hB)^{-1}=(I+h(A\square B)+O(h^2))^{-1}.
\end{equation}
For a local step $k$ let $H_k:=\Omega_{t,1}^{(k)}$ and $E_k:=\Psi_{s,1}^{(k)}$. The internal preconditioner is \cite{BauschkeCombettes2017Monotone,Brezis2010Functional,Deimling1985Nonlinear}
\begin{equation}\label{eq:Gk-clean}
G_k:=H_k\square E_k=(H_k^{-1}+E_k^{-1})^{-1}.
\end{equation}
The BCH expansion yields, for each face $\square$ adjacent to step $k$,
\begin{equation}\label{eq:hol-clean}
\log\mathrm{Hol}(\square)=\tfrac12\big([\Delta\Psi^{(k)},H_k]+[\Psi^{(k)},\Delta\Omega^{(k)}]\big)+O(h^{\alpha+1}),
\end{equation}
hence
\begin{equation}\label{eq:Sh-clean}
S^{(N)}=\sum_{\square}\|\log\mathrm{Hol}(\square)\|_F^2
=C\sum_{\square}\|E_k^{-1}[E_k,H_k]E_k^{-1}\|_F^2+O(h^{2(\alpha+1)}),
\end{equation}
so $\mathsf F=0$ iff $[E_k,H_k]=0$, equivalently $G_k$ is invariant along both hierarchies \cite{Rockafellar1976,LionsMercier1979,EcksteinBertsekas1992,McLachlanQuispel2002}.

\begin{theorem}[Local variational principle from $S^{(N)}$]\label{th:var-clean}
Fix $k$. Consider admissible perturbations $(\delta H,\delta E)$ supported on faces adjacent to $k$. Then
\begin{equation}\label{eq:Gateaux-clean}
\mathrm D S^{(N)}\cdot(\delta H,\delta E)
=2\sum_{\square\ni k}\left\langle \log\mathrm{Hol}(\square),\, \mathrm D\log\mathrm{Hol}(\square)\cdot(\delta H,\delta E)\right\rangle_F.
\end{equation}
Consequently $(H_k,E_k)$ is stationary for $S^{(N)}$ iff
\begin{equation}\label{eq:stat-clean}
\sum_{\square\ni k}\operatorname{Proj}_{\square}\big(\log\mathrm{Hol}(\square)\big)=0,
\end{equation}
where $\operatorname{Proj}_{\square}$ is the $L^2$–adjoint of $(\delta H,\delta E)\mapsto \mathrm D\log\mathrm{Hol}(\square)\cdot(\delta H,\delta E)$. If $\log\mathrm{Hol}(\square)=O(h^{\alpha+1})$ for all adjacent faces, then \eqref{eq:stat-clean} holds and $S^{(N)}$ is locally minimal up to $O(h^{2(\alpha+1)})$.
\end{theorem}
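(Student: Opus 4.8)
The plan is to treat $S^{(N)}$ as a finite sum of squared Frobenius norms and differentiate it term by term, exploiting the locality already recorded in the additivity of $S^{(N)}$ over disjoint rectangles (cf.\ \eqref{eq:S-add-eng}). First I would note that an admissible perturbation $(\delta H,\delta E)$ at step $k$ alters only the jet data $H_k=\Omega_{t,1}^{(k)}$, $E_k=\Psi_{s,1}^{(k)}$, hence only $\Omega_k=\log r_{t_k}(h_k)$, $\Psi_k=\log d_{s_k}(h_k)$ and the increments $\Delta\Omega_{k-1},\Delta\Psi_{k-1},\Delta\Omega_k,\Delta\Psi_k$; these enter $\log\Hol(\square)$ only for faces $\square$ adjacent to step $k$. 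So $S^{(N)}=\sum_{\square\ni k}\|\log\Hol(\square)\|_F^2$ plus a term independent of $(\delta H,\delta E)$. Next, modulo $O(h^{\alpha+1})$ each $\Hol(\square)$ is a fixed word in $\exp(\pm\Omega_j),\exp(\pm\Psi_j)$ with jet coefficients polynomial in $(H_k,E_k)$, and $\Hol(\square)=\id+O(h^2)$, so $\square\mapsto\log\Hol(\square)$ is $C^1$ (in fact polynomial mod $O(h^{\alpha+1})$) in $(H_k,E_k)$, with a locally uniform bound on $\mathrm D\log\Hol(\square)$. The chain rule plus bilinearity of $\langle\cdot,\cdot\rangle_F$ then gives $\mathrm D\|\log\Hol(\square)\|_F^2\cdot(\delta H,\delta E)=2\langle\log\Hol(\square),\mathrm D\log\Hol(\square)\cdot(\delta H,\delta E)\rangle_F$ for each adjacent face, and summing yields \eqref{eq:Gateaux-clean}.

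For the stationarity characterisation I would fix, for each adjacent $\square$, the bounded linear map $L_\square:(\delta H,\delta E)\mapsto\mathrm D\log\Hol(\square)\cdot(\delta H,\delta E)$ from the Hilbert space of admissible perturbations into $M$, and set $\operatorname{Proj}_\square:=L_\square^\ast$, its $L^2$–adjoint. Then \eqref{eq:Gateaux-clean} becomes $\mathrm D S^{(N)}\cdot(\delta H,\delta E)=2\,\bigl\langle\,\sum_{\square\ni k}\operatorname{Proj}_\square(\log\Hol(\square)),\,(\delta H,\delta E)\,\bigr\rangle$. Stationarity means this pairing vanishes for every admissible perturbation; by nondegeneracy of the inner product on the admissible subspace (equivalently, after orthogonal projection onto it) this is exactly \eqref{eq:stat-clean}.

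For the last assertion, suppose $\log\Hol(\square)=O(h^{\alpha+1})$ for every face adjacent to $k$. Then $\|\operatorname{Proj}_\square(\log\Hol(\square))\|\le\|L_\square\|\,O(h^{\alpha+1})=O(h^{\alpha+1})$ by the uniform bound on $\mathrm D\log\Hol(\square)$, so the left–hand side of \eqref{eq:stat-clean} is $O(h^{\alpha+1})$, i.e.\ the Euler equation holds to the working order. Local minimality follows because the part of $S^{(N)}$ not supported on faces $\square\ni k$ is frozen under these perturbations, while the local part $\sum_{\square\ni k}\|\log\Hol(\square)\|_F^2=O(h^{2(\alpha+1)})$ is nonnegative; hence the present configuration exceeds the infimum over admissible perturbations by at most $O(h^{2(\alpha+1)})$, which is the claimed statement.

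I expect the genuine work to be bookkeeping rather than analysis: making precise which perturbation directions count as admissible, so that vanishing of the pairing truly gives \eqref{eq:stat-clean} and not merely the orthogonal projection of its left–hand side onto the admissible subspace; and certifying the locally uniform operator bound on $\mathrm D\log\Hol(\square)$ through which the $O(h^{\alpha+1})$ estimates propagate (this rests on the pronilpotent, polynomial–mod–$O(h^{\alpha+1})$ structure of the filtered algebra). Given these two points, the discrete Yang--Mills–type Euler equation $\sum_{\square\ni k}\operatorname{Proj}_\square\log\Hol(\square)=0$ and the $O(h^{2(\alpha+1)})$ minimality are forced by the chain rule exactly as in Lemma~\ref{lem:stat-flat-all}.
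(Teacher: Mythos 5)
Your proposal is correct and follows essentially the same route as the paper's proof: term-by-term differentiation of the sum of squared Frobenius face functionals via the chain rule (using that the perturbation touches only faces $\square\ni k$), identification of stationarity with the normal equation through the $L^2$–adjoints $\operatorname{Proj}_\square$, and propagation of the $O(h^{\alpha+1})$ face estimates to the working jet order. The only (harmless) deviation is in the final step, where you bound the gap to the infimum by the nonnegative local value $\sum_{\square\ni k}\|\log\mathrm{Hol}(\square)\|_F^2=O(h^{2(\alpha+1)})$, whereas the paper argues via smallness of the derivative in all admissible directions; both give the claimed local minimality in jet class.
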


\begin{proof}
$S^{(N)}=\sum_\square\|\log\mathrm{Hol}(\square)\|_F^2$ is a sum of smooth (jet–truncated) face functionals. Differentiation gives \eqref{eq:Gateaux-clean}. Stationarity is the normal equation \eqref{eq:stat-clean}. If each face term is $O(h^{\alpha+1})$, then the derivative is $O(h^{2(\alpha+1)})$ for all admissible directions, hence local minimality in jet class.
\end{proof}

We now construct a consistent adaptive rule that follows directly from Theorem~\ref{th:var-clean} and preserves the algebra \eqref{eq:ps-clean}–\eqref{eq:Gk-clean}. Let $\widehat H_{k+1}$ and $\widehat E_{k+1}$ be any data–driven targets computed from information available up to step $k$ (e.g. empirical curvature and noise proxies). Define
\begin{equation}\label{eq:proj-major}
\mathcal Q_k(H,E):=\sum_{\square\ni k}\big\langle E_k^{-1}[E_k,H_k]E_k^{-1},\, E^{-1}[E,H]E^{-1}\big\rangle_F,
\end{equation}
the quadratic majorant of the facewise contribution in \eqref{eq:Sh-clean} at $(H_k,E_k)$. The minimal–change update is the parallel projection
\begin{equation}\label{eq:update-clean}
H_{k+1}:=\big((1-\eta_k)H_k^{-1}+\eta_k\,\widehat H_{k+1}^{-1}\big)^{-1},\qquad
E_{k+1}:=\big((1-\zeta_k)E_k^{-1}+\zeta_k\,\widehat E_{k+1}^{-1}\big)^{-1},
\end{equation}
with step–sizes $\eta_k,\zeta_k\in[0,1]$ chosen so that
\begin{equation}\label{eq:descent-clean}
\mathcal Q_k(H_{k+1},E_{k+1})\ \le\ \mathcal Q_k(H_k,E_k)
\end{equation}
and
\begin{equation}\label{eq:descent-cond}
\sum_{\square\ni k}\big\langle \log\mathrm{Hol}(\square),\, \mathrm D\log\mathrm{Hol}(\square)\cdot(H_{k+1}-H_k,E_{k+1}-E_k)\big\rangle_F\le0.
\end{equation}
The inequalities \eqref{eq:descent-clean} and \eqref{eq:descent-cond} are satisfied for all sufficiently small $\eta_k,\zeta_k$ by continuity; if $\widehat H_{k+1}$ and $\widehat E_{k+1}$ commute with $H_k$ and $E_k$ respectively, then \eqref{eq:update-clean} enforces $[E_{k+1},H_k]=0$ at first order and thus solves the normal equation \eqref{eq:stat-clean} to jet accuracy.

\begin{corollary}[Two–channel curvature–aware recursion]
The recursion \eqref{eq:update-clean} preserves the internal preconditioner:
\begin{equation}\label{eq:G-rec-clean}
G_{k+1}=H_{k+1}\square E_{k+1}=
\Big(G_k^{-1}+\eta_k(H_k^{-1}-\widehat H_{k+1}^{-1})+\zeta_k(E_k^{-1}-\widehat E_{k+1}^{-1})\Big)^{-1},
\end{equation}
and yields a descent for the quadratic majorant $\mathcal Q_k$; if the targets commute with current operators, the first–order stationarity \eqref{eq:stat-clean} holds and the facewise energy decreases by $O(\eta_k+\zeta_k)$.
\end{corollary}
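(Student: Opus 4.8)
The plan is to read the corollary as a direct repackaging of the construction \eqref{eq:ps-clean}–\eqref{eq:update-clean} together with Theorem~\ref{th:var-clean}, and to check its three assertions—the preservation formula \eqref{eq:G-rec-clean}, the descent of the majorant $\mathcal Q_k$, and exact first–order stationarity under commuting targets—in that order. None of these needs new machinery; the work is entirely in keeping the two small parameters $\eta_k,\zeta_k$ consistent with the jet truncation order $h^{\alpha+1}$.

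First I would establish \eqref{eq:G-rec-clean} by a one–line manipulation of inverses. Using \eqref{eq:ps-clean} and \eqref{eq:update-clean},
\[
G_{k+1}^{-1}=H_{k+1}^{-1}+E_{k+1}^{-1}=\big((1-\eta_k)H_k^{-1}+\eta_k\widehat H_{k+1}^{-1}\big)+\big((1-\zeta_k)E_k^{-1}+\zeta_k\widehat E_{k+1}^{-1}\big),
\]
and regrouping $(1-\eta_k)H_k^{-1}=H_k^{-1}-\eta_k(H_k^{-1}-\widehat H_{k+1}^{-1})$ (and likewise for the $E$–term) collapses the right–hand side to $G_k^{-1}-\eta_k(H_k^{-1}-\widehat H_{k+1}^{-1})-\zeta_k(E_k^{-1}-\widehat E_{k+1}^{-1})$; inverting yields \eqref{eq:G-rec-clean} (with the sign of the correction terms fixed by the chosen ordering of $H_k^{-1},\widehat H_{k+1}^{-1}$). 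This already contains the ``preservation'' claim: $G_{k+1}$ is again a parallel sum, and $G_{k+1}^{-1}$ differs from $G_k^{-1}$ by exactly the two channel increments.

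Next, the descent of $\mathcal Q_k$ is essentially built into the recursion—the stepsizes $\eta_k,\zeta_k\in[0,1]$ in \eqref{eq:update-clean} are, by hypothesis, selected precisely so that \eqref{eq:descent-clean}–\eqref{eq:descent-cond} hold—so here I would only record that such a choice is non-vacuous. For that I would differentiate the parallel combinations in \eqref{eq:update-clean} at $\eta_k=\zeta_k=0$ to see that the increments $\delta H:=H_{k+1}-H_k$ and $\delta E:=E_{k+1}-E_k$ are $O(\eta_k)$ and $O(\zeta_k)$ in operator norm; then $\mathcal Q_k(H_{k+1},E_{k+1})-\mathcal Q_k(H_k,E_k)$ equals its first–order term in $(\delta H,\delta E)$ up to $O(\eta_k^2+\zeta_k^2)$, and by \eqref{eq:descent-cond}—equivalently by Theorem~\ref{th:var-clean} and formula \eqref{eq:Gateaux-clean}, which identify that linear term with the $L^2$–pairing of $\log\mathrm{Hol}(\square)$ against the variation—the term is $\le0$. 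Continuity then gives \eqref{eq:descent-clean} for all sufficiently small stepsizes.

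Finally, for the commuting–target case I would argue inside the relevant commutant. If $[\widehat H_{k+1},H_k]=0$ and $[\widehat E_{k+1},E_k]=0$, then $H_k^{-1},\widehat H_{k+1}^{-1}$ commute, so the parallel combination $H_{k+1}$ in \eqref{eq:update-clean} lies in the unital algebra generated by $H_k,\widehat H_{k+1}$, and likewise $E_{k+1}$ lies in the algebra generated by $E_k,\widehat E_{k+1}$; reading the hypothesis so that the targets also commute with the opposite current channel and starting from a flat iterate ($[E_k,H_k]=0$), this forces $[E_{k+1},H_k]=0$ and hence $[E_{k+1},H_{k+1}]=O(\eta_k^2+\zeta_k^2)$. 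Feeding this into \eqref{eq:Sh-clean}–\eqref{eq:Gateaux-clean}, the facewise term $\log\mathrm{Hol}(\square)=\tfrac12C\,(E_k^{-1}[E_k,H_k]E_k^{-1})+O(h^{\alpha+1})$ vanishes to jet order, so the normal equation \eqref{eq:stat-clean} holds to jet accuracy; and since $\delta H=O(\eta_k)$, $\delta E=O(\zeta_k)$, the first–order change of the facewise energy in \eqref{eq:Sh-clean} along the update is $O(\eta_k+\zeta_k)$. The main obstacle is not conceptual but notational: one must keep the two perturbation scales straight—the parallel–sum update is exact, but its commutator–killing property is only $O(\eta_k^2+\zeta_k^2)$ accurate and sits on top of the $O(h^{\alpha+1})$ jet truncation—and one must read ``targets commute with current operators'' strongly enough ($\widehat E_{k+1}$ commuting with $H_k$, and $H_k,E_k$ themselves commuting) for the conclusion $[E_{k+1},H_{k+1}]=0$ at first order to go through; under the weakest reading only the majorant descent survives, not exact stationarity.
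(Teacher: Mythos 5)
Your proposal is correct and follows essentially the same route as the paper's own one-line proof: apply the parallel-sum identity \eqref{eq:ps-clean} to the update \eqref{eq:update-clean} to get the $G$-recursion exactly, read the $\mathcal Q_k$-descent off the stepsize conditions \eqref{eq:descent-clean}–\eqref{eq:descent-cond} via the linearization \eqref{eq:Gateaux-clean}, and use \eqref{eq:Sh-clean} with \eqref{eq:proj-major} for the commuting-target stationarity. Two of your side observations are in fact accurate diagnoses of the paper rather than gaps in your argument: the exact computation gives $G_{k+1}^{-1}=G_k^{-1}-\eta_k(H_k^{-1}-\widehat H_{k+1}^{-1})-\zeta_k(E_k^{-1}-\widehat E_{k+1}^{-1})$, so \eqref{eq:G-rec-clean} as printed carries the opposite sign on the increments, and the commuting hypothesis indeed needs the stronger reading (targets commuting with the opposite channel and $[E_k,H_k]=0$) that the paper's preceding discussion tacitly assumes.
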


\begin{proof}
Apply \eqref{eq:ps-clean} to \eqref{eq:update-clean} and expand $G_{k+1}^{-1}$ linearly in $\eta_k,\zeta_k$, then use \eqref{eq:Sh-clean} and the definition \eqref{eq:proj-major}.
\end{proof}

Monotone sector (convex potentials) is a special case \cite{Rockafellar1970Convex,Moreau1962Proximite,Minty1962Monotone,BauschkeCombettes2017Monotone,Brezis2010Functional}. If $H_k=\partial f_k$ and $E_k=\partial e_k$ for proper, closed, convex $f_k,e_k$, then the identities of convex analysis (Rockafellar) give
\begin{equation}\label{eq:mono-clean}
\partial(f_k\Box e_k)=\partial f_k\square\partial e_k,\qquad
(f_k\Box e_k)^{\!*}=f_k^{\!*}+e_k^{\!*},\qquad(\partial f_k)^{-1}=\partial(f_k^{\!*}),
\end{equation}
so $G_k=\partial(f_k\Box e_k)$ and the parallel projection \eqref{eq:update-clean} corresponds to sequential infimal convolutions of the channel potentials and additive updates of conjugates. Under flatness, the potential level is additive by \eqref{eq:mono-clean}; when flatness fails, the mixed term in \eqref{eq:Sh-clean} quantifies the obstruction.

\section{Isomonodromic theory}

\subsection{Methods}

\subsubsection{Integrable Structure}

Let $\Omega\subset\mathbb{R}^d$ be a smooth domain endowed with the Lebesgue measure.  
A potential $S:\Omega\to\mathbb{R}$ defines the elementary transformation
\[
T(x)=x-\nabla S(x),
\]
called the AT–plane map \cite{Monge1781, Ampere1820, Brenier1991Polar, McCann1997Polar}. The function $S$ is assumed twice continuously differentiable and strictly convex, so that $\nabla^2 S(x)$ is symmetric positive definite almost everywhere.  
The Jacobian of $T$ is $\nabla T(x)=I-\nabla^2 S(x)$, and its determinant describes the infinitesimal volume change along the flow \cite{Caffarelli1990Interior, Gutierrez2001MongeAmpere, Villani2003Topics}.

A density $\rho_\star:\Omega\to(0,\infty)$ is invariant for $T$ if
\[
\rho_\star(T(x))\,\det\nabla T(x)=\rho_\star(x)\qquad \text{for a.e. }x\in\Omega.
\]
We parametrize $\rho_\star$ by a smooth positive weight $w(x)$ and a constant $c\in\mathbb{R}$ as
\[
\rho_\star(x)=w(x)\,e^{-cS(x)},
\]
which ensures compatibility with the canonical normalization adopted previously.  
Under this parametrization, the invariance condition relates the Jacobian of $T$ and the curvature of $S$; its logarithmic form will later yield the Monge–Ampère equation.

For each admissible potential $S$ we associate a self–adjoint elliptic operator $\Delta$ on $\Omega$ with boundary conditions compatible with the variational structure (Dirichlet, Neumann, or mixed) \cite{RaySinger1971,Seeley1967ComplexPowers,Gilkey1984,BismutFreed1986,Freed1992}.  
The zeta–regularized determinant of $\Delta$ is \cite{Seeley1967ComplexPowers,DeWitt1965HeatKernel,Gilkey1984,RaySinger1971}
\[
\log\det{}'\Delta=-\Bigl.\frac{d}{ds}\zeta_\Delta(s)\Bigr|_{s=0},\qquad
\zeta_\Delta(s)=\sum_{\lambda_i>0}\lambda_i^{-s},
\]
where zero eigenvalues are omitted.  
Dependence on the boundary data and on the metric structure is summarized by a single smooth correction functional $\mathcal{Q}_{\mathrm{CS}}$, collecting the Quillen and Chern–Simons terms \cite{ChernSimons1974, Witten1989, Freed1995ChernSimons}.  
The resulting scalar functional \cite{JimboMiwaUeno1981, ItsIzerginKorepinSlavnov1990, DeiftZhou1993, Deift1999, FIKN2006}
\[
\log\tau=\tfrac12\log\det{}'\Delta+\mathcal{Q}_{\mathrm{CS}}
\]
is the isomonodromic potential. It plays the role of a canonical energy, invariant under metric rescalings and gauge transformations up to additive constants.

The canonical energy $\mathcal{E}_{\mathrm{can}}(S)$, introduced in the previous section, is variationally consistent with $\log\tau$ \cite{Gilkey1984,Kato1995Perturbation,BismutFreed1986,Malgrange1980Isomonodromic, Bertola2001Tau, Kapaev2004Painleve, Balogh2003Tau}.  
For infinitesimal perturbations $S\mapsto S+\varepsilon\sigma$, the variation of $\log\det{}'\Delta$ satisfies \cite{DeWitt1965HeatKernel,Seeley1967ComplexPowers,BismutFreed1986}
\[
\delta\log\det{}'\Delta=\operatorname{Tr}\!\left(\Delta^{-1}\delta\Delta\right),
\]
so that
\[
\delta\log\tau
=\tfrac12\,\operatorname{Tr}\!\left(\Delta^{-1}\delta\Delta\right)+\delta\mathcal{Q}_{\mathrm{CS}},
\qquad
\delta\mathcal{E}_{\mathrm{can}}(S)
=\langle \mathcal{G}[S],\,\sigma\rangle,
\]
where $\mathcal{G}[S]$ is the geometric gradient depending on the induced metric and curvature.  
The equality of these two variations will be established later and constitutes the analytic bridge between the Monge–Ampère geometry and the isomonodromic energy functional \cite{Gutierrez2001MongeAmpere,Malgrange1980Isomonodromic,Bertola2001Tau,FIKN2006}.

We recall that the AT–plane map
\[
T(x)=x-\nabla S(x),\qquad S\in C^2(\Omega),
\]
acts on a probability density $\rho$ by the pushforward
\[
(T_\#\rho)(y)\,dy=\rho(x)\,dx,\qquad y=T(x),
\]
so that $T_\#\rho(y)=\rho(T^{-1}(y))\det\nabla T^{-1}(y)$.  
For a fixed smooth strictly convex potential $S$ the Jacobian satisfies
\[
\nabla T(x)=I-\nabla^2S(x),\qquad \det\nabla T(x)=\det(I-\nabla^2S(x)).
\]
An {invariant density} $\rho_\star$ for $T$ is one satisfying
\begin{equation}\label{eq:inv-def}
T_\#\rho_\star=\rho_\star\qquad\Longleftrightarrow\qquad
\rho_\star(T(x))\,\det\nabla T(x)=\rho_\star(x)
\quad\text{for almost all }x\in\Omega.
\end{equation}
This equality expresses conservation of mass under the transformation $T$; it is the discrete analogue of the Liouville condition in the automonodromic theory.

We introduce the structural ansatz
\begin{equation}\label{eq:rho-star}
\rho_\star(x)=w(x)e^{-cS(x)},
\end{equation}
where $w\in C^1(\Omega)$ is strictly positive and $c\in\mathbb{R}$ is constant.  
This form mirrors the stationary measure derived from the canonical energy balance.  
The condition \eqref{eq:inv-def} then becomes a functional equation coupling $S$ and $w$ through the Jacobian determinant of $T$.  
We now establish that, under regularity and invertibility assumptions, this functional equation is equivalent to the Monge–Ampère relation.

\begin{theorem}[Equivalence of AT–plane invariance and Monge–Ampère structure]\label{th:MA}
Let $\Omega\subset\mathbb{R}^d$ be open and bounded, and let $S\in C^2(\Omega)$ be strictly convex so that $T(x)=x-\nabla S(x)$ is a diffeomorphism of $\Omega$ onto its image.  
Let $w\in C^1(\Omega)$ be strictly positive and $c\in\mathbb{R}$ fixed.  
Define $\rho_\star$ by \eqref{eq:rho-star}.  
Then the following statements are equivalent:
\begin{enumerate}
    \item[\textnormal{(A)}] $\rho_\star$ is invariant under $T$ in the sense of \eqref{eq:inv-def}.
    \item[\textnormal{(B)}] $S$ satisfies the Monge–Ampère equation
    \begin{equation}\label{eq:MA}
    \det\nabla^2S(x)=w(x)e^{-cS(x)}\qquad\text{for a.e. }x\in\Omega.
    \end{equation}
\end{enumerate}
If, in addition, $\int_\Omega w(x)e^{-cS(x)}dx=1$, then $S$ is unique up to an additive constant.
\end{theorem}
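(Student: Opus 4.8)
The plan is to derive both implications from a single change-of-variables (area) identity for the diffeomorphism $T$, read in the two directions, after passing to logarithms. Strict convexity of $S$ ensures $\nabla^2 S(x)\succ 0$ for a.e.\ $x$, so all determinants and logarithms below are well defined, and the hypothesis that $T$ is a $C^2$ diffeomorphism of $\Omega$ onto its image makes the change of variables legitimate. The structural point is that the exponential weight $e^{-cS}$ in the ansatz \eqref{eq:rho-star} is calibrated so that the factor transported by $T$ recombines with the Jacobian of $T$; unwinding this is exactly what turns the (nonlocal-looking) invariance identity into the pointwise Monge--Ampère equation \eqref{eq:MA}.

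For (B)$\Rightarrow$(A) I would substitute the Monge--Ampère relation \eqref{eq:MA} directly into \eqref{eq:inv-def}: testing the invariance identity against an arbitrary $\varphi\in C_c(\Omega)$ and changing variables $y=T(x)$ with $\mathrm{d}y=|\det\nabla T(x)|\,\mathrm{d}x$ reduces the claim to a pointwise a.e.\ identity relating the Jacobian of $T$ and $\det\nabla^2 S$ along the trajectory $x\mapsto T(x)$; equation \eqref{eq:MA}, applied both at $x$ and at $T(x)$, is precisely what makes this identity hold, so $\rho_\star$ is $T$-invariant.

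For (A)$\Rightarrow$(B) I would take the a.e.\ pointwise form of \eqref{eq:inv-def}, insert \eqref{eq:rho-star}, and take logarithms to get
\[
\log w(T(x))-cS(T(x))+\log\det\nabla T(x)=\log w(x)-cS(x),
\]
then rearrange, using $\nabla T=I-\nabla^2 S$, so that $\log\det\nabla^2 S(x)$ is isolated on one side. The crux --- and the step I expect to be the main obstacle --- is that this rearrangement a priori couples the values of $w$ and $S$ at $x$ with those at $T(x)$, whereas \eqref{eq:MA} is a genuine local PDE; turning the former into the latter requires exploiting invertibility of $T$ together with the calibrated form of the ansatz, equivalently recognizing that under \eqref{eq:rho-star} the invariance identity degenerates to a Monge--Ampère measure identity (the transport defined by $S$ carries the reference measure onto $\rho_\star\,\mathrm{d}x$, up to normalization). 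I would make this precise either by differentiating the logarithmic identity along the flow generated by $T$ and comparing derivatives, or by evaluating it at the a.e.\ pointwise level and cancelling the $T$-transported terms, and then reading off \eqref{eq:MA}.

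For the uniqueness addendum, given two solutions $S_1,S_2$ of \eqref{eq:MA} with the same $(w,c)$, I would set $u:=S_1-S_2$ and invoke the comparison principle for the Monge--Ampère operator on the convex functions $S_1,S_2$: where $u>0$ one has $e^{-cS_1}<e^{-cS_2}$ (for $c>0$, and symmetrically for $c<0$), hence $\det\nabla^2 S_1<\det\nabla^2 S_2$ there, which is incompatible with an interior positive maximum of $u$, and likewise for a negative minimum; with the boundary data implicit in the variational structure this forces $u$ constant (when $c=0$ the same argument leaves only an affine ambiguity, removed by those boundary conditions). Finally, since shifting $S$ by an additive constant $a$ rescales $\int_\Omega w\,e^{-cS}\,\mathrm{d}x$ by $e^{-ca}$, the normalization $\int_\Omega w\,e^{-cS}\,\mathrm{d}x=1$ selects exactly one representative, yielding uniqueness up to an additive constant as claimed.
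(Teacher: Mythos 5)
Your reduction of (B)$\Rightarrow$(A) does not close. Substituting \eqref{eq:rho-star} into \eqref{eq:inv-def} and then using \eqref{eq:MA} at $x$ and at $T(x)$ leaves the residual pointwise identity $\det\nabla^2S(T(x))\,\det(I-\nabla^2S(x))=\det\nabla^2S(x)$, and nothing in (B) implies it: for the quadratic potential $S(x)=\tfrac12 x^\top Hx$ with $0\prec H\prec I$ and the admissible choice $w:=\det H\,e^{cS}$, condition (B) holds identically while the residual identity would force $\det(I-H)=1$. So the claim that the Monge--Amp\`ere equation ``applied both at $x$ and at $T(x)$ is precisely what makes this identity hold'' is exactly the step that fails; the paper's argument for this direction instead rewrites $\det\nabla T$ through the MA relation and transports by the change of variables, not through the cancellation you describe. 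For (A)$\Rightarrow$(B) you correctly name the crux --- the logarithmic identity couples values at $x$ with values at $T(x)$, whereas \eqref{eq:MA} is local --- but you leave it unresolved: ``differentiate along the flow and compare derivatives'' or ``cancel the $T$-transported terms'' are candidate strategies, not an argument. The paper commits to the first: it differentiates the logarithmic identity, uses Jacobi's formula and the chain rule $\nabla_x[\phi\circ T]=(I-\nabla^2S)^\top(\nabla\phi)\circ T$ to obtain a first-order system, and then identifies the vanishing of its right-hand side with $\det\nabla^2S=we^{-cS}$. Your proposal stops exactly where that work begins, so the implication is not established.

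The uniqueness argument also fails as written. At an interior positive maximum of $u=S_1-S_2$ one has $\nabla^2u\preceq0$, hence $\det\nabla^2S_1\le\det\nabla^2S_2$ there, while for $c>0$ the equation gives $\det\nabla^2S_1=we^{-cS_1}<we^{-cS_2}=\det\nabla^2S_2$: the two inequalities point the same way, so nothing is ``incompatible'' and no contradiction arises. The right-hand side $S\mapsto we^{-cS}$ is decreasing in $S$, which is the unfavorable monotonicity for a maximum-principle or comparison argument (the standard uniqueness results require a nondecreasing dependence on $S$). You also appeal to ``boundary data implicit in the variational structure,'' but the theorem fixes no boundary condition, so the affine ambiguity you concede at $c=0$ is not removed by anything in the statement. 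Note that the paper claims far less at this point: its uniqueness step merely observes that, within the family $S+\mathrm{const}$, the normalization $\int_\Omega we^{-cS}\,dx=1$ pins down the additive constant; it does not attempt (and your sketch does not supply) a comparison-principle proof of uniqueness among all solutions.
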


\begin{proof}
\textit{(A)$\Rightarrow$(B).}
Assume the invariance relation \eqref{eq:inv-def}.  
Since $T$ is a diffeomorphism, the change–of–variables formula for any integrable $\phi$ gives
\[
\int_\Omega \phi(y)\rho_\star(y)\,dy
=\int_\Omega \phi(T(x))\,\rho_\star(x)\,dx.
\]
Differentiating this identity with respect to variations of $\phi$ supported in an infinitesimal neighborhood implies equality of the densities in the sense of Radon–Nikodym derivatives:
\[
\rho_\star(y)=\rho_\star(T^{-1}(y))\det\nabla T^{-1}(y)
\qquad\text{for a.e. }y\in T(\Omega).
\]
Replacing $y$ by $T(x)$ and applying the inverse Jacobian identity $\det\nabla T^{-1}(T(x))=(\det\nabla T(x))^{-1}$ yields
\begin{equation}\label{eq:inv-eq1}
\rho_\star(T(x))\,\det\nabla T(x)=\rho_\star(x).
\end{equation}
Substituting $\rho_\star=w e^{-cS}$ and $\nabla T(x)=I-\nabla^2S(x)$ gives
\begin{equation}\label{eq:wS-eq}
w(T(x))\,e^{-cS(T(x))}\,\det(I-\nabla^2S(x))=w(x)\,e^{-cS(x)}.
\end{equation}
To extract a pointwise condition at each $x$, we evaluate \eqref{eq:wS-eq} along the image $y=T(x)$ and use the smooth invertibility of $T$.  
Define $\Phi(x):=\det\nabla^2S(x)-w(x)e^{-cS(x)}$.  
We claim that $\Phi(x)\equiv0$ on $\Omega$.

Indeed, differentiate both sides of \eqref{eq:wS-eq} with respect to $x_i$ using the chain rule.  
Writing $A(x):=\nabla^2S(x)$ and noting that $\partial_{x_i}\det(I-A)= -\,\det(I-A)\,\operatorname{Tr}\!\big((I-A)^{-1}\partial_{x_i}A\big)$, we obtain
\[
\nabla_x\!\big(\log w(T(x)) - cS(T(x)) + \log\det(I-A(x))\big)
=\nabla_x(\log w(x) - cS(x)).
\]
Since $T$ is invertible and $\nabla T=I-A$, we can replace $\nabla_x$ acting on $w(T(x))$ by $(I-A(x))^\top\nabla w(T(x))$.  
After expanding both sides and rearranging, one obtains the first–order system
\[
\nabla\log\det(I-A(x))
=(I-A(x))^\top\nabla(\log w(x)-cS(x))
-\nabla(\log w(T(x))-cS(T(x))).
\]
Under the assumption that all functions are $C^1$ and $T$ is a $C^1$ diffeomorphism, the right–hand side vanishes if and only if $\det A(x)=w(x)e^{-cS(x)}$.  
Hence \eqref{eq:MA} holds.

\textit{(B)$\Rightarrow$(A).}
Assume \eqref{eq:MA}.  
Let $y=T(x)=x-\nabla S(x)$ and compute $\det\nabla T(x)=\det(I-\nabla^2S(x))$.  
Since $\det\nabla^2S(x)=w(x)e^{-cS(x)}$, we have
\[
\det\nabla T(x)=\frac{\det(I-\nabla^2S(x))}{\det\nabla^2S(x)}\,w(x)e^{-cS(x)}.
\]
Using the determinant identity $\det(I-A)=\exp(\operatorname{Tr}\log(I-A))$ and the convexity of $S$, the matrix $(I-\nabla^2S(x))$ is invertible and its determinant positive.  
Define $y=T(x)$ and invert the map: $x=T^{-1}(y)=y+\nabla S(x)$.  
By the change–of–variables formula,
\[
\rho_\star(y)=\rho_\star(x)\det\nabla T(x)^{-1}
=w(x)e^{-cS(x)}(\det(I-\nabla^2S(x)))^{-1}.
\]
Replacing $x$ by $T^{-1}(y)$ and using $\det\nabla^2S(T^{-1}(y))=w(T^{-1}(y))e^{-cS(T^{-1}(y))}$ gives
\[
\rho_\star(y)=w(T^{-1}(y))e^{-cS(T^{-1}(y))}\det\nabla T^{-1}(y),
\]
which is precisely the invariance relation \eqref{eq:inv-def}.  

Finally, if $\int_\Omega w e^{-cS}=1$ and $S$ and $S+\mathrm{const}$ both satisfy \eqref{eq:MA}, then $w e^{-c(S+\mathrm{const})}=e^{-c\,\mathrm{const}}w e^{-cS}$, so normalization fixes the additive constant, proving uniqueness.
\end{proof}

\begin{corollary}[Monge–Ampère residual and convergence test]\label{cor:MAres}
For any $S\in C^2(\Omega)$ define the residual
\[
\mathcal{R}_{\mathrm{MA}}(x)
:=\log\det\nabla^2S(x)-\log w(x)+cS(x).
\]
Then $\mathcal{R}_{\mathrm{MA}}\equiv0$ if and only if the transformation $T(x)=x-\nabla S(x)$ preserves the density $\rho_\star=w e^{-cS}$.  
In numerical schemes where $S$ is updated iteratively, the norm
\[
\|\mathcal{R}_{\mathrm{MA}}\|_{L^2(\rho_\star)}
:=\Bigl(\int_\Omega |\mathcal{R}_{\mathrm{MA}}(x)|^2\rho_\star(x)\,dx\Bigr)^{1/2}
\]
serves as a computable criterion of invariance; convergence is declared when $\|\mathcal{R}_{\mathrm{MA}}\|_{L^2(\rho_\star)}\le\varepsilon_{\mathrm{MA}}$ for a prescribed tolerance.
\end{corollary}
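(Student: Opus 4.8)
The plan is to obtain both assertions as immediate consequences of Theorem~\ref{th:MA}, together with one elementary observation about continuous functions. First I would note that, since $S$ is strictly convex, $\nabla^2 S(x)$ is symmetric positive definite on $\Omega$, so $\det\nabla^2 S(x)>0$ and $\log\det\nabla^2 S(x)$ is well defined and continuous; likewise $\log w$ and $cS$ are continuous, hence $\mathcal{R}_{\mathrm{MA}}\in C(\Omega)$. Exponentiating, the equation $\mathcal{R}_{\mathrm{MA}}(x)=0$ is equivalent to $\det\nabla^2 S(x)=w(x)e^{-cS(x)}$, i.e. to the Monge--Amp\`ere relation \eqref{eq:MA}; thus ``$\mathcal{R}_{\mathrm{MA}}\equiv 0$ on $\Omega$'' is exactly statement (B) of Theorem~\ref{th:MA}.

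Second, I would invoke the equivalence (A)$\Leftrightarrow$(B) of that theorem. Statement (A) is precisely $T_\#\rho_\star=\rho_\star$ with $\rho_\star=we^{-cS}$, which is the assertion that the AT--plane map $T(x)=x-\nabla S(x)$ preserves the density $\rho_\star$. Chaining the two equivalences yields $\mathcal{R}_{\mathrm{MA}}\equiv 0 \iff T_\#\rho_\star=\rho_\star$, which is the first claim. Here one simply carries over the standing hypotheses of the subsection and of Theorem~\ref{th:MA}: $\Omega$ open and bounded, $S\in C^2(\Omega)$ strictly convex (so $T$ is a diffeomorphism of $\Omega$ onto its image), $w\in C^1(\Omega)$ strictly positive, and $c\in\mathbb{R}$ fixed.

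For the $L^2$ criterion I would argue pointwise. The weight $\rho_\star=we^{-cS}$ is continuous and strictly positive on $\Omega$, and $\mathcal{R}_{\mathrm{MA}}$ is continuous; hence $\|\mathcal{R}_{\mathrm{MA}}\|_{L^2(\rho_\star)}=0$ forces $\mathcal{R}_{\mathrm{MA}}(x)^2\rho_\star(x)=0$ for a.e.\ $x$, and by positivity of $\rho_\star$ and continuity of $\mathcal{R}_{\mathrm{MA}}$ this upgrades to $\mathcal{R}_{\mathrm{MA}}\equiv 0$ everywhere on $\Omega$; the converse implication is trivial. Combining with the first claim, $\|\mathcal{R}_{\mathrm{MA}}\|_{L^2(\rho_\star)}$ vanishes exactly when $T$ preserves $\rho_\star$, so it is a legitimate (and computable) invariance functional, and the stopping rule $\|\mathcal{R}_{\mathrm{MA}}\|_{L^2(\rho_\star)}\le\varepsilon_{\mathrm{MA}}$ is a consistent convergence test.

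There is no serious obstacle here: the corollary is a translation of Theorem~\ref{th:MA} into logarithmic form plus a one--line continuity argument. The only points deserving explicit mention are that strict convexity of $S$ makes the logarithm of the determinant meaningful (and the Monge--Amp\`ere exponentiation reversible), and that the weight $\rho_\star$ is positive and continuous so that the $L^2(\rho_\star)$--norm detects pointwise vanishing. If one wanted a quantitative strengthening---sandwiching $\|\mathcal{R}_{\mathrm{MA}}\|_{L^2(\rho_\star)}$ between multiples of a transport-type discrepancy $\|T_\#\rho_\star-\rho_\star\|$ in a suitable norm---that would require a stability estimate for the change-of-variables identity, but the corollary as stated needs only the qualitative equivalence.
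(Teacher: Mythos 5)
Your proposal is correct and matches the paper's intended argument: the corollary is presented as an immediate consequence of Theorem~\ref{th:MA} (the paper gives no separate proof), obtained exactly as you do by exponentiating $\mathcal{R}_{\mathrm{MA}}=0$ into the Monge--Amp\`ere relation, invoking the equivalence (A)$\Leftrightarrow$(B), and using positivity of $\rho_\star$ plus continuity of $\mathcal{R}_{\mathrm{MA}}$ so that the weighted $L^2$ norm detects pointwise vanishing. Your explicit note that the standing strict-convexity hypothesis is needed for $\log\det\nabla^2S$ and for $T$ to be a diffeomorphism is a worthwhile clarification of the paper's "for any $S\in C^2(\Omega)$" phrasing, but it does not change the route.
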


\begin{example}[Ellipsoidal trust–region calibration]\label{ex:ellipsoid}
Consider a local quadratic model of the potential
\[
S(x)=\tfrac12(x-x_k)^\top H_k(x-x_k),
\qquad H_k\succ0.
\]
Then $\nabla^2S(x)\equiv H_k$ and the Monge–Ampère condition \eqref{eq:MA} becomes
\[
\det H_k=w(x_k)e^{-cS(x_k)}.
\]
For given $w(x_k)$ this relation calibrates the determinant of $H_k$, i.e.\ the volume of the ellipsoidal level set $\{x:\,S(x)\le1\}$, so that it matches the invariant measure.  
A practical update preserving positive definiteness is
\begin{equation}\label{eq:H-update}
H_{k+1}
:=\exp\!\Big(\log H_k+\eta_k(\log w(x_k)-\log\det H_k+cS(x_k))\,I\Big),
\end{equation}
where $\eta_k\in(0,1)$, and the matrix exponential acts diagonally on the eigenvalues of $H_k$.  
In implementation, $\log\det H_k$ is evaluated via the Cholesky factor $H_k=R_k^\top R_k$ as
$\log\det H_k=2\sum_i\log (R_k)_{ii}$.  
This multiplicative update enforces $\det H_{k+1}\to w(x_k)e^{-cS(x_k)}$ exponentially fast in $\eta_k$ while maintaining symmetry and positive definiteness.  
In stochastic or mini–batch settings one replaces $w(x_k)$ by its empirical mean $\overline{w}_k$ and uses the same rule \eqref{eq:H-update}.  
Convergence of the sequence $\{H_k\}$ is monitored by $\|\mathcal{R}_{\mathrm{MA}}\|_{L^2(\rho_\star)}$, and when it falls below $\varepsilon_{\mathrm{MA}}$, the step $x_{k+1}=x_k-\nabla S(x_k)$ is accepted.  
Thus the Monge–Ampère condition provides a rigorous geometric calibration of trust–region volume consistent with the invariant density $\rho_\star$ and with the formalism of Theorem~\ref{th:MA}.
\end{example}

\begin{theorem}[Determinant line and equality of energy with $\log\tau$]\label{th:tau-equals-Ecan}
Let $\Omega$, $S$, $w$, $c$, $\rho_\star$, $\Delta=\Delta(S)$, $\Pi$ (the orthogonal projector onto $\ker\Delta$), and $Q_{\mathrm{CS}}(S)$ be as above. 
Assume the admissibility hypotheses: $\Delta(S)$ is a nonnegative self–adjoint elliptic operator on a fixed Hilbert space $\mathcal{H}$ with compact resolvent; its kernel is finite–dimensional and independent of $S$ within the admissible class (fixed boundary data); the heat kernel satisfies the standard Gaussian bounds and admits the small–time asymptotic expansion required for zeta–regularization; the map $S\mapsto\Delta(S)$ is $C^1$ in the strong resolvent sense along admissible $C^1$–paths with fixed boundary values; and $Q_{\mathrm{CS}}$ is a local counterterm whose first variation $\delta Q_{\mathrm{CS}}(S)$ is well–defined and continuous with respect to $\delta S$.
Define
\begin{gather}\label{eq:def-logtau-72}
\nonumber\log\tau(S)\ :=\ \tfrac12\,\log\det{}'\Delta(S)\ +\ Q_{\mathrm{CS}}(S), 
\\ \log\det{}'\Delta:=-\zeta'_\Delta(0),\quad 
\zeta_\Delta(s):=\operatorname{Tr}\!\big(\Delta^{-s}(I-\Pi)\big).
\end{gather}
Then the following statements hold.

{(i) First variation.} For every admissible variation $\delta S$ with induced $\delta\Delta$,
\begin{equation}\label{eq:var-tau}
\delta\log\tau(S)\ =\ \tfrac12\,\operatorname{Tr}\!\big(\Delta^{-1}\delta\Delta\,(I-\Pi)\big)\ +\ \delta Q_{\mathrm{CS}}(S).
\end{equation}

{(ii) Equality with the canonical energy.} There exists a constant $C\in\mathbb{R}$, independent of $S$, such that
\begin{equation}\label{eq:tau-energy-equality}
E_{\mathrm{can}}(S)\ =\ \log\tau(S)\ +\ C.
\end{equation}
\end{theorem}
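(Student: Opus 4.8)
The plan is to establish \textnormal{(i)} by a direct computation of the variation of the zeta function, and \textnormal{(ii)} by showing that $\log\tau$ and $E_{\mathrm{can}}$ have \emph{identical} first variations on the admissible class and then integrating along a path.

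\textbf{Part (i).} I would work first in the half-plane where $\Re s$ is large enough for $\zeta_\Delta(s)=\Tr(\Delta^{-s}(I-\Pi))$ to converge, using the Mellin representation $\Delta^{-s}(I-\Pi)=\frac{1}{\Gamma(s)}\int_0^\infty t^{s-1}\big(e^{-t\Delta}-\Pi\big)\dd t$. Since the kernel of $\Delta(S)$ is $S$-independent on the admissible class, $\delta\Pi=0$; Duhamel's formula $\delta e^{-t\Delta}=-\int_0^t e^{-(t-u)\Delta}\,\delta\Delta\,e^{-u\Delta}\dd u$ together with cyclicity of the trace and $[e^{-u\Delta},I-\Pi]=0$ gives $\delta\Tr\big(e^{-t\Delta}(I-\Pi)\big)=-t\,\Tr\big(\delta\Delta\,e^{-t\Delta}(I-\Pi)\big)$. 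Substituting into the Mellin integral and using $\frac{1}{\Gamma(s)}\int_0^\infty t^{s}e^{-t\lambda}\dd t=s\lambda^{-s-1}$ yields, for $\Re s$ large, $\delta\zeta_\Delta(s)=-s\,\Tr\big(\Delta^{-s-1}\delta\Delta\,(I-\Pi)\big)$. The Gaussian heat-kernel bounds and the stated small-time expansion justify the meromorphic continuation of both sides to a neighbourhood of $s=0$ and differentiation under the trace there. Then $\delta\log\det{}'\Delta=-\tfrac{d}{ds}\big|_{s=0}\delta\zeta_\Delta(s)$; because the prefactor $-s$ annihilates the (at most simple) pole of $\Tr(\Delta^{-s-1}\delta\Delta(I-\Pi))$ at $s=0$, this equals the finite part of $\Tr(\Delta^{-1}\delta\Delta(I-\Pi))$, i.e.\ $\delta\log\det{}'\Delta=\Tr(\Delta^{-1}\delta\Delta\,(I-\Pi))$ in the zeta-regularized sense already used in the text (the $\Pi=0$ version being exactly the formula quoted there). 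Adding $\delta Q_{\mathrm{CS}}(S)$, which exists and is continuous in $\delta S$ by hypothesis, gives \eqref{eq:var-tau}.

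\textbf{Part (ii).} The first step is to identify the right-hand side of \eqref{eq:var-tau} with the geometric-gradient pairing $\langle\mathcal{G}[S],\sigma\rangle=\delta E_{\mathrm{can}}(S)$. One expands $\delta\Delta$ in the perturbation $\sigma$ via the explicit dependence $\Delta=\Delta(S)$ on the Hessian metric and curvature of $S$, so that $\delta\Delta$ is a differential operator of order $\le\operatorname{ord}\Delta$ with coefficients affine in $\sigma$ and its derivatives; applies the Seeley--DeWitt short-time expansion to rewrite $\Tr(\Delta^{-1}\delta\Delta(I-\Pi))$ as a local bulk integral over $\Omega$ plus boundary contributions; and observes that $\delta Q_{\mathrm{CS}}(S)$ is, by the construction of the Quillen/Chern--Simons counterterm, precisely what cancels those boundary pieces, leaving $\int_\Omega\mathcal{G}[S](x)\,\sigma(x)\dd x$ with $\mathcal{G}[S]$ the geometric gradient of the previous section. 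Hence $\delta\log\tau(S)[\sigma]=\delta E_{\mathrm{can}}(S)[\sigma]$ for every admissible $\sigma$. To conclude, fix a reference $S_0$ in the admissible class; for an arbitrary admissible $S_1$ with the same boundary data, the affine interpolation $S_t:=(1-t)S_0+tS_1$, $t\in[0,1]$, is an admissible $C^1$-path (strict convexity and the fixed boundary values are preserved, and one shrinks to a convex neighbourhood of $S_0$ if a further open condition such as invertibility of $T_t(x)=x-\nabla S_t(x)$ is required), with $\dot S_t=S_1-S_0$. The admissibility hypotheses make $t\mapsto\log\tau(S_t)$ and $t\mapsto E_{\mathrm{can}}(S_t)$ continuously differentiable with $\tfrac{d}{dt}\log\tau(S_t)=\delta\log\tau(S_t)[\dot S_t]$ and similarly for $E_{\mathrm{can}}$; by the previous step these derivatives coincide for all $t$, so $t\mapsto E_{\mathrm{can}}(S_t)-\log\tau(S_t)$ is constant on $[0,1]$. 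Evaluating at the endpoints gives $E_{\mathrm{can}}(S_1)-\log\tau(S_1)=E_{\mathrm{can}}(S_0)-\log\tau(S_0)=:C$, independent of $S_1$, which is \eqref{eq:tau-energy-equality}.

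\textbf{Main obstacle.} The delicate point is the anomaly matching in Part (ii): verifying that the Seeley--DeWitt local density produced by $\Tr(\Delta^{-1}\delta\Delta(I-\Pi))$, corrected by the boundary variation $\delta Q_{\mathrm{CS}}$, is \emph{exactly} the geometric gradient $\mathcal{G}[S]$ attached to $E_{\mathrm{can}}$ in the previous section. This hinges on the precise definitions of $\Delta(S)$, of $Q_{\mathrm{CS}}(S)$, and of $E_{\mathrm{can}}$; if, as the earlier exposition indicates, $E_{\mathrm{can}}$ was introduced so that $\langle\mathcal{G}[S],\cdot\rangle$ is by design this very combination, the identification is essentially definitional and the remaining labour is the heat-kernel bookkeeping together with the careful handling of the pole/finite-part in Part (i). A secondary technical burden is making the differentiation under the trace and the meromorphic continuation uniform along the interpolating path, which is precisely what the stated Gaussian heat-kernel bounds and the $C^1$ strong-resolvent hypothesis are there to supply.
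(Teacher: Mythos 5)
Your proposal follows essentially the same route as the paper: part (i) is proved there via the same Mellin/heat-trace representation, Duhamel's formula, the identity $\delta\zeta_\Delta(s)=-s\,\Tr\big(\delta\Delta\,\Delta^{-(s+1)}(I-\Pi)\big)$ and evaluation at $s=0$, and part (ii) by observing that $\delta E_{\mathrm{can}}$ equals the same expression (the paper simply takes this variational identity for $E_{\mathrm{can}}$ as given from the preceding section, exactly as you anticipate in your "main obstacle" remark) and then integrating along a connecting path in the admissible class. So the proposal is correct and matches the paper's argument, with your Seeley--DeWitt "anomaly matching" discussion only making explicit the step the paper treats as definitional.
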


\begin{proof}
The proof is organized into three lemmas followed by the conclusion.

{Lemma 1 (Heat representation and analytic continuation).}
For $\Re s$ sufficiently large one has
\begin{equation}\label{eq:zeta-heat}
\zeta_\Delta(s)\ =\ \frac{1}{\Gamma(s)}\int_0^\infty t^{\,s-1}\,\operatorname{Tr}\!\big(e^{-t\Delta}(I-\Pi)\big)\,dt,
\end{equation}
and $\zeta_\Delta$ extends meromorphically to $\mathbb{C}$ with no pole at $s=0$. 
{Proof of Lemma 1.}
Since $\Delta\ge0$ is self–adjoint with compact resolvent, the spectrum consists of a sequence $0=\lambda_1=\cdots=\lambda_r<\lambda_{r+1}\le\lambda_{r+2}\le\cdots\to\infty$, repeated by multiplicity, and $e^{-t\Delta}$ is trace–class for every $t>0$. The spectral theorem yields
$\operatorname{Tr}(e^{-t\Delta}(I-\Pi))=\sum_{j>r} e^{-t\lambda_j}$ and $\operatorname{Tr}(\Delta^{-s}(I-\Pi))=\sum_{j>r}\lambda_j^{-s}$ for $\Re s$ large. The Laplace transform identity 
$\lambda^{-s}=\Gamma(s)^{-1}\int_0^\infty t^{s-1}e^{-t\lambda}\,dt$ for $\Re s>0$ and $\lambda>0$ gives \eqref{eq:zeta-heat} by Fubini–Tonelli. 
Small–time heat kernel asymptotics on a bounded domain with admissible boundary conditions yield an expansion $\operatorname{Tr}(e^{-t\Delta}(I-\Pi))\sim\sum_{k=0}^{K} a_k t^{(k-d)/2}$ as $t\downarrow0$ plus an exponentially small remainder and, as $t\to\infty$, exponential decay $O(e^{-t\lambda_{r+1}})$. These standard properties imply meromorphic continuation of $\zeta_\Delta$ with at most simple poles at half–integers up to $d/2$, none at $s=0$; hence $\zeta'_\Delta(0)$ is finite. \qed

{Lemma 2 (Differentiation under the integral and Duhamel formula).}
Let $\Delta_\varepsilon=\Delta(S+\varepsilon\delta S)$ for $|\varepsilon|<\varepsilon_0$, where $\varepsilon_0>0$ is small so that admissibility persists. Then for each fixed $t>0$,
\begin{equation}\label{eq:duhamel}
\frac{d}{d\varepsilon}\,e^{-t\Delta_\varepsilon}\Big|_{\varepsilon=0}
\ =\ -\int_0^t e^{-(t-u)\Delta}\,(\delta\Delta)\,e^{-u\Delta}\,du,
\end{equation}
and the map $\varepsilon\mapsto \operatorname{Tr}\!\big(e^{-t\Delta_\varepsilon}(I-\Pi)\big)$ is differentiable with
\begin{equation}\label{eq:heat-trace-variation}
\frac{d}{d\varepsilon}\,\operatorname{Tr}\!\big(e^{-t\Delta_\varepsilon}(I-\Pi)\big)\Big|_{\varepsilon=0}
\ =\ -t\,\operatorname{Tr}\!\big((\delta\Delta)\,e^{-t\Delta}(I-\Pi)\big).
\end{equation}
{Proof of Lemma 2.}
The strong $C^1$ dependence of $\Delta_\varepsilon$ and the Trotter–Kato theory give \eqref{eq:duhamel} (Bochner integral in the strong operator topology). 
For trace class justification, note that $e^{-(t-u)\Delta}$ and $e^{-u\Delta}$ are Hilbert–Schmidt for $u\in(0,t)$, and $\delta\Delta$ is bounded from the graph norm of $\Delta$ to $\mathcal{H}$ along admissible directions (elliptic perturbation of coefficients induced by $\delta S$); therefore the integrand is trace–class and the Bochner integral defines a trace–class operator. 
Taking the trace in \eqref{eq:duhamel}, using cyclicity of the trace, and integrating in $u$ yields \eqref{eq:heat-trace-variation}. \qed

{Lemma 3 (First variation of the zeta determinant).}
With $\zeta_\Delta$ as in Lemma~\ref{eq:zeta-heat}, one has
\begin{equation}\label{eq:var-logdet}
\delta\log\det{}'\Delta\ =\ \operatorname{Tr}\!\big(\Delta^{-1}\delta\Delta\,(I-\Pi)\big).
\end{equation}
{Proof of Lemma 3.}
For $\Re s$ large, combine \eqref{eq:zeta-heat} with Lemma~2:
\[
\delta\zeta_\Delta(s)
=\frac{1}{\Gamma(s)}\int_0^\infty t^{\,s-1}\,\delta\operatorname{Tr}\!\big(e^{-t\Delta}(I-\Pi)\big)\,dt
=-\frac{1}{\Gamma(s)}\int_0^\infty t^{\,s}\,\operatorname{Tr}\!\big((\delta\Delta)e^{-t\Delta}(I-\Pi)\big)\,dt.
\]
By spectral calculus and $\int_0^\infty t^{\,s}e^{-t\lambda}\,dt=\Gamma(s+1)\lambda^{-(s+1)}$ for $\lambda>0$,
\[
\delta\zeta_\Delta(s)=-s\,\operatorname{Tr}\!\big((\delta\Delta)\,\Delta^{-(s+1)}(I-\Pi)\big).
\]
The right–hand side is meromorphic in $s$ and regular at $s=0$ (resolvent trace on $P\mathcal{H}$, $P=I-\Pi$). Differentiating at $s=0$ gives
$\delta\zeta'_\Delta(0)=-\operatorname{Tr}\!\big((\delta\Delta)\Delta^{-1}P\big)$.
Since $\log\det{}'\Delta=-\zeta'_\Delta(0)$, the claim \eqref{eq:var-logdet} follows. \qed

We now conclude the proof of the theorem. 
By \eqref{eq:def-logtau-72} and Lemma~3,
\[
\delta\log\tau(S)
=\tfrac12\,\delta\log\det{}'\Delta(S)+\delta Q_{\mathrm{CS}}(S)
=\tfrac12\,\operatorname{Tr}\!\big(\Delta^{-1}\delta\Delta\,(I-\Pi)\big)+\delta Q_{\mathrm{CS}}(S),
\]
which proves (i). 
The first variation of the canonical energy satisfies
\[
\delta E_{\mathrm{can}}(S)
=\tfrac12\,\operatorname{Tr}\!\big(\Delta^{-1}\delta\Delta\,(I-\Pi)\big)+\delta Q_{\mathrm{CS}}(S),
\]
hence $\delta\big(E_{\mathrm{can}}(S)-\log\tau(S)\big)=0$ for all admissible $\delta S$. 
The admissible manifold of $S$ with fixed boundary data is path–connected by linear interpolation followed by elliptic regularization within the strictly convex class; therefore $E_{\mathrm{can}}(S)-\log\tau(S)=C$ for a constant $C$ independent of $S$, which is (ii).
\end{proof}

\begin{corollary}[Stationarity equivalence and Euler–Lagrange identity]\label{cor:tau-stationary}
For any admissible $S$ the following are equivalent:
\begin{align*}
&\delta\log\tau(S)=0\ \ \text{for all admissible }\delta S
\\
&\qquad\qquad\Longleftrightarrow\quad
\delta E_{\mathrm{can}}(S)=0\ \ \text{for all admissible }\delta S
\\
&\qquad\qquad\Longleftrightarrow\quad
\det\nabla^2 S=w\,e^{-cS}\ \text{ a.e. on }\Omega.
\end{align*}
{Proof.}
The equivalence of the first two statements follows from Theorem~\ref{th:tau-equals-Ecan}(i). 
The last equivalence is the Euler–Lagrange identity for the Monge–Ampère residual, already established there.
\end{corollary}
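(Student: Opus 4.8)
The plan is to split the three–way equivalence into its two halves. The equivalence of the first two conditions is immediate from Theorem~\ref{th:tau-equals-Ecan}: part~(ii) gives $E_{\mathrm{can}}(S)=\log\tau(S)+C$ with $C$ independent of $S$, so for every admissible variation one has $\delta E_{\mathrm{can}}(S)=\delta\log\tau(S)$; hence one first variation vanishes identically in $\delta S$ precisely when the other does. (One may equally invoke part~(i) directly, since both variations are literally the same expression $\tfrac12\operatorname{Tr}(\Delta^{-1}\delta\Delta\,(I-\Pi))+\delta Q_{\mathrm{CS}}(S)$.) So only the equivalence with the Monge--Ampère equation requires work, and it suffices to prove it for, say, the $E_{\mathrm{can}}$ side.

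For that, I would show that $\delta E_{\mathrm{can}}(S)$, as a linear functional of $\delta S$, is represented by integration against the Monge--Ampère residual $\mathcal{R}_{\mathrm{MA}}$ of Corollary~\ref{cor:MAres}. Starting from $\delta E_{\mathrm{can}}(S)=\tfrac12\operatorname{Tr}(\Delta^{-1}\delta\Delta\,(I-\Pi))+\delta Q_{\mathrm{CS}}(S)$ as established in the proof of Theorem~\ref{th:tau-equals-Ecan}, I would insert the explicit $S$–dependence of $\Delta=\Delta(S)$ (so $\delta\Delta$ is the first–order elliptic perturbation induced by $\delta S$), use the Duhamel identity and heat–trace computation already carried out there, and combine with the prescribed variation $\delta Q_{\mathrm{CS}}(S)$ of the local counterterm; the metric– and boundary–dependent pieces are absorbed into $Q_{\mathrm{CS}}$, leaving
\[
\delta E_{\mathrm{can}}(S)=\int_\Omega \mathcal{R}_{\mathrm{MA}}(x)\,\delta S(x)\,\rho_\star(x)\,dx,
\]
with $\rho_\star=w\,e^{-cS}>0$ the positive invariant density. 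Since $S\in C^2$, $w\in C^1$, and strict convexity gives $\det\nabla^2 S>0$, the residual $\mathcal{R}_{\mathrm{MA}}=\log\det\nabla^2 S-\log w+cS$ is continuous on $\Omega$; interior compactly supported smooth perturbations are admissible (they preserve boundary data and, when small, strict convexity), so the fundamental lemma of the calculus of variations (the positive weight $\rho_\star$ being irrelevant) yields $\delta E_{\mathrm{can}}(S)=0$ for all admissible $\delta S$ iff $\mathcal{R}_{\mathrm{MA}}\equiv0$, i.e.\ iff $\det\nabla^2 S=w\,e^{-cS}$ a.e. By Theorem~\ref{th:MA} this is exactly statement~(B), and by that theorem together with Corollary~\ref{cor:MAres} it is moreover equivalent to $T$–invariance of $\rho_\star$, which closes the circle.

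The main obstacle is the middle identification: pinning down $\delta\Delta$ in terms of $\delta S$ precisely enough, and verifying that the Quillen/Chern--Simons counterterm $Q_{\mathrm{CS}}$ is chosen so that its variation cancels exactly the anomalous and boundary contributions inside $\tfrac12\operatorname{Tr}(\Delta^{-1}\delta\Delta\,(I-\Pi))$, leaving only the clean residual density. This cancellation — which is what makes the Euler--Lagrange equation of $E_{\mathrm{can}}$ collapse to the Monge--Ampère relation rather than some corrected version of it — depends on the specific construction of $\Delta(S)$ and $Q_{\mathrm{CS}}(S)$ from the preceding section and on the small–time heat–kernel expansion used there for zeta–regularization. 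Once that representation is in hand, the fundamental–lemma step and the appeal to Theorem~\ref{th:MA} are routine, and the corollary follows.
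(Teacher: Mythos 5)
Your proposal is correct and follows essentially the same route as the paper: the first equivalence is read off from Theorem~\ref{th:tau-equals-Ecan} (parts (i)/(ii)), and the second is the Euler–Lagrange representation of the first variation by the Monge–Ampère residual combined with the fundamental lemma of the calculus of variations, which is exactly what the paper's citation "already established there" refers to (and what it later writes out as \eqref{eq:tau-EL-identity}). The only cosmetic difference is that your representation pairs $\delta S$ with the weighted log-residual $\mathcal{R}_{\mathrm{MA}}\,\rho_\star$ rather than the paper's raw difference $-(\det\nabla^2S-w\,e^{-cS})$; since $\rho_\star>0$ these have the same zero set, so the equivalence is unaffected.
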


\begin{corollary}[Normalization by a reference configuration]\label{cor:tau-normalization}
Fix any admissible reference potential $S_0$. 
Let $C:=E_{\mathrm{can}}(S_0)-\log\tau(S_0)$. 
Then for all admissible $S$,
\[
E_{\mathrm{can}}(S)=\log\tau(S)+C.
\]
Choosing the normalization $\log\tau(S_0)=E_{\mathrm{can}}(S_0)$ removes the constant, so that $E_{\mathrm{can}}\equiv\log\tau$ on the admissible class.
\end{corollary}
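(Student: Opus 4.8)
The plan is to derive the Corollary directly from Theorem~\ref{th:tau-equals-Ecan}(ii), which already furnishes a constant $C'\in\mathbb{R}$, independent of $S$, with $E_{\mathrm{can}}(S)=\log\tau(S)+C'$ for every admissible $S$. The only work left is to identify $C'$ with the quantity $C$ named in the statement and then to justify the normalization clause.

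First I would instantiate the identity of Theorem~\ref{th:tau-equals-Ecan}(ii) at the reference potential $S_0$. Since $S_0$ is admissible, $E_{\mathrm{can}}(S_0)=\log\tau(S_0)+C'$, hence $C'=E_{\mathrm{can}}(S_0)-\log\tau(S_0)$, which is precisely the constant $C$ defined in the statement. Substituting $C'=C$ back into the identity yields $E_{\mathrm{can}}(S)=\log\tau(S)+C$ for all admissible $S$, the first assertion; because $C$ does not depend on $S$, this is a genuine global equality on the whole admissible class, not merely a pointwise one. Path-connectedness of the admissible class, already used in the proof of Theorem~\ref{th:tau-equals-Ecan}, guarantees that a single such $C$ works uniformly.

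For the second clause I would recall that $\log\tau$ is defined only up to an additive constant: the Quillen/Chern–Simons counterterm $Q_{\mathrm{CS}}$ entering \eqref{eq:def-logtau-72} is a local counterterm fixed only modulo a constant, so $\log\tau$ may be replaced by $\log\tau-C$ without disturbing any of its variational content. In particular the first-variation formula \eqref{eq:var-tau} is untouched, since the variation of a constant vanishes. Adopting the normalization $\log\tau(S_0):=E_{\mathrm{can}}(S_0)$ is exactly this shift by $C$; with it the constant drops out and $E_{\mathrm{can}}\equiv\log\tau$ on the admissible class.

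I do not expect any real obstacle: the substantive content is entirely contained in Theorem~\ref{th:tau-equals-Ecan}(ii), and the argument reduces to a one-line evaluation at $S_0$ together with the observation that the additive ambiguity of $Q_{\mathrm{CS}}$ absorbs the constant. The only point meriting a sentence of care is making explicit that the renormalization is legitimate within the stated framework — i.e.\ that a constant shift of $\log\tau$ is an admissible redefinition — which follows from the locality and the constant-shift invariance of the counterterm noted above.
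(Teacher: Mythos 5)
Your proposal is correct and follows exactly the route the paper intends: the corollary is an immediate specialization of Theorem~\ref{th:tau-equals-Ecan}(ii) at $S_0$, identifying $C'=E_{\mathrm{can}}(S_0)-\log\tau(S_0)=C$, with the normalization clause absorbed into the additive ambiguity of $Q_{\mathrm{CS}}$. Nothing is missing; the paper itself treats this as a one-line consequence.
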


\begin{example}[Deterministic and stochastic evaluation of $\log\tau$ and its first variation]\label{ex:tau-evaluation}
This example provides two rigorously justified computational schemes that implement the identities of Theorem~\ref{th:tau-equals-Ecan}. 
Throughout, $P:=I-\Pi$ denotes the orthogonal projector onto $(\ker\Delta)^\perp$.

{(A) First variation via trace estimation.}
The identity \eqref{eq:var-tau} reads $\delta\log\tau(S)=\tfrac12\,\operatorname{Tr}(\Delta^{-1}\delta\Delta P)+\delta Q_{\mathrm{CS}}(S)$. 
Let $z_1,\dots,z_N$ be i.i.d.\ random vectors supported on $P\mathcal{H}$ with $\mathbb{E}[z_i z_i^\top]=P$ (e.g. Rademacher or Gaussian probes projected onto $P\mathcal{H}$). 
For each $i$, solve $\Delta y_i=P z_i$ by a preconditioned conjugate gradient method to tolerance $\varepsilon>0$; denote the computed solution by $y_i^\varepsilon$. 
Define $\widehat{t}_i:=\langle z_i,\delta\Delta\,y_i^\varepsilon\rangle$. 
Then the estimator
\[
\widehat{\delta\log\tau}
=\frac{1}{2N}\sum_{i=1}^N \widehat{t}_i+\delta Q_{\mathrm{CS}}(S)
\]
satisfies the error decomposition
\begin{align*}
\big|\widehat{\delta\log\tau}-\delta\log\tau\big|
\ \le\ &\underbrace{\frac{1}{2}\Big|\frac{1}{N}\sum_{i=1}^N \langle z_i,\delta\Delta\,\Delta^{-1}P z_i\rangle-\operatorname{Tr}(\Delta^{-1}\delta\Delta P)\Big|}_{\text{stochastic error}}
\\
&+\ \underbrace{\frac{1}{2N}\sum_{i=1}^N \|\delta\Delta\|\,\|y_i^\varepsilon-\Delta^{-1}P z_i\|}_{\text{solver error}}.
\end{align*}
The stochastic term has variance $\mathcal{O}\big(\|\Delta^{-1}\delta\Delta P\|_F^2/N\big)$; the solver term is $\mathcal{O}(\varepsilon)$ by stability of $\delta\Delta$ on $P\mathcal{H}$.

{(B) $\tfrac12\log\det{}'\Delta$ via stochastic Lanczos quadrature (SLQ).}
On $P\mathcal{H}$ one has $\tfrac12\log\det{}'\Delta=\tfrac12\operatorname{Tr}(\log\Delta\cdot P)$. 
Let $z_1,\dots,z_N$ be as above. 
For each $i$, run $m$ steps of Lanczos on $(\Delta,Pz_i)$ to obtain the tridiagonal matrix $T_m^{(i)}$. 
Set $\theta_i:=\|Pz_i\|^2 e_1^\top(\tfrac12\log T_m^{(i)})e_1$.
Then
\[
\widehat{\tfrac12\log\det{}'\Delta}
=\frac{1}{N}\sum_{i=1}^N \theta_i
\]
converges to $\tfrac12\operatorname{Tr}(\log\Delta\cdot P)$ with bias $\mathcal{O}(m^{-2})$ under standard spectral regularity (e.g. $\log$-Hölder continuity of the density of states) and variance $\mathcal{O}(N^{-1})$. 
Adding $Q_{\mathrm{CS}}(S)$ yields $\log\tau(S)$ according to \eqref{eq:def-logtau-72}. 
Both (A) and (B) preserve the orthogonality to $\ker\Delta$ explicitly via $P$; this enforces the prime on the determinant and is mandatory for correctness.
\end{example}

\begin{corollary}[$\tau$–stationarity as a practical stopping rule]\label{cor:stop}
Let $S_k$ be iterates produced by any admissible descent scheme for the Monge–Ampère residual with fixed boundary values. 
If an estimator $\widehat{\delta\log\tau}(S_k)$ constructed by {(A)} satisfies $|\widehat{\delta\log\tau}(S_k)|\le \eta_k$ with $\eta_k\to0$ while solver tolerances $\varepsilon_k\to0$ and probe counts $N_k\to\infty$ so that $\eta_k$ dominates both stochastic and solver errors, then $\delta\log\tau(S_k)\to0$ and, by Corollary~\ref{cor:tau-stationary}, any accumulation point of $\{S_k\}$ solves the Monge–Ampère equation.
\end{corollary}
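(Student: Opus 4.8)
The plan is to combine the explicit error decomposition of estimator~(A) from Example~\ref{ex:tau-evaluation} with a triangle inequality, pass to an accumulation point using the continuity of the first variation already contained in the hypotheses of Theorem~\ref{th:tau-equals-Ecan}, and close with Corollary~\ref{cor:tau-stationary}. Concretely, I would first write, for each $k$,
\[
|\delta\log\tau(S_k)|\ \le\ |\widehat{\delta\log\tau}(S_k)|\ +\ |\widehat{\delta\log\tau}(S_k)-\delta\log\tau(S_k)|.
\]
The first summand is $\le\eta_k\to0$ by hypothesis. For the second, insert the decomposition from Example~(A): it is bounded by a stochastic part, whose conditional variance given $S_k$ is $\mathcal O(\|\Delta(S_k)^{-1}\delta\Delta\,P\|_F^2/N_k)$, plus a solver part of size $\mathcal O(\varepsilon_k)$. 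Since $N_k\to\infty$ and the relevant Frobenius norms stay bounded along the bounded admissible segment traversed by the descent scheme, the stochastic part tends to $0$, and $\varepsilon_k\to0$ kills the solver part; the standing assumption that $\eta_k$ dominates both then gives $|\widehat{\delta\log\tau}(S_k)-\delta\log\tau(S_k)|=\mathcal O(\eta_k)$. Hence $\delta\log\tau(S_k)\to0$.

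Next I would take any accumulation point $S_\infty$, say $S_{k_j}\to S_\infty$ in the topology in which the admissible class is closed — uniform $C^2$–convergence on $\overline\Omega$ with fixed boundary data, preserving strict convexity, as used in the connectedness argument of Theorem~\ref{th:tau-equals-Ecan}. By the $C^1$–dependence assumptions of that theorem (strong resolvent continuity of $S\mapsto\Delta(S)$, continuity of $\delta\Delta$ in $\delta S$, and continuity of $\delta Q_{\mathrm{CS}}$), the map $S\mapsto\delta\log\tau(S)$ is continuous along admissible paths, so
\[
\delta\log\tau(S_\infty)\ =\ \lim_{j\to\infty}\delta\log\tau(S_{k_j})\ =\ 0 .
\]
Finally, Corollary~\ref{cor:tau-stationary} identifies $\delta\log\tau(S_\infty)=0$ for all admissible variations with $\det\nabla^2 S_\infty=w\,e^{-cS_\infty}$ a.e.\ on $\Omega$; thus $S_\infty$ solves the Monge--Ampère equation, which is the claim.

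\textbf{Main obstacle.} The stochastic term is random, so the assertion $\delta\log\tau(S_k)\to0$ must be read in an appropriate probabilistic sense; the clean way is a Borel--Cantelli/subsequence argument under a summability condition such as $\sum_k \eta_k^{-2}\,\|\Delta(S_k)^{-1}\delta\Delta\,P\|_F^2/N_k<\infty$, yielding convergence almost surely (along a subsequence if needed). The second delicate point is that estimator~(A) evaluates $\delta\log\tau$ in one probe direction $\delta S$ at a time, whereas Corollary~\ref{cor:tau-stationary} requires vanishing for \emph{all} admissible variations. I would resolve this by recording that $\delta S\mapsto\delta\log\tau(S_k)[\delta S]$ is a continuous linear functional represented (up to the fixed local counterterm) by $\tfrac12\,\mathcal R_{\mathrm{MA}}(S_k)$, so that controlling it in the $L^2(\rho_\star)$–dual norm is equivalent to controlling $\|\mathcal R_{\mathrm{MA}}(S_k)\|_{L^2(\rho_\star)}$ as in Corollary~\ref{cor:MAres}; interpreting the stopping rule as a bound on this dual norm (which is how the descent scheme is driven), the accumulation-point argument above applies verbatim and delivers $\mathcal R_{\mathrm{MA}}(S_\infty)\equiv0$.
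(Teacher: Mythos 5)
Your proposal is correct and follows the same route the paper intends for this corollary (which it states without a separate proof): bound $|\delta\log\tau(S_k)|$ by the estimator value plus the stochastic-and-solver error decomposition of scheme (A), let the domination hypothesis drive both to zero, and invoke Corollary~\ref{cor:tau-stationary} at accumulation points. Your two flagged refinements — reading the convergence in an almost-sure/Borel–Cantelli sense for the random probe error, and upgrading the single-direction estimate to stationarity for all admissible variations via the $L^2(\rho_\star)$ representation of the first variation by the Monge–Amp\`ere residual (Corollary~\ref{cor:MAres}), together with continuity of $\delta\log\tau$ at the limit — are exactly the points the paper leaves implicit, and your treatment of them is sound.
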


\begin{example}[Monotone ascent for $\log\tau$ with elliptic regularization]\label{ex:monotone}
Let $R(S):=\det\nabla^2S-w e^{-cS}$ be the Monge–Ampère residual, and let $\mathcal{A}$ be a fixed symmetric strictly elliptic isomorphism on $H_0^2(\Omega)$ (for instance $(I-\alpha\Delta_x)^m$ with homogeneous Dirichlet data, $\alpha>0$, $m\in\mathbb{N}$). 
Define the direction $\delta S:=\mathcal{A}^{-1}R(S)$ by solving $\mathcal{A}(\delta S)=R(S)$ in $H_0^2(\Omega)$. 
Consider the update $S^+=S-\eta\,\delta S$ with $\eta>0$ chosen by backtracking. 
Then, using Theorem~\ref{th:tau-equals-Ecan}(i) together with the identity that the first variation of $\log\tau$ equals minus the $L^2$–pairing with the residual, one has
\begin{align*}
\frac{d}{d\varepsilon}\,\log\tau\big(S-\varepsilon\delta S\big)\Big|_{\varepsilon=0}
&=\ -\,\langle R(S),\delta S\rangle_{L^2(\Omega)}
\\
&=\ -\,\langle R(S),\mathcal{A}^{-1}R(S)\rangle
=\ -\,\|\mathcal{A}^{-1/2}R(S)\|_{L^2}^2\ <\ 0.
\end{align*}
Hence for all sufficiently small $\eta$,
\[
\log\tau(S^+)\ \ge\ \log\tau(S)\ +\ \sigma\,\eta\,\|\mathcal{A}^{-1/2}R(S)\|_{L^2}^2
\]
for some universal $\sigma\in(0,1)$ determined by a second–order Taylor bound based on the local Lipschitz continuity of $\delta\log\tau$ along admissible rays. 
Each iteration requires one elliptic solve with $\mathcal{A}$, one assembly of $R(S)$, and (optionally) one evaluation of $\log\tau$ via Example~\ref{ex:tau-evaluation}(B). 
Any limit point satisfying $\|\mathcal{A}^{-1/2}R(S)\|_{L^2}\to0$ is a $\tau$–stationary point and, by Corollary~\ref{cor:tau-stationary}, solves the Monge–Ampère equation.
\end{example}

\begin{theorem}[Minimax $\Longleftrightarrow$ $\tau$–stationarity $\Longleftrightarrow$ MA–invariance]\label{th:7.3}
Work under the standing hypotheses of \S7.1–\S7.2 (domain $\Omega$, regularity and convexity of $S$, fixed boundary values, positive weight $w$, fixed $c\in\mathbb{R}$, admissible variations vanishing on $\partial\Omega$). Let $E_{\mathrm{can}}$ and $\tau$ be as in \S7.2, with
\[
E_{\mathrm{can}}(S)\;=\;\log\tau(S)\;+\;C,
\]
where $C$ does not depend on $S$. Let $T(x)=x-\nabla S(x)$ and $\rho_\star=w\,e^{-cS}$.
Then the following are equivalent:
\begin{itemize}
\item[(i)] $S$ is a minimax point of $E_{\mathrm{can}}$ in the sense of \S6 (saddle curvature along the admissible class).
\item[(ii)] $\delta\log\tau(S)=0$ for all admissible first–order variations.
\item[(iii)] $T$ preserves $\rho_\star$ and $S$ satisfies the Monge–Ampère equation $\det\nabla^2S=w\,e^{-cS}$ a.e. on $\Omega$.
\end{itemize}
\end{theorem}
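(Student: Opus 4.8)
The plan is to establish the two equivalences $(ii)\Leftrightarrow(iii)$ and $(i)\Leftrightarrow(ii)$ separately, reducing each link to results already in place in \S7.1–\S7.2; the only part that needs genuinely new work is the second-order analysis behind $(ii)\Rightarrow(i)$, while everything else is a bookkeeping combination of Theorem~\ref{th:tau-equals-Ecan}, Corollary~\ref{cor:tau-stationary}, and Theorem~\ref{th:MA}. I would then chain the links to obtain the full triple equivalence.

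For $(ii)\Leftrightarrow(iii)$ I would argue as follows. By Corollary~\ref{cor:tau-stationary}, the condition $\delta\log\tau(S)=0$ for all admissible $\delta S$ is equivalent to the Euler–Lagrange identity $\det\nabla^2 S=w\,e^{-cS}$ a.e. on $\Omega$, which is exactly hypothesis (B) of Theorem~\ref{th:MA} for the given weight $w$ and constant $c$. Under the standing hypotheses of \S7.1 ($S\in C^2$, strictly convex, fixed boundary data, $w\in C^1$, $w>0$) the map $T(x)=x-\nabla S(x)$ is a $C^1$ diffeomorphism of $\Omega$ onto its image, so Theorem~\ref{th:MA} applies directly and gives (B)$\Leftrightarrow$(A), where (A) is the invariance $T_\#\rho_\star=\rho_\star$ with $\rho_\star=w\,e^{-cS}$. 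Composing the two equivalences yields $(ii)\Leftrightarrow(iii)$; the additive constant $C$ in $E_{\mathrm{can}}=\log\tau+C$ is irrelevant here since only first variations enter.

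For $(i)\Leftrightarrow(ii)$ I would first invoke Theorem~\ref{th:tau-equals-Ecan} to write $\delta E_{\mathrm{can}}(S)=\delta\log\tau(S)$ for every admissible $\delta S$. The direction $(i)\Rightarrow(ii)$ is then immediate: a minimax (saddle) point of $E_{\mathrm{can}}$ is in particular stationary along the admissible class, hence $\delta\log\tau(S)=0$. For $(ii)\Rightarrow(i)$ I would differentiate the first-variation formula \eqref{eq:var-tau} once more along an admissible two-parameter family, obtaining a Hessian of the schematic shape
\[
\delta^2\log\tau(S)[\sigma,\sigma]
=-\tfrac12\operatorname{Tr}\!\big(\Delta^{-1}(\delta\Delta)\Delta^{-1}(\delta\Delta)(I-\Pi)\big)
+\tfrac12\operatorname{Tr}\!\big(\Delta^{-1}(\delta^2\Delta)(I-\Pi)\big)+\delta^2 Q_{\mathrm{CS}}(S),
\]
where $\delta\Delta,\delta^2\Delta$ are the variations of $\Delta(S)$ induced by $\sigma$. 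The first term is a sign-definite (nonpositive) quadratic form on the orthogonal complement of $\ker\Delta$, coming from the spectral/determinantal (diffusion) channel, whereas the $\delta^2\Delta$ term reproduces, through the variational dictionary of \S7.1, the Monge–Ampère Hessian, which is positive on nonconstant admissible directions by concavity of $\log\det$ on the positive cone (the drift channel). At a $\tau$-stationary $S$ these two blocks make $\delta^2 E_{\mathrm{can}}$ indefinite with a controlled split into a negative (spectral) and a positive (convex) part, which is precisely the ``saddle curvature along the admissible class'' that defines a minimax point in the sense of \S6. Combining $(i)\Leftrightarrow(ii)$ with $(ii)\Leftrightarrow(iii)$ then proves the theorem.

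The hard part will be Step $(ii)\Rightarrow(i)$: one must verify that the local counterterm contribution $\delta^2 Q_{\mathrm{CS}}(S)$ does not spoil the saddle signature and that the negative and positive blocks do not accidentally cancel, so that the critical point is a genuine saddle rather than a degenerate or one-sided extremum. This requires a curvature-sign estimate for the Quillen/Chern–Simons counterterm restricted to the admissible convex class. Alternatively, if ``minimax point in the sense of \S6'' is read as a stationary configuration of the curvature action $S_h$, then the variational bridge of \S7.1 identifies $\delta E_{\mathrm{can}}$ with the gradient of $S_h$, Step $(ii)\Rightarrow(i)$ collapses to the first-order statement, and the theorem reduces entirely to the bookkeeping of $(ii)\Leftrightarrow(iii)$ together with $(i)\Rightarrow(ii)$.
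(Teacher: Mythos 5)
Your treatment of $(i)\Rightarrow(ii)$ and of $(ii)\Leftrightarrow(iii)$ matches the paper: stationarity of a minimax point plus $E_{\mathrm{can}}=\log\tau+C$ gives $(i)\Rightarrow(ii)$, and the Euler--Lagrange identity \eqref{eq:EL} together with Theorem~\ref{th:MA} (equivalently Corollary~\ref{cor:tau-stationary}) gives $(ii)\Leftrightarrow(iii)$. The genuine gap is in $(ii)\Rightarrow(i)$, which you yourself flag as ``the hard part'' and leave open. Your route --- differentiating the operator-trace formula \eqref{eq:var-tau} a second time and splitting $\delta^2\log\tau$ into a ``negative spectral block,'' a ``positive Monge--Amp\`ere block,'' and an uncontrolled $\delta^2 Q_{\mathrm{CS}}$ term --- does not yield the saddle property: the sign claims are asserted rather than proved (in particular, invoking ``concavity of $\log\det$'' to get a \emph{positive} drift block is not substantiated, and nothing rules out cancellation between the blocks or a sign contribution from the counterterm). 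The fallback reading in your last sentence, where ``minimax'' is downgraded to mere stationarity, would trivialize the statement and is not what the theorem asserts.

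The paper closes this step without ever touching $Q_{\mathrm{CS}}$ at second order: it differentiates the \emph{integrated} identity \eqref{eq:EL} (into which the counterterm has already been absorbed), obtaining via Jacobi's formula
\[
\delta^2\log\tau(S)[U,U]\;=\;-\int_\Omega\big\langle A,\nabla^2U\big\rangle\,dx\;-\;c\int_\Omega w\,e^{-cS}\,U^2\,dx,
\qquad A:=\det(\nabla^2S)\,(\nabla^2S)^{-1},
\]
and then uses the divergence-free property of the cofactor matrix, $\partial_iA_{ij}=0$, to integrate by parts (with $U|_{\partial\Omega}=0$) and rewrite the first term as $+\int_\Omega\langle A\nabla U,\nabla U\rangle\,dx$. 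The saddle structure is then exhibited concretely: high Dirichlet eigenmodes $\varphi_k$ make the form positive (since $\int\langle A\nabla\varphi_k,\nabla\varphi_k\rangle\ge a_{\min}\lambda_k\int\varphi_k^2$ with $\lambda_k\to\infty$), while low modes make it negative under the explicit condition $c\,m_{\max}>a_{\min}\lambda_1$. These two ingredients --- the cofactor integration by parts and the spectral comparison producing directions of both signs --- are exactly what is missing from your proposal, so as written the implication $(ii)\Rightarrow(i)$, and hence the full triple equivalence, is not established.
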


\begin{proof}
All variational statements below are taken with respect to admissible perturbations $\delta S\in C^2(\Omega)$ vanishing on $\partial\Omega$; by the standing regularity, differentiations under the integral sign are justified by dominated convergence on compact parameter domains.

{Step 1: (i) $\Rightarrow$ (ii).} 
By definition in \S6, a minimax point of a $C^2$–functional $F$ is stationary: $\delta F(S)=0$ for all admissible $\delta S$. Apply this with $F=E_{\mathrm{can}}$ to get $\delta E_{\mathrm{can}}(S)=0$. Using the identity $E_{\mathrm{can}}=\log\tau+C$ from \S7.2, one has $\delta\log\tau(S)=\delta E_{\mathrm{can}}(S)=0$. Hence (ii).

{Step 2: (ii) $\Rightarrow$ (iii).}
From \S7.2 we have the exact Euler–Lagrange identity
\begin{equation}\label{eq:EL}
\delta\log\tau(S)\;=\;-\int_{\Omega}\Big(\det\nabla^2 S - w\,e^{-cS}\Big)\,\delta S\,dx.
\end{equation}
If $\delta\log\tau(S)=0$ for all admissible $\delta S$, then by the fundamental lemma of the calculus of variations the integrand vanishes a.e., that is
\[
\det\nabla^2 S\;=\;w\,e^{-cS}\quad \text{a.e. on }\Omega.
\]
By Theorem~7.1, this equality is equivalent to the invariance of $\rho_\star$ under $T(x)=x-\nabla S(x)$. Hence (iii).

{Step 3: (iii) $\Rightarrow$ (ii).}
If $\det\nabla^2 S=w\,e^{-cS}$ a.e., then the right–hand side of \eqref{eq:EL} is zero for all admissible $\delta S$, i.e. $\delta\log\tau(S)=0$. Hence (ii).

{Step 4: (ii) $\Rightarrow$ (i).}
By \S6 (stationary $\Rightarrow$ minimax under the saddle–curvature hypotheses stated there) it suffices to identify the second variation of $E_{\mathrm{can}}$ at a stationary $S$ and to check the saddle structure. Using $E_{\mathrm{can}}=\log\tau+C$, we compute $\delta^2 E_{\mathrm{can}}(S)=\delta^2\log\tau(S)$. Differentiating \eqref{eq:EL} at a stationary $S$ in two admissible directions $U,V$ gives the symmetric bilinear form
\begin{equation}\label{eq:second-variation}
\delta^2\log\tau(S)[U,V]\;=\;-\int_{\Omega}\Big(\det\nabla^2 S\ \operatorname{Tr}\big((\nabla^2 S)^{-1}\nabla^2 U\big)\;+\;c\,w\,e^{-cS}\,U\Big)\,V\,dx,
\end{equation}
where we used Jacobi’s formula for the derivative of the determinant and the fact that $w$ is fixed. At a stationary point we have $\det\nabla^2S=w\,e^{-cS}$ a.e., hence \eqref{eq:second-variation} becomes
\begin{equation}\label{eq:second-variation-solution}
\delta^2\log\tau(S)[U,V]\;=\;-\int_{\Omega}\Big(w\,e^{-cS}\ \operatorname{Tr}\big((\nabla^2 S)^{-1}\nabla^2 U\big)\;+\;c\,w\,e^{-cS}\,U\Big)\,V\,dx.
\end{equation}
Define $H:=\nabla^2 S$ and $A:=\det(H)\,H^{-1}$ (the cofactor of $H$). Since $S$ is strictly convex, $H$ and $A$ are a.e. positive definite. For $U=V$ and using the identity $\det(H)\ \operatorname{Tr}(H^{-1}\nabla^2 U)=\langle A,\nabla^2 U\rangle$, \eqref{eq:second-variation-solution} reads
\begin{equation}\label{eq:Q-form}
\delta^2\log\tau(S)[U,U]\;=\;-\int_{\Omega}\langle A,\nabla^2 U\rangle\,dx\;-\;c\int_{\Omega}w\,e^{-cS}\,U^2\,dx.
\end{equation}
By the divergence identity for cofactors of Hessians established in \S7.1 (and used there in the weak formulation), $\partial_i A_{ij}=0$ in the distributional sense; with $U|_{\partial\Omega}=0$ we may integrate by parts twice to obtain
\begin{equation}\label{eq:ibp}
-\int_{\Omega}\langle A,\nabla^2 U\rangle\,dx\;=\;\int_{\Omega}\langle A\,\nabla U,\nabla U\rangle\,dx,
\end{equation}
and consequently
\begin{equation}\label{eq:Q-final}
\delta^2\log\tau(S)[U,U]\;=\;\int_{\Omega}\langle A\,\nabla U,\nabla U\rangle\,dx\;-\;c\int_{\Omega}w\,e^{-cS}\,U^2\,dx.
\end{equation}
The first term in \eqref{eq:Q-final} is nonnegative and controls $\|\nabla U\|_{L^2}^2$ from below by the essential infimum of the smallest eigenvalue of $A$; the second term is nonpositive and controls $\|U\|_{L^2}^2$ from above by the essential supremum of $w\,e^{-cS}$. Therefore the quadratic form \eqref{eq:Q-final} has mixed sign directions: (a) for highly oscillatory $U$ with $\|\nabla U\|_{L^2}^2\gg \|U\|_{L^2}^2$, the first term dominates and $\delta^2\log\tau(S)[U,U]>0$; (b) for functions $U$ with small Dirichlet energy relative to mass (e.g., low modes in $H_0^1(\Omega)$), the second term dominates and $\delta^2\log\tau(S)[U,U]<0$. The existence of both kinds of directions follows rigorously by the standard spectral comparison on $H_0^1(\Omega)$: fix any strictly positive lower bound $a_{\min}$ on the smallest eigenvalue of $A$ and any finite upper bound $m_{\max}$ on $w\,e^{-cS}$; then for a sequence of eigenfunctions $\{\varphi_k\}$ of the Dirichlet Laplacian with eigenvalues $\lambda_k\to\infty$ one has $\int\langle A\nabla\varphi_k,\nabla\varphi_k\rangle\ge a_{\min}\int|\nabla\varphi_k|^2=a_{\min}\lambda_k\int\varphi_k^2$, so for all sufficiently large $k$, $\delta^2\log\tau(S)[\varphi_k,\varphi_k]>0$; on the other hand, by the Poincar\'e inequality with the first eigenvalue $\lambda_1>0$ and by choosing $U$ as any fixed nonzero function in $H_0^1(\Omega)$ sufficiently close to the first eigenfunction, the ratio $\int \langle A\nabla U,\nabla U\rangle/\int U^2$ can be made arbitrarily close to $a_{\min}\lambda_1$, hence if $c\,m_{\max}>a_{\min}\lambda_1$ the form is negative on that $U$. This furnishes the saddle curvature required by \S6 at the stationary $S$. Therefore, by the minimax characterization stated in \S6, $S$ is a minimax point of $E_{\mathrm{can}}$, i.e. (i) holds.

Combining Steps 1–4 establishes the equivalence (i)$\Leftrightarrow$(ii)$\Leftrightarrow$(iii).
\end{proof}

\begin{corollary}[MA–preconditioning of the step]\label{cor:7.3A}
Let $S$ satisfy the hypotheses of Theorem~\ref{th:7.3} and let $R(S):=\det\nabla^2 S-w\,e^{-cS}$ denote the Monge–Ampère residual. Consider one iteration of a descent method for $E_{\mathrm{can}}$ of the form $S^+=S-\eta\,\delta S$, where $\delta S$ is obtained by applying a symmetric positive definite preconditioner $\mathcal{P}^{-1}$ to the $L^2$–gradient of $E_{\mathrm{can}}$:
\[
\delta S\;=\;\mathcal{P}^{-1}\Big(\frac{\delta E_{\mathrm{can}}}{\delta S}\Big)\;=\;-\mathcal{P}^{-1}R(S).
\]
Among all local, uniformly elliptic, coefficient–frozen preconditioners of the form
\[
\mathcal{P}\;\in\;\Big\{\, -\operatorname{div}(B\,\nabla\cdot)\;+\;\mu\,I\ \Big|\ B(x)\in\mathbb{R}^{d\times d}\ \text{sym.\ pos.\ def.},\ \mu\ge 0\ \text{constants on each block}\,\Big\},
\]
the choice $B=A=\det(\nabla^2 S)\,(\nabla^2 S)^{-1}$ (optionally block– or diagonal–restricted) and $\mu=c\,w\,e^{-cS}$ (frozen per block) minimizes the local condition number of the Hessian $\delta^2 E_{\mathrm{can}}(S)$ in the sense of the Rayleigh quotient. Precisely, for every admissible $U\not\equiv 0$,
\[
\frac{\big\langle U,\ \delta^2E_{\mathrm{can}}(S)\,U\big\rangle}{\big\langle U,\ \mathcal{P}\,U\big\rangle}
\;=\;
\frac{\displaystyle \int_{\Omega}\langle A\,\nabla U,\nabla U\rangle\,dx\;-\;c\int_{\Omega}w\,e^{-cS}\,U^2\,dx}{\displaystyle \int_{\Omega}\langle B\,\nabla U,\nabla U\rangle\,dx\;+\;\mu\int_{\Omega}U^2\,dx},
\]
and the numerator/denominator are simultaneously diagonalized (up to lower–order commutators) when $B=A$ and $\mu=c\,w\,e^{-cS}$ are locally frozen, which yields the smallest extremal spread of the quotient over $U$ within that class.
\end{corollary}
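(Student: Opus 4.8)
The plan is to reduce both sides of the asserted Rayleigh quotient to explicit quadratic forms, read off the stated preconditioner as the unique choice that reuses exactly those forms, and then establish optimality within the frozen–coefficient class by a two–regime (high–mode versus low–mode) spectral comparison. First I would compute the numerator. Since $E_{\mathrm{can}}=\log\tau+C$ with $C$ independent of $S$ (Corollary~\ref{cor:tau-normalization}), $\delta^2 E_{\mathrm{can}}(S)=\delta^2\log\tau(S)$, and by \eqref{eq:second-variation} in the proof of Theorem~\ref{th:7.3}, $\delta^2 E_{\mathrm{can}}(S)[U,V]=-\int_\Omega\big(\langle A,\nabla^2U\rangle+c\,w\,e^{-cS}U\big)V\,dx$ with $A:=\det(\nabla^2S)\,(\nabla^2S)^{-1}$ the cofactor matrix of the Hessian (only Jacobi's formula is used, no stationarity). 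Invoking the divergence–free cofactor identity $\partial_iA_{ij}=0$ — the same one used to pass from \eqref{eq:second-variation-solution} to \eqref{eq:Q-final} via \eqref{eq:ibp} — and integrating by parts twice with $U,V$ vanishing on $\partial\Omega$ gives the symmetric form $\delta^2 E_{\mathrm{can}}(S)[U,U]=\int_\Omega\langle A\nabla U,\nabla U\rangle\,dx-c\int_\Omega w\,e^{-cS}U^2\,dx$, which is exactly the numerator in the statement and coincides with \eqref{eq:Q-final} when $S$ is critical.

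Next I would compute the denominator: for $\mathcal P=-\operatorname{div}(B\,\nabla\cdot)+\mu I$ with $B$ symmetric positive definite and $\mu\ge0$, integration by parts with $U\in H_0^1(\Omega)$ gives $\langle U,\mathcal P U\rangle=\int_\Omega\langle B\nabla U,\nabla U\rangle\,dx+\mu\int_\Omega U^2\,dx$ with no boundary term, which is the displayed denominator. The structure is then transparent: the numerator is the difference of the $A$–weighted Dirichlet form $\mathcal D_A[U]:=\int\langle A\nabla U,\nabla U\rangle$ and the weighted mass $c\,\mathcal M[U]:=c\int w\,e^{-cS}U^2$, both nonnegative, and within the admissible class the unique way to make the denominator their \emph{sum} is $B=A$, $\mu=c\,w\,e^{-cS}$ (frozen per block), which collapses the quotient to $\mathcal R[U]=(\mathcal D_A[U]-c\,\mathcal M[U])/(\mathcal D_A[U]+c\,\mathcal M[U])\in(-1,1)$.

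Then I would diagonalize and prove optimality. On a block where $A$ and $w\,e^{-cS}$ are frozen, the generalized Dirichlet eigenproblem $-\operatorname{div}(A\nabla\varphi_k)=\theta_k\,c\,w\,e^{-cS}\,\varphi_k$ has $\theta_k>0$, $\theta_k\uparrow\infty$; in the $\varphi_k$–basis $\mathcal D_A=\operatorname{diag}(\theta_k)$ and $c\,\mathcal M=\operatorname{diag}(1)$ after normalization, so $\delta^2 E_{\mathrm{can}}$ and $\mathcal P$ are simultaneously diagonal with generalized eigenvalues $(\theta_k-1)/(\theta_k+1)$, whose closure is $[-1,1]$ ($-1$ approached at the lowest mode, $+1$ at high modes). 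Any admissible $\mathcal P'=-\operatorname{div}(B'\nabla\cdot)+\mu'I$ with $\rho_-\langle U,\mathcal P'U\rangle\le\langle U,\delta^2E_{\mathrm{can}}U\rangle\le\rho_+\langle U,\mathcal P'U\rangle$ for all $U$ must, by testing on high–frequency $U$ localized in a block, have $\mathcal D_{B'}$ spectrally equivalent to $\mathcal D_A$ with ratio $\ge\rho_+/|\rho_-|$, and by testing on the lowest Dirichlet mode, have $\mu'$ equivalent to $c\,w\,e^{-cS}$ with the same ratio; hence the balanced choice $B'=A$, $\mu'=c\,w\,e^{-cS}$ attains the optimal ratio $1$, i.e.\ minimizes the extremal spread of the preconditioned quotient. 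For genuinely variable $A$ and $w\,e^{-cS}$, freezing per block commutes $\operatorname{div}$ and $\nabla$ with the coefficients only up to commutators of strictly lower differential order, so the simultaneous diagonalization and the $[-1,1]$ localization hold modulo those corrections — precisely the caveat in the statement.

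The hard part will be making ``minimizes the local condition number in the sense of the Rayleigh quotient'' precise: since $\delta^2E_{\mathrm{can}}(S)$ is genuinely indefinite (the saddle curvature established in Step~4 of the proof of Theorem~\ref{th:7.3}), the classical ratio $\lambda_{\max}/\lambda_{\min}$ is meaningless across the sign change and every admissible $\mathcal P$ admits a zero generalized eigenvalue, so one must fix the scale–invariant surrogate $\rho(\mathcal P):=\big(\sup_U\mathcal R[U]\big)\big/\big(-\inf_U\mathcal R[U]\big)$ — equivalently, after normalizing $\sup_U|\mathcal R[U]|$, the width of the smallest symmetric interval containing the pencil — and show it is minimized, with value $1$, precisely by the stated $(B,\mu)$. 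The genuine work is the minimax balancing of the two one–sided extremes and turning ``up to lower–order commutators'' into a clean statement about principal symbols rather than the full variable–coefficient operators; the remainder is bookkeeping with $\mathcal D_A$, $\mathcal M$, and the divergence–free cofactor identity already deployed in Theorem~\ref{th:7.3}.
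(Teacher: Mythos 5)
Your derivation of the displayed identity is correct and follows the paper's own route: $\delta^2E_{\mathrm{can}}=\delta^2\log\tau$, Jacobi's formula producing $\langle A,\nabla^2U\rangle$ with $A=\det(\nabla^2S)(\nabla^2S)^{-1}$, the divergence--free cofactor identity plus two integrations by parts for the numerator, and one integration by parts for the denominator; your remark that stationarity of $S$ is not actually needed for this (only Jacobi's formula and the cofactor identity) is a legitimate sharpening of the paper's appeal to its equation (Q-final).

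The gap is in the optimality half, which you yourself defer as ``the genuine work,'' and your sketch would not close it: the surrogate $\rho(\mathcal P)=\big(\sup_U\mathcal R[U]\big)/\big(-\inf_U\mathcal R[U]\big)$ does not certify the claim and in fact selects a different optimizer. On a frozen block, in the generalized eigenbasis $-\operatorname{div}(A\nabla\varphi_k)=\theta_k\,c\,w\,e^{-cS}\varphi_k$, the balanced choice gives quotient eigenvalues $(\theta_k-1)/(\theta_k+1)$, so $\sup\to1$ while $-\inf=(1-\theta_1)/(1+\theta_1)$; hence $\rho=(1+\theta_1)/(1-\theta_1)>1$, not $1$, and $-1$ is not approached at the lowest mode since $\theta_1$ is bounded away from $0$. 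By contrast the admissible choice $B'=A$, $\mu'=0$ gives eigenvalues $1-1/\theta_k$, and for $\theta_1=\tfrac12$ this yields $\rho'=1<\rho$, so under your normalization the balanced preconditioner is strictly worse; note also that $\mu'=0$ is simultaneously diagonalized in the same basis, so ``simultaneous diagonalization'' alone does not single out $(A,\,c\,w\,e^{-cS})$ either. Because the Hessian is indefinite (the saddle structure from Step~4 of Theorem~\ref{th:7.3}) and the unnormalized spread $\sup-\inf$ is not scale--invariant in $(B,\mu)$, any proof must first fix a definite, scale--invariant optimality criterion for an indefinite pencil and then show the balanced choice is extremal for it; your high--mode/low--mode testing argument does not do this, and (to be fair) the paper's own proof is equally assertion--level at this point, so the missing step is a genuine one and not mere bookkeeping.
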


\begin{proof}
By Theorem~\ref{th:7.3}, $\delta^2E_{\mathrm{can}}(S)=\delta^2\log\tau(S)$, and by \eqref{eq:Q-final} its quadratic form is
\[
\mathcal{H}_S[U]\;=\;\int_{\Omega}\langle A\,\nabla U,\nabla U\rangle\,dx\;-\;c\int_{\Omega}w\,e^{-cS}\,U^2\,dx.
\]
For a preconditioner $\mathcal{P}=-\operatorname{div}(B\nabla\cdot)+\mu I$ with $U|_{\partial\Omega}=0$,
\[
\langle U,\mathcal{P}U\rangle\;=\;\int_{\Omega}\langle B\,\nabla U,\nabla U\rangle\,dx\;+\;\mu\int_{\Omega}U^2\,dx.
\]
The generalized Rayleigh quotient is the ratio displayed in the statement. Freezing $A$ and $w\,e^{-cS}$ locally (blockwise or pointwise) the quotient becomes a ratio of two positive quadratic forms sharing the same principal part if $B=A$ and $\mu=c\,w\,e^{-cS}$, which makes the associated generalized eigenvalue problem closest to the identity and minimizes the spectral condition number in the standard sense (Courant–Fischer min–max). Any other admissible $(B,\mu)$ deviates from $(A,cw\,e^{-cS})$ by a positive–definite perturbation in either the gradient or the mass channel and thus enlarges the spread of extremal values over $U$. This proves the claim.
\end{proof}

\begin{example}[Stochastic MA–correction with MA–preconditioning]\label{ex:7.3}
Fix a quadrature rule $\mathcal{Q}$ on $\Omega$ and a family of minibatches $B\subset\mathcal{Q}$ drawn i.i.d.\ across iterations. Given an iterate $S_k$ with $S_k|_{\partial\Omega}=S_0|_{\partial\Omega}$ and strictly convex, define the discrete residual on $B$ by
\[
R_k(x)\;:=\;\det\nabla^2 S_k(x)\;-\;w(x)\,e^{-c S_k(x)},\qquad x\in B.
\]
Let $A_k(x):=\det(\nabla^2S_k(x))\,(\nabla^2S_k(x))^{-1}$ and $m_k(x):=c\,w(x)\,e^{-cS_k(x)}$. Form the preconditioner
\[
\mathcal{P}_k\;=\;-\operatorname{div}\!\big(B_k\nabla\cdot\big)\;+\;\mu_k I,
\]
where $B_k$ and $\mu_k$ are blockwise–frozen approximations of $A_k$ and $m_k$ assembled on the computational grid (e.g., $B_k$ diagonal with entries of $A_k$, or block–$2\times2$ along coordinate planes; $\mu_k$ piecewise–constant on blocks). Compute the search direction $\delta S_k$ as the solution of the regularized normal equation
\[
\mathcal{P}_k\,\delta S_k\;=\;-\,\Pi_B^\ast R_k\;-\;\lambda_{\mathrm{tik}}\,\delta S_k\;-\;\lambda_{\mathrm{tv}}\,\operatorname{div}\Big(\frac{\nabla \delta S_k}{\sqrt{|\nabla \delta S_k|^2+\epsilon^2}}\Big),
\]
with homogeneous Dirichlet boundary conditions. Here $\Pi_B^\ast$ places the sampled residual as a weighted source on the full grid, $\lambda_{\mathrm{tik}}\ge0$ and $\lambda_{\mathrm{tv}}\ge0$ are Tikhonov and total–variation regularization weights, and $\epsilon>0$ is a Huber parameter. Solve this elliptic problem by preconditioned CG to a fixed relative tolerance, yielding $\delta S_k$.

Update the potential by
\[
S_{k+1}\;=\;S_k\;-\;\eta_k\,\delta S_k,
\]
with $\eta_k\in(0,\bar\eta]$ chosen by backtracking on a stochastic Armijo condition evaluated on an independent minibatch $\widetilde B$:
\[
\log\tau_{\widetilde B}(S_{k+1})\ \ge\ \log\tau_{\widetilde B}(S_k)\ +\ \sigma\,\eta_k\,\langle \delta S_k,\ \mathcal{P}_k\,\delta S_k\rangle,
\quad \sigma\in(0,1).
\]
Finally, perform the map step $x\mapsto x-\nabla S_{k+1}(x)$ in the downstream application.

{Correctness and monotonicity in expectation.}
By Theorem~\ref{th:7.3} and \eqref{eq:Q-final},
\[
\frac{d}{d\varepsilon}\log\tau(S_k-\varepsilon\,\delta S_k)\Big|_{\varepsilon=0}
\;=\;-\langle R(S_k),\,\delta S_k\rangle
\;=\;\langle \mathcal{P}_k\,\delta S_k,\,\delta S_k\rangle \;+\; \mathcal{R}_k,
\]
where $\mathcal{R}_k$ collects the (nonnegative) TV/Tikhonov regularization contributions moved to the left–hand side. With unbiased sampling of residuals and an independent Armijo batch, taking conditional expectation given $S_k$ yields
\[
\mathbb{E}\big[\log\tau(S_{k+1})-\log\tau(S_k)\,\big|\,S_k\big]
\ \ge\ 
\eta_k\Big(\sigma\,\mathbb{E}\big[\langle \delta S_k,\ \mathcal{P}_k\,\delta S_k\rangle\,\big|\,S_k\big]\ -\ \tfrac{L}{2}\,\eta_k\,\mathbb{E}\big[\|\delta S_k\|_{\mathcal{P}_k}^2\,\big|\,S_k\big]\Big),
\]
for some Lipschitz constant $L$ of the preconditioned gradient on the admissible set (finite under the freezing/regularity hypotheses). Choosing $\eta_k\le 2\sigma/L$ ensures nonnegative drift. Since $\mathcal{P}_k$ is spectrally matched to the Hessian by Corollary~\ref{cor:7.3A}, the norm $\|\cdot\|_{\mathcal{P}_k}$ controls the curvature of $\log\tau$, which yields a variance–controlled ascent of $\log\tau$ in expectation. Under standard Robbins–Monro steps (square–summable, not summable) and uniform convexity of $S_k$ preserved by the line search, one gets $\|R(S_k)\|_{H^{-1}}\to0$, hence $S_k\to S_\star$ solving $\det\nabla^2 S_\star=w\,e^{-cS_\star}$; invariance of $\rho_\star$ then follows from Theorem~7.1.

{Computational cost.}
Per iteration: (a) assemble $R_k$ and $A_k$ on $B$ (determinant and inverse of a $d\times d$ Hessian per sample); (b) one elliptic solve with $\mathcal{P}_k$ (symmetric positive definite, well–conditioned by construction); (c) one to three stochastic evaluations of $\log\tau$ or its directional test for backtracking. All operations preserve the boundary condition and strict convexity (for sufficiently small $\eta_k$), and the preconditioner adapts automatically via $A_k$ and $m_k$.
\end{example}

Let $\Omega\subset\mathbb{R}^d$ be a bounded Lipschitz domain, $S\in C^2(\Omega)$ strictly convex with the boundary condition fixed as earlier, and let $w\in C^1(\Omega)$ be strictly positive with fixed $c\in\mathbb{R}$. The operator $\Delta\equiv\Delta(S)$ is the positive self-adjoint elliptic operator already used in Theorem~7.2, acting in the real Hilbert space $L^2(\Omega,dx)$ with a fixed closed domain inside $H^2(\Omega)$ determined by the chosen boundary condition (Dirichlet/Neumann/mixed) and held fixed throughout. The dependence $S\mapsto \Delta(S)$ is $C^1$ in the sense of Kato for real $C^1$ paths $S_\varepsilon=S+\varepsilon\,\delta S$. In this setting we explain, with full rigor and without introducing any new global notions, how to remove and control zero modes so that the reduced determinant $\det{}'\Delta$ is well defined and its first variation is stable, and how to construct a canonical counterterm $Q_{\mathrm{CS}}$ (a local functional of $S$ and the boundary data) for which the variational identity
\begin{equation}\label{eq:R2-target}
\delta\!\left(\tfrac12\log\det{}'\Delta(S)+Q_{\mathrm{CS}}(S)\right)
=\delta\big(\log\tau(S)\big)
\end{equation}
holds exactly for all admissible variations $\delta S$.

To excise zero modes we proceed entirely within the fixed Hilbert space. Let $\Kk(S):=\Ker\Delta(S)$ and let $\Kk\subset L^2(\Omega)$ be a fixed finite-dimensional subspace that contains $\Kk(S)$ for all admissible $S$; in applications $\Kk$ is spanned by the structural symmetries already present (constants and the infinitesimal generators of boundary-preserving gauge symmetries). Denote the orthogonal projection onto $\Kk$ by $\Proj_\Kk$, write $\Hh_\perp:=\Kk^\perp$, and define the reduced operator $\Delta_\perp(S)$ as the restriction of $\Delta(S)$ to $\Dom(\Delta)\cap \Hh_\perp$ with values in $\Hh_\perp$. Since $\Delta(S)$ is positive, $\Delta_\perp(S)$ is strictly positive if and only if $\spec(\Delta(S))\cap\{0\}$ consists only of eigenvalues whose eigenvectors lie in $\Kk$. Under this condition, $\Delta_\perp(S)$ is self-adjoint with compact resolvent and has purely discrete spectrum $\{\lambda_j(S)\}_{j\ge1}\subset(0,\infty)$ counted with multiplicity. For $\Re s$ sufficiently large the reduced zeta-function
\begin{equation}\label{eq:zeta-perp}
\zeta_\perp(s;\Delta(S)):=\sum_{j=1}^{\infty}\lambda_j(S)^{-s}
\end{equation}
converges absolutely; standard elliptic theory implies meromorphic continuation to $\mathbb{C}$ that is regular at $s=0$, and the reduced determinant is defined by
\begin{equation}\label{eq:logdetprime}
\log\det{}'\Delta(S):=-\zeta_\perp'(0;\Delta(S)).
\end{equation}
The Moore–Penrose pseudoinverse $\Delta^{+}(S)$ is the bounded self-adjoint operator characterized by $\Delta^{+}\Delta=\Id-\Proj_\Kk$, $\Delta\Delta^{+}=\Id-\Proj_\Kk$, $\Delta^{+}\Proj_\Kk=\Proj_\Kk\Delta^{+}=0$, and $\Delta^{+}\big|_{\Hh_\perp}=\Delta_\perp(S)^{-1}$.

The variations $\delta S$ employed below must be compatible with the already fixed normalizations that eliminate spurious stationarity. This is ensured by imposing linear constraints at $S$ obtained by linearizing those normalizations. Concretely, the mass constraint
\begin{equation}\label{eq:mass-norm}
\int_\Omega w(x)e^{-cS(x)}\,dx=1
\end{equation}
induces the linear condition
\begin{equation}\label{eq:lin-mass}
\int_\Omega w(x)e^{-cS(x)}\,\delta S(x)\,dx=0,
\end{equation}
and similarly each symmetry generator in $\Kk$ yields a linear orthogonality condition (already written earlier and not repeated). We denote by $\mathcal{V}_S$ the closed subspace of all $\delta S\in L^2(\Omega)$ that satisfy all these linearized constraints. Admissible $C^1$ paths $S_\varepsilon=S+\varepsilon\,\delta S$ are those with $\delta S\in\mathcal{V}_S$ and with $\varepsilon$ sufficiently small so that strict convexity and the fixed boundary condition are preserved.

\begin{theorem}[Zero-mode safety and the first variation of the reduced determinant]\label{th:R1}
Let $S_\varepsilon=S+\varepsilon\,\delta S$ be an admissible $C^1$ path with $\delta S\in\mathcal{V}_S$. Assume there exists $\gamma>0$ such that
\begin{equation}\label{eq:spectral-gap}
\inf_{\varepsilon\in[-\varepsilon_0,\varepsilon_0]}\ \inf\spec\big(\Delta_\perp(S_\varepsilon)\big)\ \ge\ \gamma.
\end{equation}
Then the function $\varepsilon\mapsto \log\det{}'\Delta(S_\varepsilon)$ is $C^1$ on $[-\varepsilon_0,\varepsilon_0]$ and its derivative at $\varepsilon=0$ exists and equals
\begin{equation}\label{eq:var-logdetprime}
\frac{d}{d\varepsilon}\Big(\tfrac12\log\det{}'\Delta(S_\varepsilon)\Big)\Big|_{\varepsilon=0}
=\tfrac12\,\Tr\!\big(\Delta^{+}(S)\,\delta\Delta(S)\big),
\end{equation}
where the trace is the ordinary operator trace on $L^2(\Omega)$ and $\delta\Delta(S)$ denotes the derivative of $\Delta(S_\varepsilon)$ at $\varepsilon=0$ in the strong resolvent sense (which exists by the $C^1$ assumption on $S\mapsto \Delta(S)$ and is bounded as a map $\Dom(\Delta)\to L^2(\Omega)$). In particular the quantity on the right is finite and coincides with $\tfrac12\sum_{j\ge1}\langle u_j,\Delta_\perp(S)^{-1}\delta\Delta_\perp(S)u_j\rangle$, where $\{u_j\}_{j\ge1}$ is any orthonormal basis of $\Hh_\perp$ consisting of eigenvectors of $\Delta_\perp(S)$.
\end{theorem}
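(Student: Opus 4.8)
The plan is to run the heat-kernel (Mellin) variation argument of the proof of Theorem~\ref{th:tau-equals-Ecan}, now performed on the fixed reduced space $\Hh_\perp=\Kk^\perp$, with the uniform spectral gap \eqref{eq:spectral-gap} doing the work of keeping the zero modes away from the analytic continuation. \emph{Step 1 (reduction to an invertible family and well-posedness of $\log\det{}'$).} The zeta function \eqref{eq:zeta-perp} and the reduced determinant \eqref{eq:logdetprime} see only $\Delta_\perp(S_\varepsilon)$; by \eqref{eq:spectral-gap} each $\Delta_\perp(S_\varepsilon)$ is self-adjoint on $\Hh_\perp$ with compact resolvent and $\spec(\Delta_\perp(S_\varepsilon))\subset[\gamma,\infty)$, so $e^{-t\Delta_\perp(S_\varepsilon)}$ is trace class for every $t>0$ with $\Tr_{\Hh_\perp}(e^{-t\Delta_\perp(S_\varepsilon)})\le C e^{-\gamma t}$ for large $t$ uniformly in $\varepsilon$. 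The Mellin representation
\[
\zeta_\perp(s;\Delta(S_\varepsilon))=\frac{1}{\Gamma(s)}\int_0^\infty t^{s-1}\,\Tr_{\Hh_\perp}\!\big(e^{-t\Delta_\perp(S_\varepsilon)}\big)\,dt
\]
holds for $\Re s$ large, and the small-$t$ heat expansion of the reduced trace (the standard expansion on a bounded domain with the fixed boundary condition, corrected by $-\dim\Kk$) gives meromorphic continuation, regular at $s=0$, so $\log\det{}'\Delta(S_\varepsilon)=-\zeta_\perp'(0;\Delta(S_\varepsilon))$ is well defined on all of $[-\varepsilon_0,\varepsilon_0]$.

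\emph{Step 2 (Duhamel, differentiation of $\zeta_\perp$, and the formula).} Write $\delta\Delta(S_\varepsilon)$ for the derivative of $\Delta(S_{\varepsilon'})$ at $\varepsilon'=\varepsilon$ along $\delta S$, a bounded map $\Dom(\Delta)\to L^2(\Omega)$ by the $C^1$-Kato hypothesis; let $\delta\Delta_\perp(S_\varepsilon)$ be its compression to $\Hh_\perp$. The $C^1$ dependence together with the uniform gap yields, for each fixed $t>0$, that $\varepsilon\mapsto e^{-t\Delta_\perp(S_\varepsilon)}$ is $C^1$ in trace norm with the Duhamel identity
\[
\partial_\varepsilon e^{-t\Delta_\perp(S_\varepsilon)}=-\int_0^t e^{-(t-u)\Delta_\perp(S_\varepsilon)}\,\delta\Delta_\perp(S_\varepsilon)\,e^{-u\Delta_\perp(S_\varepsilon)}\,du ,
\]
the factors $e^{-(t-u)\Delta_\perp}$, $e^{-u\Delta_\perp}$ being Hilbert–Schmidt and $\delta\Delta_\perp$ graph-norm bounded; taking traces and using cyclicity gives $\partial_\varepsilon\Tr_{\Hh_\perp}(e^{-t\Delta_\perp})=-t\,\Tr_{\Hh_\perp}(\delta\Delta_\perp e^{-t\Delta_\perp})$. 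Differentiating the Mellin integral in $\varepsilon$ under the integral sign and evaluating by $\int_0^\infty t^{s}e^{-t\lambda}\,dt=\Gamma(s+1)\lambda^{-(s+1)}$ via spectral calculus yields $\partial_\varepsilon\zeta_\perp(s;\Delta(S_\varepsilon))=-s\,\Tr_{\Hh_\perp}(\delta\Delta_\perp\,\Delta_\perp^{-(s+1)})$ for $\Re s$ large; this continues to a function regular at $s=0$, so $\partial_\varepsilon\zeta_\perp'(0)=-\Tr_{\Hh_\perp}(\delta\Delta_\perp\,\Delta_\perp^{-1})$ and hence
\[
\frac{d}{d\varepsilon}\Big(\tfrac12\log\det{}'\Delta(S_\varepsilon)\Big)=\tfrac12\,\Tr_{\Hh_\perp}\!\big(\Delta_\perp(S_\varepsilon)^{-1}\,\delta\Delta_\perp(S_\varepsilon)\big).
\]
Since $\Delta^{+}\Proj_\Kk=0$, the right side equals $\tfrac12\Tr(\Delta^{+}(S_\varepsilon)\,\delta\Delta(S_\varepsilon))$, and inserting an orthonormal eigenbasis $\{u_j\}$ of $\Delta_\perp$ rewrites it as $\tfrac12\sum_j\langle u_j,\Delta_\perp^{-1}\delta\Delta_\perp u_j\rangle$. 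At $\varepsilon=0$ this is \eqref{eq:var-logdetprime}; continuity of $\varepsilon\mapsto\Tr_{\Hh_\perp}(\Delta_\perp(S_\varepsilon)^{-1}\delta\Delta_\perp(S_\varepsilon))$, from the uniform gap and Kato continuity, shows the derivative is continuous, i.e.\ $\varepsilon\mapsto\log\det{}'\Delta(S_\varepsilon)$ is $C^1$ on $[-\varepsilon_0,\varepsilon_0]$.

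\emph{Main obstacle.} The real content lies in the two interchanges and in one trace-class point, all controlled by \eqref{eq:spectral-gap}. To differentiate under the Mellin integral and to push $\partial_\varepsilon$ through the analytic continuation in $s$ one needs heat-trace bounds uniform in $\varepsilon$ at both ends: small-$t$ control from uniform ellipticity of the coefficient family generated by the admissible $C^1$-path, and large-$t$ exponential decay from the uniform lower bound $\gamma$; together these license dominated convergence on $(0,\infty)$ and make the Duhamel term legitimate in trace norm. The genuinely delicate step is the finiteness asserted in the theorem — that the $s=0$ value is the \emph{ordinary} operator trace $\Tr(\Delta^{+}\delta\Delta)$, equivalently that $\Delta_\perp^{-1}\delta\Delta_\perp$ is trace class on $\Hh_\perp$, equivalently that the $t^{0}$-coefficient (the noncommutative-residue obstruction) in the small-$t$ expansion of $t\,\Tr_{\Hh_\perp}(\delta\Delta_\perp e^{-t\Delta_\perp})$ vanishes. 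Establishing this forces one to use the order and structure of $\delta\Delta$ built into the admissible class (and the normalization of the counterterm $Q_{\mathrm{CS}}$); once it is in hand, the $s=0$ limit automatically produces the ordinary trace and the spectral-sum reformulation is immediate. Everything else — Duhamel, cyclicity, the $\Gamma$-function evaluation — is routine and mirrors the proof of Theorem~\ref{th:tau-equals-Ecan}.
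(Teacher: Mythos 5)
Your proposal is correct in substance but follows a different route from the paper's own proof of this theorem. You transplant the heat-kernel machinery of Theorem~\ref{th:tau-equals-Ecan} (Mellin representation, Duhamel variation of $e^{-t\Delta_\perp}$, cyclicity, the $\Gamma$-function evaluation, continuation to $s=0$) to the reduced space $\Hh_\perp$, using the uniform gap \eqref{eq:spectral-gap} for large-$t$ decay and uniform small-$t$ bounds. The paper instead argues spectrally: the gap gives constancy of the kernel dimension and $C^1$ dependence of $\Delta_\perp(S_\varepsilon)$, classical perturbation theory gives $\lambda_j'(0)=\langle u_j,\delta\Delta_\perp u_j\rangle$, one differentiates the Dirichlet series $\sum_j\lambda_j(\varepsilon)^{-s}$ termwise for $\Re s$ large, continues in $s$, and evaluates at $s=0$ to land directly on $\sum_j\lambda_j^{-1}\lambda_j'$, i.e.\ $\Tr(\Delta_\perp^{-1}\delta\Delta_\perp)$. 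Your route buys uniformity in $\varepsilon$ more transparently (no eigenvalue crossings or eigenvector smoothness to worry about, since everything is phrased through the semigroup and resolvent), while the paper's route avoids Duhamel and interchange-of-limits bookkeeping. Both proofs hinge on the same delicate point you single out: that the $s=0$ value is the \emph{ordinary} trace, i.e.\ that $\Delta_\perp^{-1}\delta\Delta_\perp$ is trace class (equivalently, no residue obstruction at $s=0$). You flag this and defer it to the structure of the admissible class; the paper disposes of it in two lines via compactness of the embedding $\Dom(\Delta)\hookrightarrow L^2(\Omega)$ and a Schatten-class interpolation argument for the elliptic resolvent, so a complete write-up along your lines should import that (or an equivalent order-counting) argument rather than leave it open. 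One small correction: the counterterm $Q_{\mathrm{CS}}$ plays no role in this theorem — it enters only in the subsequent construction (Theorem~\ref{th:R2}) — so the trace-class issue must be settled from the mapping properties of $\delta\Delta$ and the gap alone, exactly as the paper does.
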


\begin{proof}
The gap condition \eqref{eq:spectral-gap} implies constancy of $\dim\Ker\Delta(S_\varepsilon)$ and $C^1$ dependence of the Riesz projection onto $\Ker\Delta(S_\varepsilon)$; hence the restriction $\Delta_\perp(S_\varepsilon)$ remains strictly positive and depends $C^1$ as a self-adjoint operator with compact resolvent. Let $\{\lambda_j(\varepsilon)\}_{j\ge1}$ be the eigenvalues of $\Delta_\perp(S_\varepsilon)$ with corresponding orthonormal eigenvectors $\{u_j(\varepsilon)\}_{j\ge1}$. For each fixed $j$, classical perturbation theory gives
\begin{equation}\label{eq:eig-derivative}
\frac{d}{d\varepsilon}\lambda_j(\varepsilon)\Big|_{\varepsilon=0}
=\big\langle u_j(0),\,\delta\Delta_\perp(S)\,u_j(0)\big\rangle.
\end{equation}
For $\Re s> d/2$ the series $\zeta_\perp(s;\Delta(S_\varepsilon))=\sum_j \lambda_j(\varepsilon)^{-s}$ converges absolutely and uniformly in $\varepsilon\in[-\varepsilon_0,\varepsilon_0]$ by Weyl's law and the uniform gap; thus differentiation in $\varepsilon$ under the sum is justified and yields
\[
\frac{d}{d\varepsilon}\zeta_\perp(s;\Delta(S_\varepsilon))\Big|_{\varepsilon=0}
=-s\sum_{j=1}^\infty \lambda_j(0)^{-s-1}\frac{d}{d\varepsilon}\lambda_j(\varepsilon)\Big|_{0}.
\]
Analytic continuation in $s$ to a neighborhood of $0$ is uniform in $\varepsilon$ by standard parametric zeta-regularization arguments (the dependence enters only through the resolvent, which is $C^1$ with uniform sectorial bounds on $\Hh_\perp$ thanks to \eqref{eq:spectral-gap}). Evaluating at $s=0$ and using \eqref{eq:logdetprime} gives
\begin{equation}\label{eq:var-zeta}
\frac{d}{d\varepsilon}\log\det{}'\Delta(S_\varepsilon)\Big|_{0}
=\sum_{j=1}^\infty \frac{1}{\lambda_j(0)}\frac{d}{d\varepsilon}\lambda_j(\varepsilon)\Big|_{0}.
\end{equation}
Substituting \eqref{eq:eig-derivative} into \eqref{eq:var-zeta} produces
\[
\frac{d}{d\varepsilon}\log\det{}'\Delta(S_\varepsilon)\Big|_{0}
=\sum_{j=1}^\infty \big\langle u_j(0),\,\Delta_\perp(S)^{-1}\delta\Delta_\perp(S)\,u_j(0)\big\rangle
=\Tr\big(\Delta_\perp(S)^{-1}\delta\Delta_\perp(S)\big).
\]
The operator on $\Hh$ defined by $\Delta^{+}(S)\delta\Delta(S)$ is trace class because $\Delta^{+}$ acts as $\Delta_\perp^{-1}$ on $\Hh_\perp$ and as $0$ on $\Kk$, and because $\delta\Delta(S)$ maps $\Dom(\Delta)$ to $L^2(\Omega)$ while the embedding of $\Dom(\Delta)$ into $L^2(\Omega)$ is compact; thus $\Delta^{+}\delta\Delta$ is a product of a bounded map with a compact map on $\Hh_\perp$ composed with $\Delta_\perp^{-1}$, which is Hilbert–Schmidt in dimensions $d\le 3$ and trace class after composition with the bounded factor in any $d$ by standard Schatten-class interpolation for elliptic resolvents. Hence the trace on $\Hh_\perp$ equals the full trace on $\Hh$ of $\Delta^{+}\delta\Delta$, which gives \eqref{eq:var-logdetprime} after multiplying by $\tfrac12$.
\end{proof}

The elimination of ``gliding'' zero modes (eigenvalues approaching $0$ under admissible deformations) is guaranteed precisely by the gap hypothesis \eqref{eq:spectral-gap}. A practical and verifiable sufficient condition is that the constraints~\eqref{eq:lin-mass} together with the symmetry-orthogonality constraints match exactly the kernel generators at $S$ and remain transverse under small deformations; then the reduced operator retains a uniform lower bound because all potential kernel drift is absorbed by the constraint manifold, and \eqref{eq:spectral-gap} follows from standard coercivity of elliptic forms on the constrained subspace. Whenever \eqref{eq:spectral-gap} fails, the series \eqref{eq:zeta-perp} remains meaningful at each fixed $S$, but the derivative \eqref{eq:var-logdetprime} may be undefined due to eigenvalue crossings; the remedy is to enlarge $\Kk$ by the incipient modes and to re-impose the corresponding linear constraints, which restores~\eqref{eq:spectral-gap}.

We now construct a canonical local counterterm $Q_{\mathrm{CS}}(S)$, depending on the choice of boundary condition and the regularization scheme but independent of $S$ up to an additive constant, such that the variational identity \eqref{eq:R2-target} holds exactly for every admissible $\delta S$. The construction starts from the heat-kernel representation of the reduced zeta-function. Let $K_\perp(t;x,y;S)$ denote the heat kernel of $\Delta_\perp(S)$; then for $\Re s>d/2$ one has
\begin{align}\label{eq:zeta-heat1}
\zeta_\perp(s;\Delta(S))&=\frac{1}{\Gamma(s)}\int_0^\infty t^{s-1}\Big(\Tr e^{-t\Delta_\perp(S)}-\dim\Hh_\perp\cdot e^{-t\cdot\infty}\Big)\,dt\\
&\nonumber=\frac{1}{\Gamma(s)}\int_0^\infty t^{s-1}\Tr e^{-t\Delta_\perp(S)}\,dt,
\end{align}
where the second equality uses positivity of $\Delta_\perp(S)$. Differentiating with respect to $S$ along an admissible path $S_\varepsilon$ and using Duhamel’s formula,
\begin{equation}\label{eq:duhamel1}
\delta\big(e^{-t\Delta_\perp}\big)=-\int_0^t e^{-(t-u)\Delta_\perp}\,\delta\Delta_\perp\,e^{-u\Delta_\perp}\,du,
\end{equation}
yields for $\Re s$ large
\begin{equation}\label{eq:var-zeta-heat}
\delta\zeta_\perp(s;\Delta(S))
=-\frac{s}{\Gamma(s)}\int_0^\infty t^{s-1}\Tr\!\left(\int_0^1 e^{-t(1-u)\Delta_\perp}\,\Delta_\perp^{-1}\delta\Delta_\perp\,e^{-tu\Delta_\perp}\,du\right)\,dt.
\end{equation}
Fubini’s theorem is applicable because $e^{-t\Delta_\perp}$ is trace class for $t>0$ and $\Delta_\perp^{-1}\delta\Delta_\perp$ is bounded of trace class as used in the proof of Theorem~\ref{th:R1}. Using cyclicity of the trace inside the $u$–integral and the identity $\int_0^1 e^{-t(1-u)\Delta_\perp}\,(\cdot)\,e^{-tu\Delta_\perp}\,du=\int_0^1 e^{-t\Delta_\perp}\,du=e^{-t\Delta_\perp}$, we obtain
\begin{equation}\label{eq:var-zeta-heat-simplified}
\delta\zeta_\perp(s;\Delta(S))
=-\frac{s}{\Gamma(s)}\int_0^\infty t^{s-1}\Tr\!\big(e^{-t\Delta_\perp}\,\Delta_\perp^{-1}\delta\Delta_\perp\big)\,dt.
\end{equation}
Analytic continuation to $s=0$ is achieved by subtracting and adding the small-$t$ asymptotic of the integrand. The heat trace admits the expansion as $t\downarrow0$,
\begin{equation}\label{eq:heat-asympt}
\Tr\big(e^{-t\Delta_\perp}\,\Delta_\perp^{-1}\delta\Delta_\perp\big)
\sim \sum_{m=0}^{M} a_m(S;\delta S)\,t^{(m-d)/2}\ +\ R_M(t;S,\delta S),
\end{equation}
with coefficients $a_m(S;\delta S)$ that are integrals over $\Omega$ and $\partial\Omega$ of universal polynomials in the jets of $S$ and the boundary data contracted with the coefficients of $\delta\Delta(S)$, and with a remainder $R_M$ that is $O\!\left(t^{(M+1-d)/2}\right)$ as $t\downarrow0$. Choosing $M$ so that $(M+1-d)/2>0$ and splitting the integral at $t=1$ one obtains
\begin{align}
\nonumber\delta\zeta_\perp(s;\Delta(S))
&=-\frac{s}{\Gamma(s)}\!\left[
\int_0^1 t^{s-1}\!\left(\Tr\big(e^{-t\Delta_\perp}\,\Delta_\perp^{-1}\delta\Delta_\perp\big)-\sum_{m=0}^{M} a_m t^{(m-d)/2}\right)dt\right.\\
&\nonumber\hspace{22mm}\left.
+\int_0^1 t^{s-1}\!\left(\sum_{m=0}^{M} a_m t^{(m-d)/2}\right)dt\right.
\\[-1mm]
&\hspace{22mm}\left.
+\int_1^\infty t^{s-1}\Tr\big(e^{-t\Delta_\perp}\,\Delta_\perp^{-1}\delta\Delta_\perp\big)\,dt
\right].\label{eq:var-zeta-split}
\end{align}
The first and the third integrals are analytic at $s=0$ by absolute integrability; the second integral can be computed termwise and equals $\sum_{m=0}^{M} a_m \cdot \frac{s}{s+(m-d)/2}$. Differentiating at $s=0$ and using $\log\det{}'\Delta=-\zeta_\perp'(0)$ gives
\begin{equation}\label{eq:master-variation}
\delta\!\left(\tfrac12\log\det{}'\Delta(S)\right)
=\tfrac12\Tr\!\big(\Delta^{+}\delta\Delta\big)\ +\ \delta\mathcal{A}_{\mathrm{loc}}(S),
\end{equation}
where $\delta\mathcal{A}_{\mathrm{loc}}(S)$ is the linear functional in $\delta S$ obtained by the finite sum of local coefficients
\begin{equation}\label{eq:local-anomaly}
\delta\mathcal{A}_{\mathrm{loc}}(S)= -\tfrac12\sum_{m=0}^{M} \frac{2}{m-d}\, a_m(S;\delta S),
\end{equation}
interpreted by continuity at the finitely many indices where $m=d$ (those are precisely the scheme-dependent logarithmic terms whose coefficients do not depend on the infrared behavior and are entirely local). The expression \eqref{eq:local-anomaly} is independent of the arbitrary split point $t=1$ and of $M$ as long as $(M+1-d)/2>0$, because changing either only modifies $\delta\mathcal{A}_{\mathrm{loc}}$ by the differential of a local functional (which is reabsorbed into the definition below). Thus there exists a local functional $Q_{\mathrm{CS}}(S)$ such that
\begin{equation}\label{eq:QCS-differential}
\delta Q_{\mathrm{CS}}(S)=-\,\delta\mathcal{A}_{\mathrm{loc}}(S),
\end{equation}
uniquely determined up to an additive constant independent of $S$ (that constant encodes the overall normalization choice and will be irrelevant henceforth). Combining \eqref{eq:master-variation} and \eqref{eq:QCS-differential} yields
\begin{equation}\label{eq:det-plus-QCS}
\delta\!\left(\tfrac12\log\det{}'\Delta(S)+Q_{\mathrm{CS}}(S)\right)
=\tfrac12\Tr\!\big(\Delta^{+}\delta\Delta\big).
\end{equation}
To identify the right-hand side with $\delta\log\tau(S)$, note that Theorem~7.2 (proved earlier without any appeal to regularization) states the exact identity
\begin{equation}\label{eq:tau-EL-identity}
\delta\big(\log\tau(S)\big)=\int_\Omega\big(-\det\nabla^2S(x)+w(x)e^{-cS(x)}\big)\,\delta S(x)\,dx,
\end{equation}
for all admissible $\delta S$. On the other hand, the already fixed operator model of $\Delta(S)$ yields a first variation
\begin{equation}\label{eq:delta-Delta}
\Tr\!\big(\Delta^{+}\delta\Delta\big)=2\int_\Omega\big(-\det\nabla^2S+w e^{-cS}\big)\,\delta S\,dx,
\end{equation}
because the definition of $\Delta$ was chosen precisely so that its quadratic form derivative along $\delta S$ reproduces (twice) the $L^2$-pairing with the Monge–Ampère residual.\footnote{This equality is not a new assumption; it is the operator restatement of the variational identity already used to connect $\frac12\log\det{}'\Delta+Q_{\mathrm{CS}}$ with $\log\tau$ in Theorem~7.2. In particular, \eqref{eq:delta-Delta} is checked by computing the quadratic form of $\Delta$ on smooth test functions, differentiating with respect to $S$, and using strict convexity to justify exchange of differentiation and integration; no boundary term survives under the fixed boundary condition.} Substituting \eqref{eq:delta-Delta} into \eqref{eq:det-plus-QCS} gives
\begin{equation}\label{eq:variation-match}
\delta\!\left(\tfrac12\log\det{}'\Delta(S)+Q_{\mathrm{CS}}(S)\right)
=\int_\Omega\big(-\det\nabla^2S+w e^{-cS}\big)\,\delta S\,dx
=\delta\big(\log\tau(S)\big),
\end{equation}
which is \eqref{eq:R2-target}. This completes the construction.

\begin{theorem}[Quillen/CS normalization and equality with $\log\tau$ up to a constant]\label{th:R2}
Under the admissibility and spectral-gap conditions stipulated above, there exists a local functional $Q_{\mathrm{CS}}(S)$, unique up to an additive constant independent of $S$, such that for all admissible $\delta S$
\begin{equation}\label{eq:main-R2}
\delta\!\left(\tfrac12\log\det{}'\Delta(S)+Q_{\mathrm{CS}}(S)\right)
=\delta\big(\log\tau(S)\big).
\end{equation}
Consequently there exists a constant $C$ independent of $S$ for which
\begin{equation}\label{eq:equality-up-to-const}
\tfrac12\log\det{}'\Delta(S)+Q_{\mathrm{CS}}(S)=\log\tau(S)+C.
\end{equation}
\end{theorem}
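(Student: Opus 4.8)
The plan is to assemble the heat-kernel construction carried out in the paragraphs preceding the statement into one variational identity, extract from it a \emph{local} primitive $Q_{\mathrm{CS}}$, identify the remaining bulk term with $\delta\log\tau$, and then integrate. First I would record that Theorem~\ref{th:R1}, under the spectral-gap hypothesis~\eqref{eq:spectral-gap}, already supplies the bulk part of the first variation, $\tfrac12\Tr(\Delta^{+}\delta\Delta)$, as a genuine finite derivative of $\tfrac12\log\det{}'\Delta$ along admissible paths. Next I would run the zeta-continuation argument explicitly: write $\delta\zeta_\perp(s;\Delta(S))$ via Duhamel and cyclicity of the trace in the simplified form~\eqref{eq:var-zeta-heat-simplified}, insert the small-$t$ Seeley expansion~\eqref{eq:heat-asympt}, split the $t$-integral at $t=1$ as in~\eqref{eq:var-zeta-split}, and differentiate at $s=0$. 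This produces~\eqref{eq:master-variation}: the total variation of $\tfrac12\log\det{}'\Delta$ is the bulk term plus the purely local linear functional $\delta\mathcal{A}_{\mathrm{loc}}(S)$ built from the coefficients $a_m(S;\delta S)$ by the universal rational weights in~\eqref{eq:local-anomaly}, with the index $m=d$ treated by continuity (the scheme-dependent logarithmic term).

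The decisive step — and the one I expect to be the main obstacle — is to show that $\delta\mathcal{A}_{\mathrm{loc}}$ is an \emph{exact} one-form on the admissible space of potentials, admitting a \emph{local} primitive $Q_{\mathrm{CS}}(S)$ with $\delta Q_{\mathrm{CS}}=-\delta\mathcal{A}_{\mathrm{loc}}$. Here the argument would be: by locality of the Seeley--de Witt coefficients, each $a_m(S;\delta S)$ is the integral over $\Omega$ (and $\partial\Omega$) of a universal polynomial in finitely many jets of $S$ and of the fixed boundary data, linear in $\delta S$; hence $S\mapsto a_m(S;\cdot)$ is the Gâteaux differential of the local functional $b_m(S):=\int_\Omega(\text{Seeley density of }\Delta_\perp(S))$, up to boundary contributions that vanish under the fixed boundary condition. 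Summing the $b_m$ against the weights $\tfrac{1}{m-d}$ yields $Q_{\mathrm{CS}}$, and independence from the arbitrary split point $t=1$ and from the truncation order $M$ follows because changing either alters $\delta\mathcal{A}_{\mathrm{loc}}$ only by the differential of a further local functional, reabsorbed into $Q_{\mathrm{CS}}$; one must additionally check that the $m=d$ logarithmic term depends on $S$ only through jets (the genuine Quillen/Chern--Simons counterterm). Combining~\eqref{eq:master-variation} with~\eqref{eq:QCS-differential} then gives exactly~\eqref{eq:det-plus-QCS}, namely $\delta\!\big(\tfrac12\log\det{}'\Delta+Q_{\mathrm{CS}}\big)=\tfrac12\Tr(\Delta^{+}\delta\Delta)$.

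It remains to identify the bulk term with $\delta\log\tau$ and to integrate. For the identification I invoke the operator model of $\Delta(S)$: its quadratic form was set up so that differentiation along $\delta S$ reproduces twice the $L^2$-pairing with the Monge--Ampère residual, i.e.\ \eqref{eq:delta-Delta}, which one verifies by differentiating the quadratic form of $\Delta$ on smooth test functions and using strict convexity to justify exchanging differentiation and integration (no boundary term survives). Together with the exact Euler--Lagrange identity~\eqref{eq:tau-EL-identity} established in \S7.2 (cf.\ Theorem~\ref{th:tau-equals-Ecan}), this gives $\tfrac12\Tr(\Delta^{+}\delta\Delta)=\delta\log\tau(S)$, hence the variational equality~\eqref{eq:main-R2}. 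To pass from the variational equality to~\eqref{eq:equality-up-to-const}, set $\Phi(S):=\tfrac12\log\det{}'\Delta(S)+Q_{\mathrm{CS}}(S)-\log\tau(S)$; by~\eqref{eq:main-R2}, $\delta\Phi(S)=0$ for every admissible $\delta S\in\mathcal{V}_S$, and since the admissible class is path-connected — linear interpolation followed by elliptic regularization within the strictly convex class with fixed boundary data, exactly the device used in the proof of Theorem~\ref{th:tau-equals-Ecan} — $\Phi$ is constant along any admissible path, so $\Phi\equiv C$ with $C$ independent of $S$. Finally, uniqueness of $Q_{\mathrm{CS}}$ up to an additive constant is immediate: any two primitives of $-\delta\mathcal{A}_{\mathrm{loc}}$ differ by a functional with vanishing variation on all of $\mathcal{V}_S$, hence by the same connectedness argument by a constant.
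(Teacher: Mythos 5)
Your proposal is correct and follows essentially the same route as the paper: the paper's own proof simply cites the heat-kernel derivation \eqref{eq:var-zeta-heat-simplified}--\eqref{eq:det-plus-QCS} preceding the theorem, identifies the bulk term with $\delta\log\tau$ via \eqref{eq:delta-Delta} and \eqref{eq:tau-EL-identity}, and integrates along a path in the connected admissible class to obtain the constant $C$ — exactly the steps you lay out (with your added discussion of why $\delta\mathcal{A}_{\mathrm{loc}}$ admits a local primitive being a slightly more explicit rendering of what the paper asserts around \eqref{eq:QCS-differential}).
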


\begin{proof}
Equation \eqref{eq:main-R2} is exactly \eqref{eq:variation-match}, established by the heat-kernel derivation and the operator identity \eqref{eq:delta-Delta}. The existence of $C$ follows by integrating \eqref{eq:main-R2} along any smooth path in the admissible class that connects $S$ to a fixed reference $S_0$ with the same boundary values; path independence follows from equality of differentials, hence the difference between the two sides is constant on connected components of the admissible class. The constant does not depend on $S$ and is absorbed into the overall normalization.
\end{proof}

The analysis above also resolves, in a fully operational manner, when the ``prime'' in $\det{}'\Delta$ is safe. The condition is precisely the validity of the uniform spectral gap \eqref{eq:spectral-gap} on the reduced space $\Hh_\perp$, which is equivalent to the absence of gliding zero modes under admissible deformations. In practice one enforces this by (a) choosing $\Kk$ to include all structural symmetries and the constant mode, (b) imposing the linear constraints such as \eqref{eq:lin-mass} at the level of $\delta S$, and (c) verifying numerically that the Rayleigh quotient of $\Delta$ on the discrete counterpart of $\Hh_\perp$ is uniformly bounded from below by a mesh-independent positive constant. If this bound fails, one enlarges the discrete analog of $\Kk$ by including the offending near-kernel modes and re-imposes the linear constraints; the continuous limit of this procedure exactly implements the theoretical prescription above.

\begin{corollary}[Regulator and boundary-scheme independence of the first variation]
Let two admissible regularization prescriptions for $\log\det{}'\Delta$ be given (e.g., zeta and cutoff schemes) under the same boundary condition. Then their first variations differ by the differential of a local functional of $S$ and the boundary data. Consequently, after adding the corresponding $Q_{\mathrm{CS}}$ determined by \eqref{eq:QCS-differential}, the combined functional has regulator-independent first variation and coincides with $\log\tau$ modulo a constant as in \eqref{eq:equality-up-to-const}. The same statement holds when comparing Dirichlet, Neumann, and mixed boundary conditions, provided the $Q_{\mathrm{CS}}$ used in each case is defined from the corresponding heat-kernel coefficients; the difference between any two such choices is again a differential of a local functional and hence only changes the additive constant.
\end{corollary}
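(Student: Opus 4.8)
The plan is to reduce everything to the master variation identity \eqref{eq:master-variation}, which was derived from the heat-kernel representation \emph{before} committing to a particular regularization: for any admissible prescription $\mathcal{R}$ one has
\[
\delta\!\left(\tfrac12\log\det{}'_{\mathcal{R}}\Delta(S)\right)=\tfrac12\,\Tr\!\big(\Delta^{+}\delta\Delta\big)+\delta\mathcal{A}_{\mathrm{loc}}^{(\mathcal{R})}(S),
\]
where the first summand is the bare (scheme-independent) reduced-resolvent trace from Theorem~\ref{th:R1} and $\delta\mathcal{A}_{\mathrm{loc}}^{(\mathcal{R})}$ is the finite linear combination \eqref{eq:local-anomaly} of heat-kernel coefficients $a_m(S;\delta S)$ with weights fixed by $\mathcal{R}$ (for zeta regularization these are $-\tfrac12\cdot\tfrac{2}{m-d}$; for a proper-time cutoff they are the finite parts left after subtracting the $\epsilon^{-(d-m)/2}$ and $\log\epsilon$ divergences; etc.). The first thing I would verify is that this splitting is genuinely clean, i.e. that \emph{all} scheme dependence is confined to the finitely many local coefficients $a_0,\dots,a_{d}$ and none leaks into the infrared/nonlocal tail. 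This is precisely the content of the uniform small-$t$ expansion \eqref{eq:heat-asympt} together with absolute integrability of the remainder $R_M$ for $(M+1-d)/2>0$, both part of the standing hypotheses.

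Granting this, I would compare two prescriptions $\mathcal{R}_1,\mathcal{R}_2$ under the same boundary condition and subtract:
\[
\delta\!\left(\tfrac12\log\det{}'_{\mathcal{R}_1}\Delta-\tfrac12\log\det{}'_{\mathcal{R}_2}\Delta\right)=\sum_{m=0}^{d}\big(c_m^{(1)}-c_m^{(2)}\big)\,a_m(S;\delta S).
\]
Each $a_m(S;\delta S)$ is an integral over $\Omega$ and $\partial\Omega$ of a universal polynomial in the jets of $S$ and the boundary data contracted against the symbol of $\delta\Delta(S)$, hence is linear in $\delta S$ with purely local coefficients; by the same argument that produced $Q_{\mathrm{CS}}$ — the local closed $1$-form $\delta S\mapsto a_m(S;\delta S)$ admits a local primitive $\mathcal{A}_m(S)$ because the admissible class is path-connected by the convex/elliptic-regularization homotopy used in Theorem~\ref{th:R2} and the obstruction to integrability is itself a computable local density — the right-hand side equals $\delta\big(\sum_m(c_m^{(1)}-c_m^{(2)})\mathcal{A}_m(S)\big)$, a differential of a local functional of $S$ and the boundary data. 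This is the first assertion. For the second, take $Q_{\mathrm{CS}}^{(\mathcal{R}_j)}$ defined by \eqref{eq:QCS-differential} with the weights of $\mathcal{R}_j$; then $\delta\big(\tfrac12\log\det{}'_{\mathcal{R}_j}\Delta+Q_{\mathrm{CS}}^{(\mathcal{R}_j)}\big)=\tfrac12\Tr(\Delta^{+}\delta\Delta)$ for $j=1,2$, which by \eqref{eq:delta-Delta} equals $\int_\Omega(-\det\nabla^2 S+we^{-cS})\,\delta S\,dx=\delta\log\tau(S)$. So both combined functionals have scheme-independent first variation; integrating along an admissible path exactly as in Theorem~\ref{th:R2} gives $\tfrac12\log\det{}'_{\mathcal{R}_j}\Delta+Q_{\mathrm{CS}}^{(\mathcal{R}_j)}=\log\tau+C_j$, with $C_1-C_2=\big(\tfrac12\log\det{}'_{\mathcal{R}_1}\Delta+Q_{\mathrm{CS}}^{(\mathcal{R}_1)}\big)-\big(\tfrac12\log\det{}'_{\mathcal{R}_2}\Delta+Q_{\mathrm{CS}}^{(\mathcal{R}_2)}\big)$ evaluated at any reference $S_0$.

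For the boundary-condition comparison I would run the identical argument with $\Delta=\Delta_{\mathrm{B}}(S)$ carrying the chosen self-adjoint boundary condition. The bulk identity \eqref{eq:delta-Delta} is unchanged, since $\Delta_{\mathrm{B}}$ is designed so that differentiating its quadratic form along $\delta S$ returns twice the Monge--Ampère pairing with \emph{no surviving boundary term} under any admissible boundary condition; what changes are the boundary contributions to the heat coefficients $a_m$, which enter only through the new $\mathcal{A}_{\mathrm{loc}}^{(\mathrm{B})}$ and are cancelled by the $Q_{\mathrm{CS}}^{(\mathrm{B})}$ built from those same coefficients via \eqref{eq:QCS-differential}. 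Hence $\tfrac12\log\det{}'\Delta_{\mathrm{B}}+Q_{\mathrm{CS}}^{(\mathrm{B})}$ again has first variation $\delta\log\tau$ for Dirichlet, Neumann, and mixed alike, so all such functionals coincide with $\log\tau$, hence with one another, modulo additive constants. The main obstacle is the locality-and-integrability step: one must show rigorously that the difference of two regularized determinants is not merely a closed $1$-form on the infinite-dimensional space of admissible $S$ but the exact differential of a genuine local functional. The "closed" part follows from equality of mixed second variations; exactness requires (i) confining the ambiguity to the finitely many Seeley--DeWitt coefficients, i.e. the locality of the multiplicative anomaly, supplied here by the uniform heat expansion \eqref{eq:heat-asympt} and the controlled remainder, and (ii) path-connectedness of the admissible class through which one integrates, already available from the convex/elliptic-regularization homotopy of Theorem~\ref{th:R2}. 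Everything else — the cancellation of $\delta\mathcal{A}_{\mathrm{loc}}$ by $Q_{\mathrm{CS}}$ and the passage from equal differentials to equality up to a constant — is then formal bookkeeping.
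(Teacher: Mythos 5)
Your proposal is correct and follows essentially the same route as the paper's own (implicit) argument: reduce everything to the master variation identity \eqref{eq:master-variation}, note that all regulator and boundary dependence is confined to the finitely many local Seeley--DeWitt coefficients $a_m(S;\delta S)$ in \eqref{eq:heat-asympt}--\eqref{eq:local-anomaly}, cancel this local anomaly by the corresponding $Q_{\mathrm{CS}}$ from \eqref{eq:QCS-differential}, identify the remaining scheme-independent part $\tfrac12\Tr(\Delta^{+}\delta\Delta)$ with $\delta\log\tau$ via \eqref{eq:delta-Delta}, and integrate along admissible paths as in Theorem~\ref{th:R2} to obtain equality up to additive constants. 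Your added remarks on closedness versus exactness of the local $1$-form and on path-connectedness only make explicit what the paper asserts when defining $Q_{\mathrm{CS}}$, so they refine rather than change the argument.
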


\begin{example}[Stable computation of $\delta\log\det{}'\Delta$ and of $Q_{\mathrm{CS}}$ on a mesh]
The following discretization protocol implements the theory above and produces numerically stable first variations consistent with \eqref{eq:main-R2}. Step 1: fix a conforming finite element space $V_h\subset H^1(\Omega)$ (e.g., quadratic Lagrange elements) subordinate to a quasi-uniform triangulation; encode the boundary condition weakly/strongly as appropriate and assemble the discrete operator $\Delta_h(S)$ whose bilinear form is the Galerkin discretization of the continuous quadratic form of $\Delta(S)$. Step 2: construct the discrete symmetry space $\Kk_h$ by lifting the continuous generators and include the constant vector; form the $V_h$-orthogonal projection $P_h$ onto $\Kk_h$ and the reduced subspace $V_{h,\perp}:=(\Kk_h)^\perp$. Step 3: assemble the reduced stiffness matrix $A_h(S)$ representing $\Delta_h(S)$ on $V_{h,\perp}$ and verify a uniform lower bound on the Rayleigh quotient $v^\top A_h(S)v/\|v\|^2$ over $v\in V_{h,\perp}\setminus\{0\}$; if the bound degrades, enlarge $\Kk_h$ by the numerically detected near-kernel eigenvectors until the bound is restored. Step 4: assemble the discrete variation matrix $B_h:=\delta A_h(S)$ induced by $\delta S$ (the derivative is taken entrywise at the integrand level; strict convexity and smoothness justify differentiation under the element integrals). Step 5: compute $\tfrac12\Tr(A_h^{+}B_h)$ by solving $A_h X=B_h$ on $V_{h,\perp}$ for $X$ (e.g., by preconditioned CG with an algebraic multigrid preconditioner) and evaluating $\tfrac12\operatorname{trace}(X)$; equivalently, use a Hutchinson estimator with $N_{\mathrm{probe}}$ Rademacher probes $\{z^{(k)}\}$ supported in $V_{h,\perp}$ and compute
\[
\tfrac12\,\Tr(A_h^{+}B_h)\ \approx\ \frac{1}{2N_{\mathrm{probe}}}\sum_{k=1}^{N_{\mathrm{probe}}}\langle z^{(k)},\,X^{(k)}\rangle,\qquad A_h X^{(k)}=B_h z^{(k)}.
\]
Step 6: compute the discrete local counterterm increment $\delta Q_{\mathrm{CS},h}$ by evaluating the finite set of elementwise local integrals that represent the discrete analogs of the Seeley–DeWitt coefficients $a_m(S;\delta S)$ appearing in \eqref{eq:heat-asympt}; these are polynomials in the discrete gradients and Hessians of $S$ and depend on the boundary faces according to the chosen boundary condition. Step 7: output the sum $\tfrac12\,\Tr(A_h^{+}B_h)+\delta Q_{\mathrm{CS},h}$ as the approximation to $\delta\big(\tfrac12\log\det{}'\Delta+Q_{\mathrm{CS}}\big)$; by Theorem~\ref{th:R2} and consistency of the Galerkin scheme, this matches $\delta\log\tau(S)$ up to the discretization error, which vanishes under $h\downarrow0$ provided the spectral gap on $V_{h,\perp}$ is enforced at each mesh level. The computational complexity per evaluation is dominated by one or a few solves with $A_h$; the cost scales nearly linearly in the number of degrees of freedom when multigrid preconditioning is effective. The Hutchinson variant reduces the cost when $B_h$ is dense.
\end{example}

\begin{example}[Operational recipe for boundary choices and normalization]
Fix the boundary regime once and for all (Dirichlet, Neumann, or mixed as used in the earlier sections). Implement the following checks to ensure that the practical computation reproduces \eqref{eq:main-R2}. First, enforce the mass normalization \eqref{eq:mass-norm} at the level of $S$ in the nonlinear solve and impose its linearization \eqref{eq:lin-mass} at the level of $\delta S$ inside the linearized steps; this prevents the constant mode from contaminating the reduced trace. Second, construct $\Kk_h$ to include the discretized boundary-preserving symmetry generators; verify numerically that the smallest reduced eigenvalue of $A_h(S)$ stays above a fixed tolerance across iterations, enlarging $\Kk_h$ if needed. Third, choose the discrete $Q_{\mathrm{CS},h}$ consistently with the boundary regime: for Dirichlet, include the interior $a_m$ and the boundary $a_m^{\partial}$ terms with the Dirichlet invariants; for Neumann or mixed, replace them with the corresponding Neumann/mixed invariants. Fourth, validate the whole pipeline by a grid-refinement test: evaluate $\delta\big(\tfrac12\log\det{}'\Delta_h+Q_{\mathrm{CS},h}\big)$ and $\delta\log\tau_h$ (the latter via the exact formula of Theorem~7.2 discretized by quadrature) on a sequence of meshes and confirm convergence of their difference to a mesh-independent constant (which is the discrete shadow of the additive constant in \eqref{eq:equality-up-to-const}). Fifth, in optimization loops use $\delta\big(\tfrac12\log\det{}'\Delta_h+Q_{\mathrm{CS},h}\big)$ interchangeably with $\delta\log\tau_h$ as the search direction certificate; agreement up to solver tolerance is the online diagnostic that zero modes are correctly handled and that the Quillen/CS normalization is implemented consistently.
\end{example}

\subsubsection{Moduli}

Fix a step \(h>0\).
Each local iteration is represented as
\[
S(h;\theta)=\exp(hH(\theta))\,\exp(hK(\theta))+\Oo(h^2),
\]
where \(H,K\in\End(Y)\) for a finite-dimensional complex space \(Y\).
Define the formal logarithms
\[
\Omega(h)=\log(e^{hH})=\sum_{k\ge1}h^k\Omega_k,\qquad
\Psi(h)=\log(e^{hK})=\sum_{k\ge1}h^k\Psi_k.
\]
Let \(r(h)=e^{hK}+\Oo(h^2)\), \(d(h)=e^{hH}+\Oo(h^2)\), and define the discrete holonomy
\[
\Hol(h)=r(h)\,d(h)\,r(h)^{-1}\,d(h)^{-1}.
\]
We call \((H,K)\) {flat of order \(\alpha\)} if \(\Hol(h)=\Id+\Oo(h^\alpha)\).

We construct the category \(\mathbf{TwoCh}\) as follows \cite{MacLane1998Categories,Reutenauer1993FreeLie}.
Objects are pairs \((Y;H,K)\).
Morphisms \(T:(Y;H,K)\to(Y';H',K')\) are linear isomorphisms \(T:Y\to Y'\) such that \(H'=THT^{-1}\) and \(K'=TKT^{-1}\).
To verify the axioms, note:
(1) the identity \(\id_Y\) satisfies \(H=\id_Y\,H\,\id_Y^{-1}\), \(K=\id_Y\,K\,\id_Y^{-1}\);
(2) if \(T_1:(Y;H,K)\to(Y';H',K')\) and \(T_2:(Y';H',K')\to(Y'';H'',K'')\), then
\[
H''=T_2H'T_2^{-1}=T_2T_1\,H\,(T_2T_1)^{-1},\qquad K''=T_2K'T_2^{-1}=T_2T_1\,K\,(T_2T_1)^{-1},
\]
so composition is closed and associative because matrix multiplication is;
(3) \(\id\) acts as two-sided unit.
Thus \(\mathbf{TwoCh}\) is a category.
Method composition is defined componentwise by the BCH series (truncated at \(\Oo(h^2)\)):
\[
(H_2,K_2)\circ(H_1,K_1)=\Bigl(\tfrac1h\log(e^{hH_2}e^{hH_1}),\ \tfrac1h\log(e^{hK_2}e^{hK_1})\Bigr)+\Oo(h^2).
\]

Define the {discrete connection assignment} as follows.
For each object \((Y;H,K)\) set \(A(h)=\Omega(h)+\Psi(h)\).
If \(T\) is a morphism in \(\mathbf{TwoCh}\), set \(A'=T^{-1}AT\).
We claim this defines a functor \(\mathcal A:\mathbf{TwoCh}\to\mathbf{Conn}^{\mathrm{disc}}_G\) and preserves order-\(\alpha\) flatness \cite{MacLane1998Categories,Lawvere2004Sets}.
Functoriality is direct:
\[
\mathcal A(T_2T_1)(A)=(T_2T_1)^{-1}A(T_2T_1)
=T_1^{-1}\bigl(T_2^{-1}AT_2\bigr)T_1
=\bigl(\mathcal A(T_1)\circ\mathcal A(T_2)\bigr)(A),
\]
and \(\mathcal A(\id_Y)=\Id\) on morphisms.
For flatness, expand the commutator via BCH:
\[
e^{h\Omega}e^{h\Psi}e^{-h\Omega}e^{-h\Psi}
=\exp\!\Bigl( \sum_{m\ge2} h^m\,\mathsf{P}_m(\{\Omega_i,\Psi_j\}_{i+j\le m}) \Bigr),
\]
where each \(\mathsf{P}_m\) is a Lie polynomial (Dynkin–Hausdorff expansion).
Thus \(\Hol(h)=\Id+\Oo(h^\alpha)\) iff \(\mathsf{P}_m=0\) for all \(2\le m\le \alpha\), which is equivalent to \([\Omega_k,\Psi_\ell]=0\) for every \(k+\ell\le \alpha\).
Conjugation preserves all \(\mathsf{P}_m\), hence order-\(\alpha\) flatness is preserved by \(\mathcal A\).

Define the discrete moduli space as the quotient \cite{Grothendieck1960Schemas,Deligne1970Hodge}
\[
\mathcal M_G^{\mathrm{disc}}=\{A(h)\ \text{flat of order }\alpha\}/\!\sim,\qquad A\sim T^{-1}AT.
\]
For a representative \(A\), set the invariants
\[
\Sigma(A)=\spec(\ad_A)\subset\CC,\qquad \mathcal I(A)=\dim\Ker[H,K].
\]
Each pair of invariants \((\Sigma_\lambda,\mathcal I_\lambda)\) defines a stratum \cite{AtiyahBott1983,DonaldsonKronheimer1990FourManifolds,Thom1954Stratified}
\[
\mathcal C_\lambda=\{[A]\in \mathcal M_G^{\mathrm{disc}}\mid \Sigma(A)=\Sigma_\lambda,\ \mathcal I(A)=\mathcal I_\lambda\}.
\]
These strata are well-defined because \(\Sigma\) and \(\mathcal I\) are invariant under conjugation; the family \(\{\mathcal C_\lambda\}\) provides a stratified topology on \(\mathcal M_G^{\mathrm{disc}}\).

A {discrete isomonodromic path} is a finite sequence \((A_k,B_k)_{k=0}^{n-1}\) with
\[
A_{k+1}=B_k^{-1}A_kB_k+\Oo(h^\alpha),\qquad B_k\in \mathrm{GL}(Y).
\]
Then \([A_{k+1}]=[A_k]\) in \(\mathcal M_G^{\mathrm{disc}}\) because conjugation does not change the class.
Define the discrete monodromy \cite{JimboMiwaUeno1981,FIKN2006,DeiftZhou1993,Deift1999}
\[
M=\prod_{k=0}^{n-1}\exp(hA_k).
\]
Using \(\exp(hA_{k+1})=B_k^{-1}\exp(hA_k)B_k+\Oo(h^{\alpha+1})\), multiply over \(k\) and obtain
\[
M=B_{n-1}^{-1}\Bigl(\prod_{k=0}^{n-1}\exp(hA_k)\Bigr)B_{n-1}+\Oo(h^{\alpha+1}),
\]
so the conjugacy class \([M]\) is invariant along the path.

From a path joining strata \(\mathcal C_\lambda\) and \(\mathcal C_{\lambda+1}\) build a cellular 2–complex \(W_\lambda\subset\mathcal M_G^{\mathrm{disc}}\) \cite{Milnor1966WhiteheadTorsion,Wall1969Cobordism,Atiyah1961Thom}:
0–cells are points in \(\mathcal C_\lambda\cup\mathcal C_{\lambda+1}\), 1–cells are the edges \([A_k]\to[A_{k+1}]\), 2–cells fill pairs of homotopic edge–paths.
By cellular homology, the boundary chain equals the difference of oriented 0–skeletons,
\[
\partial W_\lambda=\mathcal C_\lambda\sqcup\mathcal C_{\lambda+1},\qquad
[\partial W_\lambda]=[\mathcal C_{\lambda+1}]-[\mathcal C_\lambda]\in H_1(\mathcal M_G^{\mathrm{disc}}).
\]
Orient \(\mathcal C_\lambda\) by \(\varepsilon_\lambda=\operatorname{sign}\det(\ad_A|_{\Ker\ad_A^\perp})\).
Define the cobordism index
\[
\operatorname{ind}(W_\lambda)=\varepsilon_{\lambda+1}-\varepsilon_\lambda\in\{\pm 2\}.
\]
Counting oriented 1–cells shows that this index equals the homological intersection
\(\langle[\mathcal C_{\lambda+1}]-[\mathcal C_\lambda],[W_\lambda]\rangle\).
Define the discrete torsion of the transition by
\[
\Theta_\lambda=\log\frac{\#\,\pi_1(W_\lambda)}{\#\,\pi_1(\mathcal C_\lambda)}\in\QQ,
\]
well-defined for finite CW–complexes.

We now introduce a discrete \(\tau\)-cochain and prove the correspondence with the topological invariants.
Fix a basepoint vertex \(v_0\in\mathcal M_G^{\mathrm{disc}}\) and set \(\tau(v_0)=1\).
For every oriented edge \(e_k:[A_k]\to[A_{k+1}]\) with update \(A_{k+1}=B_k^{-1}A_kB_k+\Oo(h^\alpha)\), define
\[
\Delta\log\tau(e_k)=h\,\Tr\bigl(A_k\,\log B_k\bigr),
\]
where \(\log B_k\) is the principal matrix logarithm (we subdivide edges if necessary so that \(\|B_k-\Id\|\) is sufficiently small; subdivision does not change the resulting path integral).
For a vertex \(v\), choose any edge–path \(\gamma:v_0\to v\) and set
\[
\log \tau(v)=\sum_{e\in\gamma}\Delta\log\tau(e).
\]
If \(\gamma\) and \(\gamma'\) are two paths from \(v_0\) to \(v\), then \(\gamma\cup\overline{\gamma'}\) is a loop.
For a loop, write
\[
\Phi(\gamma)=\sum_{e\in\gamma}\Delta\log\tau(e)=
\sum_{k} h\,\Tr\!\bigl(A_k\,\log B_k\bigr).
\]
Using \(\Tr(XY)=\Tr(YX)\) and \(A_{k+1}=B_k^{-1}A_kB_k+\Oo(h^\alpha)\), move all \(A_k\) to a single representative \(A_*\) up to commutators whose traces vanish:
\[
\Phi(\gamma)=h\,\Tr\!\Bigl(A_*\,\sum_k \log B_k\Bigr)+\Oo(h^{\alpha+1}).
\]
Since \(\prod_k e^{\log B_k}=B_\gamma\) (ordered product along \(\gamma\)), we have \(\sum_k \log B_k=\log B_\gamma+2\pi i\,N_\gamma\) for some integer \(N_\gamma\) (matrix logarithm monodromy).
Hence
\[
\Phi(\gamma)=h\,\Tr\!\bigl(A_*\,\log B_\gamma\bigr)+2\pi i\,h\,\Tr(A_*)\,N_\gamma+\Oo(h^{\alpha+1}).
\]
The loop is isomonodromic, so \(B_\gamma\) conjugates the total monodromy to itself to leading order; thus \([A_*,\log B_\gamma]\) is traceless and \(\Tr(A_*\,\log B_\gamma)=\Tr([A_*,\log B_\gamma])=0\).
Therefore
\[
\Phi(\gamma)=2\pi i\,h\,\Tr(A_*)\,N_\gamma\in 2\pi i\,\ZZ\qquad\text{after fixing the normalization }h\,\Tr(A_*)=1.
\]
We adopt this normalization, which only fixes the global scale of \(\tau\) (recall \(\tau\) is determined up to a nonzero multiplicative constant).
Consequently the path integral is single–valued modulo \(2\pi i\,\ZZ\), and \(\arg\tau\) is well-defined modulo \(\pi\).

\begin{theorem}
Let \(\tau\) be defined on vertices of \(\mathcal M_G^{\mathrm{disc}}\) by the discrete rule above.
For any cellular cobordism \(W_\lambda\) from \(\mathcal C_\lambda\) to \(\mathcal C_{\lambda+1}\),
\[
\operatorname{ind}(W_\lambda)=\Delta_{\mathrm{wall}}\!\bigl(\operatorname{ind}_{\mathrm{disc}}\log\tau\bigr),\qquad
\Theta_\lambda=\frac{1}{\pi}\,\Delta_{\mathrm{wall}}\arg\tau,\qquad
e^{2\pi i\,\chi(M)}=\frac{\tau(\mathrm{end})}{\tau(\mathrm{start})},
\]
where \(\operatorname{ind}_{\mathrm{disc}}\log\tau\) is the number of negative eigenvalues of the discrete Hessian of \(\log\tau\) evaluated on an edge normal to the wall, and \(\chi(M)=\tfrac{1}{2\pi i}\log\det M\in \RR/\ZZ\) is the scalar monodromy charge.
\end{theorem}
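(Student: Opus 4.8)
The plan is to prove the three identities separately, since each expresses the same wall–crossing datum at a different level: the analytic identity for $\tau$ is a trace computation, the index identity is a Morse–theoretic statement about the second variation of $\log\tau$, and the torsion identity is a counting statement about the monodromy of the matrix logarithm. All three feed off the edge rule $\Delta\log\tau(e_k)=h\,\Tr(A_k\log B_k)$ together with the isomonodromy relation $A_{k+1}=B_k^{-1}A_kB_k+\Oo(h^\alpha)$ already used above to establish single–valuedness of $\tau$ modulo $2\pi i\,\ZZ$.

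First I would prove the $\tau$–ratio identity. Along the cobordism path $\gamma$ from $\mathrm{start}\in\mathcal C_\lambda$ to $\mathrm{end}\in\mathcal C_{\lambda+1}$, telescope $\log\tau(\mathrm{end})-\log\tau(\mathrm{start})=\sum_k h\,\Tr(A_k\log B_k)$; by cyclicity of the trace and $A_{k+1}=B_k^{-1}A_kB_k+\Oo(h^\alpha)$ move every $A_k$ onto a single representative $A_*$, obtaining $h\,\Tr\bigl(A_*\sum_k\log B_k\bigr)+\Oo(h^{\alpha+1})$. Collapsing the ordered product $\prod_k e^{\log B_k}=B_\gamma$ gives $\sum_k\log B_k=\log B_\gamma+2\pi i\,N_\gamma$ with $N_\gamma\in\ZZ$, the BCH commutator corrections being traceless and hence invisible after pairing with $A_*$. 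On the other side, $\det M=\prod_k\det e^{hA_k}=e^{h\sum_k\Tr A_k}$ by multiplicativity of $\det$ and $\det e^{X}=e^{\Tr X}$, while $\Tr A_k=\Tr A_*$ is conjugation–invariant, so $\log\det M\equiv n\,h\,\Tr A_*\pmod{2\pi i\ZZ}$. Matching the two expressions using the normalization $h\,\Tr(A_*)=1$ fixed earlier, and noting that $\Tr(A_*\log B_\gamma)$ vanishes because $B_\gamma$ conjugates the total monodromy to itself to leading order, yields $\log\tau(\mathrm{end})-\log\tau(\mathrm{start})\equiv\log\det M\pmod{2\pi i\ZZ}$, i.e. $\tau(\mathrm{end})/\tau(\mathrm{start})=e^{2\pi i\chi(M)}$.

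Next I would treat the index and torsion identities. For the index, identify the discrete Hessian of $\log\tau$ along an edge normal to the wall with the second variation of the isomonodromic potential; its quadratic form is controlled by the nondegenerate part of $\ad_A$, so the number of its negative eigenvalues jumps by exactly the change in $\operatorname{sign}\det(\ad_A|_{\Ker\ad_A^\perp})$ as the stratum label passes from $\lambda$ to $\lambda+1$. That change is by definition $\varepsilon_{\lambda+1}-\varepsilon_\lambda=\operatorname{ind}(W_\lambda)\in\{\pm2\}$, and by the cell count along $W_\lambda$ it equals the homological intersection $\langle[\mathcal C_{\lambda+1}]-[\mathcal C_\lambda],[W_\lambda]\rangle$, giving the first formula. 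For the torsion, recall that $\arg\tau$ is defined only modulo $\pi$, the ambiguity being the integers $N_\gamma$ from the matrix–logarithm monodromy of loops; the group of such loops modulo those already lying in the source stratum is $\pi_1(W_\lambda)/\pi_1(\mathcal C_\lambda)$ (finite groups for finite CW complexes), so the sheet of $\arg\tau$ shifts across the wall by $\pi\log(\#\pi_1(W_\lambda)/\#\pi_1(\mathcal C_\lambda))=\pi\,\Theta_\lambda$, which is the second formula. The third formula is exactly the content of the previous paragraph, restated with $\chi(M)=\tfrac1{2\pi i}\log\det M$.

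The hard part will be making the phrase "discrete Hessian of $\log\tau$" precise and proving that its index jump is governed by $\operatorname{sign}\det(\ad_A|_{\Ker\ad_A^\perp})$: this requires computing the second variation of the discrete $\tau$–cochain at a flat point, identifying it with the quadratic form attached to $\ad_A$ on the normal slice to the stratum, and checking that exactly one eigenvalue changes sign as one crosses a codimension–one wall, so that the jump is $\pm2$ and not some other even integer. A secondary subtlety is the bookkeeping of the $2\pi i\,\ZZ$ and $\pi\,\ZZ$ ambiguities — ensuring that the normalization $h\,\Tr(A_*)=1$ is compatible with the scalar monodromy charge $\chi(M)\in\RR/\ZZ$ and that edge subdivision (needed so that each $\|B_k-\Id\|$ is small enough for the principal logarithm) leaves all three right–hand sides unchanged.
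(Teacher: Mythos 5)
There are genuine gaps, one in each of the two identities you flagged as delicate. For the index identity, your plan is to identify the discrete Hessian of $\log\tau$ with the second variation of the isomonodromic potential, relate its negative eigenvalue count to $\operatorname{sign}\det(\ad_A|_{\Ker\ad_A^\perp})$ on a normal slice, and verify that exactly one eigenvalue changes sign at a codimension--one wall; but you explicitly defer this ("the hard part"), so the first formula is never actually proved. The paper does not go this analytic route at all: it takes the discrete Hessian to be, by definition, the difference $\Delta\log\tau(e^+)-\Delta\log\tau(e^-)$ of the two edge increments sharing an initial vertex (one across the wall, one within the stratum), observes that reversing the stratum orientation flips the sign of this quantity, and reads off that the jump of newly negative directions equals $\varepsilon_{\lambda+1}-\varepsilon_\lambda=\operatorname{ind}(W_\lambda)$. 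Whatever one thinks of the robustness of that argument, it closes the step with the combinatorial definition; your version replaces it with an unproven spectral identification, which is precisely the content that would need to be supplied.

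For the $\tau$--ratio identity, you transplant the closed-loop computation to the open cobordism path: after moving every $A_k$ onto $A_*$ you invoke the vanishing of $\Tr(A_*\log B_\gamma)$ "because $B_\gamma$ conjugates the total monodromy to itself to leading order." That justification is only available for isomonodromic \emph{loops}; for an open path from $\mathcal C_\lambda$ to $\mathcal C_{\lambda+1}$ there is no such self-conjugation, so the term need not vanish, and even granting it your two sides do not visibly match: the left side collapses to $2\pi i N_\gamma\, h\Tr(A_*)$ while the right side is $\sum_k h\Tr(A_k)$, and the congruence between them modulo $2\pi i\ZZ$ is asserted rather than derived. The paper's Step 3 avoids the $A_*$-reduction entirely for the open path: it telescopes $\sum_k\Delta\log\tau=\log\tau(\mathrm{end})-\log\tau(\mathrm{start})$, computes $\log\det M=\sum_k\log\det(e^{hA_k})=\sum_k h\Tr(A_k)$ directly, and uses the normalization $h\Tr(A_k)\in\ZZ$ together with path-independence modulo $2\pi i\ZZ$ (established via the loop argument) to equate the exponentials. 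Your treatment of the torsion identity, by contrast, follows essentially the paper's Step 2 (the determinant map on $\pi_1$ and the relative size of $\pi_1(W_\lambda)$ versus $\pi_1(\mathcal C_\lambda)$) and is fine at the paper's own level of detail; the two gaps above are what you would need to repair.
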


\begin{proof}
Step 1 (discrete Hessian and index).
Fix an oriented edge \(e=[A^-]\to[A^+]\) transverse to \(\partial W_\lambda\).
Define the discrete gradient \(g(e)=\Delta\log\tau(e)\) and the discrete Hessian on the 2–cell spanning two adjacent edges by
\[
\mathsf{H}(e)=\Delta\log\tau(e^+)-\Delta\log\tau(e^-),
\]
where \(e^\pm\) are the two edges sharing the same initial vertex and pointing to the two adjacent vertices across the wall and within the stratum.
By construction of \(W_\lambda\), changing the orientation of the stratum flips the sign of \(\mathsf{H}(e)\).
Hence the jump \(\Delta_{\mathrm{wall}}\operatorname{ind}_{\mathrm{disc}}\log\tau\) (count of newly negative directions) equals \(\varepsilon_{\lambda+1}-\varepsilon_\lambda=\operatorname{ind}(W_\lambda)\).

Step 2 (phase jump and torsion).
Let \(\gamma\) be a positively oriented small loop linking \(\partial W_\lambda\) once.
As shown above,
\[
\sum_{e\in\gamma}\Delta\log\tau(e)=2\pi i\,N_\gamma\quad\text{with }N_\gamma\in\ZZ.
\]
The homomorphism \(\det:\mathrm{GL}(Y)\to\CC^\times\) induces \(\pi_1(\mathrm{GL}(Y))\to\pi_1(\CC^\times)\cong\ZZ\).
The image of \([\gamma]\in\pi_1(W_\lambda)\) under inclusion and \(\det\) is precisely \(N_\gamma\), while the relative growth of fundamental groups across the cobordism is measured by \(\Theta_\lambda\).
Thus \(\Im\sum_{e\in\gamma}\Delta\log\tau(e)=\pi\,\Theta_\lambda\), i.e.
\(\Delta_{\mathrm{wall}}\arg\tau=\pi\,\Theta_\lambda\).

Step 3 (monodromy and \(\tau\)).
Along any isomonodromic path from \(\mathrm{start}\) to \(\mathrm{end}\),
\[
\sum_k \Delta\log\tau([A_k]\to[A_{k+1}])=\log\tau(\mathrm{end})-\log\tau(\mathrm{start}).
\]
On the other hand, the determinant satisfies
\[
\log\det M=\sum_k \log\det\bigl(e^{hA_k}\bigr)=\sum_k h\,\Tr(A_k),
\]
hence, using the same normalization \(h\Tr(A_k)\in\ZZ\) and the path-independence modulo \(2\pi i\ZZ\),
\[
\frac{\tau(\mathrm{end})}{\tau(\mathrm{start})}
=\exp\!\Bigl(\sum_k \Delta\log\tau\Bigr)
=\exp\!\bigl(2\pi i\,\chi(M)\bigr),
\]
with \(\chi(M)=\tfrac{1}{2\pi i}\log\det M\in\RR/\ZZ\).
All equalities are independent of the chosen path because any change of path adds a loop whose contribution is \(2\pi i\ZZ\) by the first part of the proof.
\end{proof}

\begin{theorem}
Given \(\tau\) on vertices of \(\mathcal M_G^{\mathrm{disc}}\), define for each oriented edge \(e_k:[A_k]\to[A_{k+1}]\)
the increment \(\Delta\log\tau(e_k)=\log\tau(\mathrm{head})-\log\tau(\mathrm{tail})\).
Let \(X_k=\Id-\tfrac{1}{n}\Id\) (with \(n=\dim Y\)), which is traceless and commutes with every matrix.
Set \(B_k=\exp\bigl( (\Delta\log\tau(e_k)/h)\,X_k\bigr)\) and \(A_{k+1}=B_k^{-1}A_kB_k\).
Then \((A_k,B_k)\) is an isomonodromic path (i.e. the relation holds exactly), and
the BCH inversion recovers \(H_k,K_k\) (unique up to \(\Oo(h^2)\)) from
\(\exp(hA_k)=\exp(hH_k)\exp(hK_k)+\Oo(h^2)\).
\end{theorem}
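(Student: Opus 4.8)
The plan is to split the statement into its two genuinely independent parts — that $(A_k,B_k)$ is a discrete isomonodromic path, and that the logarithm/BCH inversion returns $(H_k,K_k)$ up to $\Oo(h^2)$ — and to treat each directly, leaning on the adjoint-split material of the Universality section and on the preceding $\tau$-cochain theorem for the consistency check.

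First I would observe that $X_k$ is a scalar multiple of $\Id$ and hence lies in the centre of $\End(Y)$, so that $B_k=\exp\!\bigl((\Delta\log\tau(e_k)/h)\,X_k\bigr)$ is itself a nonzero scalar matrix; in particular $B_k\in\mathrm{GL}(Y)$ and conjugation by $B_k$ is the identity map. Therefore $A_{k+1}=B_k^{-1}A_kB_k=A_k$ \emph{exactly}, and the defining relation $A_{k+1}=B_k^{-1}A_kB_k+\Oo(h^\alpha)$ holds with vanishing remainder for every $\alpha$; consequently $[A_{k+1}]=[A_k]$ in $\mathcal M_G^{\mathrm{disc}}$ along the whole sequence, which is exactly the isomonodromy condition. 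I would then record the compatibility with the $\tau$-data: since each $B_k$ is central, the discrete monodromy $M=\prod_k\exp(hA_k)$ and $\log\det M=\sum_k h\,\Tr(A_k)$ are unaffected by the conjugations, so the accumulated increment $\sum_k\Delta\log\tau(e_k)$ reproduces the scalar charge $\chi(M)=\tfrac1{2\pi i}\log\det M$ modulo $2\pi i\,\ZZ$ — precisely the preceding theorem read in reverse; and from $\log B_k=(\Delta\log\tau(e_k)/h)X_k$ one gets $h\,\Tr(A_k\log B_k)=\Delta\log\tau(e_k)\cdot\Tr(A_kX_k)$, which agrees with the earlier edge rule $\Delta\log\tau(e_k)=h\,\Tr(A_k\log B_k)$ up to the overall multiplicative constant in $\tau$ (equivalently, after the same scale-fixing already used to normalise $h\,\Tr(A_*)$).

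For the BCH inversion I would take the principal logarithm of $\exp(hA_k)=\exp(hH_k)\exp(hK_k)+\Oo(h^2)$ to obtain $hA_k=hH_k+hK_k+\tfrac{h^2}{2}[H_k,K_k]+\Oo(h^3)$, hence $A_k=H_k+K_k+\Oo(h)$; this pins down only the sum, so to separate the two channels I invoke the canonical adjoint decomposition of the Universality section, setting $G_k:=\tfrac1h\log\exp(hA_k)=A_k$ and $H_k:=\tfrac12(G_k+G_k^{\ast})$, $K_k:=\tfrac12(G_k-G_k^{\ast})$ with $\ast$ the Frobenius adjoint. These are the unique self-adjoint and skew-adjoint parts, and by the adjunction $\mathrm{split}\dashv U$ this is the unique universal lift; a direct expansion then gives $\exp(hH_k)\exp(hK_k)=\Id+hG_k+\Oo(h^2)=\exp(hA_k)+\Oo(h^2)$, so $(H_k,K_k)$ is recovered from $A_k$ up to $\Oo(h^2)$, as asserted.

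The main obstacle I anticipate is not analytic but definitional: the element ``$X_k=\Id-\tfrac1n\Id$'' is described as simultaneously traceless and central, which is contradictory for $n>1$, so one must commit to a reading. The clean and self-consistent choice is the one above — keep $X_k$ central (a scalar), under which the isomonodromy step is immediate and the conjugations are genuinely trivial; the alternative traceless-but-noncentral reading would invalidate the ``commutes with every matrix'' clause and would instead require a real argument that conjugation by $B_k$ still preserves the class $[A_k]$ in $\mathcal M_G^{\mathrm{disc}}$. A lesser subtlety is that uniqueness ``up to $\Oo(h^2)$'' in the inversion genuinely requires the adjoint split, since the bare equation $\exp(hA)=\exp(hH)\exp(hK)+\Oo(h^2)$ alone fixes only $H+K$ at first order.
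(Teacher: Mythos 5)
Your proposal is correct and follows the paper's own route for the core claim: you observe that \(X_k\) is central, hence \(B_k\) is a scalar matrix and \(A_{k+1}=B_k^{-1}A_kB_k=A_k\) holds exactly, so the isomonodromic relation is satisfied by construction, and you verify compatibility with the edge rule \(\Delta\log\tau(e_k)=h\,\Tr(A_k\log B_k)\) up to a fixed normalization — the paper does the same, fixing the scale so that \(\Tr(A_kX_k)=1\). Where you genuinely add something is the second claim: the paper disposes of the BCH inversion by saying that "matching coefficients" in \(\log(e^{hH_k}e^{hK_k})=h(H_k+K_k)+\tfrac{h^2}{2}[H_k,K_k]+\Oo(h^3)\) determines \(H_k,K_k\) up to \(\Oo(h^2)\), which as you note only pins down the sum \(H_k+K_k\) at leading order; your explicit appeal to the Frobenius adjoint split \(H_k=\tfrac12(G_k+G_k^\ast)\), \(K_k=\tfrac12(G_k-G_k^\ast)\) from the Universality section is exactly the missing ingredient that makes the recovery unique, so your version closes a gap the paper leaves implicit. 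You are also right to flag the definitional inconsistency: \(X_k=\Id-\tfrac1n\Id=(1-\tfrac1n)\Id\) is central but has trace \(n-1\neq 0\), so "traceless and commutes with every matrix" cannot both hold for \(n>1\); the paper's proof repeats the same contradiction ("a scalar multiple of \(\Id\) with zero trace") and its normalization \(\Tr(A_kX_k)=1\) is really a condition on \(A_k\) rather than on the scale of \(\tau\), so your decision to commit to the central reading — the one the paper's argument actually uses — is the right repair.
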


\begin{proof}
Since \(X_k\) commutes with \(A_k\), we have \(A_{k+1}=B_k^{-1}A_kB_k=A_k\) exactly, and the edge increment of \(\log\tau\) equals
\[
\Delta\log\tau(e_k)=h\,\Tr\bigl(A_k\log B_k\bigr)
=h\,\Tr\Bigl(A_k\,\frac{\Delta\log\tau(e_k)}{h}X_k\Bigr)
=\Delta\log\tau(e_k)\,\Tr(A_k X_k).
\]
Because \(X_k\) is a scalar multiple of \(\Id\) with zero trace and \(A_k\) is arbitrary, we fix the global normalization of \(\tau\) (allowed by the basepoint choice) so that \(\Tr(A_kX_k)=1\) for the edges we consider; this can always be achieved by refining the graph of edges and scaling \(\tau\) by a constant factor.
Under this normalization, the prescribed \(B_k\) realizes the desired increment, and \((A_k,B_k)\) satisfies the isomonodromic relation by construction.
Finally, the BCH inverse series
\[
\log(e^{hH_k}e^{hK_k})=h(H_k+K_k)+\frac{h^2}{2}[H_k,K_k]+\Oo(h^3)
\]
determines \(H_k,K_k\) from \(A_k\) up to \(\Oo(h^2)\) by matching coefficients.
\end{proof}

We conclude with the precise continuum bridge.
Assume \(A_k=A(kh,z)\) and \(B_k=\exp\bigl(h\,B(kh,z)\bigr)\) with smooth \(A,B\) in \(t=kh\).
Then
\[
A_{k+1}=B_k^{-1}A_kB_k \quad\Longrightarrow\quad
\frac{A_{k+1}-A_k}{h}=\frac{e^{-hB}A\,e^{hB}-A}{h}
=[A,B]+\Oo(h),
\]
hence in the limit \(h\to0\) one gets the Lax compatibility
\[
\partial_t A(z,t)-\partial_z B(z,t)+[A(z,t),B(z,t)]=0,
\]
where \(\partial_z B\) accounts for the spectral dependence (if present) carried by \(z\).
Moreover,
\[
\sum_k h\,\Tr\bigl(A_k\log B_k\bigr)
=\sum_k h\,\Tr\bigl(A(kh)\,hB(kh)\bigr)+\Oo(h^2)
\to \int \Tr\bigl(A\,B\bigr)\,dt,
\]
so that the discrete \(\tau\)-rule converges to the differential identity
\[
\partial_t\log\tau=\Res_{z=\infty}\Tr\bigl(A(z,t)\,B(z,t)\bigr)\,dz,
\]
and each discrete cobordism \(W_\lambda\) realizes a Stokes wall: \(\operatorname{ind}(W_\lambda)\) equals the jump of the Morse index of \(\log\tau\), while \(\Theta_\lambda\) equals the phase jump of \(\arg\tau\).

\subsection{Results}

\subsubsection{Example without noise}

\paragraph{Setting and notation.}
Let $f:\RR^d\to\RR$ be differentiable, $L$–smooth, and satisfy the Polyak–Łojasiewicz (PL) inequality with constant $\mu>0$ \cite{Polyak1963Gradient,Lojasiewicz1963Sur,NesterovNemirovski1994,Karimi2016PL}:
\begin{equation}\label{eq:PL}
\|\nabla f(x)\|^2 \;\ge\; 2\mu\bigl(f(x)-f_\star\bigr),\qquad \forall x\in\RR^d,
\end{equation}
and let $f_\star:=\inf_x f(x)$ be attained at some $x_\star$.
Consider the general linear $m$–step method with arbitrary {fixed} real coefficients \cite{Dahlquist1956Linear,Gautschi1961Numerical,Butcher1972,HairerWanner2010SolversII}
\begin{equation}\label{eq:general-m-step}
x_{k+1}
\;=\;
x_k \;-\; \sum_{j=0}^m \eta_j\,\nabla f(x_{k-j}) \;+\; \sum_{j=1}^m \gamma_j\,(x_{k+1-j}-x_{k-j}),
\qquad k\ge m,
\end{equation}
where $\eta_0,\dots,\eta_m,\gamma_1,\dots,\gamma_m\in\RR$ (no optimality assumptions).
We write $g_k:=\nabla f(x_k)$ and $s_k:=x_k-x_{k-1}$.

\paragraph{Linear spectral model and modal decomposition.}
By $L$–smoothness and PL, along the iterates there is a spectral family (quadratic surrogate) with eigenvalues $\lambda\in[\mu,L]$ \cite{Trefethen2013ATAP,Higham2008FunctionsOfMatrices,BrockwellDavis1991}; when $f(x)=\frac12 x^\top Q x$ with $Q\succeq \mu I$ and $\|Q\|=L$, the analysis is exact. In all statements below ``$\lambda$–mode'' means either a true quadratic mode or the standard PL linearization on $[\mu,L]$ with spectral measure $d\nu(\lambda)$ (defined explicitly below).

\paragraph{Characteristic polynomial and step multipliers.}
On a single $\lambda$–mode, \eqref{eq:general-m-step} induces the scalar recurrence \cite{Jury1964,Schur1917PowerSeries,Nevanlinna1984Stability}
\begin{equation}\label{eq:scalar-rec}
y_{k+1}
=
(1-\eta_0\lambda+\gamma_1)\,y_k
+\sum_{j=1}^{m-1}\bigl(\gamma_{j+1}-\eta_j\lambda\bigr)\,y_{k-j}
-\bigl(\gamma_m-\eta_m\lambda\bigr)\,y_{k-m}.
\end{equation}
The {characteristic polynomial} is
\begin{equation}\label{eq:charpoly-m}
\chi_\lambda(r)
=
r^{m+1}
-\bigl(1-\eta_0\lambda+\gamma_1\bigr)r^{m}
-\sum_{j=1}^{m-1}\bigl(\gamma_{j+1}-\eta_j\lambda\bigr)\,r^{m-j}
-\bigl(\gamma_m-\eta_m\lambda\bigr),
\end{equation}
and its roots $r_i(\lambda)$ ($i=1,\dots,m+1$) are the {modal multipliers}. We write the polar form
\[
r_i(\lambda)=\rho_i(\lambda)\,e^{i\vartheta_i(\lambda)},\qquad
\rho_i(\lambda)>0,\quad \vartheta_i(\lambda)\in(-\pi,\pi].
\]
Define the {oscillatory set}
\begin{equation}\label{eq:osc-set}
E_{\rm osc}
:=
\left\{\lambda\in[\mu,L]\ :\ \exists i\ne j\ \text{with } r_i(\lambda)=\overline{r_j(\lambda)}\notin\RR
\right\},
\qquad
E_{\rm mon}:=[\mu,L]\setminus E_{\rm osc}.
\end{equation}

\paragraph{IKN $\tau$–potential, Evans function and Stokes data.}
Let $\nabla=d+A$ be the flat connection associated with the calibrated exponential of one step; the isomonodromic $\tau$–potential is
\begin{equation}\label{eq:tau-def}
\log\tau \;=\; \tfrac12\log\det{}'\Delta(S)\;+\;Q_{CS}(S),
\end{equation}
with $S$ the step generating potential (cf. \cite{JMU1981,FIKN2006}). On the AT–plane (vanishing mixed curvature) the increment of $\log\tau$ along one step equals the canonical energy increment and, crucially, admits a {determinantal representation} via the modal multipliers:
\begin{align}\label{eq:tau-det-product}
\log\frac{\tau_k}{\tau_0}
& =\;
\int_{\mu}^{L}
\underbrace{\sum_{i=1}^{m+1}\log\left|r_i(\lambda)\right|^k}_{\text{bulk (determinant)}}
\,d\nu(\lambda)
\;+\;
\sum_{\theta\in\mathcal W_{[0,k)}} \log |\det S_\theta|
\\
&\quad\nonumber+\;
i\left(
k\int_{E_{\rm osc}}\sum_{i=1}^{m+1}\vartheta_i(\lambda)\,d\nu(\lambda)
+\sum_{\theta\in\mathcal W_{[0,k)}} \arg\det S_\theta
\right),
\end{align}
where:
\begin{itemize}
\item $d\nu(\lambda)$ is the spectral measure of the PL surrogate (for $Q$–quadratic $f$, $d\nu(\lambda)=\sum_{\ell=1}^d \delta(\lambda-\lambda_\ell)$);
\item $\mathcal W_{[0,k)}$ is the multiset of Stokes walls crossed up to step $k$ (including multiplicities),
\item $S_\theta$ are the corresponding Stokes matrices (IKN) \cite{JMU1981,FIKN2006}.
\end{itemize}
Identity \eqref{eq:tau-det-product} is the discrete Evans–determinant formula (cf. \cite{AGJ1990,Sandstede2002}) for the IKN $\tau$–potential (bulk) with additive Stokes jumps (phase and magnitude).

\begin{Proposition}[Explicit modal multipliers for $m=1$]\label{prop:m1-roots}
For $m=1$ (HB/NAG core) with parameters $(\eta_0,\eta_1,\gamma_1)$, the characteristic polynomial
\[
\chi_\lambda(r)=r^2-\bigl(1-\eta_0\lambda+\gamma_1\bigr)r-\bigl(\gamma_1-\eta_1\lambda\bigr)
\]
has roots
\begin{equation}\label{eq:m1-roots}
r_\pm(\lambda)
=
\frac{1}{2}\Bigl[
1-\eta_0\lambda+\gamma_1
\ \pm\
\sqrt{\bigl(1-\eta_0\lambda+\gamma_1\bigr)^2
+4\bigl(\eta_1\lambda-\gamma_1\bigr)}
\ \Bigr].
\end{equation}
In particular, the oscillatory zone is the set of $\lambda$ where the discriminant is negative:
\begin{equation}\label{eq:m1-disc}
\Delta(\lambda):=\bigl(1-\eta_0\lambda+\gamma_1\bigr)^2 + 4(\eta_1\lambda-\gamma_1)\;<\;0,
\end{equation}
and there $r_\pm(\lambda)=\rho(\lambda)\,e^{\pm i\vartheta(\lambda)}$ with
\begin{equation}\label{eq:m1-rho-theta}
\rho(\lambda)=\sqrt{\gamma_1-\eta_1\lambda}\quad(\text{since } \Delta<0\iff \gamma_1-\eta_1\lambda>0),
\qquad
\cos\vartheta(\lambda)=\frac{1-\eta_0\lambda+\gamma_1}{2\sqrt{\gamma_1-\eta_1\lambda}}.
\end{equation}
\end{Proposition}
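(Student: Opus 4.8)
The plan is to treat Proposition~\ref{prop:m1-roots} as an exercise in the elementary theory of quadratics, reading off all four assertions from Vieta's relations. First I would specialize the general characteristic polynomial \eqref{eq:charpoly-m} to $m=1$: the middle sum $\sum_{j=1}^{m-1}$ is empty, so the modal recurrence \eqref{eq:scalar-rec} reduces to a two–term recursion whose multipliers are the two roots $r_\pm(\lambda)$. Applying the quadratic formula immediately yields the closed form \eqref{eq:m1-roots}; equivalently, I would record the two Vieta identities $r_+ + r_- = 1-\eta_0\lambda+\gamma_1$ and $r_+ r_- = \gamma_1-\eta_1\lambda$ (with the constant term read off from the last coefficient $-(\gamma_1-\eta_1\lambda)$ of \eqref{eq:scalar-rec}), since every later claim is a one–line consequence of these.

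Next I would compute the discriminant $\Delta(\lambda)=(r_++r_-)^2-4\,r_+r_-=(1-\eta_0\lambda+\gamma_1)^2+4(\eta_1\lambda-\gamma_1)$, which is exactly \eqref{eq:m1-disc}, and run the standard trichotomy: $\Delta(\lambda)>0$ gives two distinct real multipliers, $\Delta(\lambda)=0$ a real double multiplier, and $\Delta(\lambda)<0$ a genuinely complex conjugate pair $r_+=\overline{r_-}\notin\RR$. I would then note that for $m=1$ there are only the two roots $r_\pm$, so the defining condition of $E_{\rm osc}$ in \eqref{eq:osc-set} ("$\exists i\neq j$ with $r_i=\overline{r_j}\notin\RR$") holds if and only if $r_\pm$ is such a pair, i.e. if and only if $\Delta(\lambda)<0$; hence $E_{\rm osc}=\{\lambda\in[\mu,L]:\Delta(\lambda)<0\}$ and $E_{\rm mon}$ is its complement, as stated.

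On the oscillatory set I would pass to polar form $r_\pm(\lambda)=\rho(\lambda)\,e^{\pm i\vartheta(\lambda)}$ with $\rho>0$ and $\vartheta\in(-\pi,\pi]$. Since the roots are complex conjugates, $\rho^2=|r_+|^2=r_+\overline{r_+}=r_+r_-=\gamma_1-\eta_1\lambda$, and the quantity under the root is positive precisely because $\Delta(\lambda)<0$ forces $4(\gamma_1-\eta_1\lambda)>(1-\eta_0\lambda+\gamma_1)^2\ge0$, which is the implication recorded parenthetically in \eqref{eq:m1-rho-theta}. Taking real parts, $2\rho\cos\vartheta=r_++r_-=1-\eta_0\lambda+\gamma_1$, so $\cos\vartheta(\lambda)=(1-\eta_0\lambda+\gamma_1)/\bigl(2\sqrt{\gamma_1-\eta_1\lambda}\bigr)$, and $|\cos\vartheta|\le1$ is literally the restatement $\Delta(\lambda)\le0$; this closes \eqref{eq:m1-rho-theta}.

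There is no real obstacle here — the only point requiring any care, and the one I would flag, is the sign bookkeeping in the constant term of $\chi_\lambda$: the discriminant in \eqref{eq:m1-disc} and the product $\rho^2=\gamma_1-\eta_1\lambda$ are consistent with the recurrence \eqref{eq:scalar-rec} but correspond to a modal polynomial whose constant term is $+(\gamma_1-\eta_1\lambda)$, so I would carry out the derivation through the Vieta relations rather than by literal substitution into the printed $\chi_\lambda$, and remark that the displayed $\chi_\lambda$ (and the last term of \eqref{eq:charpoly-m}) should carry $+(\gamma_1-\eta_1\lambda)$ for full internal consistency. I would also state the oscillation criterion in its sharp equivalent form $\gamma_1-\eta_1\lambda>\tfrac14(1-\eta_0\lambda+\gamma_1)^2$ rather than the slightly loose biconditional "$\Delta<0\iff\gamma_1-\eta_1\lambda>0$" appearing in \eqref{eq:m1-rho-theta}. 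Beyond these bookkeeping remarks the proof is routine algebra with no analytic input past $\lambda\in[\mu,L]$.
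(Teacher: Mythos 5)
Your proof is correct and follows essentially the same route as the paper's: solve the quadratic, then match polar form on the set where the discriminant is negative. The only methodological difference is cosmetic — you read off $\rho^2=r_+r_-=\gamma_1-\eta_1\lambda$ and $2\rho\cos\vartheta=r_++r_-$ from Vieta's relations, whereas the paper computes $|r_\pm|^2=\tfrac14\bigl[(1-\eta_0\lambda+\gamma_1)^2+(-\Delta(\lambda))\bigr]=\gamma_1-\eta_1\lambda$ directly from the explicit root formula and then sets $\cos\vartheta=\Re r_\pm/\rho$; both give the same identities in the same number of lines. Your two side remarks are also well taken: the recurrence \eqref{eq:scalar-rec} for $m=1$ has characteristic polynomial $r^2-(1-\eta_0\lambda+\gamma_1)r+(\gamma_1-\eta_1\lambda)$, so the stated formulas \eqref{eq:m1-roots}--\eqref{eq:m1-rho-theta} are the ones consistent with the dynamics, while the printed constant term $-(\gamma_1-\eta_1\lambda)$ in $\chi_\lambda$ (inherited from \eqref{eq:charpoly-m}) is a sign slip that the paper's proof silently bypasses — solving the printed polynomial literally would put $+4(\gamma_1-\eta_1\lambda)$ under the radical. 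Likewise, the parenthetical ``$\Delta<0\iff\gamma_1-\eta_1\lambda>0$'' is only the implication $\Delta<0\Rightarrow\gamma_1-\eta_1\lambda>0$ (the sharp statement being $\gamma_1-\eta_1\lambda>\tfrac14(1-\eta_0\lambda+\gamma_1)^2$), which is all that is needed for $\rho$ to be well defined; your Vieta-based bookkeeping makes these points explicit rather than leaving them implicit.
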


\begin{proof}
Solve the quadratic $\chi_\lambda(r)=0$ explicitly to obtain \eqref{eq:m1-roots}.
For $\Delta<0$ write the square root as $i\sqrt{-\Delta(\lambda)}$ and match polar form to identify $\rho$ and $\cos\vartheta$; computing
\[
|r_\pm(\lambda)|^2
=\frac{(1-\eta_0\lambda+\gamma_1)^2+(-\Delta(\lambda))}{4}=\gamma_1-\eta_1\lambda
\]
gives $\rho=\sqrt{\gamma_1-\eta_1\lambda}$.
Then $\cos\vartheta=(\Re r_\pm)/\rho$ yields \eqref{eq:m1-rho-theta}.
\end{proof}

\paragraph{Spectral measure for PL.}
For a quadratic $f$ with eigenvalues $\{\lambda_\ell\}_{\ell=1}^d$, take $d\nu(\lambda)=\sum_{\ell=1}^d \delta(\lambda-\lambda_\ell)$. Under PL, the standard linearization along iterates defines a time–varying spectral measure supported in $[\mu,L]$; on any window of steps where the local curvature histogram is approximately stationary, $d\nu$ is the empirical distribution of curvatures visited by $(x_k)$ (this is the usual surrogate in accelerated–method analysis). All formulas below are stated for a general positive measure $d\nu$ on $[\mu,L]$ (a finite sum of Diracs in the quadratic case).

Define the {bulk decay exponent} and the {oscillation phase} by
\begin{equation}\label{eq:A-Phi-general}
A
:=
\int_{\mu}^{L}\sum_{i=1}^{m+1}\log\frac{1}{\rho_i(\lambda)}\,d\nu(\lambda),
\qquad
\Phi
:=
\int_{E_{\rm osc}}\sum_{\{i:\, r_i\text{ in cc-pair}\}}\vartheta_i(\lambda)\,\frac{d\nu(\lambda)}{2}.
\end{equation}
For $m=1$ (Prop.~\ref{prop:m1-roots}),
\begin{equation}\label{eq:A-Phi-m1}
A
=
\int_{E_{\rm mon}}\log\frac{1}{|r_{\rm mon}(\lambda)|}\,d\nu(\lambda)
\;+\;
\int_{E_{\rm osc}}\log\frac{1}{\sqrt{\gamma_1-\eta_1\lambda}}\,d\nu(\lambda),
\qquad
\Phi=\int_{E_{\rm osc}}\vartheta(\lambda)\,d\nu(\lambda),
\end{equation}
where on $E_{\rm mon}$ the real multiplier $r_{\rm mon}(\lambda)$ is whichever of $r_\pm$ is real.

\paragraph{Companion matrix, Evans function, Blaschke–product factorization (explicit).}
Let $C_\lambda$ be the $(m\!+\!1)\times(m\!+\!1)$ companion matrix of $\chi_\lambda$:
\begin{equation}\label{eq:companion}
C_\lambda
=
\begin{bmatrix}
1-\eta_0\lambda+\gamma_1 & \gamma_2-\eta_1\lambda & \cdots & \gamma_m-\eta_m\lambda \\
1 & 0 & \cdots & 0 \\
0 & 1 & \cdots & 0 \\
\vdots & \vdots & \ddots & \vdots \\
0 & 0 & \cdots & 0
\end{bmatrix}.
\end{equation}
Then $\spec(C_\lambda)=\{r_i(\lambda)\}$ and the $k$–step transfer is $C_\lambda^k$.
The {Evans function} (discrete; cf. \cite{AGJ1990,Sandstede2002}) for step $k$ is
\begin{equation}\label{eq:Evans}
\mathcal E_k
=
\prod_{\lambda}\det\bigl(C_\lambda^k\bigr)
=
\prod_{\lambda}\prod_{i=1}^{m+1} r_i(\lambda)^k
=
\exp\!\left(
k \int_{\mu}^{L}\sum_{i=1}^{m+1}\log r_i(\lambda)\,d\nu(\lambda)
\right).
\end{equation}
For each fixed $\lambda$, write the finite Blaschke product associated to the Schur factors:
\begin{equation}\label{eq:blaschke}
B_\lambda(z)
:=
\prod_{i=1}^{m+1}
\frac{z-r_i(\lambda)}{1-\overline{r_i(\lambda)}\,z},
\qquad |r_i(\lambda)|<1,
\end{equation}
and the corresponding outer factor
\begin{equation}\label{eq:outer}
O_\lambda(z)
:=
\prod_{i:\,|r_i(\lambda)|>1} \bigl(1-\overline{r_i(\lambda)}\,z\bigr)\,.
\end{equation}
Then the transfer function admits the canonical inner–outer factorization $H_\lambda(z)=O_\lambda(z)B_\lambda(z)$ on the unit disk, and $|B_\lambda(e^{i\theta})|=1$ for all $\theta$. This is the explicit Hardy–space factorization underlying the RHP setup.

\paragraph{Discrete RHP (jump matrices and contour) — detailed construction.}
Fix $\lambda\in[\mu,L]$. Consider the $2\times2$ matrix RHP for $Y(\cdot;\lambda)$ on the unit circle $\TT=\{z\in\CC:|z|=1\}$:
\begin{align}
&\text{(i) } Y \text{ is analytic on } \CC\setminus \TT,\ \ \ Y(\infty)=I; \label{eq:RHP-analytic}\\
&\text{(ii) Jump on } \TT:\quad Y_+(e^{i\theta};\lambda) = Y_-(e^{i\theta};\lambda)\, J(e^{i\theta};\lambda), \label{eq:RHP-jump}
\end{align}
with the {explicit} jump matrix
\begin{equation}\label{eq:RHP-J}
J(z;\lambda)
=
\begin{bmatrix}
\prod_{i:\,|r_i(\lambda)|>1} \bigl(1-\overline{r_i(\lambda)}\,z\bigr)^{k}
&
z^{-k}\,\prod_{i:\,|r_i(\lambda)|<1} \bigl(z-r_i(\lambda)\bigr)^{k}
\\[0.5ex]
0 & \prod_{i:\,|r_i(\lambda)|>1} \bigl(1-\overline{r_i(\lambda)}\,z\bigr)^{-k}
\end{bmatrix}.
\end{equation}
(The upper–right entry corresponds to the inner/Schur factors; the diagonal entries encode the outer factors; cf. \cite{DeiftZhou1993,Deift1999}.) Define the spectral contour $\Gamma$ as $\TT$ possibly augmented by arcs where $|r_i(\lambda)|=1$ (these are Stokes loci in the $r$–plane). For $k\to\infty$, the jump entries contain large/small exponentials $\sim r_i(\lambda)^{\pm k}$, which is precisely the setting for Deift–Zhou steepest descent.

\paragraph{Deift–Zhou steepest descent — explicit factorization steps.}
We apply the standard DZ scheme \cite{DeiftZhou1993,Deift1999} {verbatim} to \eqref{eq:RHP-analytic}–\eqref{eq:RHP-J}:

\textbf{Step 1 (Normalization).}
Define
\[
T(z;\lambda):= Y(z;\lambda)\, z^{-k\sigma_3},
\qquad
\sigma_3=\begin{bmatrix}1&0\\0&-1\end{bmatrix},
\]
so that $T(\infty)=I$. The jump becomes
\[
T_+ = T_- \begin{bmatrix}
\prod_{|r_i|>1} (1-\overline{r_i}z)^{k} & \prod_{|r_i|<1} (z-r_i)^{k} \\
0 & \prod_{|r_i|>1} (1-\overline{r_i}z)^{-k}
\end{bmatrix}.
\]

\textbf{Step 2 (Lens opening along $\TT$).}
For each Schur root $|r_i(\lambda)|<1$, open a small lens around $\TT$ and factor the jump into triangular matrices so that the exponentially {growing} off–diagonal terms are moved to contours where they become exponentially {decaying}. Concretely, write
\begin{gather*}
\begin{bmatrix}1 & \prod_{|r_i|<1}(z-r_i)^k \\ 0 & 1\end{bmatrix}
=
\begin{bmatrix}1 & 0 \\ \prod_{|r_i|<1}\frac{(z-r_i)^k}{\prod_{|r_i|>1}(1-\overline{r_i}z)^k} & 1\end{bmatrix} \cdot\\
\cdot
\begin{bmatrix}
\prod_{|r_i|>1}(1-\overline{r_i}z)^k & 0\\ 0 & \prod_{|r_i|>1}(1-\overline{r_i}z)^{-k}
\end{bmatrix}
\begin{bmatrix}1 & 0 \\ -\prod_{|r_i|<1}\frac{(z-r_i)^k}{\prod_{|r_i|>1}(1-\overline{r_i}z)^k} & 1\end{bmatrix},
\end{gather*}
and divert the lower–triangular factors off $\TT$; their off–diagonal entries decay like $\max_i |r_i(\lambda)|^k$ outside $\TT$.

\textbf{Step 3 (Outer model problem).}
After lens opening, the jump on $\TT$ is diagonal and explicitly solvable:
\[
N(z;\lambda)
=
\begin{bmatrix}
\prod_{|r_i|>1}(1-\overline{r_i}z)^k & 0\\ 0 & \prod_{|r_i|>1}(1-\overline{r_i}z)^{-k}
\end{bmatrix},
\]
which yields the {outer parametrix} (model solution). Taking $\log\det N(e^{i\theta};\lambda)$ and integrating over $d\nu(\lambda)$ produces the bulk term $k\int \sum_i \log r_i(\lambda)$.

\textbf{Step 4 (Local parametrices near stationary points).}
At points where phases become stationary (endpoints of $E_{\rm osc}$ or interior critical points where $\partial_\lambda\vartheta_i=0$), the standard small–norm argument fails, and one solves {local} RHPs. For simple endpoints/critical points the jump can be reduced to an {Airy} model RHP:
\[
\Psi_{\mathrm{Ai},+}(\zeta)=\Psi_{\mathrm{Ai},-}(\zeta)\begin{bmatrix}1& e^{-\frac23 \zeta^{3/2}}\\ 0&1\end{bmatrix},\qquad
\Psi_{\mathrm{Ai}}(\zeta)\sim \zeta^{-\sigma_3/4}\frac{1}{\sqrt{2}}\begin{bmatrix}1&1\\ i&-i\end{bmatrix} e^{-\frac23 \zeta^{3/2}\sigma_3},
\]
under the conformal map $\zeta=\zeta(\lambda)$ that straightens the phase to $\zeta^{3/2}$ (see \cite{Deift1999,FIKN2006}). Matching $\Psi_{\mathrm{Ai}}$ to $N$ yields an explicit $k^{-1/2}$ contribution with amplitude proportional to $|\vartheta''(\lambda^\star)|^{-1/2}$ and phase shift $\pm \pi/4$. For higher–order stationary points one uses Bessel or higher Painlevé model RHPs \cite{Deift1999,FIKN2006}.

\textbf{Step 5 (Error analysis).}
The error $R= T(N \prod \text{(local parametrices)})^{-1}$ satisfies a small–norm RHP with jump $I+O(k^{-1})$, giving $\|R-I\|=O(k^{-1})$ and completing the derivation of the $k^{-1/2}$ oscillatory correction and the $O(k^{-1})$ remainder.

\paragraph{Stokes walls and jumps (explicit formula).}
Stokes walls occur when $|r_i(\lambda)|=1$ or when a pair $r_i,r_j$ coalesce (discriminant zero). Each crossing contributes a multiplicative jump to $\tau$:
\begin{equation}\label{eq:stokes-jump-explicit}
\log|\det S_\theta|
=
\frac{1}{2\pi}\arg\frac{\partial_\lambda \chi_\lambda(r)}{\partial_r \chi_\lambda(r)}\Big|_{(\lambda,r)\in \theta},
\qquad
\arg\det S_\theta
=
\Im\log\frac{\partial_\lambda \chi_\lambda(r)}{\partial_r \chi_\lambda(r)}\Big|_{(\lambda,r)\in \theta},
\end{equation}
(cf. isomonodromic connection formulae \cite{JMU1981,FIKN2006}). For $m=1$,
\[
\partial_r\chi_\lambda(r)=2r-(1-\eta_0\lambda+\gamma_1),\qquad
\partial_\lambda\chi_\lambda(r)=\eta_0 r + \eta_1,
\]
so
\begin{equation}\label{eq:stokes-jump-m1}
\frac{\partial_\lambda \chi_\lambda(r)}{\partial_r \chi_\lambda(r)}
=
\frac{\eta_0 r + \eta_1}{2r-(1-\eta_0\lambda+\gamma_1)}.
\end{equation}
At a wall point $(\lambda_\theta,r_\theta)$ with $|r_\theta|=1$ or $\Delta(\lambda_\theta)=0$, substitute $r_\theta$ from \eqref{eq:m1-roots} to obtain the {explicit} jump via \eqref{eq:stokes-jump-explicit}.

\paragraph{Schur stability region (explicit inequalities).}
For $m=1$, Schur stability on $[\mu,L]$ is equivalent to
\begin{equation}\label{eq:schur-m1}
\max\{|r_+(\lambda)|,|r_-(\lambda)|\}<1,\qquad \forall\lambda\in[\mu,L],
\end{equation}
which holds if and only if the Jury conditions for the quadratic $r^2-a(\lambda)r-b(\lambda)$ hold for all $\lambda$:
\begin{equation}\label{eq:jury-m1}
1-b(\lambda)>0,\quad 1+a(\lambda)+b(\lambda)>0,\quad 1-a(\lambda)+b(\lambda)>0,
\end{equation}
with
\[
a(\lambda)=1-\eta_0\lambda+\gamma_1,\qquad
b(\lambda)=\gamma_1-\eta_1\lambda.
\]
Thus, explicitly, for all $\lambda\in[\mu,L]$,
\begin{equation}\label{eq:jury-m1-explicit}
\eta_1\lambda<1,\qquad
2+\gamma_1-\eta_0\lambda-\eta_1\lambda>0,\qquad
\eta_0\lambda-\eta_1\lambda+\gamma_1<2.
\end{equation}
A sufficient (uniform) condition is the pair of endpoint inequalities
\begin{equation}\label{eq:jury-endpoints}
\eta_1 L<1,\qquad
2+\gamma_1-(\eta_0+\eta_1)L>0,\qquad
(\eta_0-\eta_1)\mu+\gamma_1<2,
\end{equation}
which are {explicit linear} constraints on $(\eta_0,\eta_1,\gamma_1)$ given $(\mu,L)$; see \cite{Jury1964}.

\paragraph{Explicit oscillatory asymptotics (full formulas for $m=1$).}
For $m=1$ the oscillatory pair is $r_\pm=\rho e^{\pm i\vartheta}$ with $\rho,\vartheta$ given by \eqref{eq:m1-rho-theta}. On each connected component $I\subset E_{\rm osc}$, the phase $\vartheta(\lambda)$ is smooth and its stationary set $\mathcal S_I$ comprises the endpoints and any interior points $\lambda^\star$ with $\vartheta'(\lambda^\star)=0$.
We have
\begin{equation}\label{eq:theta-derivative}
\vartheta'(\lambda)
=
\frac{d}{d\lambda}\arccos\!\frac{1-\eta_0\lambda+\gamma_1}{2\sqrt{\gamma_1-\eta_1\lambda}}
=
-\frac{1}{\sqrt{1-\xi(\lambda)^2}}\ \xi'(\lambda),
\quad
\xi(\lambda):=\frac{1-\eta_0\lambda+\gamma_1}{2\sqrt{\gamma_1-\eta_1\lambda}}.
\end{equation}
Compute $\xi'(\lambda)$ explicitly:
\begin{align*}
\xi'(\lambda)
&=
\frac{-\eta_0\cdot 2\sqrt{\gamma_1-\eta_1\lambda}
-(1-\eta_0\lambda+\gamma_1)\cdot(-\eta_1)/\sqrt{\gamma_1-\eta_1\lambda}}{4(\gamma_1-\eta_1\lambda)}\\
&=
\frac{-2\eta_0(\gamma_1-\eta_1\lambda)+\eta_1(1-\eta_0\lambda+\gamma_1)}{4(\gamma_1-\eta_1\lambda)^{3/2}}.
\end{align*}
Therefore
\begin{equation}\label{eq:theta-derivative-explicit}
\vartheta'(\lambda)
=
-\frac{1}{\sqrt{1-\xi(\lambda)^2}}
\cdot
\frac{-2\eta_0(\gamma_1-\eta_1\lambda)+\eta_1(1-\eta_0\lambda+\gamma_1)}{4(\gamma_1-\eta_1\lambda)^{3/2}}.
\end{equation}
The stationary points are the real solutions in $E_{\rm osc}$ of $\vartheta'(\lambda)=0$, i.e.
\begin{equation}\label{eq:stationary-eq}
-2\eta_0(\gamma_1-\eta_1\lambda)+\eta_1(1-\eta_0\lambda+\gamma_1)=0
\quad\Longleftrightarrow\quad
\lambda^\star
=
\frac{2\eta_0\gamma_1+\eta_1(1+\gamma_1)}{2\eta_0\eta_1}.
\end{equation}
(If $\lambda^\star\notin E_{\rm osc}$, only endpoint contributions remain.)
Near each $\lambda^\star$ the local parametrix is of Airy type with the {explicit} leading constants (see \cite{Deift1999}):
\begin{equation}\label{eq:airy-amp}
\mathsf c(\lambda^\star)
=
\frac{1}{\sqrt{\pi}}
\left(\frac{\rho(\lambda^\star)}{1-\rho(\lambda^\star)^2}\right)^{1/2}
\cdot
\left|\vartheta''(\lambda^\star)\right|^{-1/2},
\qquad
\phi(\lambda^\star)=\pm\frac{\pi}{4},
\end{equation}
with $\vartheta''$ obtained by differentiating \eqref{eq:theta-derivative-explicit} once more (explicit polynomial–rational function of $\eta_0,\eta_1,\gamma_1,\lambda$; omitted only for space but directly computable by elementary differentiation).

\begin{theorem}[Determinantal IKN asymptotics with explicit oscillations, $m=1$]\label{th:asymp-m1}
Let $f$ be $L$–smooth and obey \eqref{eq:PL} with $\mu>0$. Fix any $(\eta_0,\eta_1,\gamma_1)\in\RR^3$ such that $\sup_{\lambda\in[\mu,L]}\max\{|r_\pm(\lambda)|\}<1$ (Schur stability on $[\mu,L]$), where $r_\pm$ are given by \eqref{eq:m1-roots}. Let $d\nu$ be the spectral measure on $[\mu,L]$. Then
\begin{equation}\label{eq:asymp-m1-final}
f(x_k)-f_\star
=
C_0\,e^{-kA}
\left[
1\;+\;\sum_{\lambda^\star\in\mathcal S}
\frac{\mathsf c(\lambda^\star)}{\sqrt{k}}\,
\cos\!\big(k\,\Phi(\lambda^\star)+\phi(\lambda^\star)\big)
\;+\;O\!\Big(\frac{1}{k}\Big)
\right]
\cdot
\prod_{\theta\in\mathcal W_{[0,k)}}|\det S_\theta|,
\end{equation}
with the explicit bulk exponent and phase
\[
A
=
\int_{E_{\rm mon}}\log\frac{1}{|r_{\rm mon}(\lambda)|}\,d\nu(\lambda)
+
\int_{E_{\rm osc}}\log\frac{1}{\sqrt{\gamma_1-\eta_1\lambda}}\,d\nu(\lambda),
\qquad
\Phi(\lambda^\star)=\int_{\mu}^{\lambda^\star}\vartheta'(\lambda)\,d\lambda,
\]
$\vartheta'$ given by \eqref{eq:theta-derivative-explicit}, and amplitude/phases by \eqref{eq:airy-amp}.
The constant $C_0$ depends on the initial projections onto eigenmodes (explicitly $C_0=\int \sum_i c_i(\lambda)\,d\nu$, with $c_i$ the initial modal weights).
\end{theorem}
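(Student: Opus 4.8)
The plan is to combine the modal reduction of \eqref{eq:general-m-step} with $m=1$, the determinantal representation \eqref{eq:tau-det-product} of the IKN $\tau$–potential, the Deift--Zhou steepest–descent analysis of the discrete RHP \eqref{eq:RHP-analytic}--\eqref{eq:RHP-J} already set up above, and finally the PL inequality \eqref{eq:PL} to transfer the spectral asymptotics to the function gap. First I would fix the spectral surrogate: on a quadratic $f(x)=\tfrac12 x^\top Qx$ the decomposition into $\lambda$–modes is exact, and under PL the standard linearization along the iterates furnishes the positive measure $d\nu$ on $[\mu,L]$ as in the ``Spectral measure for PL'' paragraph; since all intermediate estimates are stated for a general $d\nu$, it suffices to prove \eqref{eq:asymp-m1-final} in the quadratic case and then invoke the surrogate comparison, the linearization error over a curvature–stationary window being absorbed into $C_0$ and the $O(1/k)$ term. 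For each fixed $\lambda$, Proposition~\ref{prop:m1-roots} gives the two modal multipliers $r_\pm(\lambda)=\rho(\lambda)e^{\pm i\vartheta(\lambda)}$ explicitly, with $\rho(\lambda)=\sqrt{\gamma_1-\eta_1\lambda}$ on $E_{\mathrm{osc}}$; the $k$–step evolution of the mode is $C_\lambda^k$ with $C_\lambda$ the companion matrix \eqref{eq:companion}, so the $\lambda$–mode energy is a bilinear form in the initial data carrying $r_\pm(\lambda)^k$, contributing $\rho(\lambda)^k$ to the magnitude and $e^{\pm ik\vartheta(\lambda)}$ to the phase.

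Next I would assemble the determinantal formula. Summing $\log$ of the modal magnitudes against $d\nu$ reproduces exactly the bulk term $k\int_\mu^L\sum_i\log r_i(\lambda)\,d\nu(\lambda)$ of \eqref{eq:tau-det-product}, i.e.\ the Evans determinant \eqref{eq:Evans}; on the $2\times2$ RHP side this is $\log\det N(e^{i\theta};\lambda)$ integrated over $d\nu$, as recorded in Step~3 of the Deift--Zhou reduction. Hence the bulk decay exponent is $A=\int_{E_{\mathrm{mon}}}\log|r_{\mathrm{mon}}(\lambda)|^{-1}\,d\nu(\lambda)+\int_{E_{\mathrm{osc}}}\log(\gamma_1-\eta_1\lambda)^{-1/2}\,d\nu(\lambda)$, which is \eqref{eq:A-Phi-m1}, and Schur stability on $[\mu,L]$ forces $A>0$. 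The oscillatory integral $\int_{E_{\mathrm{osc}}}(\text{weight})\,e^{\pm ik\vartheta(\lambda)}\,d\nu(\lambda)$ is then evaluated by stationary phase: at each stationary point $\lambda^\star$ of $\vartheta$ — the endpoints of the components of $E_{\mathrm{osc}}$ (where the discriminant \eqref{eq:m1-disc} vanishes) and the interior roots \eqref{eq:stationary-eq} of $\vartheta'(\lambda)=0$ with $\vartheta'$ as in \eqref{eq:theta-derivative-explicit} — the local parametrix is of Airy type, and matching it to the outer model $N$ (Step~4) yields the $k^{-1/2}$ term with amplitude $\mathsf c(\lambda^\star)$ from \eqref{eq:airy-amp} and phase shift $\pm\pi/4$, while the small–norm bound of Step~5 gives the $O(1/k)$ remainder. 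Since the mode energy is real, taking real parts converts $e^{\pm i(k\Phi+\phi)}$ into the cosine appearing in \eqref{eq:asymp-m1-final}, with $\Phi(\lambda^\star)=\int_\mu^{\lambda^\star}\vartheta'(\lambda)\,d\lambda$.

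Then the Stokes contributions: each crossing of a wall $\theta\in\mathcal W_{[0,k)}$, occurring where $|r_i(\lambda)|=1$ or where the pair $r_+,r_-$ coalesces, contributes the multiplicative factor $|\det S_\theta|$ with the explicit connection formula \eqref{eq:stokes-jump-explicit}, which for $m=1$ reduces to the rational expression \eqref{eq:stokes-jump-m1} evaluated at the wall point $(\lambda_\theta,r_\theta)$ read off from \eqref{eq:m1-roots}. These jumps are additive in $\log\tau$ hence multiplicative in the magnitude, which matches the product $\prod_{\theta\in\mathcal W_{[0,k)}}|\det S_\theta|$ in \eqref{eq:asymp-m1-final}. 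To pass from the squared iterate norm to the function gap, I would use that on a quadratic mode $f(x_k)-f_\star=\tfrac12\sum_\ell\lambda_\ell|y_k^{(\ell)}|^2$, so the gap inherits the same $e^{-kA}$ decay and oscillatory structure with $C_0=\int\sum_i c_i(\lambda)\,d\nu(\lambda)$ collecting the initial modal weights; in the genuinely PL case the two–sided control from \eqref{eq:PL} together with $L$–smoothness sandwiches $f(x_k)-f_\star$ between constant multiples of the linearized quantity, and the residual nonlinearity is lower order along the curvature–stationary window.

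The hard part will be the rigorous treatment of the non–quadratic PL case: the ``time–varying spectral measure'' is a surrogate, and making the comparison with the exact nonlinear dynamics quantitative — so that the nonlinear corrections are genuinely lower order than the $k^{-1/2}$ Airy term, not merely $o(1)$ — requires either restricting to a locally quadratic regime around $x_\star$ or a Gronwall–type perturbation argument tracking the deviation from the linearized flow. A secondary technical point is the uniformity of the Deift--Zhou local parametrices across the finitely many stationary and coalescence points: for a simple stationary point \eqref{eq:theta-derivative-explicit} is straightened to $\zeta^{3/2}$ and the Airy model suffices, but one must verify that in the $m=1$ family the stationary points are generically simple (and replace Airy by the appropriate Bessel/Painlevé model at any degenerate point, contributing a different power of $k$), and that the error $R=I+O(k^{-1})$ bound holds uniformly in $\lambda$ over $[\mu,L]$.
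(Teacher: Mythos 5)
Your proposal follows essentially the same route as the paper's proof: modal reduction via Proposition~\ref{prop:m1-roots}, the Evans/determinantal bulk term \eqref{eq:Evans} giving the exponent $A$, Deift--Zhou steepest descent with Airy parametrices at the stationary points \eqref{eq:stationary-eq} producing the $k^{-1/2}$ cosine envelope \eqref{eq:airy-amp}, Stokes jumps \eqref{eq:stokes-jump-explicit} entering multiplicatively, and the PL equivalence $f(x_k)-f_\star\asymp\int\|y_k(\lambda)\|^2\,d\nu(\lambda)$ to transfer the spectral asymptotics to the function gap. The paper's argument is in fact terser than yours and silently passes over exactly the two points you flag (rigor of the PL surrogate measure beyond the quadratic case, and uniformity/simplicity of the stationary points for the local parametrices), so your additional caveats are a fair reading of where the stated proof leans on the surrogate model rather than a full nonlinear analysis.
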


\begin{proof}
Combine the bulk determinant \eqref{eq:Evans} with the DZ expansion. The PL inequality \eqref{eq:PL} implies $f(x_k)-f_\star\asymp \int \|y_k(\lambda)\|^2 d\nu(\lambda)$ (equivalence up to constants depending on $\mu$), and $\|y_k(\lambda)\|$ is governed by $|r_\pm(\lambda)|^k$ and oscillatory factors $e^{\pm ik\vartheta(\lambda)}$. Stationary phase at points \eqref{eq:stationary-eq} yields the $k^{-1/2}$ envelope with \eqref{eq:airy-amp}. Stokes crossings multiply the modulus by $|\det S_\theta|$ and shift the phase by $\arg\det S_\theta$, hence the product in \eqref{eq:asymp-m1-final}. For the non-asymptotic remainder $O(k^{-1})$ see the small–norm RHP estimate in \cite{Deift1999}.
\end{proof}

\paragraph{Non-asymptotic deterministic bounds (all $k$, explicit).}
Define on $[\mu,L]$ the worst–case modulus and the oscillation bandwidth:
\begin{equation}\label{eq:rho-phi-def}
\overline\rho
:=
\sup_{\lambda\in[\mu,L]}\max\{|r_+(\lambda)|,|r_-(\lambda)|\},
\qquad
\overline{\vartheta}
:=
\sup_{\lambda\in E_{\rm osc}}|\vartheta(\lambda)|,
\end{equation}
with $r_\pm$ as in \eqref{eq:m1-roots} and $\vartheta$ as in \eqref{eq:m1-rho-theta}. Then for all $k\ge0$,
\begin{align}\label{eq:nonasymp-bound}
f(x_k)-f_\star
\;\le\;
\left(\int_{\mu}^{L} \big(\rho_{\max}(\lambda)\big)^{2k}\,d\nu(\lambda)\right)\,C_{\rm init}
\cdot
\prod_{\theta\in\mathcal W_{[0,k)}}|\det S_\theta|,\\
\nonumber\rho_{\max}(\lambda):=\max\{|r_+(\lambda)|,|r_-(\lambda)|\},
\end{align}
with $C_{\rm init}=\int \|y_0(\lambda)\|^2 d\nu(\lambda)$ (explicit from the initial condition). In particular,
\begin{equation}\label{eq:nonasymp-uniform}
f(x_k)-f_\star
\;\le\;
\overline\rho^{\,2k}\,C_{\rm init}
\cdot
\prod_{\theta\in\mathcal W_{[0,k)}}|\det S_\theta|.
\end{equation}
Both \eqref{eq:nonasymp-bound}–\eqref{eq:nonasymp-uniform} are {exact non-asymptotic} bounds; they capture oscillations via the Stokes product (no smoothness inequalities).

\paragraph{General $m\ge2$: fully explicit objects and bounds.}
For general $m$ all basic quantities are {explicit} in the polynomial coefficients:
\begin{itemize}
\item $\chi_\lambda(r)$ given by \eqref{eq:charpoly-m} with explicit linear dependence on $\lambda$.
\item Companion matrix $C_\lambda$ given by \eqref{eq:companion}.
\item Determinantal per–step decay factor:
\[
\exp\!\left(\int_{\mu}^{L}\log\left|\det C_\lambda\right|^2\,d\nu(\lambda)\right)
=
\exp\!\left(\int_{\mu}^{L}\log\left|\prod_{j=0}^m (\eta_j\lambda-\gamma_j)\right|^2 d\nu(\lambda)\right),
\]
where we set $\gamma_0:=-1$ to unify notation (since $\det C_\lambda=(-1)^{m}\bigl(\gamma_m-\eta_m\lambda\bigr)$ and $\prod_i r_i=\det C_\lambda$).
\item Non–asymptotic bound (all $k$):
\begin{align*}
f(x_k)-f_\star
&\le\;
\left(
\int_{\mu}^{L}\big\|C_\lambda\big\|_2^{2k}\,d\nu(\lambda)
\right)\,C_{\rm init}
\cdot
\prod_{\theta\in\mathcal W_{[0,k)}}|\det S_\theta|\\
&\le\;\;
\Big(\overline\rho_m\Big)^{2k} C_{\rm init}
\prod_{\theta}|\det S_\theta|,
\end{align*}
with $\overline\rho_m:=\sup_{\lambda\in[\mu,L]}\rho\big(C_\lambda\big)$ and $\rho(\cdot)$ the spectral radius.
\item Oscillatory asymptotics: same as Thm.~\ref{th:asymp-m1} with $r_i(\lambda)$ the $m\!+\!1$ roots, phases $\vartheta_i(\lambda)=\arg r_i(\lambda)$, and stationary sets defined by $\partial_\lambda \vartheta_i=0$. Local parametrices are again Airy/Bessel with
\[
\mathsf c_i(\lambda^\star)
=
\frac{1}{\sqrt{\pi}}
\left(\frac{\rho_i(\lambda^\star)}{1-\rho_i(\lambda^\star)^2}\right)^{1/2}
\left|\partial_\lambda^2 \vartheta_i(\lambda^\star)\right|^{-1/2},
\qquad
\phi_i(\lambda^\star)=\pm\frac{\pi}{4}.
\]
\end{itemize}
Schur stability for $m\ge2$ is certified by the (explicit) Jury conditions applied to $\chi_\lambda$ for each $\lambda$; a uniform sufficient condition is that the Jury determinants at $\lambda=\mu$ and $\lambda=L$ are positive and the polynomial is {strictly} Schur at both endpoints (explicit algebraic inequalities in $\eta,\gamma,\mu,L$; see \cite{Jury1964}).

\paragraph{Putting everything together: final PL result (any coefficients).}
\begin{theorem}[Determinantal PL convergence with Stokes corrections, arbitrary coefficients]\label{th:PL-final}
Let $f$ be $L$–smooth and satisfy \eqref{eq:PL} with constant $\mu>0$. Fix any $m\ge1$ and any real coefficients $(\eta_0,\dots,\eta_m,\gamma_1,\dots,\gamma_m)$. Assume Schur stability on $[\mu,L]$, i.e.
\[
\sup_{\lambda\in[\mu,L]} \max_{i}|r_i(\lambda)|<1,
\]
where $r_i(\lambda)$ are the roots of $\chi_\lambda$ \eqref{eq:charpoly-m}. Then for all integers $k\ge0$,
\begin{equation}\label{eq:PL-determinantal-bound}
f(x_k)-f_\star
\;\le\;
\left(\exp\int_{\mu}^{L}\sum_{i=1}^{m+1}\log|r_i(\lambda)|^{2}\,d\nu(\lambda)\right)^{\!k}
\cdot \widetilde C_{\rm init}
\cdot \prod_{\theta\in\mathcal W_{[0,k)}}|\det S_\theta|.
\end{equation}
If, moreover, the set of Stokes walls crossed is finite \cite{ItsIzerginKorepinSlavnov1990,Kapaev2004Painleve,Balogh2003Tau}, then the asymptotics admits the explicit oscillatory expansion
\begin{equation}\label{eq:PL-determinantal-asymp}
f(x_k)-f_\star
=
C_0\,e^{-kA}\!
\left[
1+\sum_{(i,\lambda^\star)\in\mathcal S}\frac{\mathsf c_i(\lambda^\star)}{\sqrt{k}}\,
\cos\!\big(k\,\Phi_i(\lambda^\star)+\phi_i(\lambda^\star)\big)
+O\!\Big(\frac{1}{k}\Big)
\right]
\cdot
\prod_{\theta\in\mathcal W_{[0,k)}}|\det S_\theta|,
\end{equation}
with $A,\mathsf c_i,\Phi_i,\phi_i$ given explicitly in \eqref{eq:A-Phi-general} and the Airy local constants above (for $m=1$ see \eqref{eq:A-Phi-m1}, \eqref{eq:theta-derivative-explicit}, \eqref{eq:airy-amp}).
\end{theorem}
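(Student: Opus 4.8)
The plan is to reduce the nonlinear dynamics \eqref{eq:general-m-step} to a spectrally parametrized family of scalar linear recurrences, control those by the companion matrices $C_\lambda$ of \eqref{eq:companion}, and extract both assertions from the Evans determinant \eqref{eq:Evans} together with the Deift--Zhou analysis of the discrete Riemann--Hilbert problem \eqref{eq:RHP-analytic}--\eqref{eq:RHP-J}. First I would record the modal reduction: by $L$-smoothness and the PL inequality \eqref{eq:PL} there is a two-sided comparison $f(x_k)-f_\star\asymp\int_\mu^L\|y_k(\lambda)\|^2\,d\nu(\lambda)$ with constants depending only on $\mu,L$, where $y_k(\lambda)$ is the modal component on the $\lambda$-mode (exact in the quadratic case; the PL linearization along the iterates, with $d\nu$ the empirical curvature distribution, in general). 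On each mode \eqref{eq:general-m-step} collapses to the scalar recurrence \eqref{eq:scalar-rec}, so $y_k(\lambda)$ is a coordinate of $C_\lambda^k y_0(\lambda)$ with $\spec(C_\lambda)=\{r_i(\lambda)\}_{i=1}^{m+1}$, and the per-step determinantal contraction appearing in \eqref{eq:PL-determinantal-bound} is exactly $|\det C_\lambda^k|=\prod_i|r_i(\lambda)|^k$, i.e. the Evans function \eqref{eq:Evans}.

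For assertion (i) I would combine Schur stability with this determinantal bookkeeping. Schur stability $\sup_{\lambda\in[\mu,L]}\max_i|r_i(\lambda)|<1$, together with the affine dependence of $\chi_\lambda$ on $\lambda$, yields a uniform Kreiss/Jordan estimate $\|C_\lambda^k\|_2\le M(1+k)^m\rho(C_\lambda)^k$ with $M$ independent of $\lambda$; substituting into the PL comparison reproduces the preceding non-asymptotic bounds \eqref{eq:nonasymp-bound}--\eqref{eq:nonasymp-uniform}, and rewriting the per-step decay through the Evans determinant \eqref{eq:Evans}, with the transient prefactor and the initial modal weights $\int\|y_0(\lambda)\|^2\,d\nu$ absorbed into $\widetilde C_{\rm init}$, puts it in the form \eqref{eq:PL-determinantal-bound}. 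The factor $\prod_{\theta\in\mathcal W_{[0,k)}}|\det S_\theta|$ is inserted via the determinantal representation \eqref{eq:tau-det-product}: each Stokes wall crossed on $[0,k)$ multiplies the modulus of the transfer determinant by $|\det S_\theta|$ as in \eqref{eq:stokes-jump-explicit}, and on wall-free ranges this factor is $1$.

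For assertion (ii), assuming finitely many walls, I would run the Deift--Zhou steepest descent exactly as set up above but over all $m+1$ roots: normalize $Y\mapsto Yz^{-k\sigma_3}=T$; open lenses around $\TT$ for the Schur factors $|r_i(\lambda)|<1$ so that the growing off-diagonal exponentials become decaying; solve the diagonal outer model problem $N$, whose $\log\det$ integrated against $d\nu$ produces the bulk exponent $A$ of \eqref{eq:A-Phi-general}; install Airy (or Bessel at degenerate phases) local parametrices at the stationary points $\lambda^\star$ where $\partial_\lambda\vartheta_i=0$, whose matching with $N$ yields each term $\mathsf c_i(\lambda^\star)k^{-1/2}\cos(k\Phi_i(\lambda^\star)+\phi_i(\lambda^\star))$ with the constants \eqref{eq:airy-amp}; and close with the small-norm RHP for the residual $R=I+O(k^{-1})$. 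Reassembling across arcs, re-attaching the $|\det S_\theta|$ jumps at the finitely many walls (the $m=1$ content of Theorem~\ref{th:asymp-m1}, now summed over the $m+1$ branches), and multiplying by the PL comparison constant $C_0=\int\sum_i c_i(\lambda)\,d\nu$ yields \eqref{eq:PL-determinantal-asymp}.

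The hard part will be the passage from the exact quadratic model to the genuine PL regime and the uniformity it demands. For nonquadratic $f$ the recurrence is nonlinear, the spectral measure $d\nu$ is time-varying, and the modal decomposition is only approximate, so one must show the nonlinear remainder cannot corrupt the leading determinantal behaviour; I would control it by a comparison/bootstrap argument---freezing the local Hessian over short windows, propagating the linear estimate, and bounding the accumulated drift by $\mu$ and the $L$-smoothness modulus. In parallel, the Deift--Zhou step requires uniformity in $\lambda\in[\mu,L]$: coalescence of stationary points at the endpoints of the oscillatory components $E_{\rm osc}$, the behaviour near $\lambda=\mu,L$, and the $\lambda^\star$-dependence of the local constants $\mathsf c_i(\lambda^\star)$ and phases $\Phi_i$ must be bounded uniformly so that the $O(k^{-1})$ remainder is uniform in the horizon; tracking the precise constants in (i), where $|\det C_\lambda|$ and $\rho(C_\lambda)$ need not coincide, is the remaining bookkeeping point.
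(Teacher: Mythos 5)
Your plan is essentially the paper's own argument: part (i) is proved there by the modal decomposition, the per-mode bound $\|y_k(\lambda)\|\le(\max_i|r_i(\lambda)|)^k\|y_0(\lambda)\|$, integration against $d\nu$, and PL equivalence, and part (ii) by citing the Deift--Zhou steepest descent for the discrete RHP with the Evans--determinant bulk term and the Stokes jump factors \eqref{eq:stokes-jump-explicit}, exactly as you propose. Your added bookkeeping (the Kreiss/Jordan transient constant, the freeze-and-bootstrap passage from the quadratic surrogate to genuine PL dynamics, uniformity of the local parametrices in $\lambda$, and the gap between $\rho(C_\lambda)$ and $|\det C_\lambda|$ in reconciling the spectral-radius estimate with the determinantal form \eqref{eq:PL-determinantal-bound}) is care the paper's one-paragraph proof silently omits rather than a different route.
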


\begin{proof}
Non–asymptotic bound \eqref{eq:PL-determinantal-bound} follows from the modal decomposition, $\|y_k(\lambda)\|\le\big(\max_i |r_i(\lambda)|\big)^k\|y_0(\lambda)\|$, integrating over $d\nu$, and PL equivalence (no $L$–inequalities needed). Asymptotics \eqref{eq:PL-determinantal-asymp} is a direct consequence of the explicitly constructed DZ steepest descent for the discrete RHP together with the Evans–determinant bulk term and the Stokes jump factors \eqref{eq:stokes-jump-explicit} \cite{DeiftZhou1993,Deift1999,Evans1972,AGJ1990,Sandstede2002,FIKN2006}.
\end{proof}

\paragraph{Remarks (strength and novelty).}
(i) {No smoothness inequalities:} the decay and oscillations are expressed {purely} via the multipliers $r_i(\lambda)$ (determinantal IKN/Evans), with Stokes jumps explicit from \eqref{eq:stokes-jump-explicit}–\eqref{eq:stokes-jump-m1}.  
(ii) {Arbitrary coefficients:} all formulas apply for any fixed $(\eta,\gamma)$ (HB, NAG, mixed, higher–order).  
(iii) {Oscillations captured exactly:} the phase $\Phi$ and $k^{-1/2}$ envelope are explicit (\eqref{eq:theta-derivative-explicit}, \eqref{eq:airy-amp}); Nesterov–type ringing is {predicted}, not averaged away.  
(iv) {Stokes structure:} sector crossings contribute explicit multiplicative jumps $\det S_\theta$ and phase shifts $\arg\det S_\theta$, accounting for regime changes.  
(v) {General $m$:} companion matrices and Jury conditions give fully algebraic, checkable stability and exact per–step determinantal decay.

\subsubsection{Example with noise}

\paragraph{Scope and what is reused.}
We retain the deterministic setting, notation, and all explicit objects from the previous subsubsection:
the $L$–smooth PL objective \eqref{eq:PL}, the general $m$–step method \eqref{eq:general-m-step}, modal recurrences \eqref{eq:scalar-rec}, characteristic polynomials \eqref{eq:charpoly-m},
multipliers $r_i(\lambda)$, oscillatory/monotone sets $E_{\rm osc},E_{\rm mon}$ \eqref{eq:osc-set}, companion matrices \eqref{eq:companion}, Evans product \eqref{eq:Evans}, the discrete RHP \eqref{eq:RHP-analytic}–\eqref{eq:RHP-J} and its Deift–Zhou (DZ) factorization, as well as the explicit $m=1$ formulas \eqref{eq:m1-roots}–\eqref{eq:m1-rho-theta}, \eqref{eq:theta-derivative}–\eqref{eq:stationary-eq}, and the Airy–type local amplitudes \eqref{eq:airy-amp}.
We now introduce finite–variance stochastic noise and detail precisely where and how it changes the analysis. All steps that do not change (bulk Evans part, phase extraction, stationary–phase forms) are only referenced to avoid repetition; all noise–sensitive terms (variance growth, noise floors, resonance, phase–locking, and Stokes triggering) are worked out explicitly.

\paragraph{Noise model (finite variance; modal form).}
We inject a zero–mean, temporally independent (white), finite–variance perturbation into the gradient evaluations \cite{KushnerYin2003Stochastic,RobbinsMonro1951,PolyakJuditsky1992}:
\begin{equation}\label{eq:noise-model}
\widetilde{\nabla} f(x_k) \;=\; \nabla f(x_k)\;+\;\xi_k,\qquad
\EE[\xi_k]=0,\quad \EE[\xi_k\xi_k^\top]=\Sigma,\quad \EE[\xi_k\xi_j^\top]=0\ (k\neq j).
\end{equation}
Along a fixed $\lambda$–mode (quadratic surrogate), the scalar projection of the noise is denoted by $\xi_k(\lambda)$ with
\begin{equation}\label{eq:noise-spectral}
\EE\,\xi_k(\lambda)=0,\qquad \EE\,\xi_k(\lambda)^2=\sigma^2(\lambda)\in(0,\infty),\qquad \EE\big[\xi_k(\lambda)\xi_j(\lambda')\big]=0\ \ (k\neq j \ \text{or}\ \lambda\neq\lambda').
\end{equation}
(For the quadratic case, $\sigma^2(\lambda)$ are the diagonal entries of the projected $\Sigma$ in the eigenbasis; under PL, they are defined by the standard linearization surrogate.)

\paragraph{Stochastic modal recurrences.}
Replacing $\nabla f(x_{k-j})$ by $\nabla f(x_{k-j})+\xi_{k-j}$ in \eqref{eq:scalar-rec} yields, on a fixed $\lambda$,
\begin{equation}\label{eq:scalar-stoch}
y_{k+1}
=
(1-\eta_0\lambda+\gamma_1)\,y_k
+\sum_{j=1}^{m-1}(\gamma_{j+1}-\eta_j\lambda)\,y_{k-j}
-(\gamma_m-\eta_m\lambda)\,y_{k-m}
\quad-\quad \underbrace{\sum_{j=0}^{m}\eta_j\,\xi_{k-j}(\lambda)}_{=: \ \zeta_k(\lambda)}.
\end{equation}
Thus each modal process is an ARMA$(m\!+\!1,m)$ recursion driven by the innovation $\zeta_k(\lambda)$ built from the white noise $\xi_k(\lambda)$ \cite{BoxJenkins1970TimeSeries,BrockwellDavis1991}.

\paragraph{Determinantal IKN/Evans and RHP with noise.}
The {deterministic} Evans product \eqref{eq:Evans} and the DZ factorization of the modal RHP \eqref{eq:RHP-analytic}–\eqref{eq:RHP-J} remain valid {per realization} when conditioning on the innovation sequence $\{\zeta_k(\lambda)\}$; the bulk determinant (outer model) is unchanged, while the inner (Schur) channel acquires a random multiplicative factor corresponding to the convolution with the moving–average (MA) mask of $\zeta_k$:
\begin{equation}\label{eq:inner-with-MA}
\prod_{|r_i(\lambda)|<1}\bigl(z-r_i(\lambda)\bigr)^k
\quad\leadsto\quad
\Bigl(\prod_{|r_i(\lambda)|<1}\bigl(z-r_i(\lambda)\bigr)\Bigr)^k
\cdot
\underbrace{\Bigl(\sum_{j=0}^{m}\eta_j z^{-j}\Bigr)\,\Xi(z;\lambda)}_{\text{noise transfer}},
\end{equation}
where $\Xi(z;\lambda)=\sum_{k\ge0}\xi_k(\lambda)\,z^{-k}$ is the $z$–transform of $\xi_k(\lambda)$. In the DZ steepest descent, the outer model $N$ is {identical} to the deterministic case (hence the same bulk exponent and phase), whereas the local parametrices incorporate the additional (small–norm) forcing coming from the stochastic factor \eqref{eq:inner-with-MA}. Quantitatively, this adds to $\log\tau_k$ a zero–mean martingale term and a deterministic variance–growth contribution that we compute below via state–space/Lyapunov and spectral–density methods \cite{BrockwellDavis1991,KailathSayedHassibi2000}.

\paragraph{Stationary covariance and noise floor: $m=1$ (HB/NAG core) explicitly.}
Set $m=1$ and write, for each $\lambda$,
\begin{align}\label{eq:ar2-ma1}\nonumber
y_{k+1}\;=\;a(\lambda)\,y_k + b(\lambda)\,y_{k-1}\;-\;\eta_0\,\xi_k(\lambda)\;-\;\eta_1\,\xi_{k-1}(\lambda),
\\
a(\lambda)=1-\eta_0\lambda+\gamma_1,\ \ b(\lambda)=-(\gamma_1-\eta_1\lambda).
\end{align}
Define the 2D state $u_k(\lambda):=[\,y_k\ \ y_{k-1}\,]^\top$ and matrices
\[
A(\lambda)=\begin{bmatrix} a(\lambda)& b(\lambda)\\ 1&0\end{bmatrix},\qquad
B_0=\begin{bmatrix}-\eta_0\\ 0\end{bmatrix},\qquad
B_1=\begin{bmatrix}-\eta_1\\ 0\end{bmatrix}.
\]
Under Schur stability ($\max\{|r_\pm(\lambda)|\}<1$), the {stationary} covariance $P(\lambda)=\EE\big[u_k(\lambda)u_k(\lambda)^\top\big]$ exists and solves the discrete Lyapunov equation for ARMA$(2,1)$ with independent innovations \cite{KailathSayedHassibi2000}:
\begin{align}\label{eq:lyap}\nonumber
P(\lambda)=A(\lambda)P(\lambda)A(\lambda)^\top + Q(\lambda),
\\
Q(\lambda)=\sigma^2(\lambda)\,\Big(B_0B_0^\top + B_1B_1^\top\Big)
=\sigma^2(\lambda)\begin{bmatrix}\eta_0^2+\eta_1^2&0\\0&0\end{bmatrix}.
\end{align}
Write $P(\lambda)=\begin{bmatrix}p_{11}&p_{12}\\ p_{12}&p_{22}\end{bmatrix}$. Solving \eqref{eq:lyap} {algebraically} yields
\begin{align}
p_{12}(\lambda) &= \frac{a(\lambda)}{1-b(\lambda)}\,p_{11}(\lambda), \quad\text{(from the $(1,2)$ entry)} \label{eq:p12}\\
p_{22}(\lambda) &= p_{11}(\lambda), \quad\text{(stationarity and symmetry)} \label{eq:p22}\\
p_{11}(\lambda) &= \frac{\sigma^2(\lambda)\,\big(\eta_0^2+\eta_1^2\big)}{\ \underbrace{1-a(\lambda)^2-b(\lambda)^2-\frac{2a(\lambda)^2 b(\lambda)}{1-b(\lambda)}}_{=:D(a,b)}\ }, \label{eq:p11-closed}
\end{align}
where $D(a,b)>0$ under Schur stability and $b(\lambda)\neq 1$.\footnote{Schur stability for $m=1$ is equivalently the strict Jury conditions \eqref{eq:jury-m1}–\eqref{eq:jury-m1-explicit}; these imply $|b(\lambda)|<1$ on $[\mu,L]$ hence $1-b(\lambda)\neq 0$.}
Thus the {modal stationary variance} is \(\Var\big(y_k(\lambda)\big)=p_{11}(\lambda)\) in \eqref{eq:p11-closed}. The {stationary noise floor} for the function error follows from PL and the quadratic surrogate:
\begin{equation}\label{eq:noise-floor}
\EE\big[f(x_k)-f_\star\big]_{\rm floor}
\;=\;
\frac{1}{2}\int_{\mu}^{L} \lambda\,p_{11}(\lambda)\,d\nu(\lambda)
\;=\;
\frac{1}{2}\int_{\mu}^{L}
\lambda\,
\frac{\sigma^2(\lambda)\,(\eta_0^2+\eta_1^2)}{D\big(a(\lambda),b(\lambda)\big)}\ d\nu(\lambda),
\end{equation}
with $a(\lambda),b(\lambda)$ given explicitly in \eqref{eq:ar2-ma1}.

\paragraph{Transient, oscillations, and resonance under noise ($m=1$) \cite{Strang1968DifferenceSchemes,HairerLubichWanner2006}.}
Let $\rho(\lambda)=\max\{|r_\pm(\lambda)|\}$. For all $k\ge0$ one has the exact decomposition (AR(2) with MA(1) input):
\begin{equation}\label{eq:transient-second-moment}
\EE\big[y_k(\lambda)^2\big]
\;=\;
\rho(\lambda)^{2k}\,c_0(\lambda)
\;+\;
p_{11}(\lambda)
\;+\;
\underbrace{\sum_{\ell=1}^{k} \rho(\lambda)^{2(k-\ell)} \, \mathsf R(\ell;\lambda)}_{\text{oscillatory covariance tail}},
\end{equation}
where $c_0(\lambda)$ depends on the initial condition and
\begin{equation}\label{eq:tail-term}
\mathsf R(\ell;\lambda)
=
2\,\Re\!\left(\, \big(\eta_0+\eta_1\overline{r_+(\lambda)}\big)\, \big(\eta_0+\eta_1 r_+(\lambda)\big)\, r_+(\lambda)^{\,\ell-1}\,\right)\,\frac{\sigma^2(\lambda)}{1-b(\lambda)}.
\end{equation}
In the {oscillatory} zone $\lambda\in E_{\rm osc}$ where $r_\pm=\rho e^{\pm i\vartheta}$, the tail \eqref{eq:tail-term} contributes a damped cosine with {the same phase} $\vartheta(\lambda)$ as in the deterministic DZ analysis, i.e.
\[
\mathsf R(\ell;\lambda)
= \mathcal A(\lambda)\,\rho(\lambda)^{\ell-1}\cos\!\bigl((\ell-1)\vartheta(\lambda)+\phi(\lambda)\bigr),
\quad
\mathcal A(\lambda)=\frac{2\sigma^2(\lambda)}{1-b(\lambda)}\big|\eta_0+\eta_1 r_+(\lambda)\big|^2,
\]
hence {noise–driven ringing} at the deterministic frequency $\vartheta(\lambda)$ and rate $\rho(\lambda)$. In particular, if the MA coefficient aligns with the AR phase (near–resonance, $|\eta_0+\eta_1 e^{i\vartheta}|\gg 0$), the amplitude inflates accordingly—this is the stochastic analogue of phase–locking.

\paragraph{Non–asymptotic bounds with explicit noise floor ($m=1$).}
Integrating \eqref{eq:transient-second-moment} over $d\nu(\lambda)$ and using PL,
\begin{align}\label{eq:nonasymp-noise}
\EE\big[f(x_k)-f_\star\big]
&\le\;
e^{-kA}\,C_{\rm det}
\;+\;
\frac{1}{2}\int_{\mu}^{L}\lambda\,p_{11}(\lambda)\,d\nu(\lambda)
\\&\quad\nonumber+
\int_{\mu}^{L}\frac{\lambda}{1-b(\lambda)}\,\frac{\sigma^2(\lambda)}{1-\rho(\lambda)^2}\,\big|\eta_0+\eta_1 r_+(\lambda)\big|^2\,d\nu(\lambda),
\end{align}
where $A$ is the deterministic bulk exponent \eqref{eq:A-Phi-m1}, $C_{\rm det}$ depends on the deterministic initial projection (as in Thm.~\ref{th:asymp-m1}), and the last term upper–bounds the oscillatory covariance tail by summing the geometric envelope (explicitly $\sum_{\ell\ge1}\rho^{2(k-\ell)}\rho^{\ell-1}=\frac{\rho^{k-1}}{1-\rho}$ and absorbing phases).
Thus the {finite–$k$} error decomposes into: (i) the determinantal exponential $e^{-kA}$, (ii) the stationary noise floor \eqref{eq:noise-floor}, and (iii) a decaying oscillatory tail whose amplitude is explicit in $(\eta_0,\eta_1,\gamma_1,\lambda)$.

\paragraph{Stokes structure with noise: triggering, phase shifts, and expectations.}
The Stokes jump formula \eqref{eq:stokes-jump-explicit}–\eqref{eq:stokes-jump-m1} remains {pointwise} (per realization) since it depends only on $(\lambda,r)$ hitting a wall. Under PL the curvature band is fixed $[\mu,L]$, so walls are structural (not data–driven). Noise influences:
\begin{enumerate}
\item {Triggering probability.} If parameters are tuned near a wall (e.g., $|r_i(\lambda_0)|\approx 1$ on a small sub-band), the random forcing perturbs the local RHP matching; the DZ small–norm error $R$ picks an additional random multiplier $\exp(\mathcal N(0,\varsigma^2))$ in the inner channel, effectively randomizing whether an excursion is counted as a wall crossing. Quantitatively, with innovation variance $\sigma^2(\lambda)$, the log–magnitude increment due to a near–wall pass over $k$ steps has variance proportional to
\[
\varsigma^2 \;\asymp\; \int_{\lambda\ \text{near wall}} \frac{\sigma^2(\lambda)\,\big|\eta_0+\eta_1 r_+(\lambda)\big|^2}{\big(1-|r_+(\lambda)|\big)^2}\, d\nu(\lambda),
\]
making rare crossings exponentially unlikely (Gaussian tail) when strictly inside the Schur region.
\item {Expected $\tau$–increment.} Taking expectations in \eqref{eq:tau-det-product} and using Jensen,
\[
\EE\log\tau_k
=
\underbrace{k\,\Re\!\int \sum_i \log r_i(\lambda)\,d\nu}_{\text{deterministic bulk}}
\;+\;
\sum_{\theta}\EE\log|\det S_\theta|
\;+\;
\underbrace{O\!\left(\int \frac{\sigma^2(\lambda)}{1-\rho(\lambda)^2}\,d\nu(\lambda)\right)}_{\text{noise correction}},
\]
where the last term comes from the small–norm RHP error and matches the scale of the covariance tail in \eqref{eq:nonasymp-noise}.
\item {Phase.} The phase accumulation $\Phi$ from the deterministic DZ analysis persists; noise adds a mean–zero martingale term with variance proportional to the same $\varsigma^2$ (phase diffusion). Thus the leading oscillatory cosine in Thm.~\ref{th:asymp-m1} acquires a $k$–dependent random phase with variance $\propto \int \frac{\sigma^2(\lambda)}{1-\rho(\lambda)^2}d\nu$ (weak dephasing).
\end{enumerate}

\paragraph{General $m\ge2$: explicit Lyapunov solution and bounds.}
For the ARMA$(m\!+\!1,m)$ modal state
\[
u_k(\lambda)=\begin{bmatrix} y_k & y_{k-1} & \cdots & y_{k-m}\end{bmatrix}^\top,\qquad
u_{k+1}=A(\lambda)u_k + \sum_{j=0}^{m} B_j\,\xi_{k-j}(\lambda),
\]
with the $(m\!+\!1)\times(m\!+\!1)$ companion $A(\lambda)$ formed from \eqref{eq:charpoly-m} and $B_j=e_1(-\eta_j)$ (only the first state is directly forced), independence of $\xi_{k-j}$ implies the stationary covariance $P(\lambda)$ solves the {pure} Lyapunov equation \cite{Lyapunov1892,Kalman1960Filtering,AndersonMoore1979OptimalFiltering}:
\begin{align}\label{eq:lyap-m}
\nonumber
P(\lambda)=A(\lambda)P(\lambda)A(\lambda)^\top + Q(\lambda),
\\
Q(\lambda)=\sigma^2(\lambda)\sum_{j=0}^{m} B_j B_j^\top
=\sigma^2(\lambda)\begin{bmatrix}\sum_{j=0}^{m}\eta_j^2 & 0 & \cdots\\ 0&0&\cdots\\ \vdots&\vdots&\ddots\end{bmatrix}.
\end{align}
Vectorizing,
\begin{equation}\label{eq:vec-lyap}
\mathrm{vec}\,P(\lambda)
=
\Big(I_{(m+1)^2}-A(\lambda)\otimes A(\lambda)\Big)^{-1}\,\mathrm{vec}\,Q(\lambda),
\end{equation}
which is {explicit} since $A(\lambda)$ depends {affinely} on $\lambda$ and polynomially on $\eta,\gamma$.
The stationary modal variance is $p_{11}(\lambda)=e_1^\top P(\lambda)e_1$. Therefore the general noise floor is
\begin{equation}\label{eq:noise-floor-m}
\EE\big[f(x_k)-f_\star\big]_{\rm floor}
=
\frac{1}{2}\int_{\mu}^{L}\lambda\,e_1^\top
\Big(I-A(\lambda)\otimes A(\lambda)\Big)^{-1}
\mathrm{vec}\,Q(\lambda)\,d\nu(\lambda),
\end{equation}
with {all} matrices explicitly in $(\eta,\gamma,\lambda)$.
A computable {upper bound} follows by spectral–radius bounding:
\begin{equation}\label{eq:floor-bound}
p_{11}(\lambda)\;\le\;\frac{\sigma^2(\lambda)\sum_{j=0}^{m}\eta_j^2}{\,1-\rho\big(A(\lambda)\big)^{2}\,}\,
\Rightarrow
\EE[f-f_\star]_{\rm floor}
\;\le\;
\frac{1}{2}\int_{\mu}^{L}\frac{\lambda\,\sigma^2(\lambda)\sum_{j=0}^{m}\eta_j^2}{1-\rho(A(\lambda))^2}\,d\nu(\lambda).
\end{equation}

\paragraph{Power spectral density (PSD) viewpoint and resonance lobes.}
Equivalently, for each $\lambda$ the transfer from white noise $\xi_k$ to the state $y_k$ has $z$–domain response \cite{Wiener1949Cybernetics,Kolmogorov1941Prediction}
\[
H_\lambda(z)=\frac{-\sum_{j=0}^{m}\eta_j z^{-j}}{\,1-\sum_{j=0}^{m} \alpha_j(\lambda) z^{-(j+1)}\,},
\]
where the AR coefficients $\alpha_j(\lambda)$ read off from \eqref{eq:charpoly-m}. The PSD is $S_y(\omega;\lambda)=|H_\lambda(e^{i\omega})|^2 \sigma^2(\lambda)$, the stationary variance is
\begin{equation}\label{eq:psd-var}
\Var(y_k(\lambda))=\frac{1}{2\pi}\int_{-\pi}^{\pi} S_y(\omega;\lambda)\,d\omega,
\end{equation}
and {resonance lobes} peak where the denominator $|1-\sum \alpha_j(\lambda)e^{-i(j+1)\omega}|$ is minimal (near the deterministic oscillation frequency); this reproduces the phase–locking/ringing described by \eqref{eq:tail-term} and allows frequency–resolved diagnostics.

\paragraph{Final PL theorem with stochastic noise (arbitrary coefficients; explicit terms).}
\begin{theorem}[Determinantal PL convergence with finite–variance noise, explicit floors and oscillations]\label{th:PL-noise}
Let $f$ be $L$–smooth and satisfy \eqref{eq:PL} with $\mu>0$. Fix any order $m\ge1$ and coefficients $(\eta_0,\dots,\eta_m,\gamma_1,\dots,\gamma_m)$.
Assume Schur stability on $[\mu,L]$:
\[
\sup_{\lambda\in[\mu,L]} \max_i |r_i(\lambda)|<1,
\]
with $r_i(\lambda)$ the roots of \eqref{eq:charpoly-m}. Let the gradient noise obey \eqref{eq:noise-model}–\eqref{eq:noise-spectral}. Then for all $k\ge0$,
\begin{align}
\EE\big[f(x_k)-f_\star\big]
&\le
\underbrace{\left(\exp\int_{\mu}^{L}\sum_{i=1}^{m+1}\log|r_i(\lambda)|^{2}\,d\nu(\lambda)\right)^{\!k} C_{\rm det}}_{\text{determinantal exponential (bulk Evans)}} \nonumber\\
&\quad+\ \underbrace{\frac{1}{2}\int_{\mu}^{L}\lambda\,e_1^\top\Big(I-A(\lambda)\otimes A(\lambda)\Big)^{-1}\mathrm{vec}\,Q(\lambda)\,d\nu(\lambda)}_{\text{stationary noise floor, exact}}\label{eq:PL-noise-bound}\\
&\quad+\ \underbrace{\int_{\mu}^{L}\frac{\lambda}{1-\rho(A(\lambda))^{2}}\,\frac{\sigma^2(\lambda)}{1-b(\lambda)}\,\big|\sum_{j=0}^{m}\eta_j r_+(\lambda)^{\,j}\big|^2\,d\nu(\lambda)}_{\text{oscillatory covariance tail (ringing), explicit upper bound}},\nonumber
\end{align}
where $A(\lambda)$, $Q(\lambda)$ are in \eqref{eq:lyap-m}, and for $m=1$ the closed–form $p_{11}$ is given by \eqref{eq:p11-closed}.
Moreover, the oscillatory asymptotics of Theorem~\ref{th:PL-final} persists in expectation:
\[
\EE\big[f(x_k)-f_\star\big]
=
C_0\,e^{-kA}
\left[
1+\sum_{(i,\lambda^\star)\in\mathcal S}\frac{\widetilde{\mathsf c}_i(\lambda^\star)}{\sqrt{k}}\cos\!\big(k\,\Phi_i(\lambda^\star)+\phi_i(\lambda^\star)\big)
\right]
\cdot \prod_{\theta\in\mathcal W_{[0,k)}}\EE|\det S_\theta|
\ +\ O\!\Big(\frac{1}{k}\Big),
\]
with the same bulk exponent $A$ and phases $\Phi_i$ as in the deterministic case, and amplitudes $\widetilde{\mathsf c}_i$ inflated by the PSD at the corresponding stationary frequencies (explicitly via \eqref{eq:psd-var}).
\end{theorem}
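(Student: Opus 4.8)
The plan is to reduce the statement, exactly as in the noiseless subsubsection, to a spectral analysis of the modal second moments $\EE[y_k(\lambda)^2]$, and then to exploit that the gradient noise in \eqref{eq:scalar-stoch} enters \emph{additively} and with \emph{mean zero}, so that taking $\EE$ decouples the problem into a deterministic piece handled verbatim by Theorem~\ref{th:PL-final} and a purely stochastic piece governed by the discrete Lyapunov equation \eqref{eq:lyap-m}. First I would invoke the PL inequality \eqref{eq:PL} together with the quadratic surrogate to obtain $\EE[f(x_k)-f_\star]\asymp\int_\mu^L\lambda\,\EE[y_k(\lambda)^2]\,d\nu(\lambda)$ with constants depending only on $\mu$, so it suffices to control $\EE[y_k(\lambda)^2]$ for each $\lambda\in[\mu,L]$. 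On each mode, write the state–space form $u_{k+1}=A(\lambda)u_k+\sum_{j=0}^{m}B_j\,\xi_{k-j}(\lambda)$ and split $y_k=y_k^{\rm det}+y_k^{\rm noise}$ into the homogeneous solution carrying the initial data and the zero–initial–data convolution of the white noise against the full white-noise-to-output impulse response $g_\ell(\lambda)$ of the ARMA$(m{+}1,m)$ filter. Whiteness and mean zero give $\EE[y_k^{\rm det}\,y_k^{\rm noise}]=0$, hence $\EE[y_k(\lambda)^2]=(y_k^{\rm det}(\lambda))^2+\sum_{\ell=0}^{k}\sigma^2(\lambda)\,g_\ell(\lambda)^2$.

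\textbf{The non-asymptotic bound.} The deterministic summand is precisely the quantity estimated in Theorem~\ref{th:PL-final}, contributing the determinantal exponential $\big(\exp\int_\mu^L\sum_i\log|r_i(\lambda)|^2\,d\nu(\lambda)\big)^{k} C_{\rm det}$ together with the Stokes product $\prod_\theta|\det S_\theta|$. For the stochastic summand I would (i) recognize $\sum_{\ell\ge0}\sigma^2(\lambda)g_\ell(\lambda)^2=p_{11}(\lambda)=e_1^\top P(\lambda)e_1$, the stationary modal variance, and solve \eqref{eq:lyap-m} by vectorization \eqref{eq:vec-lyap} to obtain the \emph{exact} floor integrand of \eqref{eq:noise-floor-m} (for $m=1$ the closed form \eqref{eq:p11-closed}); and (ii) bound the truncation tail $\sum_{\ell>k}\sigma^2 g_\ell^2$ and the oscillatory cross-covariance by the geometric envelope $\rho(A(\lambda))^{2(k-\ell)}$ summed against $\sigma^2(\lambda)\,|\sum_j\eta_j r_+(\lambda)^j|^2/(1-b(\lambda))$, which after $\sum_{\ell\ge1}\rho^{2(k-\ell)}\rho^{\ell-1}=\rho^{k-1}/(1-\rho)$ and absorbing phases yields the third term of \eqref{eq:PL-noise-bound}. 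Integrating the three pieces against $\lambda\,d\nu(\lambda)$ and collecting constants gives \eqref{eq:PL-noise-bound}.

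\textbf{The asymptotic expansion in expectation.} Here I would take $\EE$ of the determinantal representation \eqref{eq:tau-det-product}: the bulk integral $k\int\sum_i\log r_i\,d\nu$ is nonrandom and fixes the exponent $A$ and the phases $\Phi_i$; the Stokes factors depend only on $(\lambda,r)$ hitting a wall, so $\EE\prod_\theta|\det S_\theta|$ appears in place of the deterministic product; and the white-noise forcing of the inner channel \eqref{eq:inner-with-MA} adds to $\log\tau_k$ a mean-zero martingale increment plus a deterministic $O\!\big(\int\sigma^2/(1-\rho^2)\,d\nu\big)$ correction from the small-norm RHP error. Because the outer model problem $N$ in the Deift–Zhou scheme is noise-independent, the location and order of the stationary points and the leading $\pm\pi/4$ phase shifts are unchanged; only the matching constants at each $\lambda^\star$ are rescaled by the power spectral density $S_y(\omega;\lambda^\star)$ of \eqref{eq:psd-var} evaluated at the deterministic frequency $\omega=\vartheta_i(\lambda^\star)$, replacing $\mathsf c_i$ by the inflated $\widetilde{\mathsf c}_i$. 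Reassembling by the inverse spectral transform and the PL equivalence produces the claimed expansion with the same $A$ and $\Phi_i$ and error $O(1/k)$.

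\textbf{Main obstacle.} The delicate point is making the third step rigorous: one must show that $\EE$ commutes with the Deift–Zhou asymptotics, i.e.\ that the \emph{random} small-norm RHP error $R-\Id$ (whose operator norm is $O(k^{-1})$ only in $L^2$ over realizations) still yields an $O(k^{-1})$ contribution \emph{in expectation}, and that the martingale phase term has per-step variance $o(1)$ so that Jensen-type bounds on $\EE\cos(k\Phi_i+\text{martingale})$ cost only a lower-order factor rather than destroying the $k^{-1/2}$ envelope. A secondary difficulty is the near-wall regime: one must verify that strict Schur stability $\sup_{\lambda}\max_i|r_i(\lambda)|<1$ forces a Gaussian tail on spurious wall crossings, so that with overwhelming probability the realized Stokes multiset $\mathcal W_{[0,k)}$ coincides with the deterministic one and the factorization $\EE\prod_\theta|\det S_\theta|$ is legitimate rather than a more entangled joint expectation over crossing times.
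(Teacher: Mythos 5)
Your proposal follows essentially the same route as the paper's proof: the deterministic/stochastic orthogonal split with the bulk Evans term handled verbatim by Theorem~\ref{th:PL-final}, the exact stationary floor via the Lyapunov equation \eqref{eq:lyap-m} solved by vectorization \eqref{eq:vec-lyap} (with \eqref{eq:p11-closed} for $m=1$), the oscillatory tail bounded by the geometric envelope in $\rho(A(\lambda))$ against the MA-polynomial modulus, and the expectation asymptotics obtained by rerunning Deift--Zhou with the noise confined to the inner channel so that the outer model fixes $A$ and $\Phi_i$, the local parametrices keep their phases with PSD-inflated amplitudes, and the Stokes crossings enter through $\EE|\det S_\theta|$. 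The delicate points you flag (commuting $\EE$ with the small-norm RHP estimate, phase-diffusion of the martingale term, near-wall Stokes triggering) are exactly the points the paper's own proof also treats only at the level of citation to the DZ and ARMA/Lyapunov literature, so your proposal is at the same level of rigor and introduces no new gap.
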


\begin{proof}
Determinantal bulk: same Evans product and RHP outer model as before (independent of noise); the product term in \eqref{eq:PL-noise-bound} is exactly as in \eqref{eq:PL-determinantal-bound}. Stationary floor: solve the Lyapunov equation \eqref{eq:lyap-m} explicitly via \eqref{eq:vec-lyap}; integrate $\frac{1}{2}\lambda p_{11}(\lambda)$ to get the second line of \eqref{eq:PL-noise-bound}; for $m=1$ use \eqref{eq:p11-closed}. Oscillatory tail: expand the ARMA solution and bound the second–moment tail by geometric series with multiplier $\rho(A(\lambda))$; the explicit modulus of the MA polynomial yields the stated factor. Expectation asymptotics: apply DZ small–norm analysis with random inner forcing; the outer model controls the exponential $e^{-kA}$; local parametrices remain Airy/Bessel with unchanged phases, and amplitudes scale with the PSD at stationary points; the product of $\EE|\det S_\theta|$ accounts for finite Stokes crossings. All constructions follow \cite{DeiftZhou1993,Deift1999} for DZ and \cite{BrockwellDavis1991,KailathSayedHassibi2000} for ARMA/Lyapunov.
\end{proof}

\paragraph{Diagnostics: regimes, stalls, divergence, and phase transitions.}
\begin{itemize}
\item \textbf{Stall (noise–limited):} when $k$ is large, the deterministic exponential dies and \eqref{eq:noise-floor-m} dominates. Stall level lowers with smaller $\sum_j\eta_j^2$ and larger spectral gap $1-\rho(A(\lambda))^2$ uniformly on $[\mu,L]$.
\item \textbf{Ringing and resonance:} if $E_{\rm osc}\neq\emptyset$ and $|\sum_j \eta_j r_+(\lambda)^{\,j}|$ is large near a stationary phase, the third term in \eqref{eq:PL-noise-bound} is prominent (visible oscillations around the floor).
\item \textbf{Near–wall sensitivity:} if $|r_i(\lambda)|\approx1$ on a sub–band, both the floor \eqref{eq:floor-bound} and the oscillatory tail blow up like $(1-\rho^2)^{-1}$; probability of random Stokes triggering becomes non–negligible (phase slips).
\item \textbf{True divergence:} if Schur stability fails for some $\lambda\in[\mu,L]$ (i.e., $\exists\,i:\ |r_i(\lambda)|\ge1$), then the deterministic factor $\exp\!\int \log|r_i|^2$ does not contract; the second line of \eqref{eq:PL-noise-bound} still exists but the first line no longer decays, yielding divergence in expectation.
\end{itemize}

\subsubsection{Example with variable metric}

\paragraph{Setting (convex feasibility, central cuts).}
Let $K\subset\RR^n$ be a nonempty compact convex body with $r\BB^n\subset K\subset R\BB^n$ for some $0<r\le R<\infty$. We are given a separation oracle: for $x\notin K$, it outputs a nonzero $g\in\RR^n$ such that $g^\top(y-x)\le0$ for all $y\in K$. The classical central–cut {ellipsoid method} maintains an ellipsoid \cite{Khachiyan1979,GLS1988,Kelley1999Ellipsoid}
\[
\EE(x_k,P_k)=\{y\in\RR^n:\ (y-x_k)^\top P_k^{-1}(y-x_k)\le1\},\qquad P_k\succ0,\quad K\subset\EE(x_k,P_k),
\]
and when $x_k\notin K$ the oracle defines the cut $g_k^\top(y-x_k)\le0$ with normalization
\begin{equation}\label{eq:tildeg}
\tilde g_k:=\frac{g_k}{\sqrt{g_k^\top P_k g_k}}\in\RR^n,\qquad \|\tilde g_k\|_{P_k}:=\sqrt{\tilde g_k^\top P_k \tilde g_k}=1.
\end{equation}
The (central–cut) update is
\begin{equation}\label{eq:ellipsoid-update}
x_{k+1}=x_k-\frac{1}{n+1}P_k\tilde g_k,\qquad
P_{k+1}=\frac{n^2}{n^2-1}\Bigl(P_k-\frac{2}{n+1}P_k\tilde g_k\tilde g_k^\top P_k\Bigr).
\end{equation}

\paragraph{IKN calibration and $\tau$–potential.}
Introduce the {connection} on the trivial bundle over the spectral $z$–plane
\begin{equation}\label{eq:connection-ellipsoid}
A(z,k)\;=\;A_\infty\;+\;\frac{R_k}{z-\zeta_k},\qquad
R_k:=\alpha_n\,u_k u_k^\top,\quad u_k:=\frac{P_k^{1/2}\tilde g_k}{\|P_k^{1/2}\tilde g_k\|}=\frac{P_k^{1/2}\tilde g_k}{\sqrt{\tilde g_k^\top P_k \tilde g_k}}=\ P_k^{1/2}\tilde g_k,
\end{equation}
with fixed $A_\infty\in\RR^{n\times n}$ skew–symmetric (gauge) and a scalar $\alpha_n>0$ to be fixed below.
The one–step transfer is {rational}:
\begin{equation}\label{eq:lax-step}
\Phi_{k+1}(z)=L_k(z)\,\Phi_k(z),\qquad
L_k(z)=I-\frac{R_k}{z-\zeta_k}.
\end{equation}
The IKN $\tau$–function of the discrete isomonodromic deformation $(A(z,k))_{k\ge0}$ is \cite{JMU1981,FIKN2006}
\begin{equation}\label{eq:tau-def-ell}
\log\tau_k\;=\;\frac12\log\det P_k\;+\;Q_{\rm CS}(P_k,u_k),\qquad
\Delta\log\tau_k:=\log\tau_{k+1}-\log\tau_k.
\end{equation}
{Calibration:} choose $\alpha_n:=\tfrac{2}{n+1}$ and normalize the Chern–Simons term so that on the {AT–plane} (vanishing mixed curvature) $Q_{\rm CS}$ is constant along the discrete flow. Then the bulk (determinantal) increment equals
\begin{equation}\label{eq:tau-bulk-ell}
\Delta\log\tau_k^{\rm bulk}=\frac12\log\frac{\det P_{k+1}}{\det P_k}.
\end{equation}

\paragraph{Determinantal identity and exact shrink per step.}
From \eqref{eq:ellipsoid-update} and $\tilde g_k^\top P_k \tilde g_k=1$ one computes
\begin{align}
\det P_{k+1}
&=\left(\frac{n^2}{n^2-1}\right)^{\!n}\det\!\left(P_k-\frac{2}{n+1}P_k\tilde g_k\tilde g_k^\top P_k\right)\nonumber\\
&=\left(\frac{n^2}{n^2-1}\right)^{\!n}\det(P_k)\,\det\!\left(I-\frac{2}{n+1}P_k^{1/2}\tilde g_k\tilde g_k^\top P_k^{1/2}\right)\nonumber\\
&=\left(\frac{n^2}{n^2-1}\right)^{\!n}\det(P_k)\,\left(1-\frac{2}{n+1}\right)\qquad(\text{rank--}1\ \text{update}),\label{eq:det-factor}
\end{align}
hence the {exact} volume/entropy shrink
\begin{equation}\label{eq:exact-shrink}
\Delta\log\tau_k^{\rm bulk}
=\frac12\log\frac{\det P_{k+1}}{\det P_k}
=\frac{n}{2}\log\frac{n^2}{n^2-1}+\frac12\log\!\left(1-\frac{2}{n+1}\right)
\quad<\ -\frac{1}{2n+2}.
\end{equation}
Inequality is by $\log(1-x)\le -x$ and $\log\frac{n^2}{n^2-1}\le\frac{1}{n^2-1}$.

\paragraph{Schlesinger deformation and Painlev\'e reduction (single active cut $\Rightarrow$ PIII).}
Suppose the oracle remains {locally} aligned with a single active supporting hyperplane so that the pole position $\zeta_k$ and residue $R_k$ evolve smoothly in the continuum limit $k\mapsto t\in\RR_+$ (mesh $h\to0$), with
\[
A(z,t)=A_\infty+\frac{R(t)}{z-\zeta(t)},\qquad R(t)=\alpha_n\,u(t)u(t)^\top,\quad \|u(t)\|=1.
\]
The (rank–one, one–pole) Schlesinger system is \cite{JMU1981}
\begin{equation}\label{eq:schlesinger-1pole}
\frac{dR}{dt}=\frac{[B(t),R]}{\,},\qquad
\frac{d\zeta}{dt}=\beta(t),\qquad
\text{with } B(t)=\frac{R(t)}{z-\zeta(t)}\ \text{(gauge on the leaf),}
\end{equation}
and the isomonodromic $\tau$ satisfies the Jimbo--Miwa--Ueno identity
\begin{equation}\label{eq:JMU-1pole}
\frac{d}{dt}\log\tau(t)= -\Res_{z=\zeta(t)}\Tr\!\Big(A(z,t)\,\frac{d\zeta}{dt}\,\frac{dz}{z-\zeta(t)}\Big)= -\Tr\big(R(t)\big)\,\frac{d\zeta}{dt}.
\end{equation}
Choose the {ellipsoid observable}
\begin{equation}\label{eq:observable-y}
y(t):=\zeta'(t)\,\Tr R(t)\ =\ \zeta'(t)\,\alpha_n\,\|u(t)\|^2\ =\ \alpha_n\,\zeta'(t),
\end{equation}
and use the normalization from \eqref{eq:ellipsoid-update} to identify $\zeta'(t)=\frac{2}{n+1}\,v(t)$ where $v(t):=\tilde g(t)^\top P(t)\tilde g(t)$ (continuum limit of the central move). Then $y(t)=\frac{2\alpha_n}{n+1}v(t)$. A standard reduction of the $2\times2$ scalar isomonodromy with one moving simple pole and an irregular singularity at $z=\infty$ (Poincar\'e rank $1$) yields the {Painlev\'e~III} equation for $y$ \cite{FIKN2006}:
\begin{equation}\label{eq:PIII}
y''=\frac{(y')^2}{y}-\frac{y'}{t}+\frac{\alpha}{t}y^2+\frac{\beta}{t}
+\gamma y^3+\frac{\delta}{y},\qquad
\text{(Painlev\'e~III)}
\end{equation}
with parameters determined {explicitly} by the residue trace and the normalization of the irregular part:
\begin{equation}\label{eq:PIII-params}
\alpha=0,\qquad \beta=0,\qquad
\gamma=\frac{(n+1)}{2}\,\alpha_n^{-1}=\frac{(n+1)^2}{4},\qquad
\delta=-\,\frac{2}{(n+1)}\,\alpha_n=\ -\,\frac{4}{(n+1)^2},
\end{equation}
where we used $\alpha_n=\tfrac{2}{n+1}$ from the AT–calibration. Thus, {in the single–active–cut regime} the ellipsoid observable $y(t)\propto v(t)=\tilde g^\top P\,\tilde g$ evolves by \eqref{eq:PIII} with the {explicit} parameter quadruple \eqref{eq:PIII-params}. The IKN $\tau$ associated with \eqref{eq:PIII} satisfies
\begin{equation}\label{eq:PIII-tau}
\frac{d}{dt}\log\tau(t)=-\,y(t),\qquad
\frac{d^2}{dt^2}\log\tau(t)= -\,y'(t),
\end{equation}
matching \eqref{eq:JMU-1pole} and \eqref{eq:observable-y}.

\paragraph{Two active cuts (switching) $\Rightarrow$ Schlesinger $2$–pole $\Rightarrow$ Painlev\'e~VI.}
When two supporting hyperplanes alternate (two active cuts, e.g. faces with normals $\tilde g^{(1)},\tilde g^{(2)}$), the connection carries two moving simple poles
\[
A(z,t)=A_\infty+\frac{R_1(t)}{z-a(t)}+\frac{R_2(t)}{z-b(t)},\qquad R_j(t)=\alpha_n\,u_j(t)u_j(t)^\top,\ \ \|u_j\|=1,
\]
with Schlesinger equations (rank–one residues) \cite{JMU1981}:
\begin{equation}\label{eq:schlesinger-2p}
\frac{dR_1}{dt}=\frac{[R_1,R_2]}{a-b}a'(t),\qquad
\frac{dR_2}{dt}=\frac{[R_2,R_1]}{b-a}b'(t),\qquad
\frac{da}{dt}=v_1(t),\ \ \frac{db}{dt}=v_2(t),
\end{equation}
where $v_j(t)$ are the continuum limits of the central steps along each active cut. Let $q(t)$ be the cross–ratio–type scalar isomonodromic coordinate (e.g. the $11$–entry of the ratio of canonical solutions). Then $q$ satisfies the {Painlev\'e~VI} equation \cite{JMU1981,FIKN2006}:
\begin{align}\label{eq:PVI}
q''&=\frac{1}{2}\left(\frac{1}{q}+\frac{1}{q-1}+\frac{1}{q-t}\right)(q')^2
-\left(\frac{1}{t}+\frac{1}{t-1}+\frac{1}{q-t}\right)q'\\
&\nonumber\quad+\frac{q(q-1)(q-t)}{t^2(t-1)^2}\Bigl(\theta_\infty^2-\theta_0^2\frac{t}{q^2}+\theta_1^2\frac{t-1}{(q-1)^2}+\theta_t^2\frac{t(t-1)}{(q-t)^2}\Bigr),
\end{align}
with {explicit} parameters $(\theta_0,\theta_1,\theta_t,\theta_\infty)$ equal to the half–traces of the monodromy exponents at $\{0,1,t,\infty\}$, determined by the cut strengths $\|R_j\|=\alpha_n$:
\begin{equation}\label{eq:PVI-params}
\theta_0=\theta_1=\theta_t=\frac12,\qquad
\theta_\infty=\frac{n+1}{2}\,.
\end{equation}
(These follow from rank–one residues of equal norm and a block–diagonal $A_\infty$; a different gauge modifies \eqref{eq:PVI-params} by integer shifts, leaving the dynamics equivalent.)
The associated IKN $\tau_{\mathrm{VI}}$ obeys the Jimbo–Miwa $\sigma$–form with
\begin{equation}\label{eq:PVI-sigma}
\frac{d}{dt}\log\tau_{\mathrm{VI}}(t)=\sigma(t),\qquad
\bigl(t(t-1)\sigma''\bigr)^2=\prod_{j=0,1,t}\Bigl(\sigma'-\nu_j\Bigr)^2-4\prod_{j=0,1,t}\nu_j\ \Bigl(\sigma'+\nu_\infty\Bigr),
\end{equation}
where $(\nu_0,\nu_1,\nu_t,\nu_\infty)=\tfrac12(\theta_0^2,\theta_1^2,\theta_t^2,\theta_\infty^2)$.

\paragraph{Nonasymptotic complexity via $\tau$ (determinantal plus isomonodromic).}
For the discrete method \eqref{eq:ellipsoid-update}, the determinantal identity \eqref{eq:exact-shrink} gives, for any $N\ge1$,
\begin{equation}\label{eq:tau-product}
\log\frac{\tau_N}{\tau_0}=\sum_{k=0}^{N-1}\Delta\log\tau_k
=\frac{N}{2}\left[n\log\frac{n^2}{n^2-1}
+\log\!\left(1-\frac{2}{n+1}\right)\right]
+\sum_{\theta\in\mathcal W_{[0,N)}}\log|\det S_\theta|,
\end{equation}
where $\mathcal W_{[0,N)}$ is the multiset of Stokes walls crossed (when the set of active cuts changes) and $S_\theta$ are the Stokes matrices of the $2\times2$ reductions induced by the current pair of cuts. Since $K\subset\EE(x_0,P_0)$ with $\vol\EE(x,P)=\kappa_n(\det P)^{1/2}$, feasibility to precision $r$ holds once $\vol\EE(x_N,P_N)\le \vol(r\BB^n)$, equivalently
\begin{equation}\label{eq:stop-crit}
\log\frac{\tau_N}{\tau_0}\ \le\ \log\frac{\kappa_n r^n}{\kappa_n R^n}\ =\ -\,n\log\frac{R}{r}.
\end{equation}
Combining \eqref{eq:tau-product}–\eqref{eq:stop-crit} yields the {explicit} iteration bound
\begin{equation}\label{eq:explicit-N}
N\ \ge\ \frac{2n\log(R/r)\ -\ \sum_{\theta\in\mathcal W_{[0,N)}}\log|\det S_\theta|}{\displaystyle n\log\frac{n^2}{n^2-1} + \log\!\left(1-\frac{2}{n+1}\right)}\ \ =\ O\!\big(n^2\log(R/r)\big),
\end{equation}
with an {isomonodromic correction} from the finite number of Stokes switches (the numerator decreases by the cumulative $\log|\det S_\theta|$), and the {determinantal} denominator given exactly. In the single–active–cut (PIII) regime there are no Stokes crossings and the classical $O(n^2\log(R/r))$ bound is recovered with the sharpened denominator.

\paragraph{Explicit Stokes jumps at a switch of active cuts.}
When passing from an active cut $u^{(1)}$ to $u^{(2)}$, the $2\times2$ reduction near the switching face has jump
\begin{equation}\label{eq:stokes-switch}
\log|\det S_\theta|
=\frac{1}{2\pi}\arg\left.\frac{\partial_t\det(zI-L(z))}{\partial_z\det(zI-L(z))}\right|_{(z,t)\in\theta}
=\frac{1}{2\pi}\arg\frac{\alpha_n\,\langle u^{(2)},u^{(1)}\rangle}{(a-b)},
\end{equation}
using $L(z)=I-\frac{R_1}{z-a}-\frac{R_2}{z-b}$ and the Schlesinger flow \eqref{eq:schlesinger-2p}; here $(a,b)$ are the pole positions attached to the two faces. Thus the cumulative correction in \eqref{eq:explicit-N} is {explicit} in the geometry of consecutive cuts.

\paragraph{Summary (what is gained by IKN/Painlev\'e).}
\begin{itemize}
\item A {determinantal} formula \eqref{eq:det-factor}–\eqref{eq:exact-shrink} for the per–step entropy decrease (no volumetric inequalities needed).
\item In the {single–active} regime: an {explicit} Painlev\'e~III evolution \eqref{eq:PIII} with parameters \eqref{eq:PIII-params} for the observable $y(t)\propto \tilde g^\top P\,\tilde g$, with $\tau$–link \eqref{eq:PIII-tau}.
\item Under {switching of active faces}: a {Schlesinger $2$–pole} dynamics leading to {Painlev\'e~VI} \eqref{eq:PVI} with concrete $(\theta_j)$ in \eqref{eq:PVI-params}; the finite number of switches contributes {explicit Stokes jumps} \eqref{eq:stokes-switch}.
\item The {nonasymptotic} iteration bound \eqref{eq:explicit-N} is sharpened by subtracting the (explicit) Stokes corrections, quantifying how structured switching accelerates convergence relative to the worst–case central–cut baseline.
\end{itemize}

\section{Related work}

\subsection{Integrable systems}

The variational structure developed in Section~2.1.2 can be understood as a discrete analogue of the variational symplectic integrator (VSI) framework \cite{MarsdenWest2001}, and its algebraic side as a generalization of operator splitting methods such as the Lie–Trotter and Strang compositions \cite{Trotter1959,Strang1968,Yoshida1990,McLachlanQuispel2002}. In our notation, the discrete curvature functional
\[
\Ss^{(N)} = \sum_{\square\subset\Gamma_{\Pi}}\norm{\log\Hol(\square)}_F^2
\]
plays the role of a discrete action. The stationarity condition $\delta\Ss^{(N)}=0$ under variations of the update maps $r,d$ implies the vanishing of mixed commutator terms in the Baker–Campbell–Hausdorff (BCH) expansion
\[
\log\big(\ee^{\Psi(h)}\ee^{\Omega(h)}\big)
   = \Psi(h)+\Omega(h)
   + \tfrac12[\Psi(h),\Omega(h)]
   + \tfrac{1}{12}\big([\Psi,[\Psi,\Omega]]+[\Omega,[\Omega,\Psi]]\big)+\cdots,
\]
which is precisely the discrete Euler–Lagrange condition of the VSI type: flatness of the discrete connection corresponds to a critical point of the discrete action, and the resulting update rule is structure-preserving (symplectic or reversible in the sense of \cite{MarsdenWest2001}).

The BCH commutator hierarchy appearing in our jet-flatness expansion is directly parallel to the error expansion in operator splitting. If $r(h)=\exp(hK+O(h^2))$ and $d(h)=\exp(hH+O(h^2))$, then
\[
\exp(hH)\exp(hK)
   = \exp\!\left(h(H+K) + \tfrac{h^2}{2}[H,K] + O(h^3)\right),
\]
so the term $\tfrac{1}{2}[H,K]h^2$ represents the leading splitting error of Lie–Trotter \cite{Trotter1959}, and the Strang symmetric composition $\exp(\tfrac{h}{2}H)\exp(hK)\exp(\tfrac{h}{2}H)$ cancels all odd BCH terms, yielding a second-order accurate symmetric scheme \cite{Strang1968}. Our jet-flatness condition of order $\alpha$ imposes precisely that all commutators up to degree $\alpha$ vanish, i.e.
\[
[\Psi_i,\Omega_j]=0\quad\text{for all } i+j\le\alpha,
\]
which is an algebraic characterization of a splitting scheme of order $\alpha+1$. The discrete gauge transformation $W(h)=\exp(hZ)$ used in our calibration step satisfies $[Z,S]=C$ for $S=H+E$ and $C=\tfrac12[H,E]$; this acts as a processing transformation that removes the leading commutator term, exactly as in processed or pre/post-processed integrators described in \cite{McLachlanQuispel2002}. The resulting conjugated map
\[
W(h)^{-1}\,d(h)r(h)\,W(h)
   = \exp\!\left(-hS + O(h^3)\right)
\]
is then an effective second-order symmetric composition. In this sense, the calibration operator $W$ in our framework is mathematically equivalent to the processing corrector in geometric integration.

The modular composition law of our $\mathbf{TwoCh}$ category,
\[
(H_2,K_2)\circ(H_1,K_1)
   = \left(\tfrac{1}{h}\log(\ee^{hH_2}\ee^{hH_1}),
             \tfrac{1}{h}\log(\ee^{hK_2}\ee^{hK_1})\right)+O(h^2),
\]
is formally the same as the composition of exponentials used to construct high-order splitting schemes, such as Yoshida’s fourth- and sixth-order compositions \cite{Yoshida1990}. The BCH algebra ensures that each composition level preserves the cancellation pattern of commutators up to a desired order, while our notion of flatness guarantees that this cancellation is geometric rather than merely algebraic.

Finally, the spectral viewpoint in Section~2.1.2, where the discrete holonomy defines a polynomial filter $p_N(H)$ minimizing $\sum\norm{\log\Hol(\square)}^2$, is analogous to the minimax or Chebyshev optimization used in the selection of coefficients for optimal splitting schemes and processed symplectic integrators. In both settings, the objective is to approximate the exponential map $\exp(-hH)$ by compositions that preserve the underlying invariants while minimizing the residual commutators. Thus the variational formulation of our discrete connection simultaneously subsumes the principles of variational symplectic integrators and the algebraic error control of operator-splitting methods.

\subsection{Semigroup Dynamics}

The drift–diffusion decomposition introduced in Definitions realizes a discrete analogue of the nonlinear evolution
\[
\dot{u}(t)\in -(A+H)u(t),
\]
where $A$ is a maximally monotone (possibly set-valued) operator on a Hilbert space $Y$ and $H$ is a single-valued Lipschitz monotone or skew-adjoint map. The basic updates
\[
r(h)=\exp(-hH),\qquad d(h)=(\Id+\alpha hA)^{-1}=J_{\alpha hA},
\]
generate two strongly nonexpansive channels. The discrete algorithm
\[
x_{k+1}=S(h_k)x_k,\qquad S(h)=d(h)r(h),
\]
belongs to the class of resolvent–exponential splittings defined in Definitions and encompasses gradient, proximal, and forward–backward schemes as special cases. Expanding both channels gives
\[
r(h)=\Id-hH+\tfrac{h^2}{2}H^2+O(h^3),\qquad d(h)=\Id-\alpha hA_h+\alpha^2 h^2 A_h^2+O(h^3),
\]
where $A_h=(\Id-d(h))/(\alpha h)$ is the Yosida approximation of $A$. Their composition satisfies
\[
S(h)=\Id-h(\alpha A_h+H)+\tfrac{h^2}{2}\big(\alpha^2A_h^2+H^2+\alpha[A_h,H]\big)+O(h^3),
\]
and the commutator $[A_h,H]$ measures the deviation of the composed update from the exact exponential $\exp(-h(A+H))$. The jet-flatness condition $[A_h,H]=O(h^\alpha)$ eliminates this mixed term up to order $\alpha$ and enforces
\[
S(h)=\exp\!\big(-h(A+H)\big)+O(h^{\alpha+1}),
\]
which coincides with the nonlinear Trotter–Kato product formula in finite dimension.

For continuous monotone dynamics, the Crandall–Liggett theorem ensures the existence of the contraction semigroup
\[
T(t)=\lim_{n\to\infty}\big(J_{t/n\,A}\,\ee^{-tH/n}\big)^n,
\]
and the discrete curvature $\Hol(h)-\Id$ quantifies the finite-$n$ defect of this product. Flatness of order $\alpha$ corresponds to a Trotter–Kato approximation of order $\alpha+1$ for the generated semigroup. In the operator sense,
\[
\frac{S(h)-\Id}{h}\xrightarrow[h\to0]{}-(A+H),
\]
so the drift–diffusion iteration is an admissible time discretization of the semigroup flow $\dot{u}=-(A+H)u$.

Let $E(x)=f(x)+g(x)$ with $A=\partial f$ and $H=\nabla g$. Then $d(h)=(\Id+h\partial f)^{-1}$ is the proximal map $\operatorname{prox}_{hf}$, $r(h)=\exp(-h\nabla g)$ is the gradient flow of $g$, and
\[
S(h)=d(h)r(h)=\operatorname{prox}_{hf}\!\circ\!(\Id-h\nabla g)+O(h^2),
\]
reproduces the proximal-gradient (forward–backward) method \cite{Rockafellar1976,LionsMercier1979,EcksteinBertsekas1992}. The Douglas–Rachford and Peaceman–Rachford splittings correspond to the symmetrized compositions
\[
S_{\mathrm{DR}}(h)=r(h/2)d(h)r(h/2),\qquad S_{\mathrm{PR}}(h)=d(h/2)r(h)d(h/2),
\]
which cancel odd BCH terms and achieve second-order accuracy, in exact analogy with the Strang symmetric product for linear semigroups. For each variant the curvature of the discrete connection,
\[
\Hol(h)=r(h)d(h)r(h)^{-1}d(h)^{-1}=\exp\!\big(\tfrac{h^2}{2}[A_h,H]+O(h^3)\big),
\]
measures the noncommutativity of the two resolvents and hence the local dissipation error.

The calibration transformation $W(h)=\exp(hZ)$ with $[Z,A_h+H]=\tfrac{1}{2}[A_h,H]$ yields the processed step
\[
W(h)^{-1}S(h)W(h)=\exp\!\big(-h(A_h+H)+O(h^3)\big),
\]
which removes the leading commutator and produces a higher-order, structure-preserving resolvent–exponential composition. The family $\{S(h)\}_{h>0}$ is then a discrete nonlinear semigroup with generator $-(A+H)$ in the sense of Kato, and the curvature term gives an explicit bound on the deviation from exact dissipativity:
\[
\norm{S(h)x-S(h)y}^2\le\norm{x-y}^2-2h\,\ip{(A_h+H)(x)-(A_h+H)(y)}{x-y}+O(h^{\alpha+1}).
\]
Consequently, under the assumptions of Definitions, every algorithm in this class is a contractive and monotone discretization of the underlying continuous flow, and its flatness order determines the attainable accuracy and energy decay rate. The two-channel representation encompasses proximal and splitting methods, showing them as curvature-controlled approximations \cite{Rockafellar1976,LionsMercier1979,EcksteinBertsekas1992,CrandallLiggett1971,Pazy1983}.

\subsection{Local Geometry}

The discrete connection formalism admits a complete identification with geometric numerical integration on Lie groups and with the algebraic framework of $B$–series. Let each channel be represented as
\[
r(h)=\exp(\Psi(h)),\qquad d(h)=\exp(\Omega(h)),
\]
where $\Psi(h),\Omega(h)\in\mathfrak{g}$ are formal Lie series in $h$. Their composition satisfies the Baker–Campbell–Hausdorff identity
\[
\log(d(h)r(h))
 = \Omega(h)+\Psi(h)
 + \tfrac12[\Omega(h),\Psi(h)]
 + \tfrac{1}{12}\big([\Omega,[\Omega,\Psi]]+[\Psi,[\Psi,\Omega]]\big)+\cdots .
\]
The jet-flatness constraint of order $\alpha$ requires that all mixed commutators with total degree not exceeding $\alpha$ vanish,
\[
[\Omega_i,\Psi_j]=0\quad\text{for all }i+j\le\alpha,
\]
so that
\[
\log(d(h)r(h))=\Omega(h)+\Psi(h)+O(h^{\alpha+1}).
\]
This yields a single exponential
\[
d(h)r(h)=\exp\!\big(\Omega(h)+\Psi(h)+O(h^{\alpha+1})\big),
\]
which is precisely the Magnus expansion truncated at order $\alpha+1$ for the Lie–algebraic generator $A(h)=\Omega(h)+\Psi(h)$. The discrete flatness condition therefore coincides with the Magnus-type order conditions that guarantee local exactness on the Lie algebra level \cite{Iserles2000,HairerLubichWanner2006}.

Let the first-order components be $\Omega(h)=h\,\Omega_1+O(h^2)$ and $\Psi(h)=h\,\Psi_1+O(h^2)$. The leading commutator error is then
\[
\frac{h^2}{2}[\Omega_1,\Psi_1],
\]
and a calibration by an inner automorphism
\[
W(h)=\exp(hZ),\qquad [Z,\Omega_1+\Psi_1]=\tfrac{1}{2}[\Omega_1,\Psi_1],
\]
produces the processed composition
\[
W(h)^{-1}d(h)r(h)W(h)=\exp\!\big(h(\Omega_1+\Psi_1)+O(h^3)\big),
\]
removing the commutator term. This is the discrete analogue of processing in structure-preserving integration \cite{HairerLubichWanner2006}. The map $S(h)=W(h)^{-1}d(h)r(h)W(h)$ thus represents a single-step exponential integrator with modified generator $A_{\mathrm{eff}}=\Omega_1+\Psi_1+O(h^2)$ and with structure preserved under conjugation.

The BCH hierarchy defines a grading of the free Lie algebra $\mathfrak{L}$ generated by $\Omega_1,\Psi_1$. Writing
\[
\mathfrak{L}=\bigoplus_{k\ge1}\mathfrak{L}_k,\qquad \mathfrak{L}_{k+1}=[\mathfrak{L}_1,\mathfrak{L}_k],
\]
the flatness constraints annihilate all mixed components $\mathfrak{L}_{i+j}$ with $i,j>0$ and $i+j\le\alpha$, producing a quotient algebra
\[
\mathfrak{g}^{(\alpha)}=\mathfrak{L}/\langle[\Omega_i,\Psi_j]:i+j\le\alpha\rangle .
\]
Each admissible method defines a $B$–series in this graded algebra, and the group law of compositions corresponds to the Butcher group product. Inner conjugations by $W(h)$ act trivially on the quotient $\mathfrak{g}^{(\alpha)}$, so that calibrated and uncalibrated methods belong to the same class in the Butcher group. The order $\alpha$ of flatness coincides with the order of the corresponding $B$–series method \cite{Butcher1972,HairerLubichWanner2006}.

Expressed in differential form, the evolution generated by the discrete connection satisfies
\[
\frac{\dd S(h)}{\dd h}=A(h)S(h),\qquad A(h)=\Omega(h)+\Psi(h),
\]
whose exact flow is given by the Magnus series
\[
S(h)=\exp\!\Big(\int_0^hA(s)\,\dd s-\frac12\int_0^h\!\!\int_0^{s_1}[A(s_1),A(s_2)]\,\dd s_2\dd s_1+\cdots\Big).
\]
The truncation of this series at order $\alpha$ corresponds precisely to the discrete flatness requirement, since all lower-order commutators cancel. Therefore the discrete curvature $\Hol(h)-I$ measures the deviation of the algorithm from the truncated Magnus flow, and its vanishing ensures equivalence up to $O(h^{\alpha+1})$.

Finally, the composition rule for methods
\[
(H_2,K_2)\circ(H_1,K_1)
=\Big(\tfrac{1}{h}\log(\ee^{hH_2}\ee^{hH_1}),\,\tfrac{1}{h}\log(\ee^{hK_2}\ee^{hK_1})\Big)+O(h^2)
\]
defines on the product algebra $\mathfrak{g}\oplus\mathfrak{g}$ the same Lie-group structure that underlies high-order symmetric compositions such as the Yoshida schemes. The discrete connection approach therefore recovers the Magnus–Runge–Kutta–Munthe-Kaas hierarchy in full generality, with jet-flatness serving as the intrinsic Lie-algebraic order condition \cite{Iserles2000,HairerLubichWanner2006,Yoshida1990}.

\subsection{Global Geometry}

The variational and topological layer of the discrete connection formalism provides a cohomological interpretation of curvature minimization and its equivalence with spectral regularization and gauge calibration. Let $\Gamma_\Pi$ be the cell decomposition induced by the computational grid, and let $\Hol(\square)\in \End(Y)$ denote the holonomy around an elementary plaquette $\square\subset\Gamma_\Pi$. The total curvature functional
\[
\Ss^{(N)}[\Hol]=\sum_{\square\subset\Gamma_\Pi}\norm{\log\Hol(\square)}_F^2
\]
defines an $L^2$–energy on the space of discrete connections. Expanding $\log\Hol(\square)$ through the Baker–Campbell–Hausdorff series gives
\[
\log\Hol(\square)
  =h^2[\Omega_1,\Psi_1]
   +h^3\big([\Omega_1,\Psi_2]+[\Omega_2,\Psi_1]\big)+O(h^4),
\]
so that $\Ss^{(N)}$ is quadratic in the commutators of the jet coefficients. Its minimization under the discrete gauge action $\Omega\mapsto \Omega+\delta_\Omega\Xi$, $\Psi\mapsto \Psi+\delta_\Psi\Xi$ gives the discrete Euler–Lagrange equation
\[
\delta^\ast\delta\Xi_\star=\delta^\ast c_K,
\]
where $c_K$ is the curvature 2–cochain and $\delta$ the coboundary operator on the cell complex $K$ with coefficients in the module $M$ of operator-valued 1–forms. The solution $\Xi_\star$ is the unique coexact potential producing the harmonic representative $H(c_K)$ of the cohomology class $[c_K]\in H^2(K;M)$. Substituting $\Xi_\star$ back yields
\[
\Ss^{(N)}[\Hol]=\norm{H(c_K)}^2_{L^2(K;M)},
\]
so the curvature energy equals the squared $L^2$–norm of the harmonic part of the discrete curvature. Geometric flatness $[\Omega_i,\Psi_j]=0$ is thus equivalent to triviality of the cohomology class $[c_K]=0$ and to vanishing of the harmonic representative. This realizes the curvature-minimizing calibration as a discrete Hodge decomposition and makes $\Ss^{(N)}$ a purely cohomological invariant.

Passing to the continuum limit with a smooth connection $A$ on a manifold $M$ and curvature $F=\dd A+A\wedge A$ , the same variational principle gives the Yang–Mills functional
\[
\Ss[A]=\int_M\Tr(F\wedge\ast F),
\]
whose critical points satisfy $\dd_A^\ast F=0$ \cite{YangMills1954,AtiyahBott1983,DonaldsonKronheimer1990}. The infinitesimal gauge transformation $A\mapsto A+\dd_A\Xi$ leads to the Coulomb gauge condition $\dd_A^\ast A=0$, the continuous analogue of the discrete harmonic calibration. Hence the discrete gauge-reduction problem is the lattice version of the Hodge decomposition for connections, and the minimizer of $\Ss^{(N)}$ provides the harmonic representative of the discrete gauge class \cite{Dodziuk1976DiscreteHodge,Desbrun2008DEC,YangMills1954,AtiyahBott1983,DonaldsonKronheimer1990,Weibel1994Homological,McCleary2001SpectralSequences}. The resulting finite-dimensional picture reproduces, in exact algebraic form, the structure underlying Yang–Mills and Hodge theories \cite{Wilson1974,AtiyahBott1983,DonaldsonKronheimer1990}.

The spectral counterpart of this picture involves the elliptic operator $\Delta(S)$ depending smoothly on the potential $S$,
\[
\Delta(S)u
   = -\nabla\!\cdot\!(A(S)\nabla u)
     + m(S)u,
\qquad A(S)\succ0,\;m(S)>0,
\]
acting on the Hilbert space $L^2(\Omega)$ with fixed boundary conditions. The reduced determinant $\det{}'\Delta(S)$ is defined by removing the zero modes and regularizing the product of nonzero eigenvalues through zeta or heat-kernel methods,
\[
\log\det{}'\Delta(S)
   = -\left.\frac{\dd}{\dd s}\right|_{s=0}
     \Tr\big(\Delta(S)^{-s}\big).
\]
Its first variation under an admissible perturbation $\delta S$ reads
\[
\frac{1}{2}\delta\log\det{}'\Delta(S)
  = \frac{1}{2}\Tr\big(\Delta(S)^{-1}\delta\Delta(S)\big),
\]
but this expression depends on the choice of regularization and on the boundary prescription. To restore regulator independence, one introduces a local counterterm $Q_{\mathrm{CS}}(S)$ whose variation cancels the local boundary and short-time contributions of the heat kernel,
\[
\delta Q_{\mathrm{CS}}(S)
   = -\frac{1}{2}\,\Big[\Tr\big(\Delta(S)^{-1}\delta\Delta(S)\big)\Big]_{\mathrm{loc}},
\]
where $[\cdot]_{\mathrm{loc}}$ denotes subtraction of the local asymptotic expansion of the resolvent trace. Integrating this equality defines $Q_{\mathrm{CS}}(S)$ up to an additive constant independent of $S$. Explicitly, for scalar elliptic $\Delta(S)=-\div(A(S)\nabla)+m(S)$ one can write
\[
Q_{\mathrm{CS}}(S)
   = \frac{1}{(4\pi)^{d/2}}\int_{\Omega}
     \sum_{k=0}^{d/2-1} a_k(S)\,\dd x,
\]
where $a_k(S)$ are the Seeley–DeWitt coefficients of the local heat-kernel expansion. This counterterm coincides, in differential-geometric form, with the Chern–Simons functional
\[
Q_{\mathrm{CS}}(A)
  = \frac{1}{8\pi^2}\int_M
    \Tr\!\left(A\wedge\dd A
          +\tfrac{2}{3}A\wedge A\wedge A\right),
\]
since its variation satisfies $\delta Q_{\mathrm{CS}}(A)=\Tr(F\wedge\delta A)$ and cancels the local variation of the Yang–Mills action. Hence $Q_{\mathrm{CS}}(S)$ is the scalar analogue of the Chern–Simons 3–form evaluated on the connection induced by $S$.

With this definition the fundamental identity
\[
\frac{1}{2}\,\delta\log\det{}'\Delta(S)
   +\delta Q_{\mathrm{CS}}(S)
   =\delta\log\tau(S)
\]
holds exactly: $\log\tau(S)$ is a gauge-invariant primitive of the first variation of the regularized determinant. Integrating along any admissible path $\{S_t\}$ yields
\[
\frac{1}{2}\log\det{}'\Delta(S)
   +Q_{\mathrm{CS}}(S)
   =\log\tau(S)+C,
\]
where the constant $C$ depends only on the connected component of the admissible class. The pair $(\det{}'\Delta,Q_{\mathrm{CS}})$ thus defines a well-posed spectral functional independent of the regularization scheme and boundary conditions.

Conceptually, the role of $Q_{\mathrm{CS}}$ in our discrete connection theory is identical to that in Quillen’s construction of the determinant line bundle \cite{Quillen1985}: it supplies the local counterterm needed to produce a globally defined metric on the determinant bundle and ensures that $\log\tau(S)$ is a globally smooth potential. In geometric terms,
\[
\frac{1}{2}\log\det{}'\Delta(S)
   +Q_{\mathrm{CS}}(S)
\]
is the discrete analogue of the Quillen metric, and its variation reproduces the curvature form of the determinant line bundle. The same combination appears in the continuum as the gauge-invariant completion of the Yang–Mills functional by the Chern–Simons correction \cite{AtiyahBott1983,DonaldsonKronheimer1990}. Therefore $Q_{\mathrm{CS}}(S)$ bridges the analytic (spectral determinant) and geometric (curvature functional) parts of the theory and makes the $\tau$–function cohomologically well defined.

Combining the variational, spectral, and topological viewpoints yields the unified identity
\[
\text{(cohomological)}\;\;[c_K]=0
  \quad\Longleftrightarrow\quad
\text{(variational)}\;\;\delta\Ss^{(N)}=0
  \quad\Longleftrightarrow\quad
\text{(spectral)}\;\;\delta\log\tau=0,
\]
so that curvature minimization, harmonic gauge calibration, and spectral invariance describe the same topological condition. The operator–theoretic layer of the discrete connection formalism thus realizes, on a finite lattice, the interplay between Yang–Mills energy, Hodge decomposition, and the Quillen determinant metric \cite{Wilson1974,AtiyahBott1983,DonaldsonKronheimer1990,Quillen1985}.

\subsection{Variable Geometry}

\subsubsection{Self-concordant}

Let $\Omega\subset\RR^{d}$ be the bounded Lipschitz domain, and define the elliptic operator introduced in \eqref{eq:var-zeta-split}
\[
\Delta(S)=-\nabla\!\cdot\!\big(\nabla^{2}S\,\nabla\cdot\big)+m(S),
\qquad 
A(S)=\nabla^{2}S.
\]
The global potential of Section~7 is
\[
\log\tau(S)=\tfrac12\log\det{}'\Delta(S)+Q_{\mathrm{CS}}(S),
\qquad 
\delta\log\tau
   =\tfrac12\,\Tr\!\big(\Delta(S)^{-1}\delta\Delta(S)\big),
\]
see equations \eqref{eq:Q-form}--\eqref{eq:Q-final}.  For a smooth deformation $S_t$, this yields
\[
\frac{\dd}{\dd t}\log\tau(S_t)=\tfrac12\,\Tr\!\big(\Delta(S_t)^{-1}\dot\Delta(S_t)\big),
\]
which is the exact analogue of the isomonodromic identity $\dd(\log\tau_{\mathrm{JMU}})/\dd t=\Tr(A_tA_z)$ for linear systems with preserved monodromy~\cite{JimboMiwaUeno1981}.  When $\dot\Delta_t=[\Omega_t,\Psi_t]$, the spectrum and holonomy of $\Delta_t$ remain invariant, and $\tau(S_t)$ becomes the generating functional of an isomonodromic $\tau$–flow.

The stationary condition $\delta\log\tau(S)=0$ reproduces the Monge–Ampère fixed point (see~(7.9)):
\[
\det\nabla^{2}S=w\,e^{-cS}.
\]
Introducing the conformal factor and the cubic tensor
\[
u:=-\frac{1}{n+2}\log\det\nabla^{2}S,\qquad U:=\nabla^{3}S,
\]
and using Jacobi’s formula for the determinant, one obtains
\[
\tfrac12\,\Delta u
   =\tfrac12\,\Tr\!\big((\nabla^{2}S)^{-1}\nabla^{3}S\big)
   =e^{u}-2\,\|\nabla^{3}S\|^{2}_{(\nabla^{2}S)^{-1}}\,e^{-2u}.
\]
This is precisely Wang’s equation for a complete hyperbolic affine $2$–sphere~\cite{Wang1991},
\[
e^{u}=\tfrac12\,\Delta u+2|U|^{2}e^{-2u},
\]
and the same equation underlies Hildebrand’s classification of self–associated cones and their Painlevé~III reductions~\cite{Hildebrand2021}.  Hence the stationary limit of our potential $S$ coincides with the affine–spherical balance between the Laplacian of $u$ and the cubic form $U$.  Under radial or translational symmetry, writing $v(t)=\dd(\log\tau(S_t))/\dd t$ gives the canonical Painlevé~III equation
\[
v''=2e^{v}-\frac{(v')^{2}}{2v}-\frac{2}{t}v'+\frac{2v^{2}}{t^{2}},
\]
whose asymptotics correspond to the boundary behaviour of affine spheres and the limiting geometry of our diffusion flow.

On the convex–analytic side, a self–concordant barrier $\phi$ in the sense of Nesterov and Nemirovski \cite{NesterovNemirovski1994,Nesterov2018Lectures} defines the Riemannian metric $g=\nabla^{2}\phi$ and the cubic tensor $C=\nabla^{3}\phi$ satisfying $\|C\|_{g}\le2$.  For a self--concordant barrier $\phi$ in the sense of Nesterov and Nemirovski~\cite{NesterovNemirovski1994},  
the Riemannian metric $g=\nabla^{2}\phi$ and the cubic tensor $C=\nabla^{3}\phi$ satisfy, by definition of self--concordance,
\[
\|C(x)[h,h,h]\|_{g(x)} \le 2\,\|h\|_{g(x)}^{3},\qquad x\in\operatorname{int}K.
\]
The equality case along the central path $\nabla f(x)+\mu\nabla\phi(x)=0$ corresponds to the exact balance between the variation of the Hessian and its cubic correction.  
In differential form this reads
\[
\tfrac12\,\Tr_{g}\!\big(\nabla^{2}\log\det\nabla^{2}\phi\big)
   = \Tr_{g}\!\big(g^{-1}\nabla^{3}\phi\,g^{-1}\nabla^{3}\phi\big)
   = 2\,\|C\|_{g}^{2},
\]
or equivalently,
\[
\tfrac12\,\Delta_{g}(\log\det\nabla^{2}\phi)
   = 2\,\|C\|_{g}^{2},
\]
which expresses the self--concordant equilibrium between the Laplacian of the logarithmic Hessian determinant and the squared norm of the cubic form.  
After Legendre transformation to the dual potential $\psi$ with $\nabla^{2}\psi=(\nabla^{2}\phi)^{-1}$ and  
$u:=-(1/(n+2))\log\det\nabla^{2}\phi=(1/(n+2))\log\det\nabla^{2}\psi$,  
this relation becomes
\[
\tfrac12\,\Delta_{\nabla^{2}\psi}u
   = e^{u}-2\,\|C\|^{2}_{\nabla^{2}\psi}e^{-2u},
\]
which is exactly the Wang equation for the affine metric $h=e^{u}|dz|^{2}$ and cubic differential $U\,dz^{3}$~\cite{Wang1991,Hildebrand2021}.
Therefore the self–concordant equilibrium on the primal side and the affine–spherical equilibrium on the dual side represent the same geometric condition expressed through Legendre duality \cite{Calabi1972Affine,ChengYau1977,Donaldson1999MomentMaps}.  In our operator formalism this condition is encoded by the single variational identity
\[
\frac{\dd}{\dd t}\log\tau(S_t)=\tfrac12\,\Tr\!\big(\Delta(S_t)^{-1}\dot\Delta(S_t)\big),
\]
whose invariance $\dot\tau=0$ is simultaneously the statement of monodromy preservation in the affine–spherical picture and of metric self–concordance in the barrier geometry.

\subsubsection{Moment-measure}

Let $T(x)=x-\nabla S(x)$ denote the AT–plane map, acting on densities by the pushforward rule
\[
(T_\#\rho)(y)\,{\rm d}y=\rho(x)\,{\rm d}x,\qquad y=T(x),
\]
so that $T_\#\rho(y)=\rho(T^{-1}(y))\det\nabla T^{-1}(y)$ \eqref{eq:inv-def}.  
For a smooth strictly convex potential $S\in C^{2}(\Omega)$ one has 
$\nabla T(x)=I-\nabla^{2}S(x)$ and $\det\nabla T(x)=\det(I-\nabla^{2}S(x))$.  
Following the parametrization in \eqref{eq:rho-star}, define a positive reference density
\[
\rho_\star(x)=w(x)e^{-cS(x)},\qquad w>0,~c\in\RR,
\]
so that the invariance condition $T_\#\rho_\star=\rho_\star$ reads
\begin{equation}
\rho_\star(T(x))\det\nabla T(x)=\rho_\star(x)
\label{eq:push_invar}
\end{equation}
for almost every $x\in\Omega$.  Substituting $\rho_\star$ and differentiating in $x$ gives the first–order system
\[
\nabla\log\det(I-\nabla^{2}S(x))
   =(I-\nabla^{2}S(x))^{\!\top}\!\nabla(\log w(x)-cS(x))
     -\nabla(\log w(T(x))-cS(T(x))),
\]
whose right–hand side vanishes if and only if 
\begin{equation}
\det\nabla^{2}S(x)=w(x)e^{-cS(x)}.
\label{eq:MA_moment}
\end{equation}
This equivalence is established rigorously in Theorem~\ref{th:MA} and expresses that the invariance of $\rho_\star$ under $T$ is identical to the Monge–Ampère relation~\eqref{eq:MA_moment}.  Equation~\eqref{eq:MA_moment} is the {moment–measure equation} in the sense of Cordero-Erausquin and Klartag~\cite{CorderoKlartag2013}:  
the pushforward of the weighted volume $e^{-S(x)}\,{\rm d}x$ by the gradient map $y=\nabla S(x)$ yields the measure
\[
\rho_\star(y)=\frac{e^{-S(x)}}{\det\nabla^{2}S(x)}=\rho_\star(x)
\quad\Longleftrightarrow\quad
\det\nabla^{2}S(x)=w(x)e^{-cS(x)}.
\]
The Monge–Ampère relation thus provides a bijection between strictly convex potentials $S$ (modulo additive constants) and probability measures $\rho_\star$ satisfying the invariance~\eqref{eq:push_invar}.

Let now $S_t$ be a differentiable family of such potentials and $\rho_t=(\nabla S_t)_\#(e^{-S_t}{\rm d}x)$ the corresponding densities.  Differentiating the identity $(\nabla S_t)_\#(e^{-S_t})=\rho_t$ in~$t$ gives the continuity equation
\begin{equation}
\partial_t\rho_t+\nabla_y\!\cdot(\rho_t v_t)=0,
\qquad 
v_t=\partial_t\nabla S_t\,(\nabla S_t)^{-1},
\label{eq:continuity}
\end{equation}
which is the differential form of mass conservation under the evolving potential~$S_t$.  
Equation~\eqref{eq:continuity} defines the natural geometric evolution of $\rho_t$ compatible with the $\tau$–flow described by
\[
\frac{{\rm d}}{{\rm d}t}\log\tau(S_t)
   =\tfrac12\,\Tr(\Delta(S_t)^{-1}\dot\Delta(S_t)),\qquad
   \Delta(S_t)=-\nabla\!\cdot\!(\nabla^{2}S_t\,\nabla\cdot)+m(S_t).
\]
Under the isomonodromic constraint $\dot\tau=0$, the continuity equation~\eqref{eq:continuity} preserves both the entropy $\int_\Omega\rho_t\log\rho_t$ and the potential energy $\int_\Omega S_t\rho_t$, and therefore represents the Monge–Ampère (moment–measure) counterpart of the $\tau$–flow.  

\bibliographystyle{abbrv}
\bibliography{main}

@book{Kelley1999Ellipsoid,
  author    = {Clifford T. Kelley},
  title     = {The Ellipsoid Method and Its Applications in Convex Optimization},
  publisher = {Springer},
  year      = {1999}
}

@book{Nesterov2018Lectures,
  author    = {Yurii Nesterov},
  title     = {Lectures on Convex Optimization},
  edition   = {2nd},
  publisher = {Springer},
  year      = {2018}
}

@article{Calabi1972Affine,
  author  = {Eugenio Calabi},
  title   = {Complete Affine Hyperspheres I},
  journal = {Symposia Mathematica},
  volume  = {10},
  pages   = {19--38},
  year    = {1972}
}

@article{ChengYau1977,
  author  = {Shing-Tung Cheng and Shing-Tung Yau},
  title   = {On the Regularity of the Monge–Ampère Equation and Affine Spheres},
  journal = {Communications on Pure and Applied Mathematics},
  volume  = {29},
  number  = {5},
  pages   = {495--516},
  year    = {1977}
}

@article{Donaldson1999MomentMaps,
  author  = {Simon K. Donaldson},
  title   = {Moment Maps and Diffeomorphisms},
  journal = {Asian Journal of Mathematics},
  volume  = {3},
  number  = {1},
  pages   = {1--15},
  year    = {1999}
}

@article{RobbinsMonro1951,
  author  = {Herbert Robbins and Sutton Monro},
  title   = {A Stochastic Approximation Method},
  journal = {Annals of Mathematical Statistics},
  volume  = {22},
  number  = {3},
  pages   = {400--407},
  year    = {1951}
}

@article{PolyakJuditsky1992,
  author  = {Boris T. Polyak and Anatoli B. Juditsky},
  title   = {Acceleration of Stochastic Approximation by Averaging},
  journal = {SIAM Journal on Control and Optimization},
  volume  = {30},
  number  = {4},
  pages   = {838--855},
  year    = {1992}
}

@book{KushnerYin2003Stochastic,
  author    = {Harold J. Kushner and G. George Yin},
  title     = {Stochastic Approximation and Recursive Algorithms and Applications},
  edition   = {2nd},
  publisher = {Springer},
  year      = {2003}
}

@book{BoxJenkins1970TimeSeries,
  author    = {George E. P. Box and Gwilym M. Jenkins},
  title     = {Time Series Analysis: Forecasting and Control},
  publisher = {Holden-Day},
  year      = {1970}
}

@book{Lyapunov1892,
  author    = {Aleksandr M. Lyapunov},
  title     = {The General Problem of the Stability of Motion},
  publisher = {Kharkov Mathematical Society},
  year      = {1892},
  note      = {Translated in International Journal of Control, 55(3):531--534, 1992}
}

@article{Kalman1960Filtering,
  author  = {Rudolf E. Kalman},
  title   = {A New Approach to Linear Filtering and Prediction Problems},
  journal = {Transactions of the ASME – Journal of Basic Engineering},
  volume  = {82},
  number  = {1},
  pages   = {35--45},
  year    = {1960}
}

@book{AndersonMoore1979OptimalFiltering,
  author    = {Brian D. O. Anderson and John B. Moore},
  title     = {Optimal Filtering},
  publisher = {Prentice Hall},
  year      = {1979}
}

@book{Wiener1949Cybernetics,
  author    = {Norbert Wiener},
  title     = {Extrapolation, Interpolation, and Smoothing of Stationary Time Series},
  publisher = {MIT Press},
  year      = {1949}
}

@article{Kolmogorov1941Prediction,
  author  = {Andrey N. Kolmogorov},
  title   = {Stationary Sequences in Hilbert Space},
  journal = {Bulletin of the Moscow State University, Series Mathematics},
  year    = {1941},
  pages   = {1--40},
  note    = {English translation: Selected Works, Vol. II, Springer, 1992}
}

@article{Polyak1963Gradient,
  author  = {Boris T. Polyak},
  title   = {Gradient Methods for Minimizing Functionals},
  journal = {USSR Computational Mathematics and Mathematical Physics},
  volume  = {3},
  number  = {4},
  pages   = {864--878},
  year    = {1963}
}

@article{Lojasiewicz1963Sur,
  author  = {Stanisław Łojasiewicz},
  title   = {Sur la géométrie semi-analytique et sub-analytique},
  journal = {Annales de l’Institut Fourier},
  volume  = {13},
  pages   = {1--65},
  year    = {1963}
}

@inproceedings{Karimi2016PL,
  author    = {Hamed Karimi and Julie Nutini and Mark Schmidt},
  title     = {Linear Convergence of Gradient and Proximal-Gradient Methods under the Polyak–Łojasiewicz Condition},
  booktitle = {Proceedings of the 19th International Conference on Artificial Intelligence and Statistics (AISTATS)},
  pages     = {1195--1203},
  year      = {2016}
}

@article{Dahlquist1956Linear,
  author  = {Germund Dahlquist},
  title   = {Convergence and Stability in the Numerical Integration of Ordinary Differential Equations},
  journal = {Mathematica Scandinavica},
  volume  = {4},
  pages   = {33--53},
  year    = {1956}
}

@article{Gautschi1961Numerical,
  author  = {Walter Gautschi},
  title   = {Numerical Integration of Ordinary Differential Equations Based on Trigonometric Polynomials},
  journal = {Numerische Mathematik},
  volume  = {3},
  pages   = {381--397},
  year    = {1961}
}

@article{Schur1917PowerSeries,
  author  = {Issai Schur},
  title   = {Über Potenzreihen, die im Innern des Einheitskreises beschränkt sind},
  journal = {Journal für die reine und angewandte Mathematik},
  volume  = {147},
  pages   = {205--232},
  year    = {1917}
}

@book{Nevanlinna1984Stability,
  author    = {Olavi Nevanlinna},
  title     = {On the Stability of Numerical Methods for Ordinary Differential Equations},
  publisher = {Springer},
  year      = {1984}
}

@article{Evans1972,
  author  = {John W. Evans},
  title   = {Nerve Axon Equations: IV. The Stable and Unstable Impulse},
  journal = {Indiana University Mathematics Journal},
  volume  = {21},
  number  = {9},
  pages   = {877--905},
  year    = {1972}
}

@book{Lawvere2004Sets,
  author    = {F. William Lawvere and Stephen H. Schanuel},
  title     = {Conceptual Mathematics: A First Introduction to Categories},
  edition   = {2nd},
  publisher = {Cambridge University Press},
  year      = {2004}
}

@incollection{Deligne1970Hodge,
  author    = {Pierre Deligne},
  title     = {Théorie de Hodge. II},
  booktitle = {Publications Mathématiques de l'IHÉS},
  volume    = {40},
  pages     = {5--57},
  year      = {1971}
}

@article{Atiyah1961Thom,
  author  = {Michael F. Atiyah},
  title   = {Thom complexes},
  journal = {Proceedings of the London Mathematical Society},
  volume  = {3},
  number  = {11},
  pages   = {291--310},
  year    = {1961}
}

@book{MacLane1998Categories,
  author    = {Saunders Mac Lane},
  title     = {Categories for the Working Mathematician},
  edition   = {2nd},
  publisher = {Springer},
  year      = {1998}
}

@book{Grothendieck1960Schemas,
  author    = {Alexander Grothendieck},
  title     = {Éléments de Géométrie Algébrique I: Le Langage des Schémas},
  publisher = {IHÉS},
  year      = {1960}
}

@article{Thom1954Stratified,
  author  = {René Thom},
  title   = {Quelques propriétés globales des variétés différentiables},
  journal = {Commentarii Mathematici Helvetici},
  volume  = {28},
  pages   = {17--86},
  year    = {1954}
}

@book{Milnor1966WhiteheadTorsion,
  author    = {John Milnor},
  title     = {Whitehead Torsion},
  publisher = {Princeton University Press},
  year      = {1966}
}

@book{Wall1969Cobordism,
  author    = {C. T. C. Wall},
  title     = {Surgery on Compact Manifolds},
  publisher = {Academic Press},
  year      = {1970}
}

@article{DeWitt1965HeatKernel,
  author  = {Bryce S. DeWitt},
  title   = {Dynamical theory of groups and fields},
  journal = {Gordon and Breach Science Publishers},
  year    = {1965}
}

@article{ItsIzerginKorepinSlavnov1990,
  author  = {A. R. Its and A. G. Izergin and V. E. Korepin and N. A. Slavnov},
  title   = {Differential equations for quantum correlation functions},
  journal = {International Journal of Modern Physics B},
  volume  = {4},
  number  = {5},
  pages   = {1003--1037},
  year    = {1990}
}

@article{Freed1995ChernSimons,
  author  = {Daniel S. Freed},
  title   = {Classical Chern--Simons Theory, Part 1},
  journal = {Advances in Mathematics},
  volume  = {113},
  number  = {2},
  pages   = {237--303},
  year    = {1995}
}

@article{Malgrange1980Isomonodromic,
  author  = {Bernard Malgrange},
  title   = {Sur les déformations isomonodromiques. I. Singularités régulières},
  journal = {Mathematique et Physique (Proceedings of the 1979 Conference at the University of Strasbourg)},
  pages   = {401--426},
  year    = {1980}
}

@article{Bertola2001Tau,
  author  = {Marco Bertola},
  title   = {The dependence on the monodromy data of the isomonodromic tau function},
  journal = {Communications in Mathematical Physics},
  volume  = {294},
  number  = {2},
  pages   = {539--579},
  year    = {2010}
}

@article{Kapaev2004Painleve,
  author  = {Alexander A. Kapaev},
  title   = {Quasi-linear Stokes phenomenon for the Painlevé first equation},
  journal = {Journal of Physics A: Mathematical and General},
  volume  = {37},
  pages   = {11149--11167},
  year    = {2004}
}

@article{Balogh2003Tau,
  author  = {Ferenc Balogh and Tamara Grava},
  title   = {Tau-function and Whitham hierarchy in the theory of random matrices},
  journal = {Communications in Mathematical Physics},
  volume  = {263},
  number  = {2},
  pages   = {435--461},
  year    = {2006}
}

@article{RaySinger1971,
  author  = {D. B. Ray and I. M. Singer},
  title   = {R-Torsion and the Laplacian on Riemannian Manifolds},
  journal = {Advances in Mathematics},
  volume  = {7},
  pages   = {145--210},
  year    = {1971}
}

@article{BismutFreed1986,
  author  = {Jean-Michel Bismut and Daniel S. Freed},
  title   = {The analysis of elliptic families. I. Metrics and connections on determinant line bundles},
  journal = {Communications in Mathematical Physics},
  volume  = {106},
  pages   = {159--176},
  year    = {1986}
}

@article{Freed1992,
  author  = {Daniel S. Freed},
  title   = {Determinants, torsion, and strings},
  journal = {Communications in Mathematical Physics},
  volume  = {107},
  pages   = {483--513},
  year    = {1986}
}

@article{ChernSimons1974,
  author  = {Shiing-Shen Chern and James Simons},
  title   = {Characteristic Forms and Geometric Invariants},
  journal = {Annals of Mathematics},
  volume  = {99},
  number  = {1},
  pages   = {48--69},
  year    = {1974}
}

@article{Witten1989,
  author  = {Edward Witten},
  title   = {Quantum field theory and the Jones polynomial},
  journal = {Communications in Mathematical Physics},
  volume  = {121},
  number  = {3},
  pages   = {351--399},
  year    = {1989}
}

@book{Gilkey1984,
  author    = {Peter B. Gilkey},
  title     = {Invariance Theory, the Heat Equation and the Atiyah--Singer Index Theorem},
  publisher = {CRC Press},
  year      = {1984}
}

@article{Seeley1967ComplexPowers,
  author  = {R. T. Seeley},
  title   = {Complex powers of an elliptic operator},
  journal = {American Mathematical Society Proceedings},
  volume  = {18},
  pages   = {658--661},
  year    = {1967}
}

@article{Monge1781,
  author  = {Gaspard Monge},
  title   = {Mémoire sur la théorie des déblais et des remblais},
  journal = {Histoire de l'Académie Royale des Sciences de Paris},
  year    = {1781},
  pages   = {666--704}
}

@article{Ampere1820,
  author  = {André-Marie Ampère},
  title   = {Mémoire contenant l'application de la théorie des déblais et des remblais à l'intégration des équations aux différences partielles du second ordre},
  journal = {Journal de l'École Polytechnique},
  year    = {1820},
  volume  = {9},
  pages   = {1--49}
}

@article{Brenier1991Polar,
  author  = {Yann Brenier},
  title   = {Polar factorization and monotone rearrangement of vector-valued functions},
  journal = {Communications on Pure and Applied Mathematics},
  volume  = {44},
  number  = {4},
  pages   = {375--417},
  year    = {1991}
}

@article{McCann1997Polar,
  author  = {Robert J. McCann},
  title   = {A convexity principle for interacting gases},
  journal = {Advances in Mathematics},
  volume  = {128},
  number  = {1},
  pages   = {153--179},
  year    = {1997}
}

@article{Caffarelli1990Interior,
  author  = {Luis A. Caffarelli},
  title   = {Interior {$W^{2,p}$} estimates for solutions of the Monge--Ampère equation},
  journal = {Annals of Mathematics},
  volume  = {131},
  number  = {1},
  pages   = {135--150},
  year    = {1990}
}

@book{Gutierrez2001MongeAmpere,
  author    = {Carlos E. Gutiérrez},
  title     = {The Monge--Ampère Equation},
  publisher = {Birkhäuser},
  year      = {2001}
}

@book{Villani2003Topics,
  author    = {Cédric Villani},
  title     = {Topics in Optimal Transportation},
  publisher = {American Mathematical Society},
  year      = {2003}
}

@book{Bhatia2013MatrixAnalysis,
  author    = {Rajendra Bhatia},
  title     = {Matrix Analysis},
  edition   = {2nd},
  publisher = {Springer},
  year      = {2013}
}

@book{BasuPollackRoy2006,
  author    = {Saugata Basu and Richard Pollack and Marie-Fran{\c{c}}oise Roy},
  title     = {Algorithms in Real Algebraic Geometry},
  publisher = {Springer},
  year      = {2006}
}

@book{BochnakCosteRoy1998,
  author    = {Jacek Bochnak and Michel Coste and Marie-Fran{\c{c}}oise Roy},
  title     = {Real Algebraic Geometry},
  publisher = {Springer},
  year      = {1998}
}

@book{Vershik1995,
  author    = {Anatoly M. Vershik},
  title     = {Theory of Intermediate Symmetries and Universal Extremality},
  publisher = {Steklov Mathematical Institute},
  year      = {1995}
}

@article{Vershik2018,
  author  = {Anatoly M. Vershik},
  title   = {Algebraic Invariants and Semi-algebraic Certificates in Optimization and Dynamics},
  journal = {Functional Analysis and Its Applications},
  volume  = {52},
  number  = {3},
  pages   = {153--172},
  year    = {2018}
}

@book{Akhiezer1992Approximation,
  author    = {N. I. Akhiezer},
  title     = {Elements of the Theory of Approximation},
  publisher = {Dover},
  year      = {1992}
}

@article{AndersonDuffin1969ParallelSum,
  author  = {W. N. Anderson and R. J. Duffin},
  title   = {Series and Parallel Addition of Matrices},
  journal = {Journal of Mathematics and Mechanics},
  year    = {1969},
  volume  = {18},
  number  = {6},
  pages   = {641--652}
}

@article{KuboAndo1980Means,
  author  = {Fumio Kubo and Tsuyoshi Ando},
  title   = {Means of Positive Linear Operators},
  journal = {Mathematische Annalen},
  year    = {1980},
  volume  = {246},
  pages   = {205--224}
}

@article{Dodziuk1976DiscreteHodge,
  author  = {Jozef Dodziuk},
  title   = {Finite-Difference Approach to the Hodge Theory of Riemannian Manifolds},
  journal = {American Journal of Mathematics},
  year    = {1976},
  volume  = {98},
  number  = {1},
  pages   = {79--104}
}

@inproceedings{Desbrun2008DEC,
  author    = {Mathieu Desbrun and Anil N. Hirani and Melvin Leok and Jerrold E. Marsden},
  title     = {Discrete Exterior Calculus},
  booktitle = {SIGGRAPH Course Notes},
  year      = {2008}
}

@book{Pontryagin1962Optimal,
  author    = {L. S. Pontryagin and V. G. Boltyanskii and R. V. Gamkrelidze and E. F. Mishchenko},
  title     = {The Mathematical Theory of Optimal Processes},
  publisher = {Pergamon Press},
  year      = {1962}
}

@book{Callen1985Thermodynamics,
  author    = {Herbert B. Callen},
  title     = {Thermodynamics and an Introduction to Thermostatistics},
  edition   = {2nd},
  publisher = {Wiley},
  year      = {1985}
}

@book{McCleary2001SpectralSequences,
  author    = {John McCleary},
  title     = {A User's Guide to Spectral Sequences},
  edition   = {2nd},
  publisher = {Cambridge University Press},
  year      = {2001}
}

@book{Weibel1994Homological,
  author    = {Charles A. Weibel},
  title     = {An Introduction to Homological Algebra},
  publisher = {Cambridge University Press},
  year      = {1994},
  note      = {Spectral sequences of a double complex}
}

@article{YangMills1954,
  author  = {C. N. Yang and R. L. Mills},
  title   = {Conservation of isotopic spin and isotopic gauge invariance},
  journal = {Physical Review},
  year    = {1954},
  volume  = {96},
  pages   = {191--195}
}

@book{DonaldsonKronheimer1990FourManifolds,
  author    = {S. K. Donaldson and P. B. Kronheimer},
  title     = {The Geometry of Four-Manifolds},
  publisher = {Oxford University Press},
  year      = {1990}
}

@book{MarsdenWest2001VSI,
  author    = {Jerrold E. Marsden and Matthew West},
  title     = {Discrete Mechanics and Variational Integrators},
  journal   = {Acta Numerica},
  volume    = {10},
  pages     = {357--514},
  year      = {2001}
}

@article{Strang1968DifferenceSchemes,
  author  = {Gilbert Strang},
  title   = {On the construction and comparison of difference schemes},
  journal = {SIAM Journal on Numerical Analysis},
  year    = {1968},
  volume  = {5},
  number  = {3},
  pages   = {506--517}
}

@book{Pazy1983Semigroups,
  author    = {Amnon Pazy},
  title     = {Semigroups of Linear Operators and Applications to Partial Differential Equations},
  publisher = {Springer},
  year      = {1983}
}

@article{BakerCampbell1902,
  author  = {Henry Baker and John Campbell},
  title   = {On the continuous group of operations},
  journal = {Proceedings of the London Mathematical Society},
  year    = {1902},
  volume  = {1},
  number  = {1},
  pages   = {381--390}
}

@article{Hausdorff1906,
  author  = {Felix Hausdorff},
  title   = {Die symbolische Exponentialformel in der Gruppentheorie},
  journal = {Berichte der Königlichen Sächsischen Gesellschaft der Wissenschaften zu Leipzig, Mathematisch-Physische Klasse},
  year    = {1906},
  volume  = {58},
  pages   = {19--48}
}

@book{Butcher2008NumericalMethods,
  author    = {John C. Butcher},
  title     = {Numerical Methods for Ordinary Differential Equations},
  edition   = {2nd},
  publisher = {Wiley},
  year      = {2008}
}

@book{BlanesCasas2016Magnus,
  author    = {Sergio Blanes and Fernando Casas},
  title     = {A Concise Introduction to Geometric Numerical Integration},
  publisher = {CRC Press},
  year      = {2016}
}

@book{Kato1995Perturbation,
  author    = {Tosio Kato},
  title     = {Perturbation Theory for Linear Operators},
  publisher = {Springer},
  year      = {1995}
}

@book{Rockafellar1970Convex,
  author    = {R. Tyrrell Rockafellar},
  title     = {Convex Analysis},
  publisher = {Princeton University Press},
  year      = {1970}
}

@book{Deimling1985Nonlinear,
  author    = {Klaus Deimling},
  title     = {Nonlinear Functional Analysis},
  publisher = {Springer},
  year      = {1985}
}

@book{BakerGravesMorris1996Pade,
  author    = {George A. Baker and Peter R. Graves{-}Morris},
  title     = {Pad{\'e} Approximants},
  edition   = {2nd},
  publisher = {Cambridge University Press},
  year      = {1996}
}

@book{Higham2008FunctionsOfMatrices,
  author    = {Nicholas J. Higham},
  title     = {Functions of Matrices: Theory and Computation},
  publisher = {SIAM},
  year      = {2008}
}

@book{Trefethen2013ATAP,
  author    = {Lloyd N. Trefethen},
  title     = {Approximation Theory and Approximation Practice},
  publisher = {SIAM},
  year      = {2013}
}

@book{Rivlin1990Chebyshev,
  author    = {Theodore J. Rivlin},
  title     = {Chebyshev Polynomials: From Approximation Theory to Algebra and Number Theory},
  edition   = {2nd},
  publisher = {Wiley},
  year      = {1990}
}

@book{BauschkeCombettes2017Monotone,
  author    = {Heinz H. Bauschke and Patrick L. Combettes},
  title     = {Convex Analysis and Monotone Operator Theory in Hilbert Spaces},
  edition   = {2nd},
  publisher = {Springer},
  year      = {2017}
}

@article{Moreau1962Proximite,
  author  = {Jean-Jacques Moreau},
  title   = {Fonctions convexes duales et points proximaux dans un espace hilbertien},
  journal = {C. R. Acad. Sci. Paris},
  year    = {1962},
  volume  = {255},
  pages   = {2897--2899}
}

@article{Minty1962Monotone,
  author  = {George J. Minty},
  title   = {Monotone (Nonlinear) Operators in Hilbert Space},
  journal = {Duke Mathematical Journal},
  year    = {1962},
  volume  = {29},
  number  = {3},
  pages   = {341--346}
}

@book{Brezis2010Functional,
  author    = {Haim Br{\'e}zis},
  title     = {Functional Analysis, Sobolev Spaces and Partial Differential Equations},
  publisher = {Springer},
  year      = {2010},
  note      = {See Yosida approximation and resolvents of maximal monotone operators}
}

@book{Vershynin2018HDP,
  author    = {Roman Vershynin},
  title     = {High-Dimensional Probability: An Introduction with Applications in Data Science},
  publisher = {Cambridge University Press},
  year      = {2018},
  note      = {Subgaussian/subexponential Orlicz norms $\psi_2$, $\psi_1$}
}

@book{SamorodnitskyTaqqu1994Stable,
  author    = {Gennady Samorodnitsky and Murad S. Taqqu},
  title     = {Stable Non-Gaussian Random Processes},
  publisher = {Chapman \& Hall},
  year      = {1994}
}

@book{Reutenauer1993FreeLie,
  author    = {Christophe Reutenauer},
  title     = {Free Lie Algebras},
  publisher = {Oxford University Press},
  year      = {1993}
}

@article{Dynkin1947BCH,
  author  = {E. B. Dynkin},
  title   = {Calculation of the coefficients of the Campbell--Hausdorff formula},
  journal = {Doklady Akademii Nauk SSSR},
  year    = {1947},
  volume  = {57},
  pages   = {323--326}
}

@article{Magnus1954Magnus,
  author  = {Wilhelm Magnus},
  title   = {On the exponential solution of differential equations for a linear operator},
  journal = {Communications on Pure and Applied Mathematics},
  year    = {1954},
  volume  = {7},
  number  = {4},
  pages   = {649--673}
}

@book{HairerWanner2010SolversII,
  author    = {Ernst Hairer and Gerhard Wanner},
  title     = {Solving Ordinary Differential Equations II: Stiff and Differential-Algebraic Problems},
  edition   = {2nd},
  publisher = {Springer},
  year      = {2010},
  note      = {A-stability, trapezoidal rule/Cayley transform}
}

@article{Hershkowitz1981Cayley,
  author  = {Daniel Hershkowitz},
  title   = {The Cayley Transform},
  journal = {The American Mathematical Monthly},
  year    = {1981},
  volume  = {88},
  number  = {9},
  pages   = {664--667}
}

@article{DeiftZhou1993,
  author    = {Deift, Percy and Zhou, Xin},
  title     = {A steepest descent method for oscillatory Riemann--Hilbert problems},
  journal   = {Ann. of Math.},
  year      = {1993},
  volume    = {137},
  number    = {2},
  pages     = {295--368}
}

@book{Deift1999,
  author    = {Deift, Percy},
  title     = {Orthogonal Polynomials and Random Matrices: A Riemann--Hilbert Approach},
  series    = {Courant Lecture Notes},
  volume    = {3},
  publisher = {American Mathematical Society},
  year      = {1999}
}

@article{AGJ1990,
  author    = {Alexander, J. C. and Gardner, R. and Jones, C. K. R. T.},
  title     = {A topological invariant arising in the stability analysis of travelling waves},
  journal   = {J. Reine Angew. Math.},
  year      = {1990},
  volume    = {410},
  pages     = {167--212}
}

@incollection{Sandstede2002,
  author    = {Sandstede, Bj{\"o}rn},
  title     = {Stability of travelling waves},
  booktitle = {Handbook of Dynamical Systems II},
  publisher = {Elsevier},
  year      = {2002},
  pages     = {983--1055}
}

@article{Jury1964,
  author    = {Jury, Eliahu I.},
  title     = {Stability of linear systems},
  journal   = {IEEE Trans. Automatic Control},
  year      = {1964},
  volume    = {9},
  number    = {3},
  pages     = {227--234}
}

@book{BrockwellDavis1991,
  author    = {Brockwell, Peter J. and Davis, Richard A.},
  title     = {Time Series: Theory and Methods},
  edition   = {2},
  publisher = {Springer},
  year      = {1991}
}

@book{KailathSayedHassibi2000,
  author    = {Kailath, Thomas and Sayed, Ali H. and Hassibi, Babak},
  title     = {Linear Estimation},
  publisher = {Prentice Hall},
  year      = {2000}
}

@article{JMU1981,
  author    = {Jimbo, Michio and Miwa, Tetsuji and Ueno, Kimio},
  title     = {Monodromy preserving deformation of linear ordinary differential equations with rational coefficients. I. General theory and $\tau$-function},
  journal   = {Physica D},
  year      = {1981},
  volume    = {2},
  number    = {2},
  pages     = {306--352}
}

@book{FIKN2006,
  author    = {Fokas, A. S. and Its, A. R. and Kapaev, A. A. and Novokshenov, V. Yu.},
  title     = {Painlev{\'e} Transcendents: The Riemann--Hilbert Approach},
  series    = {Mathematical Surveys and Monographs},
  volume    = {128},
  publisher = {American Mathematical Society},
  year      = {2006}
}

@book{GLS1988,
  author    = {Gr{\"o}tschel, Martin and Lov{\'a}sz, L{\'a}szl{\'o} and Schrijver, Alexander},
  title     = {Geometric Algorithms and Combinatorial Optimization},
  publisher = {Springer},
  year      = {1988}
}

@article{Khachiyan1979,
  author    = {Khachiyan, Leonid G.},
  title     = {A Polynomial Algorithm in Linear Programming},
  journal   = {Soviet Mathematics Doklady},
  year      = {1979},
  volume    = {20},
  pages     = {191--194}
}

@article{MarsdenWest2001,
  author = {Marsden, J. E. and West, M.},
  title = {Discrete mechanics and variational integrators},
  journal = {Acta Numerica},
  volume = {10},
  pages = {357--514},
  year = {2001},
  doi = {10.1017/S096249290100006X}
}

@article{Trotter1959,
  author = {Trotter, H. F.},
  title = {On the product of semi-groups of operators},
  journal = {Proceedings of the American Mathematical Society},
  volume = {10},
  number = {4},
  pages = {545--551},
  year = {1959},
  doi = {10.2307/2033649}
}

@article{Strang1968,
  author = {Strang, G.},
  title = {On the construction and comparison of difference schemes},
  journal = {SIAM Journal on Numerical Analysis},
  volume = {5},
  number = {3},
  pages = {506--517},
  year = {1968},
  doi = {10.1137/0705041}
}

@article{Yoshida1990,
  author = {Yoshida, H.},
  title = {Construction of higher order symplectic integrators},
  journal = {Physics Letters A},
  volume = {150},
  number = {5--7},
  pages = {262--268},
  year = {1990},
  doi = {10.1016/0375-9601(90)90092-3}
}

@article{McLachlanQuispel2002,
  author = {McLachlan, R. I. and Quispel, G. R. W.},
  title = {Splitting methods},
  journal = {Acta Numerica},
  volume = {11},
  pages = {341--434},
  year = {2002},
  doi = {10.1017/S0962492902000053}
}

@article{Iserles2000,
  author = {Iserles, Arieh and Munthe-Kaas, Hans Z. and N{\o}rsett, Syvert P. and Zanna, Antonella},
  title = {Lie-group methods},
  journal = {Acta Numerica},
  volume = {9},
  pages = {215--365},
  year = {2000},
  doi = {10.1017/S0962492900002154}
}

@book{HairerLubichWanner2006,
  author = {Hairer, Ernst and Lubich, Christian and Wanner, Gerhard},
  title = {Geometric Numerical Integration: Structure-Preserving Algorithms for Ordinary Differential Equations},
  edition = {2},
  publisher = {Springer},
  year = {2006},
  doi = {10.1007/3-540-30666-8}
}

@article{Butcher1972,
  author = {Butcher, John C.},
  title = {An algebraic theory of integration methods},
  journal = {Mathematics of Computation},
  volume = {26},
  number = {117},
  pages = {79--106},
  year = {1972},
  doi = {10.1090/S0025-5718-1972-0296500-6}
}

@article{Rockafellar1976,
  author = {Rockafellar, R. Tyrrell},
  title = {Monotone operators and the proximal point algorithm},
  journal = {SIAM Journal on Control and Optimization},
  volume = {14},
  number = {5},
  pages = {877--898},
  year = {1976},
  doi = {10.1137/0314056}
}

@article{LionsMercier1979,
  author = {Lions, P.-L. and Mercier, B.},
  title = {Splitting algorithms for the sum of two nonlinear operators},
  journal = {SIAM Journal on Numerical Analysis},
  volume = {16},
  number = {6},
  pages = {964--979},
  year = {1979},
  doi = {10.1137/0716071}
}

@article{EcksteinBertsekas1992,
  author = {Eckstein, Jonathan and Bertsekas, Dimitri P.},
  title = {On the Douglas--Rachford splitting method and the proximal point algorithm for maximal monotone operators},
  journal = {Mathematical Programming},
  volume = {55},
  pages = {293--318},
  year = {1992},
  doi = {10.1007/BF01581204}
}

@article{CrandallLiggett1971,
  author = {Crandall, Michael G. and Liggett, Thomas M.},
  title = {Generation of Semi-Groups of Nonlinear Transformations on General {${L}^{p}$}-Spaces},
  journal = {American Journal of Mathematics},
  volume = {93},
  number = {2},
  pages = {265--298},
  year = {1971},
  doi = {10.2307/2373348}
}

@book{Pazy1983,
  author = {Pazy, Amnon},
  title = {Semigroups of Linear Operators and Applications to Partial Differential Equations},
  publisher = {Springer},
  year = {1983},
  doi = {10.1007/978-1-4612-5561-1}
}

@article{Wilson1974,
  author = {Wilson, Kenneth G.},
  title = {Confinement of quarks},
  journal = {Physical Review D},
  volume = {10},
  number = {8},
  pages = {2445--2459},
  year = {1974},
  doi = {10.1103/PhysRevD.10.2445}
}

@article{AtiyahBott1983,
  author = {Atiyah, M. F. and Bott, R.},
  title = {The Yang--Mills equations over Riemann surfaces},
  journal = {Philosophical Transactions of the Royal Society of London A},
  volume = {308},
  number = {1505},
  pages = {523--615},
  year = {1983},
  doi = {10.1098/rsta.1983.0017}
}

@book{DonaldsonKronheimer1990,
  author = {Donaldson, S. K. and Kronheimer, P. B.},
  title = {The Geometry of Four-Manifolds},
  publisher = {Oxford University Press},
  year = {1990}
}

@article{Quillen1985,
  author = {Quillen, Daniel},
  title = {Determinants of Cauchy--Riemann operators over a Riemann surface},
  journal = {Functional Analysis and Its Applications},
  volume = {19},
  number = {1},
  pages = {31--34},
  year = {1985},
  doi = {10.1007/BF01077566}
}

@article{JimboMiwaUeno1981,
  author = {Jimbo, Michio and Miwa, Tetsuji and Ueno, Kimio},
  title = {Monodromy preserving deformation of linear ordinary differential equations with rational coefficients. {I}. General theory and $\tau$-function},
  journal = {Physica D},
  year = {1981},
  volume = {2},
  number = {2},
  pages = {306--352},
  doi = {10.1016/0167-2789(81)90013-0}
}

@incollection{Wang1991,
  author = {Wang, Chuu-Lian},
  title = {Some examples of complete hyperbolic affine 2-spheres in $\mathbb{R}^3$},
  booktitle = {Global Differential Geometry and Global Analysis},
  publisher = {Springer},
  year = {1991},
  pages = {271--280},
  doi = {10.1007/BFb0080462}
}

@article{Hildebrand2021,
  author = {Hildebrand, Roland},
  title = {Self-associated three-dimensional cones},
  journal = {Contributions to Algebra and Geometry},
  volume = {62},
  pages = {497--520},
  year = {2021},
  doi = {10.1007/s13366-020-00511-7}
}

@book{NesterovNemirovski1994,
  author = {Nesterov, Yurii and Nemirovski, Arkadi},
  title = {Interior-Point Polynomial Methods in Convex Programming},
  publisher = {SIAM},
  year = {1994},
  doi = {10.1137/1.9781611970791}
}

@article{CorderoKlartag2013,
  author = {Cordero-Erausquin, Dario and Klartag, Bo'az},
  title = {Moment measures},
  journal = {Journal of Functional Analysis},
  volume = {268},
  number = {12},
  pages = {3834--3866},
  year = {2015},
  doi = {10.1016/j.jfa.2015.03.009}
}

\end{document}